\tikzset{%
	symbol/.style={%
		draw=none,
		every to/.append style={%
			edge node={node [sloped, allow upside down, auto=false]{$#1$}}}
	}
}
\newtheorem{Theorem}{Theorem}
\newtheorem{prop}[Theorem]{Proposition}
\newtheorem{lem}[Theorem]{Lemma}
\newtheorem{cor}[Theorem]{Corollary}
\theoremstyle{definition}
\newtheorem{example}[Theorem]{Example}
\newtheorem{dfn}[Theorem]{Definition}
\newtheorem{rmk}[Theorem]{Remark}
\newtheorem{assumption}[Theorem]{Assumption}
\DeclareMathOperator{\Cscr}{\mathscr{C}}
\DeclareMathOperator{\Dscr}{\mathscr{D}}
\DeclareMathOperator{\Escr}{\mathscr{E}}
\DeclareMathOperator{\Fscr}{\mathscr{F}}
\DeclareMathOperator{\Iscr}{\mathscr{I}}
\DeclareMathOperator{\Kscr}{\mathscr{K}}
\DeclareMathOperator{\Lscr}{\mathscr{L}}
\DeclareMathOperator{\Uscr}{\mathscr{U}}
\DeclareMathOperator{\Dbb}{\mathbb{D}}
\DeclareMathOperator{\Ibb}{\mathbb{I}}
\DeclareMathOperator{\Lbb}{\mathbb{L}}
\DeclareMathOperator{\Nbb}{\mathbb{N}}
\DeclareMathOperator{\Pbb}{\mathbb{P}}
\DeclareMathOperator{\Rbb}{\mathbb{R}}
\DeclareMathOperator{\Sbb}{\mathbb{S}}
\DeclareMathOperator{\Tbb}{\mathbb{T}}
\DeclareMathOperator{\Ubb}{\mathbb{U}}
\DeclareMathOperator{\Vbb}{\mathbb{V}}
\DeclareMathOperator{\Bcal}{\mathcal{B}}
\DeclareMathOperator{\Ccal}{\mathcal{C}}
\DeclareMathOperator{\Dcal}{\mathcal{D}}
\DeclareMathOperator{\Kcal}{\mathcal{K}}
\DeclareMathOperator{\Mcal}{\mathcal{M}}
\DeclareMathOperator{\Ocal}{\mathcal{O}}
\DeclareMathOperator{\Scal}{\mathcal{S}}
\DeclareMathOperator{\Ucal}{\mathcal{U}}
\DeclareMathOperator{\Cfrak}{\mathfrak{C}}
\DeclareMathOperator{\Dfrak}{\mathfrak{D}}
\DeclareMathOperator{\mfrak}{\mathfrak{m}}
\DeclareMathOperator{\Ufrak}{\mathfrak{U}}
\DeclareMathOperator{\VVec}{\mathbf{Vec}}
\DeclareMathOperator{\QCoh}{\mathbf{QCoh}}
\DeclareMathOperator{\Shv}{\mathbf{Shv}}
\DeclareMathOperator{\id}{id}
\DeclareMathOperator{\Set}{\mathbf{Set}}
\DeclareMathOperator{\R}{\mathbb{R}}
\DeclareMathOperator{\Z}{\mathbb{Z}}
\DeclareMathOperator{\N}{\mathbb{N}}
\DeclareMathOperator{\KAlg}{\mathnormal{K}-\mathbf{Alg}}
\DeclareMathOperator{\RMod}{\mathbf{R-Mod}}
\DeclareMathOperator{\Cring}{\mathbf{Cring}}
\DeclareMathOperator{\Ab}{\mathbf{Ab}}
\DeclareMathOperator{\Q}{\mathbb{Q}}
\DeclareMathOperator{\Spec}{Spec}
\DeclareMathOperator{\Sch}{\mathbf{Sch}}
\DeclareMathOperator{\Free}{\mathbf{Free}}
\DeclareMathOperator{\Forget}{Forget}
\DeclareMathOperator{\Cat}{\mathbf{Cat}}
\DeclareMathOperator{\Coh}{\mathbf{Coh}}
\DeclareMathOperator{\Top}{\mathbf{Top}}
\DeclareMathOperator{\Sym}{Sym}
\DeclareMathOperator{\LocRingSpac}{\mathbf{LRS}}
\DeclareMathOperator{\codom}{Codom}
\DeclareMathOperator{\op}{op}
\DeclareMathOperator{\GL}{GL}
\DeclareMathOperator{\Per}{\mathbf{Per}}
\DeclareMathOperator{\Sf}{\mathbf{Sf}}
\DeclareMathOperator{\dom}{dom}
\DeclareMathOperator{\fCat}{\mathfrak{Cat}}
\DeclareMathOperator{\fCAT}{\mathfrak{CAT}}
\DeclareMathOperator{\SfResl}{\mathbf{SfResl}}
\DeclareMathOperator{\quo}{\mathsf{quot}}
\DeclareMathOperator{\Var}{\mathbf{Var}}
\DeclareMathOperator{\Open}{\mathbf{Open}}
\DeclareMathOperator{\fTan}{\mathfrak{Tan}}
\let\epsilon\varepsilon
\newcommand{\Dbeq}[2]{\mathnormal{D}^{\mathnormal{b}}_{{#1}}({#2})}
\newcommand{\Dbeqstack}[1]{\mathnormal{D}^{\mathnormal{b}}_{\text{eq}}(\underline{[\mathnormal{G} \backslash \mathnormal{X}]}_{\bullet};\overline{\Q}_{\ell})}
\newcommand{\DbQl}[1]{\mathnormal{D}_{\mathnormal{c}}^{\mathnormal{b}}({#1};\overline{\Q}_{\ell})}
\newcommand{\DbeqQl}[1]{\mathnormal{D}_{\mathnormal{G}}^{\mathnormal{b}}({#1};\overline{\Q}_{\ell})}
\DeclareMathOperator{\XGamma}{\quot{\mathnormal{X}}{\Gamma}}
\DeclareMathOperator{\YGamma}{\quot{\mathnormal{Y}}{\Gamma}}
\DeclareMathOperator{\XGammap}{\quot{\mathnormal{X}}{\Gamma^{\prime}}}
\DeclareMathOperator{\YGammap}{\quot{\mathnormal{Y}}{\Gamma^{\prime}}}
\DeclareMathOperator{\YGammapp}{\quot{\mathnormal{Y}}{\Gamma^{\prime\prime}}}
\DeclareMathOperator{\AGamma}{\quot{\mathnormal{A}}{\Gamma}}
\DeclareMathOperator{\AGammap}{\quot{\mathnormal{A}}{\Gamma^{\prime}}}
\DeclareMathOperator{\AGammapp}{\quot{\mathnormal{A}}{\Gamma^{\prime\prime}}}
\DeclareMathOperator{\BGamma}{\quot{\mathnormal{B}}{\Gamma}}
\DeclareMathOperator{\BGammap}{\quot{\mathnormal{B}}{\Gamma^{\prime}}}
\DeclareMathOperator{\const}{\mathsf{const}}
\DeclareMathOperator{\of}{\overline{\mathnormal{f}}}
\DeclareMathOperator{\rhoGamma}{\quot{\rho}{\Gamma}}
\DeclareMathOperator{\SMan}{\mathbf{SMan}}
\DeclareMathOperator{\FMan}{\mathbf{FMan}}
\let\epsilon\varepsilon
\newcommand{\pullbackcorner}[1][dr]{\save*!/#1-1.2pc/#1:(-1,1)@^{|-}\restore}
\newcommand{\quot}[2]{\,_{#2}{#1}}
\newcommand{\Zar}[1]{\mathbf{Zar}({#1})}
\newcommand{\PC}{\mathbf{PC}}
\newcommand{\Bicat}{\mathsf{Bicat}}
\newcommand{\Kah}[2]{\Omega_{{#1}/{#2}}^1}
\numberwithin{Theorem}{section}
\title{Pseudolimits for Tangent Categories with Applications to Equivariant Algebraic and Differential Geometry}
\author{Dorette Pronk}
\address{Dalhousie University}
\email{dorette.pronk@dal.ca}
\author{Geoff Vooys}
\address{University of Calgary}
\email{gmvooys@ucalgary.ca}
\date{\today}
\begin{document}

\begin{abstract}
In this paper we show that if $\mathscr{C}$ is a category and if $F\colon\mathscr{C}^{\operatorname{op}} \to \mathfrak{Cat}$ is a pseudofunctor such that for each object $X$ of $\mathscr{C}$ the category $F(X)$ is a tangent category and for each morphism $f$ of $\mathscr{C}$ the functor $F(f)$ is part of a strong tangent morphism $(F(f),\!\quot{\alpha}{f})$ and that furthermore the natural transformations $\!\quot{\alpha}{f}$ vary pseudonaturally in $\mathscr{C}^{\operatorname{op}}$, then there is a tangent structure on the pseudolimit $\mathbf{PC}(F)$ which is induced by the tangent structures on the categories $F(X)$ together with how they vary through the functors $F(f)$.
We use this observation to show that the forgetful $2$-functor $\operatorname{Forget}:\mathfrak{Tan} \to \mathfrak{Cat}$ creates and preserves pseudolimits indexed by $1$-categories. As an application, this allows us to describe how equivariant descent interacts with the tangent structures on the category of smooth (real) manifolds and on various categories of (algebraic) varieties over a field.
\end{abstract}

%\subjclass{Primary 18F40; Secondary 18F20, 14A99}
%\keywords{Tangent Category, Algebraic Geometry, Equivariant Tangent Category, Equivariant Categories, Pseudocones, Pseudolimits of Tangent Categories, Equivariant Descent, Descent Theory}
\keywords{Tangent Category, Algebraic Geometry, Differential Geometry, Equivariant Tangent Category, Equivariant Categories, Pseudocones, Pseudolimits of Tangent Categories, Equivariant Descent, Descent Theory}
\subjclass{18F40; Secondary 18F20, 14A99, 53C10}

\maketitle
\tableofcontents

\section{Introduction}
Tangent categories describe a categorification of differential geometry by focusing on and abstracting the properties of the tangent bundle functor $T\colon \mathbf{SMan} \to \mathbf{SMan}$, where $\mathbf{SMan}$ is the category of smooth (real) manifolds, and formalizing notions of tangent bundle and differential bundle that can be applied in further abstract categorical settings.
%%giving ways to have tangent bundles and differential bundles in abstract categorical settings. 
This abstract formalism provides an algebraic description of what it means to be smooth and brings monadic techniques to the study of manifolds and differential-geometric objects. 
The notion of tangent categories was originally developed in \cite{Rosicky} and then rediscovered and generalized in \cite{GeoffRobinDiffStruct}. 
Tangent categories are now an area of study in category theory in their own right and they are used to study problems in differential geometry, algebraic geometry, linear logic, the $\lambda$-calculus, machine learning, synthetic differential geometry, programming theory, and more. 
However, while tangent categories do a good job of creating an algebraic/equational formulation of what it means for a (real) manifold to be smooth, this formalism has not yet been extended to  model interactions with group actions and local symmetries. 
More generally, the ways in which tangent structures interact with equivariant descent still need to be explored; this paper provides a first step in that direction. A central ingredient towards studying these interactions lies in understanding pseudolimits of diagrams in the $2$-category $\fTan$ of tangent categories.

Pseudolimits are a higher-categorical weakening of the notion of a limit in traditional category theory where the triangles that define the cones involving the projections are only required to commute up to coherent isomorphisms. Pseudolimits provide important tools in higher categorical geometry and topology and in particular in descent theory, as they provide a structure with which to track the ways in which you recognize objects to be ``sufficiently equivalent'' to each other. In fact, the categories of equivariant sheaves and the equivariant derived categories which appear in algebraic geometry, algebraic topology, differential geometry, and representation theory (when one wants to know about equivariant cohomology) can all be viewed as pseudolimits. This perspective is explored at great length in \cite{GeneralGeoffThesis} and used to study both equivariant algebraic geometry and equivariant algebraic topology. 
In this paper we will explore how pseudolimits facilitate equivariance for tangent structures in categorical differential geometry.

Because of their importance as a technical tool in studying equivariant descent and tangent theory simultaneously, the first half of the paper focuses on the nature of when the pseudolimits of pseudofunctors $F\colon \Cscr^{\op} \to \fCat$ (for $\Cscr$ a $1$-category) are tangent categories based on the structure of the fibre categories $F(X)$ for objects $X$ of $\Cscr$ and based on the nature of the transition functors $F(f)$ for morphisms $f$ of $\Cscr$. This then leads us to study how the hom-$2$-category $\Bicat(\Cscr^{\op},\fCat)$ can be used to probe the $2$-category $\fTan_{\operatorname{strong}}$ of tangent categories equipped with strong tangent morphisms for certain pseudolimits.
We then show that, under the same mild technical assumptions as just described, the pseudolimit in $\fCat$ of a pseudofunctor $F\colon\Cscr^{\op} \to \fTan_{\operatorname{strong}}$, when equipped with its pseudolimit tangent structure (defined and proved to exist below), is actually a pseudolimit of the corresponding diagram in $\fTan_{\operatorname{strong}}$ (in other words, the forgetful $2$-functor $\Forget\colon\fTan_{\operatorname{strong}} \to \fCat$ preserves and reflects pseudolimits of pseudofunctors $F$ whose domain is a $1$-category). Because these calculations show the structure of how tangent structures must behave with respect to descent, we then use our study of pseudolimits and tangent categories to study equivariant tangent structures in algebraic and differential geometry.

The study of equivariance (as the broad interpretation of how functions and maps may interact, record, and witness group actions) and equivariant objects in mathematics is deep and varied. These objects arise in the study of representation theory, algebraic geometry, algebraic topology, homotopy theory, statistics, number theory, analysis, stack theory, descent theory, and everywhere and anywhere in between. Of particular interest in equivariant mathematics are orbifolds and geometric representation theory. In both areas of research we want to work with generalized quotients because real quotients do not have the correct structure (for instance, they may not be smooth) and lose too much information. The notion of orbifold atlas handles the problem by keeping track of the local (finite) isotropy groups with their actions on charts and the ways these actions are related to each other, while the Langlands Programme uses descent data taken in quotients of actions of algebraic groups on sufficiently nice varieties in order to study representations of groups in geometric and ring-theoretic ways. 
Orbifolds may also be represented by a particular type of smooth groupoids; in this case the group actions are encoded in the groupoid structure. We see that in all these cases there are natural notions of tangents, but describing these tangent structures formally requires studying how the equivariant descent and tangent theories interact.

In this paper we will show how to make sense of having a descent-equivariant tangent structure on varieties over fields and on smooth manifolds with an action of a compact Lie group.
This gives an idea of how to get tangent category theory to interact with the language of descent theory, scheme theory, differential geometry, and group representation theory. Applications to orbifolds, however, will be presented in a sequel paper.

Finally, some words regarding the general nature of this paper. In \cite{LeungWeil}, Leung showed that to give a tangent category structure on a category $\Cscr$ is the same as giving the category $\Cscr$ a $\mathsf{Weil}_1$-actegory\footnote{The category $\mathsf{Weil}_1$ is the category of Weil-algebras which is a subcategory of the category of augmented nilpotent $\N$-algebras; cf. \cite{LeungWeil} or \cite{BenThesis} for details.} structure. A more high-level approach to what we are doing in this paper could be taken by constructing a certain $\mathsf{Weil}_1$-actegorical structure on the pseudolimit; however, we do not take this perspective in this paper. Instead, we take a much more explicit and calculation-based approach. Because of the applications to scheme theory, orbifold theory, and representation theory we anticipate it is important to have tools and examples showing exactly how computations and manipulations involving equivariant categories should be executed. Additionally, when seeking how to have descent theory (at least as far as it is recorded by a pseudofunctor) and tangent-theoretic information interact we anticipate that it will be useful to have these computations worked out and presented for future work and future practitioners.

It is also worth noting that Lanfranchi has introduced the notion of what it means to be a tangent object (as well as a morphism of tangent objects and a $2$-morphism between morphisms of tangent objects) internal to a $2$-category in \cite{Marcello}. We use this perspective frequently and seriously in the first half of the paper in order to show how tangent objects in the $2$-category $\Bicat(\Cscr^{\op},\fCat)$ compare with various concepts in \cite{Marcello}, as well as how they can be used to determine the pseudolimits in the $2$-category $\fTan$ of tangent categories which are taken over diagrams $D$ whose domain is a $1$-category.

\subsection{Structure of the Paper}
The structure of the paper is as follows. In Section \ref{Section: Higher Cat Review} we review the basic theory of pseudocones, as they form a central technical backbone of the paper. We also introduce the category $\PC(F)$ of pseudocones of a pseudofunctor $F:\Cscr^{\op} \to \fCat$ in this section. As shown in \cite{GeneralGeoffThesis}, this gives a pseudolimit for $F$ in $\fCat$. We also discuss why the lax version of this construction is more poorly behaved and why this necessitates working with pseudonatural transformations and not just lax or oplax transformations in this paper; in particular, we discuss how the lax limit construction in $\fCat$ does \emph{not} lift to $\fTan$, but the pseudolimit construction does (cf. Theorem \ref{Thm: Pseudolimits in Tancat}). Additionally, we give a detailed study of the limits which appear in the pseudocone categories $\PC(F)$.

In Section \ref{Section: Eq Tan Cats} we review the basic theory of tangent categories as developed by Cockett and Cruttwell before taking the rest of the section to prove the first main results of the paper as expressed in the following two theorems. Both these theorems rely on our first key definition: that of a tangent indexing functor (cf. Definition \ref{Defn: tangent pre-equivariant indexing functor}).

\begin{Theorem}[{cf. Theorem \ref{Thm: Pre-Equivariant Tangent Category}}]
Let $F\colon \Cscr^{\op} \to \fCat$ be a tangent indexing functor. Then the category $\PC(F)$ is a tangent category.
\end{Theorem}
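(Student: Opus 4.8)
The plan is to equip $\PC(F)$ with a tangent structure whose data is defined componentwise over $\Cscr$, gluing the fibrewise tangent structures along the transition functors by means of the strong tangent morphisms $(F(f),\quot{\alpha}{f})$. Throughout write $T_X$ for the tangent bundle functor of $F(X)$ and $p_X,0_X,+_X,\ell_X,c_X$ for its remaining structure transformations; by Definition~\ref{Defn: tangent pre-equivariant indexing functor} each $\quot{\alpha}{f}\colon F(f)\circ T_Y\Rightarrow T_X\circ F(f)$ (for $f\colon X\to Y$ in $\Cscr$) is a natural isomorphism compatible with $p,0,+,\ell,c$, and the family $\{\quot{\alpha}{f}\}_f$ is pseudonatural. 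Recall that an object of $\PC(F)$ is a pseudocone $A=\big((A_X)_X,(A_f)_f\big)$ with $A_X\in F(X)$ and coherence isomorphisms $A_f\colon F(f)(A_Y)\xrightarrow{\sim}A_X$, and that a morphism of pseudocones is a componentwise family commuting with the $A_f$.

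First I would define the tangent bundle endofunctor $\Tbb\colon\PC(F)\to\PC(F)$ by $(\Tbb A)_X:=T_X(A_X)$, with coherence isomorphism
\[
F(f)\big(T_Y(A_Y)\big)\xrightarrow{\ (\quot{\alpha}{f})_{A_Y}\ }T_X\big(F(f)(A_Y)\big)\xrightarrow{\ T_X(A_f)\ }T_X(A_X),
\]
and on a pseudocone morphism $\varphi$ componentwise by $T_X(\varphi_X)$. Two points require checking: that $\Tbb A$ satisfies the pseudocone cocycle identity linking $(\Tbb A)_{gf}$ with $(\Tbb A)_f$, $(\Tbb A)_g$ and the pseudofunctoriality constraints of $F$ --- this is exactly where one uses that $\{\quot{\alpha}{f}\}_f$ is pseudonatural rather than merely lax natural --- and that $\Tbb\varphi$ again commutes with the coherence isomorphisms, which is a short diagram chase from naturality of $\quot{\alpha}{f}$ and functoriality of $T_X$. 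Functoriality of $\Tbb$ is then immediate.

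Next I would define $p,0,+,\ell,c$ on $\PC(F)$ componentwise from $p_X,0_X,+_X,\ell_X,c_X$; for each of these the only nonroutine point is that the componentwise family is a morphism of pseudocones, and this follows from the compatibility of the strong tangent morphism $(F(f),\quot{\alpha}{f})$ with the corresponding piece of structure, while naturality in $\PC(F)$ is verified componentwise. The substantive step is the pullback condition: for each pseudocone $A$ one must produce the $n$-fold pullback power $\Tbb_n A$ of $p\colon\Tbb A\to A$ and show it is preserved by $\Tbb$ and all its iterates. Here I would invoke the analysis of limits in $\PC(F)$ from Section~\ref{Section: Higher Cat Review}: since each fibre $F(X)$ has the pullbacks $(T_X)_n(A_X)$ and each $F(f)$ preserves them (part of the tangent indexing functor hypothesis), such a limit of pseudocones is computed componentwise, its coherence isomorphisms being reassembled from the $\quot{\alpha}{f}$; preservation by $\Tbb^m$ then reduces componentwise to the fact that $T_X^m$ preserves $(T_X)_n$ in $F(X)$. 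I expect this componentwise reduction of the relevant limits to be the main obstacle --- not that any single verification is hard, but one must confirm that the cone isomorphisms produced by the pointwise limit formula in $\PC(F)$ agree on the nose with those induced by $\quot{\alpha}{f}$, so that $\Tbb$ is strictly (not just up to isomorphism) compatible with them.

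Finally, with all structure maps and the pullback condition in hand, the remaining axioms of a tangent category are verified componentwise: an equation between pseudocone morphisms holds if and only if it holds in every component, and in component $X$ it is precisely an axiom of the tangent category $F(X)$; likewise the universality of the vertical lift asserts that a certain square is a limit diagram, which by the same pointwise analysis follows from its counterpart in each $F(X)$. Assembling these verifications exhibits the desired (pseudolimit) tangent structure on $\PC(F)$.
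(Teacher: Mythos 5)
Your proposal is correct and follows essentially the same route as the paper: the tangent functor is induced by the pseudonatural transformation with components $\quot{T}{X}$ and coherence $\quot{T}{X}(\tau_f^A)\circ\quot{T}{f}_{\quot{A}{Y}}$, the structure transformations $p,0,+,\ell,c$ are defined componentwise and shown to be modifications (equivalently, pseudocone morphisms) using the strong-tangent-morphism compatibilities, the tangent pullbacks and the equalizer for universality of the lift are obtained from the pointwise-limit theorem for $\PC(F)$ since each $F(f)$ preserves them, and all equational axioms are verified object-locally. The only difference is presentational: the paper packages each piece of structure as a modification and invokes the general $\PC(-)$ functoriality theorem, while you verify the pseudocone-morphism conditions directly, which amounts to the same computations.
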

\begin{Theorem}[{cf. Theorem \ref{Thm: Tangent objects in Bicat Hom category are tangent indexing functors}}]
Let $\Cscr$ be a $1$-category and consider the $2$-category $\Bicat(\Cscr^{\op},\fCat)$. Then a tuple $(F,T,p,\operatorname{add},0,\ell,c)$ in $\Bicat(\Cscr^{\op},\fCat)$ is a tangent object in the sense of \cite[Definition 3.2]{Marcello} if and only if $F\colon \Cscr^{\op} \to \fCat$ is a tangent indexing functor.
\end{Theorem}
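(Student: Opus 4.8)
The plan is to unwind \cite[Definition 3.2]{Marcello} component by component, using the standard description of the hom-$2$-category $\Bicat(\Cscr^{\op},\fCat)$: its objects are pseudofunctors $\Cscr^{\op}\to\fCat$, its $1$-cells are pseudonatural transformations, and its $2$-cells are modifications. The first step is to record the functoriality observation that for each object $X$ of $\Cscr$ the evaluation assignment $\ev_X\colon\Bicat(\Cscr^{\op},\fCat)\to\fCat$ is a strict $2$-functor. Because a strict $2$-functor carries tangent objects to tangent objects, and because a tangent object in $\fCat$ is the same thing as a tangent category in the sense of Cockett and Cruttwell, a tangent object $(F,T,p,\operatorname{add},0,\ell,c)$ of $\Bicat(\Cscr^{\op},\fCat)$ produces at once, for every object $X$, a tangent category $\big(F(X),T_X,p_X,\operatorname{add}_X,0_X,\ell_X,c_X\big)$, where the subscript $X$ denotes the $X$-component of the relevant pseudonatural transformation or modification. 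This supplies the fibrewise half of the data that a tangent indexing functor must carry.

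The second step is to read off the transition data. For a morphism $f\colon X\to Y$ of $\Cscr$, the pseudonaturality structure of $T\colon F\Rightarrow F$ provides an invertible $2$-cell $\quot{\alpha}{f}$ comparing $T_X\circ F(f)$ with $F(f)\circ T_Y$; this is precisely the $2$-cell that makes $(F(f),\quot{\alpha}{f})$ a candidate strong tangent morphism $\big(F(Y),T_Y\big)\to\big(F(X),T_X\big)$. The requirement that $p$, $\operatorname{add}$, $0$, $\ell$ and $c$ be \emph{modifications}, rather than merely families of natural transformations, is exactly the requirement that $\quot{\alpha}{f}$ be compatible with the projection, the additive bundle structure, the zero section, the vertical lift and the canonical flip: the modification square of each of $p,\operatorname{add},0,\ell,c$ along $f$ is, verbatim, one of the defining coherence squares of a strong tangent morphism. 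Hence each $(F(f),\quot{\alpha}{f})$ is a strong tangent morphism. Finally, the coherence axioms of the pseudonatural transformation $T$ --- compatibility of the $\quot{\alpha}{f}$ with composition and identities in $\Cscr^{\op}$ and with the compositor and unitor $2$-cells of the pseudofunctor $F$ --- are precisely the statement that the family $\{\quot{\alpha}{f}\}_{f}$ varies pseudonaturally over $\Cscr^{\op}$. Putting these observations together identifies a tangent object in $\Bicat(\Cscr^{\op},\fCat)$ with a tuple of exactly the shape prescribed by Definition \ref{Defn: tangent pre-equivariant indexing functor}, that is, with a tangent indexing functor.

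For the converse, I would run the same dictionary in reverse: from a tangent indexing functor $F$ one defines $T$ to be the pseudonatural transformation with components $T_X$ and structure $2$-cells $\quot{\alpha}{f}$ --- well-defined and coherent exactly because the $\quot{\alpha}{f}$ vary pseudonaturally --- and one defines $p,\operatorname{add},0,\ell,c$ to be the modifications with components $p_X,\operatorname{add}_X,0_X,\ell_X,c_X$, which are legitimate modifications exactly because each $(F(f),\quot{\alpha}{f})$ is a strong tangent morphism. That the resulting tuple satisfies every axiom of \cite[Definition 3.2]{Marcello} is then checked objectwise: each such axiom is an equality of pasting diagrams of $2$-cells in $\Bicat(\Cscr^{\op},\fCat)$, two such $2$-cells are equal if and only if all of their components are, and componentwise the axiom at $X$ is exactly the corresponding Cockett--Cruttwell axiom for the tangent category $F(X)$, which holds by hypothesis.

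The step I expect to be the main obstacle is the bookkeeping around the pullback powers of the projection. A Cockett--Cruttwell tangent category requires the $n$-fold fibre products $T_nM=TM\times_M\cdots\times_M TM$ to exist and to be preserved by $T$, and \cite[Definition 3.2]{Marcello} encodes this not by a literal limit but by auxiliary $1$-cells together with cone $2$-cells. So the delicate point is to verify that such an auxiliary $1$-cell in $\Bicat(\Cscr^{\op},\fCat)$, with its cone $2$-cells, restricts objectwise to a genuine $n$-fold pullback of $p_X$ in $F(X)$ --- so that $F(X)$ is a bona fide tangent category and not merely a pre-tangent structure --- and that the modification condition on those cone $2$-cells along each $f$ forces $F(f)$ to preserve the pullbacks, which is precisely the strength built into ``strong tangent morphism''. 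This is exactly the place where the explicit analysis of the limits arising in the pseudocone categories, together with the precise formulation of Definition \ref{Defn: tangent pre-equivariant indexing functor}, carries the argument; once that compatibility is in hand, what remains is a direct, if lengthy, translation between the two axiom systems.
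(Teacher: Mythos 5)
Your overall strategy---unwinding Lanfranchi's definition cell by cell in $\Bicat(\Cscr^{\op},\fCat)$, identifying the modification squares for $p,0,\operatorname{add},\ell,c$ with the coherence squares of a strong tangent morphism, and checking the remaining pasting equations objectwise---is the same dictionary the paper builds, and most of your translation is accurate. But your opening move rests on a false general principle: it is not true that a strict $2$-functor carries tangent objects to tangent objects. A tangent object structure involves pullbacks in the hom-category (the powers $T_n$ and the universality of the lift) together with a pointwiseness requirement, and an arbitrary strict $2$-functor has no reason to preserve such limits. So you cannot conclude ``each $F(X)$ is a tangent category'' merely from evaluation at $X$ being a strict $2$-functor; you must show that the pullback $T_2$ in $\Bicat(\Cscr^{\op},\fCat)(F,F)$ is computed componentwise, i.e., that its $X$-component is genuinely the pullback of $\quot{p}{X}$ in $F(X)$ and that each $F(f)$ preserves it. You do flag exactly this as ``the main obstacle'' in your final paragraph---which is the right instinct, but it sits in tension with your first step, since if the general preservation principle held there would be no obstacle. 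In the paper this is precisely where Proposition \ref{Prop: Existence of pullback pseudonatural transformation}, Corollary \ref{Cor: Tangent Pullback functors as equivariant functors}, and Theorem \ref{Thm: Limits in pseudcone categories} carry the load: the componentwise tangent pullbacks assemble into a pseudonatural transformation $T_2$ exactly because the $F(f)$, as components of strong tangent morphisms, preserve them.

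One further bookkeeping point that your write-up elides but that the paper calls out explicitly in the remark preceding its proof: the witness $2$-cells of the pseudonatural transformation $T$ and the distributive laws of the strong tangent morphisms in Definition \ref{Defn: tangent pre-equivariant indexing functor} point in opposite directions, so the translation interchanges $\quot{T}{f}$ with $\quot{T}{f}^{-1}$. This swap is harmless only because the morphisms are strong (cf.\ Lemma \ref{Lemma: strong lax mor iff strong colax}), and it is one of the reasons the equivalence fails for lax transformations; a complete proof needs to fix this direction convention rather than leave the comparison $2$-cell unoriented as you do.
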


Note that we give rather detailed explicit proofs of these theorems, as we want to provide the details one needs in order to be able to perform computations with equivariant categories and the descent-equivariant tangent structures (cf. Sections \ref{Section: Review of EQCats} -- \ref{Section: Descent Equivariant Manfiold stuffs}). %%\bigdorette{We give a longer proof of the next theorem because we want to provide the details to be able to calculate with these things.}

In Section \ref{Section:  Eq Tan Mor} we prove that the chosen pseudolimit tangent categories $\PC(F)$ are suitably functorial. In particular, after recalling what it means to have morphisms between tangent objects and transformations between tangent morphisms in a general $2$-category, we characterize the morphisms and transformations in the $2$-category $\fTan(\Bicat(\Cscr^{\op},\fCat))$. We then use this to prove the next main theorems of the paper which show that the $2$-category $\fTan_{\operatorname{strong}}$ has all pseudolimits indexed by $1$-categorical diagrams and that the forgetful $2$-functor $\fTan_{\operatorname{strong}} \to \fCat$ reflects and preserves these pseudolimits.
\begin{Theorem}[{cf. Theorem \ref{Thm: PC of tangent objects in hom two category takes values in tangent categories}}]
The strict $2$-functor $\PC(-)$ restricts to a strict $2$-functor 
$$
\PC\colon \mathfrak{Tan}\left(\Bicat\left(\Cscr^{\op},\fCat\right)\right) \to \fTan_{\operatorname{strong}}
$$ 
and makes the diagram of $2$-categories
\[
\begin{tikzcd}
\Bicat\left(\Cscr^{\op},\fCat\right) \ar[rr]{}{\PC(-)} & & \fCat \\
\fTan\left(\Bicat\left(\Cscr^{\op},\fCat\right)\right) \ar[u]{}{\Forget} \ar[rr, swap]{}{\PC(-)}  &  &\fTan_{\operatorname{strong}} \ar[u, swap]{}{\Forget}
\end{tikzcd}
\]
commute strictly.
\end{Theorem}
\begin{Theorem}[{cf. Theorem \ref{Thm: Pseudolimits in Tancat}}]
Let $F\colon \Cscr^{\op} \to \fCat$ be a tangent indexing functor. The tangent category $\PC(F)$ equipped with the tangent structure of Theorem \ref{Thm: Pre-Equivariant Tangent Category} is the pseudolimit in $\fTan_{\operatorname{strong}}$ of shape $F$.
\end{Theorem}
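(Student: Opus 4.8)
The plan is to leverage the two preceding theorems (that $\PC(F)$ is a tangent category and that $\PC(-)$ restricts to a strict $2$-functor $\fTan(\Bicat(\Cscr^{\op},\fCat)) \to \fTan$) together with the fact, recalled from \cite{GeneralGeoffThesis}, that $\PC(F)$ is a pseudolimit of $F$ in $\fCat$. The strategy is to verify the universal property of a pseudolimit directly in $\fTan$, transporting cones and their comparison data along the forgetful $2$-functor $\Forget\colon \fTan \to \fCat$, and using that $\Forget$ is faithful enough on the relevant structure to reflect the universal property from $\fCat$ up to $\fTan$.

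First I would fix what the pseudolimit of $F$ in $\fTan$ ought to be: the tangent category $\PC(F)$ of Theorem \ref{Thm: Pre-Equivariant Tangent Category}, together with the legs $\pi_X\colon \PC(F) \to F(X)$, which I must first check are strong tangent morphisms and assemble into a pseudocone in $\fTan$ — this is essentially unwinding the definition of the pseudolimit tangent structure, since by construction the tangent functor on $\PC(F)$ is computed componentwise and the $\pi_X$ project onto those components, so the coherence isomorphisms $\quot{\alpha}{\pi_X}$ are identities (or canonical). Next, given any tangent category $\Dscr$ with a pseudocone $(G_X\colon \Dscr \to F(X), \ldots)$ in $\fTan$, I apply $\Forget$ to obtain a pseudocone in $\fCat$; by the $\fCat$-universal property there is an essentially unique functor $H\colon \Dscr \to \PC(F)$ with $\pi_X \circ H \cong G_X$ compatibly. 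The crux is then to promote $H$ to a strong tangent morphism: I would use that each $G_X$ carries tangent-coherence data $\quot{\alpha}{G_X}$ varying pseudonaturally, and that the tangent structure on $\PC(F)$ is the limit of the $F(X)$-tangent structures, to assemble a natural isomorphism $\quot{\alpha}{H}\colon T_{\PC(F)} \circ H \Rightarrow H \circ T_{\Dscr}$ componentwise; the pseudolimit uniqueness in $\fCat$ (applied now to the two functors $T_{\PC(F)}\circ H$ and $H\circ T_{\Dscr}$ into the pseudocone) forces this isomorphism to exist and be unique, and the tangent-category axioms for $H$ follow by checking them leg-by-leg, where they hold because the $G_X$ are tangent morphisms. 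Finally I would verify $2$-dimensional uniqueness: a modification between two such induced tangent morphisms is, after $\Forget$, a modification in $\fCat$, hence unique there, and the tangent-transformation condition is automatic.

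The main obstacle I expect is the promotion of the comparison $1$-cell $H$ — and its coherence isomorphism $\quot{\alpha}{H}$ — from $\fCat$ to $\fTan$, i.e.\ showing the $\fCat$-universal functor is canonically a \emph{strong} tangent morphism and that this is forced, not merely possible. This requires carefully matching the pseudonaturality data of the $\quot{\alpha}{G_X}$ against the way the pseudolimit tangent structure was built in Theorem \ref{Thm: Pre-Equivariant Tangent Category}, and checking that all of Cockett--Cruttwell's tangent-morphism coherences (compatibility with $p$, $\mathrm{add}$, $0$, $\ell$, $c$) transfer; this is where the hypothesis that the $\quot{\alpha}{f}$ vary pseudonaturally over $\Cscr^{\op}$ does the real work, since it is precisely what guarantees the componentwise-assembled $\quot{\alpha}{H}$ is well-defined on $\PC(F)$. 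A clean way to organize this is to observe that $H$ together with the tentative $\quot{\alpha}{H}$ is nothing but a morphism of tangent objects in $\Bicat(\mathbf{1},\fCat) = \fCat$ into the tangent object $F$ of $\Bicat(\Cscr^{\op},\fCat)$ pulled back along $H$, and then invoke Theorem \ref{Thm: PC of tangent objects in hom two category takes values in tangent categories} to conclude $H$ lands in $\fTan$; the uniqueness clauses then reduce to the already-established $\fCat$-universal property of $\PC(F)$.
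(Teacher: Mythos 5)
Your proposal is correct and follows essentially the same route as the paper: the paper likewise verifies the pseudolimit universal property in $\fTan$ directly, first checking that the projections $\operatorname{pr}_X$ are (in fact strict) tangent morphisms and the cone $2$-cells are tangent natural transformations, then building the comparison functor and assembling its distributive law componentwise from the $\quot{\alpha}{X}$ — with the crux, exactly as you identify, being that this componentwise family defines a genuine morphism in $\PC(F)$, which follows from the cone $2$-cells being tangent natural transformations. The only caveats are that the paper's pseudolimit factorization is strict ($\operatorname{pr}_X \circ G = \quot{G}{X}$, not merely an isomorphism, which your componentwise construction delivers anyway) and that your closing reorganization via $\Bicat(\mathbbm{1},\fCat)$ is not quite right as stated (since $\PC(\const(\Dscr)) \neq \Dscr$ in general), but neither affects the main argument.
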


In Section \ref{Section: Review of EQCats} we review and recall the foundations towards doing equivariant algebraic geometry and equivariant differential geometry. Note that there are many technical footnotes in this section for the reader who is interested in some of the technical details in equivariant geometry, but these details can be safely skipped for the reader who is willing to go with the adjective-laden flow. Most of what appears here is located in various locations in the literature, but we collect the results centrally here to introduce the reader to how to go about the equivariant geometry yoga and to also illustrate the remarkable similarities between the algebraic and differential cases. We also present twelve different examples of the sorts of ``equivariant categories'' which can appear over/on varieties $X$ and smooth manifolds $M$ in Example \ref{Example: Pre-equivariant pseudofunctors} and which can all have their own tangent structures. However, we only pay explicit attention to a few of these examples. 

In Section \ref{Section: Example of equivariant tangent structure on a variety} we compute an extended example and construct the descent equivariant tangent category of schemes over a fixed variety $X$. To do this we recall the Zariski tangent structure on $\Sch_{/S}$ for an arbitrary scheme $S$. After doing this we prove that for an arbitrary scheme morphism $f\colon S \to T$, $f^{\ast}\colon \Sch_{/T} \to \Sch_{/S}$ is a strong tangent morphism for the Zariski tangent structure and that these tangent morphisms vary pseudofunctorially in $S$ and $T$. Putting these together we show how to define the tangent category of descent equivariant schemes over a variety $X$; cf. Definition \ref{Defn: Equivariant Zariski category} and Theorem \ref{Thm: Equivariant Zariski Tangent Structure} for details. We also conclude the section by showing first that the Zariski tangent category of schemes is compatible with gluing of affine schemes in the sense that the Zariski tangent category of schemes over a base scheme $X$ is equivalent to the pseudolimit of any open affine subcover $\lbrace f_i:U_i \to X \; \left. \right| \; i \in I \rbrace$. We also extend this to the equivariant setting when we know that the morphisms in the given cover of $X$ are themselves equivariant.
\begin{prop}[cf. Proposition \ref{Prop: Scheme gluing equivalence of tan cats}]
Let $X$ be a scheme and let $\Ucal = \lbrace f_i:U_i \to X \; \left. \right| \; i \in I \rbrace$ be an affine open cover of $X$. Then there is an equivalence of tangent categories
\[
\left(\Sch_{/X},\Tbb_{\Zar{X}}\right) \simeq \operatorname{pseudolim}\left(\Sch_{/U_i},\Tbb_{\Zar{U_i}}\right).
\]
\end{prop}
\begin{prop}[cf. Proposition \ref{Prop: Equivariantifying the gluing equiv for schemes}]
Let $\Ucal := \lbrace f_i:U_i \to X \; \left. \right| \; i \in I \rbrace$ be an affine open cover of a $K$-variety $X$ for which each map $f_i$ is $G$-equivariant for a smooth algebraic group $G$. Then there is an equivalence of tangent categories
\[
\left(\left(\Sch_{/X}\right)_G,\Tbb_{X}\right) \simeq \operatorname{pseudolim} \left(\left(\Sch_{/U_i}\right)_G, \Tbb_{U_i}\right).
\]
\end{prop}

In Section \ref{Section: Descent Equivariant Manfiold stuffs} we do a second extended example and show how to build the descent equivariant tangent category of smooth manifolds over a fixed smooth manifold $M$ equipped with a (smooth) action by a Lie group $L$ by defining a tangent indexing functor. To do this we recall the notion of what it means to be an {\'e}tale map in a tangent category and then prove some small results about how {\'e}tale maps in tangent categories interact with pullback functors between slice tangent structures. Afterwards we define our pseudofunctor and prove that it is a tangent indexing functor; taking the pseudolimit of this diagram then gives us the descent equivariant tangent category of smooth manifolds over $M$. We also conclude this section much as we do the section on the descent equivariant tangent category of schemes: we show that the tangent category of smooth manifolds over $M$ is equivalent to the pseudolimit of the tangent categories for the charts $\lbrace U_i \to M \; \left. \right| \; i \in I \rbrace$ of a smooth atlas for $M$ and then extend this to the equivariant setting as well.
%\bigdorette{In the sentence just above this, isn't that supposed to be the descent equivariant category of blahblah rather than a category of descent equivariant things?}
\begin{prop}[cf. Proposition \ref{Prop: Manifold gluing equivalence of tan cats}]
Let $M$ be a smooth manifold and let $\Ucal = \lbrace f_i:U_i \to M \; \left. \right| \; i \in I \rbrace$ be a chart in a smooth atlas for $M$. Then there is an equivalence of tangent categories
\[
\left(\SMan_{/M},\Tbb_{\mathbf{Smooth}/M}\right) \simeq \operatorname{pseudolim}\left(\SMan_{/U_i},\Tbb_{\mathbf{Smooth}/U_i}\right).
\]
\end{prop}
\begin{prop}[cf. Proposition \ref{Prop: Equivariantifying the gluing equiv for manifolds}]
Let $\Ucal := \lbrace f_i:U_i \to M \; \left. \right| \; i \in I \rbrace$ be a chart in a smooth atlas for a manifold $M$ for which each map $f_i$ is $L$-equivariant for a Lie group satisfying Assumption \ref{Assume on Lie groups}. Then there is an equivalence of tangent categories
\[
\left(\left(\SMan_{/M}\right)_L,\Tbb_{M}\right) \simeq \operatorname{pseudolim} \left(\left(\SMan_{/U_i}\right)_L, \Tbb_{U_i}\right).
\]
\end{prop}

Finally, in Section \ref{Section: Towards Generalizations} we make some comments towards generalizing the results in this paper. In particular, we discuss some thoughts towards generalizing our constructions to orbifolds.
%In particular, we discuss how to generalize the fact that tangent pre-equivariant\dorette{Did you intend to still have that notion here?} indexing functors give us equivariant tangent structures to the case where we have an arbitrary (cloven) fibration $f\colon \Fscr \to \Cscr$ whose fibre categories are tangent categories and whose fibre functors are part of strong tangent morphisms which vary pseudonaturally in $\Cscr$. When we have such a situation with the corresponding indexing pseudofunctor $F\colon \Cscr^{\op} \to \fCat$ then the pseudolimit of $f$, which is equivalent to the category of pseudocones of the projection $p\colon \operatorname{El}(F) \to \Cscr$, can be shown to be a tangent category with the same techniques we used in Section \ref{Section: Eq Tan Cats}.

%\bigdorette{The text below this is new.}
Throughout the paper we will keep careful track of the role of the structure 2-cells in the whole framework; e.g., although the pseudolimit of a diagram defined by a pseudofunctor is functorial along pseudonatural transformations, it need not be along lax or oplax transformations. It is tempting to want to generalize our work to lax indexing functors, (op)lax limits and the category of tangent categories with lax or colax morphisms. However, this is not possible and there are several reasons for this. To clarify what can be generalized and what cannot be generalized, we will add comments throughout the paper to make the reader aware of these 2-categorical subtleties.
%\bigdorette{End of new text}

\subsection{Remarks on Chronology and Perspectives}
The key definition we use in this paper, namely that of a tangent indexing functor (cf. Definition \ref{Defn: tangent pre-equivariant indexing functor}), is similar to what is called an indexed tangent category in \cite[Definition 3.7]{Marcello}. On its face, our notions are simply similar: on the one hand, we define tangent indexing functors to be pseudofunctors $F\colon\Cscr^{\op} \to \fCat$ which, broadly speaking, have categories $F(X)$ given by tangent categories and which have functors $F(f)$ which form the functor component of a morphism $(F(f),\!\quot{\alpha}{f})$ of a strong morphism of tangent categories (in the lax direction) for which the transformations $\!\quot{\alpha}{f}$ vary pseudonaturally in $\Cscr^{\op}$ (cf. Definition \ref{Defn: Tangent Morphism} for the definition of strong tangent morphisms). On the other hand, in \cite{Marcello} an indexed tangent category is a tangent category $(\Cscr,\Tbb)$ equipped with a pseudofunctor $F\colon\Cscr^{\op} \to \fTan_{\operatorname{strong}}$ where $\fTan_{\operatorname{strong}}$ is the $2$-category of tangent categories and strong tangent morphisms. These notions are readily compared and contrasted in the following immediate ways:
\begin{itemize}
    \item The definition of an indexed tangent category $((\Cscr,\Tbb), F\colon\Cscr^{\op} \to \fTan_{\operatorname{strong}})$ in \cite{Marcello} has nothing to do with the tangent structure $\Tbb$ on $\Cscr$. For instance, if $\Ibb$ is the identity tangent structure and if $\Tbb$ is an arbitrary tangent structure on $\Cscr$ then $((\Cscr,\Ibb),F\colon\Cscr^{\op} \to \fTan_{\operatorname{strong}})$ and $((\Cscr,\Tbb),F\colon\Cscr^{\op} \to \fTan_{\operatorname{strong}})$ form indexed tangent categories with the same pseudofunctor but with distinct tangent structures. In particular, the pseudofunctor $F$ need not depend on any way on the tangent structure $\Tbb$. 
    \item The definition of a tangent indexing functor (cf. Definition \ref{Defn: tangent pre-equivariant indexing functor}), while presented in a minimal way, is equivalent to giving a pseudofunctor $F\colon\Cscr^{\op} \to \fTan_{\operatorname{strong}}$ (cf. Proposition \ref{Prop: Tangent indexing functor is a pseudofunctor into Tanstrong}). The difference in perspective largely comes from the fact that we are interested primarily in pseudolimits (and in particular showing that the pseudolimit of a diagram $F\colon\Cscr^{\op} \to \fCat$ in $\fCat$ lifts to give a pseudolimit $F\colon\Cscr^{\op} \to \fTan_{\operatorname{strong}}$ whenever $F$ is a tangent indexing functor), while \cite{Marcello} is interested in a tangent-categorical version of the Grothendieck construction.
\end{itemize}
We show that these two seemingly distinct concepts are equivalent (cf. Theorems \ref{Thm: Indexed tangent category} and \ref{Thm: Tangent objects in Bicat Hom category are tangent indexing functors}) in the sense that both present data which is both necessary and sufficient to giving tangent objects in the $2$-category $\Bicat(\Cscr^{\op},\fCat)$. We present our focus on indexing tangent functors in order to both be in a position to ask for minimal structure and prove what exists by virtue of the pseudofunctor itself, as well as to show that the $2$-functor $\Forget:\fTan \to \fCat$ creates pseudolimits of pseudofunctors $F$ which are indexed by $1$-categories. Finally, we would also like to remark that a preliminary version of our notion of tangent indexing functor appeared prior to \cite{Marcello}: the second author first presented a notion of tangent indexing functor (and in particular the notions which appear in Sections \ref{Section: Review of EQCats} and \ref{Section: Example of equivariant tangent structure on a variety} regarding varieties) at the 2022 Foundational Methods in Computer Science conference in \cite{FMCS2022Talk}.

\subsection{Acknowledgements}
The first author would like to thank the support of an NSERC Discovery Grant and the second author would like to thank the support of an AARMS postdoctoral fellowship. The second author would also like to thank the organizers of the 2022 Foundational Methods of Computer Science conference for the hospitality and intellectually stimulating environment as well as the opportunity to present a preliminary version of this work. Both authors would also like to thank Marcello Lanfranchi for illuminating discussions and sharing his work and ideas with us. Both authors would also like to thank the anonymous referees for their insightful questions and helpful comments that have led them to write a much better structured paper where the pseudolimits of tangent categories with their properties take the center place, followed by a wide variety of examples. 

Competing interests: The authors declare none.

\section{A Review of Pseudocones}\label{Section: Higher Cat Review}
%%%\biggeoff{Make sure to point out that descent information is on the left subscript}
%%%\biggeoff{Point out stuff about pseudocones}

In this section we review the 2-categorical material on pseudolimits that will be used as foundation for everything we do in this paper.
Specifically, we will be considering lax and pseudocones over pseudofunctors into $\fCat$, the 2-category of categories. We also will be defining the hom-bicategory $\Bicat(\mathcal{B}, \mathcal{B}^{\prime})$ of pseudofunctors, pseudonatural transformations, and modifications between two given bicategories $\Bcal$ and $\Bcal^{\prime}$; when the bicategory $\Bcal^{\prime}$ is a $2$-category, so too is the hom-bicategory $\Bicat(\Bcal,\Bcal^{\prime})$ by \cite[Corollary 4.4.12]{TwoDimCat}.

For readers new to the theory of 2-categories and bicategories, we recommend \cite{TwoDimCat} as a reference.
Specifically, the definitions of 2-categories and bicategories can be found in \cite[Definition 2.1.3]{TwoDimCat}. The notion of pseudofunctor is defined in {\cite[Definition 4.1.2]{TwoDimCat}}. However, as we will use slightly different notation for the laxity constraints, also called structure cells, of a lax or pseudofunctor we include our version of the definition of pseudofunctor here:

\begin{dfn}
    Given two bicategories $\mathcal{B}$ and $\mathcal{B}'$, a pseudofunctor $F\colon \mathcal{B}\to\mathcal{B}'$ consists of
    \begin{itemize}
        \item A function $F_0\colon \mathcal{B}_0\to\mathcal{B}'_0$ on objects;
        \item For each pair of objects $X,Y\in\mathcal{B}_0$, a local functor
        $$F_{X,Y}\colon \mathcal{B}(X,Y)\to\mathcal{B}'(F_0X,F_0Y)$$
        \item For each triple of objects $X,Y,Z\in\mathcal{B}$, invertible structure transformations:
        %$$
        %\xymatrix{
        %    \mathcal{B}(Y,Z)\times\mathcal{B}(X,Y) \ar[d]_{F_{Y,Z}\times F_{X,Y}} \ar[r]^c \ar@{}[dr]\left. \right|{\stackrel{\quot{\varphi}{X,Y,Z}}{\Rightarrow}} 
         %   & \mathcal{B}(X,Z) \ar[d]^{F_{X,Z}} & \mathbf{1} \ar[r]^{1_X} \ar@/_1.5ex/[dr]_{1'_{FX}} 
          %      \ar@{}[dr]^{\stackrel{\quot{\varphi}{X}}{\Rightarrow}} & \mathcal{B}(X,X)\ar[d]^{F_{X,X}}
           % \\
            %\mathcal{B}'(FY,FZ)\times\mathcal{B}'(FX,FY)\ar[r]_{c'}&\mathcal{B}'(FX,FZ) &&\mathcal{B}'(FX,FX)
            %}
        %$$        
        \[
        \begin{tikzcd}
        \Bcal(Y,Z) \times \Bcal(X,Y) \ar[rrr]{}{c} \ar[d, swap, ""{name = U}]{}{F_{Y,Z} \times F_{X,Y}} & & & \Bcal(X,Z) \ar[d, ""{name = D}]{}{F_{X,Z}} \\
        \Bcal^{\prime}(FY, FZ) \times \Bcal^{\prime}(FX,FY) \ar[rrr, swap]{}{c^{\prime}} & & & \Bcal^{\prime}(FX,FZ)
        \ar[from = U, to = D, Rightarrow, shorten <= 50pt, shorten >= 50pt]{}{\!\quot{\phi}{X,Y,Z}}
        \end{tikzcd}\]
        \[
        \begin{tikzcd}
        \mathbbm{1} \ar[r]{}{1_X} \ar[dr, swap, ""{name = D}]{}{1^{\prime}_{FX}} & \Bcal(X,X) \ar[d, ""{name = R}]{}{F_{X,X}} \\
         & \Bcal^{\prime}(FX,FX)
         \ar[from = D, to = R, Rightarrow, shorten <= 16pt, shorten >= 4pt]{}{\!\quot{\phi}{X}}
        \end{tikzcd}
        \]
%        %\bigdorette{You can change this to tikcd, but please keep the orientation of the left diagram as I had given it and make the 2-cell horizontal.}
        with component 2-cells $\phi_{g,f}={\!\quot{\phi}{X,Y,Z}}_{g,f}\colon Fg\circ Ff\stackrel{\sim}{\Rightarrow} F(gf)$ and $\phi_X={\!\quot{\phi}{X}}_{*}\colon 1'_{FX}\stackrel{\sim}{\Rightarrow}F(1_X)$, where $*$ is the unique object of $\mathbbm{1}$. (Note that we will generally not use the more cumbersome notation involving the objects.)
        These 2-cells are required to satisfy coherence conditions with respect to the associativity and unity cells of $\mathcal{B}'$ as described in \cite[Definition 4.1.2]{TwoDimCat}.
    \end{itemize}
\end{dfn}
The notions of lax natural transformation between these functors is described in  \cite[Definition 4.2.1]{TwoDimCat} and the modifications between transformations can be found in \cite[Definition 4.4.1]{TwoDimCat}.
Here we just want to point out that such a transformation $\alpha\colon F\Rightarrow G\colon\mathcal{B}\rightarrow \mathcal{B}'$ is given by the following data:
\begin{itemize}
    \item for each object $X\in\mathcal{B}_0$, an arrow $\alpha_X\colon FX\to GX$ in $\mathcal{B}'$;
    \item for each arrow $X\stackrel{f}{\rightarrow}Y$ in $\mathcal{B}$, a 2-cell
  %  %\bigdorette{You can change to kizcd but shorten that 2-cell arrow.}
 %   $$\xymatrix{FX\ar[d]_{\alpha_X}\ar[r]^{Ff}\ar@{}[dr]|{\stackrel{\alpha_f}{\Rightarrow}}&FY\ar[d]^{\alpha_Y}\\GX\ar[r]_{Gf}&GY}$$
    \[
    \begin{tikzcd}
    FX \ar[r]{}{Ff} \ar[d, swap, ""{name = L}]{}{\alpha_X} & FY \ar[d, ""{name= R}]{}{\alpha_Y} \\
    GX \ar[r, swap]{}{Gf} & GY
    \ar[from = L, to = R, Rightarrow, shorten <= 16pt, shorten >= 16pt]{}{\alpha_f}
    \end{tikzcd}
    \]
\end{itemize}
These data need to satisfy unity and naturality conditions as can be found in \cite[Definition 4.4.1]{TwoDimCat}.
\begin{rmk}
We make the simplifying assumption that all pseudofunctors $F\colon \Cscr^{\op} \to \fCat$ we consider in this paper are normalized, i.e., $F(\id_{ A}) = \id_{FA}$, for all objects $A \in \Cscr_0$, instead of merely having a natural isomorphism between the functors. This may be done without loss of generality; cf. \cite[Expos{\'e} VI.9, pg. 180, 181]{sga1}.
\end{rmk}

With this notation in place we can begin our discussion of pseudocones and pseudolimits.
\subsection{Pseudocones}
Given a category $\Cscr$, a {\em lax cone} $(\Kscr,\kappa)$ with vertex $\Kscr$ for a contravariant normal pseudofunctor $F\colon\Cscr^{\op}\to\fCat$ is given by a lax transformation $\kappa\colon\const(\Kscr)\Rightarrow F$, where $\const(\Kscr)\colon\Cscr^{\op}\to\fCat$ is the constant functor with value $\Kscr$ on the objects and sending all arrows and 2-cells to identities.

In detail, this lax cone is given by a functor $\!\quot{\kappa}{C}\colon\Kscr\to FC$ for each object $C\in\Cscr$ and a natural transformation,
%$$
%\xymatrix{&\Kscr\ar@{}[d]|{\stackrel{\quot{\kappa}{f}}{\Rightarrow}} \ar[dl]_{\quot{\kappa}{C}}\ar[dr]^{\quot{\kappa}{D}}
%\\
%FC\ar[rr]_{Ff}&&FD}
%$$
\[
\begin{tikzcd}
 & \Kscr \ar[dr, ""{name = R}]{}{\!\quot{\kappa}{D}} \ar[dl, swap, ""{name= L}]{}{\!\quot{\kappa}{C}} & \\
FC \ar[rr, swap]{}{Ff} & & FD
\ar[from = L, to = R, Rightarrow, swap, shorten <= 10pt, shorten >= 10pt]{}{\!\quot{\kappa}{f}}
\end{tikzcd}
\]
for each arrow $D\xrightarrow{f}C$ in $\Cscr$.
These transformations are functorial in the sense that
\begin{comment}
$$
\xymatrix{
&& \Kscr\ar[dll]_{\quot{\kappa}{C}} \ar@{}[dl]|{\stackrel{\quot{\kappa}{f}}{\Rightarrow}} \ar[d]_{\quot{\kappa}{D}} 
\ar@{}[dr]|{\stackrel{\quot{\kappa}{g}}{\Rightarrow}} \ar[drr]^{\quot{\kappa}{E}} &&&& \Kscr\ar[dl]_{\quot{\kappa}{C}}
\ar@{}[d]|{\stackrel{\quot{\kappa}{fg}}{\Rightarrow}}\ar[dr]^{\quot{\kappa}{E}}
\\
FC\ar[rr]_{Ff} && FD\ar[rr]_{Fg}& & FE \ar@{}[r]|= & FC\ar[dr]_{Ff}\ar[rr]_{F(fg)} &\ar@{}[d]|{\varphi_{f,g}\Uparrow}& FE
\\
& & & & &  & FD\ar[ur]_{Fg}
}
$$
\end{comment}
\[
\begin{tikzcd}
 & & \Kscr \ar[dll, swap, ""{name = L}]{}{\!\quot{\kappa}{C}} \ar[d, ""{name = M}]{}{}[description]{\!\quot{\kappa}{D}} \ar[drr, ""{name = R}]{}{\!\quot{\kappa}{E}} & & \\
FC \ar[rr, swap]{}{Ff} & & FD \ar[rr, swap]{}{Fg} & & FE
\ar[from = L, to = M, Rightarrow, shorten <= 10pt, shorten >= 10pt, swap]{}{\!\quot{\kappa}{f}}
\ar[from = M, to = R, Rightarrow, shorten <= 10pt, shorten >= 10pt, swap]{}{\!\quot{\kappa}{g}}
\end{tikzcd}
\]
is equal to
\[
\begin{tikzcd}
 & \Kscr \ar[dr, ""{name = UR}]{}{\!\quot{\kappa}{E}} \ar[dl, swap, ""{name= UL}]{}{\!\quot{\kappa}{C}} \\
FC \ar[rr, ""{name = M}]{}[description]{F(f \circ g)} \ar[dr, swap]{}{Ff} & & FE \\
 & FD \ar[ur, swap]{}{Fg}
\ar[from = 3-2, to = M, Rightarrow, shorten <= 8pt, shorten >= 4pt]{}{\phi_{g,f}}
\ar[from = UL, to = UR, Rightarrow, shorten <= 10pt, shorten >= 10pt, swap]{}{\!\quot{\kappa}{g \circ f}}
\end{tikzcd}
\]
(where $\phi_{g,f}$ is a structure cell for the pseudofunctor $F$)
and $\!\quot{\kappa}{\id_C}=\id_{\!\quot{\kappa}{C}}$. The lax cone $(\Kscr, \kappa)$ is a {\em pseudocone} if all $\!\quot{\kappa}{f}$ are invertible 2-cells.

\begin{rmk}
    Note that we normally write the indices for the components of a transformation as right-hand indices. However, we will encounter a number of occasions, where the components of a transformation are themselves transformations or modifications that have components of their own. In that case, we will write the first components as left-indices, so that the right index is free for the next level.  
\end{rmk}

\begin{dfn}
A pseudocone $(\Lscr,\lambda)$ over $F$ is a {\em limiting pseudocone} if it satisfies the following two universal properties:
\begin{itemize}
    \item For any other pseudocone $(\Kscr, \kappa)$ over $F$ there is a unique arrow $r\colon \Kscr\to\Lscr$ such that $\lambda_Cr=\kappa_C$ for all objects $C\in\Cscr$ and $\lambda_f \ast r=\kappa_f$ for each arrow $f$ in $\Cscr$.
    \item For any map between cones with the same vertex, $(\Kscr,\kappa)\Rightarrow(\Kscr,\kappa')$ given by a compatible family of 2-cells $\theta_C\colon\!\quot{\kappa}{C}\Rightarrow\!\quot{\kappa'}{C}$ there is a unique 2-cell $\rho\colon r\Rightarrow r'\colon \Kscr\rightarrow\Lscr$ between the induced unique arrows such that 
    $\lambda_C\ast \rho=\theta_C$ for all $C\in\Cscr$.
\end{itemize}
\end{dfn}

\begin{rmk}\label{Rmk: limiting_pseudocone}
The basics of limiting pseudocones were discussed in \cite[Chapitre I, Section 1.1.5]{Giraud} originally under the language of fibred categories (without proof that their model gives a pseudolimit and without the pseudouniversal property) and also, more explicitly and $2$-categorically, in \cite[Section 6.10]{SteveLack2Cat}. We follow the exposition of \cite{GeneralGeoffThesis}, however, as it is quite explicit and provides a concrete model for working with the pseudolimit of a pseudofunctor in $\fCat$.

   By Theorem 2.3.16 on page 43 of \cite{GeneralGeoffThesis} the apex for the limiting pseudocone for a pseudofunctor $F$ into $\fCat$ is given by 
$$\PC(F)=\Bicat(\Cscr^{\op},\fCat)(\const(\mathbbm{1}),F).$$ This is the category of pseudocones over $F$; it has been described in full detail in \cite{GeneralGeoffThesis}. Here we include a quick description of its objects and arrows (taken from page 35 of \cite{GeneralGeoffThesis}): 
\begin{itemize}
    \item objects are natural transformations $\const(\mathbbm{1})\Rightarrow F$, which correspond to a family of objects,
\[
A = \lbrace \negthinspace\quot{A}{X} \in F(X)_0 \; \left. \right| \; X \in \Cscr_0 \rbrace
\] 
with a family of invertible transition morphisms
\[
\Sigma_A = \lbrace \tau_f^{A}\colon F(f)(\negthinspace\quot{A}{Y}) \xrightarrow{\sim} \negthinspace\quot{A}{X} \; \left. \right| \; f\colon X \to Y, f \in \Cscr_1 \rbrace
\]
satisfying the cocycle condition,
\[\tau_{g \circ f}^A \circ \phi_{f,g} = \tau_f^A \circ F(f)\left(\tau_g^A\right)\]
for any pair of composable morphisms $X \xrightarrow{f} Y \xrightarrow{g} Z$ in $\Cscr$.
We will write $(A,\Sigma_A)$ for the object corresponding to this data.
\item  a morphism $P\colon (A,\Sigma_A) \to (B, \Sigma_B)$ is a modification determined by a collection of morphisms 
		\[
		P = \lbrace \negthinspace\quot{\rho}{X}\colon \negthinspace\quot{A}{X} \to \negthinspace\quot{B}{X} \; \left. \right| \; X \in \Cscr_0 \rbrace
		\]
		such that for any morphisms $f\colon X \to Y$ the diagram
		\[
		\begin{tikzcd}
			F(f)(\negthinspace\quot{A}{X}) \ar[rr]{}{F(f)(\negthinspace\quot{\rho}{X})} \ar[d, swap]{}{\tau_f^A} & & F(f)(\negthinspace\quot{B}{Y}) \ar[d]{}{\tau_f^B} \\
			\negthinspace\quot{A}{X} \ar[rr, swap]{}{\quot{\rho}{X}}	& & \negthinspace\quot{B}{X}
		\end{tikzcd}
		\]
		commutes.
\end{itemize}
As described in \cite{GeneralGeoffThesis}, the pseudonatural transformation $\const(\PC(F))\Rightarrow F$ defining the limiting pseudocone has component functors $p_X\colon \PC(F) \to F(X)$, indexed by $X \in \Cscr_0$, defined  as the composites
	\[
	\begin{tikzcd}
		\PC(F) \ar[r]{}{p} & \prod\limits_{X \in \Cscr_0} F(X) \ar[r]{}{\pi_X} & F(X)
	\end{tikzcd}
	\]
	where $\pi_X$ is the usual projection out of the product and $p\colon \PC(F) \to \prod_{X \in \Cscr_0} F(X)$ is the functor which on objects $(A,T_A)$ forgets the transition isomorphisms and sends $A$ to $(\negthinspace\quot{A}{X})_{X \in \Cscr_0}$ and on morphisms sends $P$ to $(\negthinspace\quot{\rho}{X})_{X \in \Cscr_0}$. Now define the natural isomorphism $p_f\colon F(f) \circ p_Y \Rightarrow p_X$ by setting, for all objects $(A,T_A) \in \PC(F)_0$,
	\[
	p_f^{A} := \tau_f^{A}.
	\]
\end{rmk}

\subsection{Functoriality of $\PC(F)$}
Similar to the case for ordinary cones for diagrams in 1-categories, a pseudonatural transformation ${\alpha}\colon F \Rightarrow E$ gives rise to a morphism $\underline{\alpha}\colon\PC(F) \to \PC(E)$ between their pseudocone categories, through composition,
\[
\begin{tikzcd}
\mathbbm{1} \ar[r]{}{A} & F \ar[r]{}{\alpha} & E.
\end{tikzcd}
\]
Similarly, a modification $\rho\colon \alpha \Rrightarrow \beta\colon F \Rightarrow E$ gives rise to a natural transformation
$\underline{\rho}$ between functors $\underline{\alpha},\underline{\beta}\colon \PC(F) \to \PC(E)$ through post-whiskering,
\[
\begin{tikzcd}
\mathbbm{1} \ar[r]{}{A} & F \ar[rr, bend left = 30, ""{name = U}]{}{\alpha} \ar[rr, bend right = 30, swap, ""{name = D}]{}{\beta} & & E
\ar[from = U, to = D, Rightarrow, shorten <= 4pt, shorten >= 4pt]{}{\rho}
\end{tikzcd}
\]
However, in the next section we will need the exact details of the components of these morphisms and 2-cells; in particular, the explicit formulation of the transition isomorphisms of an object $\underline{\alpha}(A)$, in order to define tangent structures on pseudocones. We will give this explicit formulation in the next theorem, which was taken from \cite{GeneralGeoffThesis}, and include parts of the proof, because it illustrates why we need to work with pseudofunctors in order for the category of pseudocones to have the universal property of the pseudolimit (and hence also the correct functoriality). In particular we show that this formulation of the transition isomorphisms cannot necessarily be done with lax or oplax transformations. For further explicit details regarding these calculations, see \cite[Theorem 4.1.1, Page 80]{GeneralGeoffThesis} for the case of the functors $\underline{\alpha}\colon \PC(F) \to \PC(E)$ and \cite[Lemma 4.1.11, Page 90]{GeneralGeoffThesis} for the calculation of the explicit form of the natural transformations.
%\bigdorette{I added a couple of lines in the paragraph above this - check whether you like this and agree. I am actually wondering whether this is more about the transformations than the functors at this point. However, we can also try to shorten this because we address it after the proof.}

\begin{Theorem}[{\cite{GeneralGeoffThesis}, Theorem 4.1.1, Lemma 4.1.11}]\label{Theorem: Defining functors and nat transforms between equivariant categories and functors}
Assume $F,E\colon \Cscr^{\op} \to \fCat$ are pseudofunctors.
\begin{enumerate}
    \item[$(1)$] If $\alpha\colon F \Rightarrow E$ is a pseudonatural transformation with component functors $\quot{\alpha}{X}$ and natural isomorphisms:
\[
\begin{tikzcd}
F(Y) \ar[rr, bend left = 30, ""{name = U}]{}{\quot{\alpha}{X} \circ F(f)} \ar[rr, bend right = 30, swap, ""{name = L}]{}{E(f) \circ \quot{\alpha}{Y}} & & E(X) \ar[from = U, to = L, Rightarrow, shorten <= 4pt, shorten >=4pt]{}{\quot{\alpha}{f}} \ar[swap, from = U, to = L, Rightarrow, shorten <= 4pt, shorten >=4pt]{}{\cong}
\end{tikzcd}
\]
then there is a functor $\underline{\alpha}\colon \PC(F) \to \PC(E)$ given on objects $(A,\Sigma_A)$ of $\PC(F)$ by $\underline{\alpha}((A,\Sigma_A)=(\underline{\alpha}(A),\Sigma_{\underline{\alpha}A})$, where
\[
\underline{\alpha}(A) = \left\lbrace \negthinspace\quot{\alpha}{X}(\negthinspace\quot{A}{X}) \; \left. \right| \; X \in \Cscr_0, \negthinspace\quot{A}{X} \in A\right\rbrace
\]
and
\[
\Sigma_{\underline{\alpha}A} = \left\lbrace \!\quot{\alpha}{X}\left(\tau_f^A\right) \circ \left(\left(\!\quot{\alpha}{f}\right)_{\negthinspace\quot{A}{Y}}\right)^{-1} \; \colon  \; f \in \Cscr_1 \right\rbrace.
\]
\item[$(2)$] If $\alpha,\beta\colon F \Rightarrow E$ are pseudonatural transformations between $F$ and $E$ and if $\rho\colon \alpha \Rrightarrow \beta$ is a modification from $\alpha$ to $\beta$ then there is a natural transformation $\underline{\rho}$ providing a $2$-cell
\[
\begin{tikzcd}
\PC(F) \ar[rr, bend left = 20, ""{name = U}]{}{\underline{\alpha}} \ar[rr, bend right = 20, swap, ""{name = L}]{}{\underline{\beta}} & & \PC(E) \ar[from = U, to = L, Rightarrow, shorten <= 4pt, shorten >=4pt]{}{\underline{\rho}}
\end{tikzcd}
\]
where $\underline{\rho}_{A}\colon \underline{\alpha}A \to \underline{\beta}A$ is defined by
\[
\underline{\rho}_{A} := \left\lbrace\left. \left(\!\quot{\rho}{X}\right)_{\!\quot{A}{X}}\colon \!\quot{\alpha}{X}\left(\!\quot{A}{X}\right) \to \!\quot{\beta}{X}\left(\!\quot{A}{X}\right) \;  \right| \; X \in \Cscr_0 \right\rbrace.
\]
\end{enumerate}
\end{Theorem}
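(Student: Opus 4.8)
The plan is to construct $\underline{\alpha}$ and $\underline{\rho}$ explicitly from the component data of $\alpha$ and $\rho$ and then verify by hand that the output lands in $\PC(E)$ and is appropriately $2$-functorial. For an object $(A,\Sigma_A)$ of $\PC(F)$ I would first declare $\underline{\alpha}(A)$ to be the family $\{\quot{\alpha}{X}(\negthinspace\quot{A}{X})\}_{X\in\Cscr_0}$ and, for each $f\colon X\to Y$ of $\Cscr$, the candidate transition morphism
\[
\tau_f^{\underline{\alpha}A}\;:=\;\quot{\alpha}{X}\!\left(\tau_f^A\right)\circ\left(\left(\quot{\alpha}{f}\right)_{\negthinspace\quot{A}{Y}}\right)^{-1}\colon\;E(f)\!\left(\quot{\alpha}{Y}(\negthinspace\quot{A}{Y})\right)\longrightarrow\quot{\alpha}{X}(\negthinspace\quot{A}{X}).
\]
The first check is that each $\tau_f^{\underline{\alpha}A}$ is invertible, which is immediate: $\tau_f^A$ is invertible since $(A,\Sigma_A)\in\PC(F)$, hence so is $\quot{\alpha}{X}(\tau_f^A)$, and $(\quot{\alpha}{f})_{\negthinspace\quot{A}{Y}}$ is invertible precisely because $\alpha$ is a \emph{pseudo}natural transformation. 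I would flag here that this is exactly where pseudonaturality is indispensable: were $\alpha$ merely lax (or oplax), the structure $2$-cell $\quot{\alpha}{f}$ would run in a single non-invertible direction, the displayed composite would not even typecheck as a morphism into $\quot{\alpha}{X}(\negthinspace\quot{A}{X})$, and the construction would collapse — this is the promised illustration that lax cones are not enough.

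The substantive step is verifying the cocycle condition for $\Sigma_{\underline{\alpha}A}$, namely $\tau_{g\circ f}^{\underline{\alpha}A}\circ\phi^E_{f,g}=\tau_f^{\underline{\alpha}A}\circ E(f)(\tau_g^{\underline{\alpha}A})$ for each composable pair $X\xrightarrow{f}Y\xrightarrow{g}Z$. After substituting the definition on both sides this reduces to a pasting identity built from three inputs: (i) the cocycle condition $\tau_{g\circ f}^A\circ\phi^F_{f,g}=\tau_f^A\circ F(f)(\tau_g^A)$ for $A$ itself, transported by applying the functor $\quot{\alpha}{X}$; (ii) the axiom of the pseudonatural transformation $\alpha$ governing how $\quot{\alpha}{f}$, $\quot{\alpha}{g}$ and $\quot{\alpha}{g\circ f}$ interact with the structure cells $\phi^F_{f,g}$ and $\phi^E_{f,g}$; and (iii) functoriality and naturality of the $\quot{\alpha}{X}$. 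I expect this diagram chase to be the main obstacle — it is the one genuinely nonformal part — and I would organize it as a single pasting diagram and read off the two sides. A fully detailed version is \cite[Theorem 4.1.1]{GeneralGeoffThesis}.

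Next I would define $\underline{\alpha}$ on a morphism $P=\{\negthinspace\quot{\rho}{X}\}$ of $\PC(F)$ by $\underline{\alpha}(P):=\{\quot{\alpha}{X}(\negthinspace\quot{\rho}{X})\}$ and check that this is a morphism $\underline{\alpha}(A)\to\underline{\alpha}(B)$ of $\PC(E)$: the required square relating $\tau_f^{\underline{\alpha}A}$, $\tau_f^{\underline{\alpha}B}$ and the $\quot{\alpha}{X}(\negthinspace\quot{\rho}{X})$'s is obtained by applying the functor $\quot{\alpha}{X}$ to the compatibility square of $P$ and pasting in the naturality square of $\quot{\alpha}{f}$ against $\negthinspace\quot{\rho}{Y}$. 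Preservation of identities follows from normalization together with functoriality of each $\quot{\alpha}{X}$, and preservation of composition from functoriality of each $\quot{\alpha}{X}$; hence $\underline{\alpha}$ is a functor.

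Finally, for a modification $\rho\colon\alpha\Rrightarrow\beta$ I would set $\underline{\rho}_A:=\{(\negthinspace\quot{\rho}{X})_{\negthinspace\quot{A}{X}}\}$ and verify two things. First, that each $\underline{\rho}_A$ is a morphism $\underline{\alpha}(A)\to\underline{\beta}(A)$ in $\PC(E)$: the compatibility square with $\tau_f^{\underline{\alpha}A}$ and $\tau_f^{\underline{\beta}A}$ is precisely the modification axiom for $\rho$ (relating $\negthinspace\quot{\rho}{X}$, $\negthinspace\quot{\rho}{Y}$, $\quot{\alpha}{f}$, $\quot{\beta}{f}$) evaluated at $\negthinspace\quot{A}{Y}$, again using invertibility of the $\quot{\alpha}{f}$. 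Second, that $A\mapsto\underline{\rho}_A$ is natural, i.e. $\underline{\rho}_B\circ\underline{\alpha}(P)=\underline{\beta}(P)\circ\underline{\rho}_A$ for every $P\colon A\to B$; this is checked componentwise and is exactly the naturality of each natural transformation $\negthinspace\quot{\rho}{X}\colon\quot{\alpha}{X}\Rightarrow\quot{\beta}{X}$. This produces the $2$-cell $\underline{\rho}$ and completes the construction, the remaining bookkeeping being spelled out in \cite[Lemma 4.1.11]{GeneralGeoffThesis}.
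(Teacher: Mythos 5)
Your proposal is correct and follows essentially the same route as the paper's own (sketched) proof: posit the componentwise formulas, verify the cocycle condition for $\Sigma_{\underline{\alpha}A}$ using the pseudonaturality of $\alpha$ together with the cocycle condition for $A$, and check the morphism and modification assignments by the same whiskering/naturality arguments, deferring full details to the cited thesis. The only cosmetic difference is that the paper \emph{derives} the formula for $\tau_f^{\underline{\alpha}A}$ from the vertical composite $\alpha\circ(A,\Sigma_A)$ rather than positing it, and it locates the essential use of invertibility in the cancellation $(\quot{\alpha}{f})^{-1}\circ\quot{\alpha}{f}$ inside the cocycle computation, whereas you locate it (equally validly) in the very formation of the transition morphisms.
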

\begin{proof}[Sketch]
Statement $(1)$: Here we import parts of the proof of \cite[Theorem 4.1.1]{GeneralGeoffThesis} in order to indicate the use of inverses in giving this description. We only focus on the fact that the claimed form of the transition isomorphisms $\Sigma_{\underline{\alpha}A}$ ultimately comes down to the two chains of equalities which we present below.

	Because the vertical composition 
	\[
	\underline{\alpha}A = \alpha \circ (A,\Sigma_A) = (\underline{\alpha}(A), \Sigma_{\underline{\alpha}A})
	\] 
	has object components
	\[
	\underline{\alpha}(A) = \left\lbrace\! \quot{\alpha}{X}\left(\!\quot{A}{X}\right) \; \colon  \; X \in \Cscr_0 \right\rbrace
	\]	
	we only need to determine the transition isomorphisms $\Sigma_{\underline{\alpha}A}$. From our convention regarding the direction of transition isomorphisms and the definition of the vertical composite $\alpha \circ (A,\Sigma_A)$ we get that
	\[
	\tau_f^{\underline{\alpha}A} = \left(\!\quot{(\alpha \circ (A,T_A))}{f}\right)^{-1} = \!\quot{\alpha}{X}\left(\!\quot{(A,T_A)}{f}\right)^{-1} \circ \!\quot{\alpha}{f}^{-1} = \quot{\alpha}{X}(\tau_f^A) \circ \left(\!\quot{\alpha}{f}\right)^{-1}.
	\]
	The fact that this indeed gives a pseudonatural transformation (and that we did not make an error in defining the transition isomorphisms) follows from the fact that the pseudonaturality of $\alpha$ implies that if we have any composable pair of morphisms $X \xrightarrow{f} Y \xrightarrow{g} Z$
	\[
	E(f)\left(\!\quot{\alpha}{Y}\left(\tau_{g}^{A}\right)\right) = \left(\!\quot{\alpha}{f}\right)_{\!\quot{A}{Y}} \circ \!\quot{\alpha}{X}\left(F(f)\tau_g^A\right) \circ \left(\left(\!\quot{\alpha}{f}\right)_{F(g)(\!\quot{A}{Z})}\right)^{-1}.
	\]
	This allows us to compute that
%\bigdorette{Does align have a left equation? It would be good to use that here to get rid of the overfull hbox.}
\begin{align}
		&\tau_{f}^{\underline{\alpha}A} \circ E(f)\tau_g^{\underline{\alpha}A} \notag \\
  &= \quot{\alpha}{X}(\tau_f^A) \circ \big(\negthinspace\quot{\alpha_{\quot{A}{Y}}}{f}\big)^{-1} \circ E(f)\big(\negthinspace\quot{\alpha}{Y}(\tau_g^A) \circ \left(\negthinspace\quot{\alpha_{\quot{A}{Z}}}{g}\right)^{-1}\big) \notag \\
		&=\quot{\alpha}{X}(\tau_f^A) \circ \left(\negthinspace\quot{\alpha_{\quot{A}{Y}}}{f}\right)^{-1} \circ E(f)\big(\negthinspace\quot{\alpha}{Y}(\tau_g^A)\big) \circ E(f)\big(\negthinspace\quot{\alpha_{\quot{A}{Z}}}{g}\big)^{-1} \notag \\
		&= \quot{\alpha}{X}(\tau_f^A) \circ \left(\negthinspace\quot{\alpha_{\quot{A}{Y}}}{f}\right)^{-1} \circ \left(\negthinspace\quot{\alpha_{\quot{A}{Y}}}{f}\right) \circ \negthinspace\quot{\alpha}{X}\big(F(f)\tau_g^A\big) \circ \left(\negthinspace\quot{\alpha_{F(g)\!\quot{A}{Z}}}{f}\right)^{-1} \circ E(f)\big(\negthinspace\quot{\alpha_{\quot{A}{Z}}}{g}\big)^{-1} \label{Eqn: First line of the inverse cancellation}\\
		&= \quot{\alpha}{X}(\tau_f^A) \circ \negthinspace\quot{\alpha}{X}(F(f)\tau_g^{A}) \circ \left(\negthinspace\quot{\alpha_{F(g)\!\quot{A}{Z}}}{f}\right)^{-1} \circ E(f)\big(\negthinspace\quot{\alpha_{\quot{A}{Z}}}{g}\big)^{-1} \label{Eqn: Second line of inverse cancellation} \\
		&= \quot{\alpha}{X}\big(\tau_f^A \circ F(f)\tau_g^A\big) \circ \left(\negthinspace\quot{\alpha_{F(g)\!\quot{A}{Z}}}{f}\right)^{-1} \circ E(f)\big(\negthinspace\quot{\alpha_{\quot{A}{Z}}}{g}\big)^{-1} \notag \\
		&= \quot{\alpha}{X}(\tau_{g \circ f}^A \circ \negthinspace\quot{\phi_{f,g}}{F})\circ \left(\negthinspace\quot{\alpha_{F(g)\!\quot{A}{Z}}}{f}\right)^{-1} \circ E(f)\big(\negthinspace\quot{\alpha_{\quot{A}{Z}}}{g}\big)^{-1} \notag \\
		&= \quot{\alpha}{X}(\tau_{g \circ f}^{A}) \circ \negthinspace\quot{\alpha}{X}(\negthinspace\quot{\phi_{f,g}}{F})\circ \left(\negthinspace\quot{\alpha_{F(g)\!\quot{A}{Z}}}{f}\right)^{-1} \circ E(f)\big(\negthinspace\quot{\alpha_{\quot{A}{Z}}}{g}\big)^{-1}. \notag
	\end{align}
	We again use the pseudonaturality of $\alpha$ to obtain the commuting diagram
	\[
	\begin{tikzcd}
		E(f)\big(E(g)\big(\!\quot{\alpha}{Z}(\!\quot{A}{Z})\big)\big) \ar[rr]{}{E(f)\big(\!\quot{\alpha_{\quot{A}{Z}}}{g}\big)^{-1}} \ar[dd, swap]{}{\!\quot{\phi_{f,g}}{E}} & & E(f)\big(\!\quot{\alpha}{Y}(F(g)(\!\quot{A}{Z}))\big) \ar[d]{}{\big(\!\quot{\alpha_{F(g)\quot{A}{Z}}}{f}\big)^{-1}} \\
		& & \ar[d]{}{\!\quot{\alpha}{X}(\!\quot{\phi_{f,g}}{X})} \!\quot{\alpha}{X}\big(F(f)(F(g)(\!\quot{A}{Z}))\big) \\
		E(g \circ f)\big(\!\quot{\alpha}{Z}(\!\quot{A}{Z})\big)\ar[rr, swap]{}{\quot{\alpha_{\quot{A}{Z}}}{g \circ f}^{-1}} & & \!\quot{\alpha}{X}\big(F(g \circ X)(\!\quot{A}{Z})\big)
	\end{tikzcd}
	\]
	and substitute the induced equation to get
	\begin{align*}
		&\!\quot{\alpha}{X}(\tau_{g \circ f}^{A}) \circ \!\quot{\alpha}{X}(\!\quot{\phi_{f,g}}{F})\circ \left(\!\quot{\alpha_{F(g)\!\quot{A}{Z}}}{f}\right)^{-1} \circ E(X)\big(\!\quot{\alpha_{\quot{A}{Z}}}{g}\big)^{-1} \\&= \!\quot{\alpha}{X}(\tau_{g \circ f}^A) \circ \big(\!\quot{\alpha_{\quot{A}{Z}}}{g \circ f}\big)^{-1} \circ \!\quot{\phi_{f,g}}{E} = \tau_{g \circ f}^{\underline{\alpha}A} \circ\! \quot{\phi_{f,g}}{E}.
	\end{align*}
	This shows that $\tau_{g \circ f}^{\underline{\alpha}A} \circ \quot{\phi_{f,g}}{E} = \tau_{f}^{\underline{\alpha}A} \circ E(f)(\tau_g^{\underline{\alpha}A})$ and hence that $\underline{\alpha}(A)$ has the form claimed in the statement of the theorem.
	
	Statement $(2)$ follows from untangling the combinatorial description of the whiskering of the $2$-morphism $P$ by the $1$-morphism $\alpha$ in $\Bicat(\Cscr^{\op},\fCat)$. Explicitly we find
	\[
	\underline{\alpha}(P) = \alpha \circ P = \alpha \ast P
	\]
	and so the description
	\[
	\alpha \ast P = \underline{\alpha}P = \left\lbrace \negthinspace\quot{\alpha}{X}\left(\negthinspace\quot{\rho}{X}\right) \; : \; X \in \Cscr_0\right\rbrace
	\]
	follows. That this is indeed a modification (and hence a morphism in $\PC(E)$) is a straightforward check using the identity
	\[
	E(f)\left(\!\quot{\alpha}{Y}\left(\!\quot{\rho}{Y}\right)\right) = \!\quot{\alpha}{f}_{\!\quot{B}{Y}} \circ \!\quot{\alpha}{X}\left(F(f)\!\quot{\rho}{Y}\right) \circ \left(\!\quot{\alpha}{f}_{\!\quot{A}{Y}}\right)^{-1}
	\]
	which allows us to deduce
 %\geoff{Some of the capital $X$'s need to be turned to }
	\begin{align*}
		\tau_f^{\underline{\alpha}B} \circ E(f)\big(\!\quot{\alpha}{Y}(\!\quot{\rho}{Y})\big)
		&= \tau_f^{\underline{\alpha}B} \circ \!\quot{\alpha_{\quot{B}{Y}}}{f} \circ \!\quot{\alpha}{X}\big(F(f)\!\quot{\rho}{Y}\big)  \circ \left(\!\quot{\alpha_{\quot{A}{Y}}}{f}\right)^{-1} \\
		&= \!\quot{\alpha}{X}(\tau_f^B) \circ \left(\!\quot{\alpha_{\quot{B}{Y}}}{f}\right)^{-1} \circ \!\quot{\alpha_{\quot{B}{Y}}}{f} \circ \!\quot{\alpha}{X}\big(F(f)\!\quot{\rho}{Y}\big)  \circ \left(\!\quot{\alpha_{\quot{A}{Y}}}{f}\right)^{-1} \\
		&= \!\quot{\alpha}{X}(\tau_f^B) \circ\!\quot{\alpha}{X}\big(F(f)\!\quot{\rho}{Y}\big) \circ \left(\!\quot{\alpha_{\quot{A}{Y}}}{f}\right)^{-1} \\
		&= \!\quot{\alpha}{X}\big(\tau_f^B \circ F(f)\!\quot{\rho}{Y}\big) \circ \left(\!\quot{\alpha_{\quot{A}{Y}}}{f}\right)^{-1} \\
		&= \!\quot{\alpha}{X}(\!\quot{\rho}{X} \circ \tau_f^A) \circ \left(\!\quot{\alpha_{\quot{A}{Y}}}{f}\right)^{-1} \\
        &= \!\quot{\alpha}{X}(\!\quot{\rho}{X}) \circ \!\quot{\alpha}{X}(\tau_f^A) \circ \left(\!\quot{\alpha_{\quot{A}{Y}}}{f}\right)^{-1} \\
		&= \!\quot{\alpha}{X}(\!\quot{\rho}{X}) \circ \tau_{f}^{\underline{\alpha}A}
	\end{align*}
	which is what was to be shown.
\end{proof}

\begin{rmk}
%\biggeoff{Point out that we are stopping going down pseudofunctors + (op)lax transformations road and whenever we talk pseudolimits we require a pseudofunctor at hand}
%\bigdorette{Please check whether what I have written below is correct and covers what you wanted to include here.}
It is worth noting that the exact use of the invertibility of the witness transformations of the pseudonatural transformations appears in Lines (\ref{Eqn: First line of the inverse cancellation}) and (\ref{Eqn: Second line of inverse cancellation}) of the proof above. In particular, we actually cancelled a natural isomorphism with its inverse. This shows that if we tried to define the pseudolimit's 2-dimensional universal property with respect to lax or oplax transformations, this would not work. For similar reasons, we cannot try to define a pseudolimit for a lax or oplax functor in this way.  There is a similar result for oplax limits of lax functors (in terms of the category of oplax cones).  However, the explicit formula for the transition isomorphisms $\Sigma_{\underline{\alpha}A}$ as given in Theorem \ref{Theorem: Defining functors and nat transforms between equivariant categories and functors} in this oplax cone extension of the result will not be the same.
\end{rmk}
\begin{cor}\label{Cor: PC strict model}
    For any $1$-category $\Cscr$, the pseudocone construction $\PC(-)$ gives a strict $2$-functor $\PC:\Bicat(\Cscr^{\op},\fCat) \to \fCat$. In particular, $\PC(-)$ gives a strictly functorial model of $1$-category-indexed pseudolimits in $\fCat$.
\end{cor}

\subsection{Limits in $\PC(F)$}
In Section \ref{Section: Eq Tan Cats}, we will need to discuss ($1$-categorical) limits in pseudocone categories $\PC(F)$ as we will need to consider certain pullbacks when investigating the conditions on $\PC(F)$ to be a tangent category. To aid us, we recall a handy theorem from \cite{GeneralGeoffThesis} which gives a description of these ($1$-categorical) limits in terms of data recorded by the pseudofunctor $F\colon \Cscr^{\op} \to \fCat$. Effectively, this gives that if we have a diagram $d\colon I \to \PC(F)$ for which each induced diagram $I \xrightarrow{d} \PC(F) \xrightarrow{\pi_X} F(X)$ has a limit, and if the functors $F(f)$ preserve these limits, then $\PC(F)$ has a limit for the diagram $d$ whose object-local components are given by the limits in $F(X)$. Explicit details regarding this construction may be found in \cite[Theorem 3.1.4, Page 54]{GeneralGeoffThesis}.

\begin{Theorem}[{\cite{GeneralGeoffThesis}, Theorem 3.1.4}]\label{Thm: Limits in pseudcone categories}
Let $F\colon\Cscr^{\op} \to \fCat$ be a pseudofunctor and assume that we have a diagram of shape $d\colon I \to \PC(F)$ and for each $X \in \Cscr_0$ let $d_{X}\colon I  \to F(X)$ denote the functor
\[
\begin{tikzcd}
I \ar[r]{}{d} \ar[drr] & \PC(F) \ar[r]{}{\tilde{\imath}} & \prod\limits_{X \in \Cscr_0} F(X) \ar[d]{}{\pi_{X}} \\
 & &  F(X)
\end{tikzcd}
\]
where $\tilde{\imath}\colon \PC(F) \to \prod_{X \in \Cscr_0} F(X)$ is defined by forgetting the transition isomorphisms of an object $(A,T_A)$. If each $d_{X}$ has a limit in $F(X)$ and if for all $f \in \Cscr_1$ the functors $F(f)$ preserve the limits $\lim d_{X}$ then $\PC(F)$ admits a limit of $d$.
%%\bigdorette{This sentence needs to be completed. I am wondering whether this follows from some fibration property.}
\end{Theorem}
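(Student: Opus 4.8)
The plan is to construct the limit of $d$ in $\PC(F)$ \emph{fibrewise} over $\Cscr_0$ and then glue the pieces together using the universal property of the limits $\lim d_X$ in each $F(X)$; throughout I write $d_X(i)$ for the $X$-component of the object $d(i) \in \PC(F)_0$ and $\tau_f^{d(i)}\colon F(f)(d_Y(i)) \to d_X(i)$ for its transition isomorphism along a morphism $f\colon X \to Y$ of $\Cscr$. First I would set $L_X := \lim d_X$, with a chosen limiting cone $(\ell_i^X\colon L_X \to d_X(i))_{i \in I_0}$ in $F(X)$, and take $(L_X)_{X \in \Cscr_0}$ as the object-components of the candidate limit. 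To build the transition isomorphisms, fix $f\colon X \to Y$ in $\Cscr_1$ and observe that, as $i$ ranges over $I$, the maps $\tau_f^{d(i)}$ are the components of an invertible natural transformation $F(f) \circ d_Y \Rightarrow d_X$; naturality in $i$ is exactly the modification condition satisfied by each $d(\alpha)$, $\alpha \in I_1$. Since $F(f)$ preserves $\lim d_Y$, the cone $(F(f)(\ell_i^Y))_i$ is limiting on $F(f) \circ d_Y$; transporting it along the above natural isomorphism gives a limiting cone on $d_X$, and the essential uniqueness of limits yields a unique isomorphism $\tau_f^L\colon F(f)(L_Y) \xrightarrow{\sim} L_X$ determined by $\ell_i^X \circ \tau_f^L = \tau_f^{d(i)} \circ F(f)(\ell_i^Y)$ for all $i \in I_0$. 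Normalisation of $F$ forces $\tau_{\id_X}^L = \id_{L_X}$.

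Next I would check that $((L_X)_X, (\tau_f^L)_f)$ satisfies the cocycle condition, i.e. $\tau_{g \circ f}^L \circ \phi_{f,g} = \tau_f^L \circ F(f)(\tau_g^L)$ for every composable pair $X \xrightarrow{f} Y \xrightarrow{g} Z$ in $\Cscr$. Both sides are morphisms into the limit $L_X$, so it suffices to postcompose with each $\ell_i^X$ and compare: unwinding the defining equations for $\tau_f^L$, $\tau_g^L$ and $\tau_{g\circ f}^L$, using the naturality of the structure cell $\phi_{f,g}\colon F(f)\circ F(g) \Rightarrow F(g\circ f)$ evaluated at the $\ell_i$'s, and finally invoking the cocycle condition for the genuine pseudocone $d(i)$, reduces the two postcomposites to the same morphism. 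This identifies $(L,\Sigma_L) := ((L_X)_X,(\tau_f^L)_f)$ as an object of $\PC(F)$.

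Then I would exhibit the cone legs and verify the universal property. For each $i \in I_0$, set $P_i := (\ell_i^X)_{X \in \Cscr_0}$; the modification condition required for $P_i$ to be a morphism $(L,\Sigma_L) \to d(i)$ of $\PC(F)$ is precisely the characterising equation of the $\tau_f^L$, and $(P_i)_i$ is a cone over $d$ because $d_X(\alpha) \circ \ell_i^X = \ell_j^X$ holds fibrewise for every $\alpha\colon i \to j$ in $I$. Given any competing cone $(Q_i\colon (B,\Sigma_B) \to d(i))_i$ in $\PC(F)$, its $X$-components assemble, for each $X$, into a cone over $d_X$ and hence factor uniquely through $L_X$ via some $r_X\colon \quot{B}{X} \to L_X$; one checks, again by comparing two morphisms into a limit, that $R := (r_X)_X$ respects the transition isomorphisms (here using the characterising equation of $\tau_f^L$ together with the modification condition on $Q_i$), so $R$ is a morphism of $\PC(F)$ with $P_i \circ R = Q_i$ for all $i$, and its uniqueness is immediate from the fibrewise uniqueness. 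Note that, since $I$ is an ordinary $1$-category, only the $1$-dimensional universal property is at stake here: we are proving the existence of an ordinary limit, not of a limiting pseudocone in the sense of the earlier definition.

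The glue throughout is of the form ``two morphisms into a limit coincide because they coincide after every projection'', and is routine. The only genuinely delicate point is the cocycle verification of the second step: it is the sole place where the pseudofunctor structure cells $\phi_{f,g}$ must be threaded through the comparison isomorphisms and the limit projections, and keeping that bookkeeping straight is where care is needed. This is, of course, the content of the computation carried out in \cite[Theorem 3.1.4]{GeneralGeoffThesis}.
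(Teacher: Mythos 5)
Your proof is correct and follows exactly the route the paper indicates (the paper itself defers the details to \cite[Theorem 3.1.4]{GeneralGeoffThesis} but explicitly describes the limit as being built object-locally from the limits $\lim d_X$): construct $L_X = \lim d_X$ fibrewise, induce the transition isomorphisms $\tau_f^L$ from limit preservation by $F(f)$, verify the cocycle condition by postcomposing with the jointly monic projections $\ell_i^X$, and check the universal property componentwise. The cocycle verification, which you correctly flag as the delicate step, goes through as you describe via naturality of $\phi_{f,g}$ at $\ell_i^Z$ together with the cocycle condition for each $d(i)$.
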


%\bigdorette{Let's talk about what the main points of this section are, so we can include them in the overview of the paper and we also want to make this clear to the reader. I have now done this. See whether you like it.}
In conclusion, in this section we have seen a very concrete representation of the pseudolimit of a diagram in $\fCat$ indexed by a pseudofunctor together with an analysis of limits in these pseudolimit categories. 

\section{Tangent Structures and Pseudolimits}\label{Section: Eq Tan Cats}

In this section we begin by recalling the notion of tangent category and the notions of morphisms between them. We will then introduce the notion of a tangent indexing functor and use the remainder of the section to discuss some of its properties. First we show that whenever $F\colon\Cscr^{\op} \to\fCat$ is a tangent indexing functor, its pseudolimit $\PC(F)$ has a tangent structure induced by the tangent structures on the categories $FX$ for the objects $X\in\Cscr^{\op}$. Then we show that our tangent indexing functors are tangent objects in the hom 2-category $\Bicat(\Cscr^{\op}, \fCat)$, as introduced in \cite{Marcello}.
We note here that while the core ideas of this section and the next are fairly straightforward, we do spell out the technical details of the constructions and proofs. We do this in order to support generalizing and extending these ideas (it is easy, if one is not careful, to end up with an arrow to point in the wrong direction), and to indicate how to apply these ideas in various geometric situations.

\subsection{Tangent Categories} 
Let us recall what it means for a category to be a tangent category following \cite{GeoffRobinDiffStruct}. These categories are an abstraction of what it means to have a category and tangent bundle functor, assigning to each object its tangent bundle (which is again an object of the same category), together with all the relations and structure we require/expect of such a bundle object. In what follows below, if $\alpha\colon F \Rightarrow \id_{\Cscr}$ is a natural transformation for an endofunctor $F\colon\Cscr \to \Cscr$, we write $F_2X$ for the pullback
\[
\xymatrix{
F_2X \ar[r]^-{\pi_1} \ar[d]_{\pi_2} \pullbackcorner & FX \ar[d]^{\alpha_X} \\
FX \ar[r]_-{\alpha_X} & X
}
\]
if it exists in $\Cscr$.

\begin{dfn}[{\cite{GeoffRobinDiffStruct}, Definition 2.3}]\label{Defn: Tangent Category}
A category $\Cscr$ has a {\em tangent structure} $\Tbb = (T,p,0,+,\ell,c)$ when:
\begin{enumerate}
	\item $T\colon \Cscr \to \Cscr$ is a functor equipped with a natural transformation $p\colon T \Rightarrow \id_{\Cscr}$ such that for every object $X \in \Cscr_0$ all pullback powers of $p_X\colon TX \to X$ exist and for all $n \in \N$ the functors $T^n$ preserve these pullback powers.
	\item There are natural transformations $+\colon T_2 \Rightarrow T$ and $0\colon \id_{\Cscr} \to T$ for which each map $p_X\colon TX \to X$ describes an additive bundle in $\Cscr$; i.e., $p\colon TX \to X$ is an internal commutative monoid in $\Cscr_{/X}$ with addition and unit given by $+$ and $0$, respectively.
	\item There is a natural transformation $\ell\colon T \Rightarrow T^2$ such that for any $X \in \Cscr_0$, the squares
%	\[
%	\xymatrix{
%	TX \ar[r]^-{\ell_X} \ar[d]_{p_X} & T^2X \ar[d]^{Tp_{X}} \\
%	X \ar[r]_-{0_X} & TX
%	}
%	\]
	\[
	\begin{tikzcd}
	TX \ar[r]{}{\ell_X} \ar[d, swap]{}{p_X} & T^2X \ar[d]{}{Tp_X} & TX \times_{X} TX \ar[d, swap]{}{+_X} \ar[rr]{}{\langle\ell_X\circ \pi_1, \ell_X\circ \pi_2\rangle} & & T^2X \times_{TX} T^2X \ar[d]{}{+_{TX}} & X \ar[d, swap]{}{0_X} \ar[r]{}{0_X} & TX \ar[d]{}{0_{TX}} \\
	X \ar[r, swap]{}{0_X} & TX & TX  \ar[rr, swap]{}{\ell_X} & & T^2X & TX \ar[r, swap]{}{\ell_X} & T^2X
	\end{tikzcd}
	\]
	all commute; i.e., $(\ell_X, 0_X)$ is a morphism of bundles in $\Cscr$ (cf. \cite[Definition 2.2]{GeoffRobinDiffStruct}). %	\footnote{Because TT preserves all pullback powers of the tangent functor, Tp:T^2X \to TXTp:T^2X \to TX is a commutative monoid in \Cscr_{/X}\Cscr_{/X} with addition and unit induced by T+:T(T_2X) \to TXT+:T(T_2X) \to TX and T0_X:TX \to T^2XT0_X:TX \to T^2X after doing some pre-and-post-composition with the isomorphism T(T_2X) \cong T_2(TX)T(T_2X) \cong T_2(TX) and/or X \xrightarrow{0_X} TX \xrightarrow{T0_X} T^2XX \xrightarrow{0_X} TX \xrightarrow{T0_X} T^2X.}
	\item There is a natural transformation $c\colon T^2 \Rightarrow T^2$ such that for all $X \in \Cscr_0$ the squares
%	\[
%	\xymatrix{
%	T^2X \ar[r]^-{c_X} \ar[d]_{Tp_X} & T^2X \ar[d]^{p_{TX}} \\
%	TX \ar[r]_-{\id_{TX}} & TX
%	}
%	\]
%\bigdorette{The following collection of diagrams has an overfull hbox; perhaps move one below the other two?}
	\[
	\begin{tikzcd}
	T^2X \ar[r]{}{c_X} \ar[d, swap]{}{Tp_X} & T^2X \ar[d]{}{p_{TX}} & T^2X \times_{TX} T^2X \ar[d, swap]{}{(T \ast +)_{X}} \ar[rr]{}{\langle c_X \circ \pi_1, c_X\circ \pi_2\rangle} & & T^2X \times_{TX}T^2X \ar[d]{}{(+ \ast T)_X} & TX \ar[d, swap]{}{(T \ast 0)_X} \ar[r]{}{\id_{TX}} & TX \ar[d]{}{(0 \ast T)_X} \\
	TX \ar[r, swap]{}{\id_{TX}} & TX & T^2X \ar[rr, swap]{}{c_X} & & T^2X & T^2X \ar[r, swap]{}{c_X} & T^2X
	\end{tikzcd}
	\]
	commute, i.e., $(c_X,\id_{TX})$ describes a bundle morphism in $\Cscr$.
	\item We have the equations $c^2 = \id_{\Cscr}, c \circ \ell = \ell$, and the diagrams
	\[
	\begin{tikzcd}
	T \ar[r]{}{\ell} \ar[d, swap]{}{\ell} & T^2 \ar[d]{}{T \ast \ell} & T^3 \ar[d, swap]{}{c \ast T} \ar[r]{}{T \ast c} & T^3 \ar[r]{}{c \ast T} & T^3\ar[d]{}{c \ast T} \\
	T^2 \ar[r, swap]{}{\ell \ast T} & T^3 & T^3 \ar[r, swap]{}{T \ast c} & T^3 \ar[r, swap]{}{c \ast T} & T^3
	\end{tikzcd}
 \]
 \[
	\begin{tikzcd}
	T^2 \ar[d, swap]{}{c} \ar[r]{}{\ell \ast T} & T^3 \ar[r]{}{T \ast c} & T^3 \ar[d]{}{c \ast T} \\
	T^2 \ar[rr, swap]{}{T \ast \ell} & & T^3
	\end{tikzcd}
	\]
	commute in the functor category $[\Cscr,\Cscr]$ where $F \ast \alpha$ and $\alpha \ast F$ denote the post-whiskering of a natural transformation $\alpha$ by a functor $F$ and the pre-whiskering of a natural transformation $\alpha$ by a  functor $F$, respectively.
	\item The diagram
	\[
	\begin{tikzcd}
	T_2X \ar[rrrr]{}{(T \ast +)_X \circ \langle \ell \circ \pi_1, 0_{TX} \circ\pi_2 \rangle} & &	& & T^2X \ar[rrr, shift left = 0.5ex]{}{T(p_X)} \ar[rrr, swap, shift left = -0.5ex]{}{0_X \circ p_X \circ p_{TX}} & & & TX
	\end{tikzcd}
	\]
	is an equalizer in $\Cscr$.
\end{enumerate}
%%Finally, a category $\Cscr$ is said to be a tangent category if it admits a tangent structure $\Tbb$.
\end{dfn}
\begin{rmk}
	In \cite{GeoffRobinDiffStruct} we find the following useful observations about the parts of Definition \ref{Defn: Tangent Category}:
	\begin{itemize}
		\item Part 1 names $T$ the tangent functor of the tangent structure $\Tbb$ on $\Cscr$ and $p$.
		\item Part 2 describes the object $p_X\colon TX \to X$ as a tangent bundle in $\Cscr$.
		\item Part 3 names $\ell$ as the vertical lift.
		\item Part 4 names $c$ as the canonical flip.
		\item Part 5 gives the coherence relations on $\ell$ and $c$.
		\item Part 6 describes the universality of the vertical lift.
	\end{itemize}
Note also that when we say that $\Cscr$ is a tangent category, we really mean that $(\Cscr,\Tbb)$ is a category equipped with a specific tangent structure $\Tbb$ and have omitted the explicit mention of the tangent structure. In fact, this is an abuse of notation; it is possible for a category to have multiple distinct tangent structures\footnote{The category $\mathbf{SMan}$ of smooth real manifolds is the standard example: there's the identity tangent structure $\mathbb{I}$ and the usual tangent structure defined by taking the bundle functor $T$ to send each manifold to its tangent bundle.}, so when we say $\Cscr$ is a tangent category we mean that $(\Cscr,\Tbb)$ is a category equipped with a specific (potentially unspecified) tangent structure $\Tbb$. 
\end{rmk}
%\begin{rmk}
%A category can be a tangent category in more than one way. For example, every category has the trivial tangent structure $\Ibb = (\id, \id, \id, \id, \id)$. However, if $\Cscr = \mathbf{SMan}$ is the category %of smooth real manifolds then there is a tangent structure whose corresponding tangent functor is the usual
%\end{rmk}
Equally as important to tangent categories are the morphisms of tangent categories. These come in two flavours: one, which is weaker and another which is strong. We will usually work with strong tangent morphisms, but provide the complete definition.

\begin{dfn}[{\cite{GeoffRobinDiffStruct}, Definition 2.7}]\label{Defn: Tangent Morphism}
Let $(\Cscr,\Tbb) = (\Cscr,T,p,0,+,\ell,c)$ and $(\Dscr,\Sbb) = (\Dscr,S,q,0^{\prime},\oplus,\ell^{\prime},c^{\prime})$ be tangent categories. A {\em morphism of tangent categories} is a pair $(F,\alpha)\colon(\Cscr,\Tbb) \to (\Dscr,\Sbb)$ where $F\colon\Cscr \to \Dscr$ is a functor and $\alpha$ is a natural transformation
\[
\begin{tikzcd}
\Cscr \ar[bend left = 30, rr, ""{name = U}]{}{F \circ T} \ar[rr, bend right = 30, swap, ""{name = L}]{}{S \circ F} & & \Dscr \ar[from = U, to = L, Rightarrow, shorten <= 4pt, shorten >= 4pt]{}{\alpha}
\end{tikzcd}
\]
called a {\em lax distributive law}. The 
following diagrams of functors and natural transformations are required to commute:
\[
\begin{tikzcd}
F \circ T \ar[r]{}{\alpha} \ar[dr, swap]{}{F \ast p} & S \circ F \ar[d]{}{q \ast F} \\
 & F
\end{tikzcd}
\begin{tikzcd}
F \ar[r]{}{F \ast 0} \ar[dr, swap]{}{0^{\prime} \ast F} & F \circ T \ar[d]{}{\alpha} \\
 & S \circ F
\end{tikzcd}
\begin{tikzcd}
F \circ T_2 \ar[r]{}{F \ast\alpha_2} \ar[d, swap]{}{F \ast +} & S_2 \circ F \ar[d]{}{\oplus \ast F} \\
F \circ T \ar[r, swap]{}{\alpha} & S \circ F
\end{tikzcd}
\]
\[
\begin{tikzcd}
F \circ T \ar[rr]{}{\alpha} \ar[d, swap]{}{F \ast \ell} & & S \circ F \ar[d]{}{\ell^{\prime} \ast F} \\
F \circ T^2 \ar[rr, swap]{}{(S \ast \alpha) \circ (\alpha \ast T)} & & S^2 \circ F
\end{tikzcd}
\begin{tikzcd}
F \circ T^2 \ar[rr]{}{(S \ast \alpha) \circ (\alpha \ast T)} \ar[d, swap]{}{F \ast c} & & S^2 \circ F \ar[d]{}{c^{\prime} \ast F} \\
F \circ T^2 \ar[rr, swap]{}{(S \ast \alpha) \circ (\alpha \ast T)} & & S^2 \circ F
\end{tikzcd}
\]
Additionally, we say that the morphism $(F,\alpha)$ is {\em strong} if the distributive law $\alpha$ is a natural isomorphism and if $F$ preserves the equalizers and pullbacks of the tangent structure $(\Cscr,\Tbb)$. Finally, we say that $(F,\alpha)$ is \emph{strict} if $\alpha$ is the identity transformation.
\end{dfn}
\begin{rmk}\label{Remark: Colax tangent morphisms}
In \cite{Marcello}, what we have defined as a morphism of tangent categories in Definition \ref{Defn: Tangent Morphism} is called instead a {\em lax} tangent morphism. With this language, a colax morphism of tangent categories $(\Cscr,\Tbb) \to (\Dscr,\Sbb)$ (a colax morphism in \cite{Marcello}) is a pair $(F,\alpha)$ where $F\colon \Cscr \to \Dscr$ is a functor and $\alpha$ is a colax distributive law; i.e., a natural transformation typing as
\[
\begin{tikzcd}
\Cscr \ar[rr, bend left = 20, ""{name = U}]{}{S \circ F} \ar[rr, bend right = 20, swap, ""{name = D}]{}{F \circ T} & & \Dscr
\ar[from = U, to = D, Rightarrow, shorten <= 4pt, shorten >= 4pt]{}{\alpha}
\end{tikzcd}
\]
which satisfies the corresponding analogues of the diagrams defining a morphism of tangent categories in Definition \ref{Defn: Tangent Morphism}. We will use this terminology in Lemma \ref{Lemma: strong lax mor iff strong colax} and in the proof of Theorem \ref{Thm: Indexed tangent category}, but sparingly elsewhere in this paper. 
\end{rmk}

We now present the notion of a tangent natural transformation so that we can define the $2$-category of tangent categories, morphisms of tangent categories, and tangent natural transformations.

\begin{dfn}[\cite{GarnerEmbedding}]\label{Defn: Tangent natural transformation}
A {\em  tangent natural transformation} between morphisms $\rho\colon (F,\alpha)\Rightarrow(G,\beta)\colon (\Cscr,\Tbb) \to (\Dscr,\Sbb)$ is a natural transformation 
\[
\begin{tikzcd}
\Cscr \ar[rr, bend left = 30, ""{name = U}]{}{F} \ar[rr, bend right = 30, swap, ""{name = D}]{}{G} & & \Dscr
\ar[from = U, to = D, Rightarrow, shorten <= 4pt, shorten >= 4pt]{}{\rho}
\end{tikzcd}
\]
for which the equation
\[
(\id_{S} \ast \rho) \circ \alpha = \beta \circ (\rho \ast \id_T)
\]
holds.
\end{dfn}
\begin{dfn}\label{Defn: Two cat of tangent cats}
The $2$-category $\fTan$ of tangent categories is defined as follows: the objects are tangent categories, the morphisms are tangent category morphisms, and the $2$-cells are tangent natural transformations with compositions as in $\fCat$. The full sub-$2$-category of $\fTan$ generated by the strong (lax directional) morphisms of tangent categories will be denoted by $\fTan_{\operatorname{strong}}$.
\end{dfn}
%\biggeoff{Make a remark that we use the terms ``strong lax'' and ``strong colax'' morphisms in order to specify the direction of the natural isomorphism/transfroamtion witnessing $T \circ F \to S \circ F$ or $S \circ F \to F \circ T$. Point forward to Lemma \ref{Lemma: strong lax mor iff strong colax}}
%\bigdorette{My attempt is below.}

\begin{rmk}
    We will use the terms ``strong lax'' and ``strong colax'' morphism for strong morphisms $(F,\alpha)$ when we want to emphasize the direction of the distributive law natural isomorphism $\alpha$ as indicated in Remark \ref{Remark: Colax tangent morphisms}. By default we will give strong morphisms with the distributive law in the lax direction, but we will have reason to consider a strong colax morphism in Lemma \ref{Lemma: strong lax mor iff strong colax}.
\end{rmk}

\begin{rmk}\label{Remark: Composition in Tan}
It is worth noting that the composition of tangent morphisms in $\fTan$ requires a composition of distributive laws. It is explicitly computed as follows. If $(F,\alpha)\colon(\Cscr,\Tbb) \to (\Dscr,\Sbb)$ and $(G,\beta)\colon(\Dscr,\Sbb) \to (\Escr,\Rbb)$ are tangent morphisms then their composite $(G, \beta) \circ (F,\alpha)\colon(\Cscr,\Tbb) \to (\Escr,\Rbb)$ is computed by taking the underlying functor to be $G \circ F$ and defining the distributive law $\gamma_{G \circ F}$ to be the natural transformation
\[
G \circ F \circ T \xrightarrow{\id_G \ast \alpha} G \circ S \circ F \xrightarrow{\beta \ast \id_F} R \circ G \circ F.
\]
\end{rmk}

\subsection{The Tangent Structure on Pseudolimits}
We now provide our first new key definition: that of a tangent indexing pseudofunctor.
%These are pseudofunctors defined off of a $1$-category $\Cscr$ which determine the tangent objects in $\Bicat(\Cscr^{\op},\fCat)$ in the sense of \cite{Marcello}; cf. Theorem \ref{Thm: Tangent objects in Bicat Hom category are tangent indexing functors} below.

\begin{dfn}\label{Defn: tangent pre-equivariant indexing functor}
We say that a pseudofunctor $F\colon \Cscr^{\op} \to \fCat$ is a {\em tangent indexing functor} if: 
\begin{enumerate}
	\item For each $X \in \Cscr_0$, $F(X)$ is a tangent category; i.e., each category $F(X)$ comes equipped with some specific tangent structure $\!\quot{\Tbb}{X} = (\!\quot{T}{X}, \quot{p}{X}, \quot{0}{X}, \quot{+}{X}, \quot{\ell}{X}, \quot{c}{X})$.
	\item For each $f\colon X \to Y$ in $\Cscr_1$, there is a natural isomorphism
	\[
	\begin{tikzcd}
	F(Y) \ar[rr,bend left = 20, ""{name = U}]{}{F(f) \circ\! \quot{T}{Y}} \ar[rr, swap, bend right = 20, ""{name = L}]{}{\!\quot{T}{X} \circ F(f)} & & F(X) \ar[from = U, to = L, Rightarrow, shorten <= 4pt, shorten >= 4pt]{}{\!\quot{T}{f}} \ar[from = U, to = L, swap, Rightarrow, shorten <= 4pt, shorten >= 4pt]{}{\cong}
	\end{tikzcd}
	\]
	such that the pair $(F(f),\!\quot{T}{f})\colon (F(Y),\!\quot{\Tbb}{Y}) \to (F(X),\!\quot{\Tbb}{X})$ is a strong morphism of tangent categories.
	\item The natural isomorphisms $\!\quot{T}{f}^{-1}$ vary pseudonaturally in the sense that the collection of functors $\quot{T}{X}$ indexed by $X \in \Cscr_0$ and of natural transformations $\!\quot{T}{f}^{-1}$ indexed by $f \in \Cscr_1$ form a pseudonatural transformation $T$:
	\[
	\begin{tikzcd}
	\Cscr^{\op} \ar[rr, bend left = 30, ""{name = U}]{}{F} \ar[rr, bend right = 30, swap, ""{name = L}]{}{F} & & \fCat \ar[from = U, to = L, Rightarrow, shorten <= 4pt, shorten >= 4pt]{}{T}
	\end{tikzcd}
	\]
\end{enumerate}
\end{dfn}

We now show that to give a tangent indexing functor is precisely to give a pseudofunctor into $\fTan_{\operatorname{strong}}$, as this gives another perspective on the information encapsulated by Definition \ref{Defn: tangent pre-equivariant indexing functor}.

\begin{prop}\label{Prop: Tangent indexing functor is a pseudofunctor into Tanstrong}
To give a tangent indexing functor $F\colon\Cscr^{\op} \to \fCat$ it is necessary and sufficient to define a pseudofunctor $\underline{F}\colon\Cscr^{\op} \to \fTan_{\operatorname{strong}}$.
\end{prop}
\begin{proof}
On one hand assume that we have a tangent indexing functor $F\colon\Cscr^{\op} \to \fCat$. We define the pseudofunctor $\underline{F}\colon\Cscr^{\op} \to \fTan_{\operatorname{strong}}$ as follows:
\begin{itemize}
    \item For objects $X \in \Cscr_0$, we define $\underline{F}(X) := (F(X),\quot{\Tbb}{X})$.
    \item For morphisms $f\colon X \to Y$ in $\Cscr_1$, we define $\underline{F}(f) := (F(f), \quot{T}{f})$.
    \item For composable pairs of morphisms $f\colon X \to Y$ and $g\colon Y \to Z$ in $\Cscr$, we define the compositor $\quot{\phi_{f,g}}{\underline{F}}\colon \underline{F}(f) \circ \underline{F}(g) \Rightarrow \underline{F}(g \circ f)$ by setting $\quot{\phi_{f,g}}{\underline{F}} := \quot{\phi_{f,g}}{F}$.
\end{itemize}
Because we already know that $F$ is a pseudofunctor and that the transformations $\quot{T}{f}^{-1}$ vary pseudonaturally in $\Cscr^{\op}$, to deduce that $\underline{F}$ is a pseudofunctor it suffices to prove that $\quot{\phi_{f,g}}{\underline{F}}$ is a tangent transformation, i.e., that the diagram
\[
\begin{tikzcd}
(F(f) \circ F(g)) \circ \quot{T}{Z} \ar[rrrr]{}{(\quot{T}{f} \ast \id_{F(f)}) \circ (\id_{F(f)} \ast \quot{T}{g})} \ar[d, swap]{}{\quot{\phi_{f,g}}{\underline{F}} \ast \quot{T}{Z}} & & & & \quot{T}{X} \circ (F(f) \circ F(g)) \ar[d]{}{\quot{T}{X} \ast \quot{\phi_{f,g}}{\underline{F}}} \\
F(g \circ f) \circ \quot{T}{Z} \ar[rrrr, swap]{}{\quot{T}{g \circ f}} & & & & \quot{T}{X} \circ F(g \circ f)
\end{tikzcd}
\]
commutes. However, the commutativity of this diagram exactly follows from the pseudonaturality of $T\colon F \Rightarrow F$.

On the other hand, if we have a pseudofunctor $\underline{F}\colon\Cscr^{\op} \to \fTan_{\operatorname{strong}}$ then a straightforward but tedious check shows that the assignment $F\colon \Cscr^{\op} \to \fCat$ given by, for the forgetful $2$-functor $\Forget\colon\fTan_{\operatorname{strong}} \to \fCat$,
\begin{itemize}
    \item For objects $X \in \Cscr_0$, $F(X) := \Forget(\underline{F}(X))$.
    \item For morphisms $f\colon X \to Y$ in $\Cscr_1$, $F(f) := \Forget(\underline{F}(f))$.
    \item For any composable pair of morphisms $f\colon X \to Y$ and $g\colon Y \to Z$ in $\Cscr$, $\quot{\phi_{f,g}}{F} := \quot{\phi_{f,g}}{\underline{F}}$
\end{itemize}
is a pseudofunctor. Furthermore, if we have a morphism $f \in \Cscr_1$ and write each distrubtive law of the tangent morphism $\underline{F}(f)$ as $\alpha_f$, then the pseudofunctoriality of $\underline{F}$ and the colax direction of the transformations $\alpha_f$ implies that the assignments
\begin{itemize}
    \item For objects $X \in \Cscr_0$, define the functor $\quot{T}{X}\colon F(X) \to F(X)$ as the tangent functor of the tangent category $\underline{F}(X)$;
    \item For morphisms $f \in \Cscr_1$, define the natural isomorphism $\quot{T}{f} := \alpha_f^{-1}$;
\end{itemize}
define a pseudonatural transformation $T\colon F \Rightarrow F$. Thus $F$, together with the pseudonatural transformation $T$, form a tangent indexing functor.
\end{proof}

\begin{example}
As observed in \cite[Page 10]{GeoffRobinDiffStruct}, every category $\Cscr$ has a trivial tangent structure $$\Ibb_{\Cscr} = (\id_{\Cscr}, \iota_{\id}, \iota_{\id}, \iota_{\id}, \iota_{\id}, \iota_{\id})$$ where $\iota_{\id}$ is the identity natural isomorphism on the identity functor $\id_{\Cscr}$. Each functor $F\colon \Cscr \to \Dscr$ is a strong tangent functor $(F,\iota_F)\colon (\Cscr,\Ibb_{\Cscr}) \to (\Dscr,\Ibb_{\Dscr})$.\footnote{This becomes immediate when you notice that the tangent pullbacks are pullbacks of the identity functor over the identity morphism at an object $X$.} Because of this, any pseudofunctor $F\colon \Cscr^{\op} \to \fCat$ is a tangent indexing functor when each category $F(X)$ is equipped with the tangent structure $\Ibb_{F(X)}$ and the natural isomorphisms $\quot{T}{f}$ are defined to be $\!\quot{T}{f} := \iota_{F(f)}$.
\end{example}

Given a tangent indexing functor $F\colon \Cscr^{\op} \to \fCat$ as in Definition \ref{Defn: tangent pre-equivariant indexing functor} we can give its category of cones, $\PC(F)$, the structure of a tangent category; i.e., we will  define the tangent functor $T\colon \PC(F) \to \PC(F)$ as well as the bundle, zero, addition, vertical lift, and canonical flip natural transformations by making use of Theorem \ref{Theorem: Defining functors and nat transforms between equivariant categories and functors}. We will show that the pair $T = (\!\quot{T}{X},\!\quot{T}{f}^{-1})$ forms a pseudonatural transformation $T\colon F \Rightarrow F$ and then show that the various natural constructions give rise to modifications between the various pullback and compositional powers of $T$ (after we prove that pullback powers of $T$ exist, of course).

\begin{lem}\label{Lemma: Constructing the tangent functor}
If $F\colon \Cscr^{\op} \to \fCat$ is a tangent indexing functor then there is a pseudonatural transformation $T\colon F \Rightarrow F$ whose object-local components are given by the tangent functors in the tangent structures $(F(X),\quot{\Tbb}{X})$. In particular, $T\colon F \Rightarrow F$ determines an endofunctor
\[
\underline{T}\colon \PC(F) \to \PC(F).
\]
\end{lem}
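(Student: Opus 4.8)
The plan is to observe that both assertions are essentially immediate from material already in hand: the first is a direct reading of Definition~\ref{Defn: tangent pre-equivariant indexing functor}, and the second is the case $E=F$ of Theorem~\ref{Theorem: Defining functors and nat transforms between equivariant categories and functors}.

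For the first assertion, I would note that part~(3) of Definition~\ref{Defn: tangent pre-equivariant indexing functor} asserts precisely that the family of tangent functors $\quot{T}{X}\colon F(X)\to F(X)$ (for $X\in\Cscr_0$), together with the family of natural isomorphisms $\quot{T}{f}^{-1}$ (for $f\in\Cscr_1$), assembles into a pseudonatural transformation $T\colon F\Rightarrow F$; its object-local component at $X$ is, by construction, the tangent functor of $\quot{\Tbb}{X}$, which is exactly what the lemma claims. The one point worth flagging is the bookkeeping of the $2$-cell direction: a witness isomorphism for a pseudonatural transformation $F\Rightarrow F$ types as $\quot{T}{X}\circ F(f)\Rightarrow F(f)\circ \quot{T}{Y}$ in the convention of Theorem~\ref{Theorem: Defining functors and nat transforms between equivariant categories and functors}, whereas the distributive law of part~(2) of Definition~\ref{Defn: tangent pre-equivariant indexing functor} types as $\quot{T}{f}\colon F(f)\circ \quot{T}{Y}\Rightarrow \quot{T}{X}\circ F(f)$. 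This is why part~(3) is phrased in terms of the inverses $\quot{T}{f}^{-1}$, and it is also where invertibility is genuinely used: a merely lax or oplax transition would not suffice, in line with the remark following Theorem~\ref{Theorem: Defining functors and nat transforms between equivariant categories and functors}.

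For the second assertion, I would apply Theorem~\ref{Theorem: Defining functors and nat transforms between equivariant categories and functors} verbatim with $E:=F$ and $\alpha:=T$, which at once produces the endofunctor $\underline{T}\colon\PC(F)\to\PC(F)$. For later use I would also record its explicit description, obtained by substituting $\quot{\alpha}{f}=\quot{T}{f}^{-1}$ into the formulas of that theorem: on an object $(A,\Sigma_A)$ of $\PC(F)$ it returns
\[
\underline{T}(A)=\left\lbrace \quot{T}{X}(\quot{A}{X}) \; \colon \; X\in\Cscr_0 \right\rbrace, \qquad \Sigma_{\underline{T}A}=\left\lbrace \quot{T}{X}(\tau_f^A)\circ(\quot{T}{f})_{\quot{A}{Y}} \; \colon \; f\in\Cscr_1 \right\rbrace,
\]
and on a morphism $P=\lbrace\quot{\rho}{X}\rbrace$ it returns $\underline{T}(P)=\lbrace \quot{T}{X}(\quot{\rho}{X}) \; \colon \; X\in\Cscr_0\rbrace$. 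That $\Sigma_{\underline{T}A}$ again satisfies the cocycle condition, that $\underline{T}(P)$ is again a modification, and that $\underline{T}$ is indeed a functor are all part of the statement of the cited theorem, so no further verification is needed.

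There is no serious obstacle here: the lemma is precisely the short bridge that repackages the componentwise tangent data of a tangent indexing functor as a single endofunctor of the pseudolimit $\PC(F)$. It is the foundation onto which the subsequent lemmas install the bundle projection, zero section, fibrewise addition, vertical lift, and canonical flip as modifications, on the way to showing that $\PC(F)$ is a tangent category.
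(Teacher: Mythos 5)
Your proposal is correct and matches the paper's own proof, which likewise disposes of the first assertion by citing Condition (3) of Definition \ref{Defn: tangent pre-equivariant indexing functor} and the second by applying Theorem \ref{Theorem: Defining functors and nat transforms between equivariant categories and functors} with $E=F$. The extra bookkeeping you record about the $2$-cell direction and the explicit form of $\Sigma_{\underline{T}A}$ agrees with Remark \ref{Rmk: Explicit form of transition isomorphisms}.
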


\begin{proof}
The first part of the statement of the lemma is axiomatized, and hence holds by Condition (3) of Definition \ref{Defn: tangent pre-equivariant indexing functor}. The second statement follows from Theorem \ref{Theorem: Defining functors and nat transforms between equivariant categories and functors}.
\end{proof}

\begin{rmk}\label{Rmk: Explicit form of transition isomorphisms}
In Definition \ref{Defn: tangent pre-equivariant indexing functor}, when we ask that the data $(\!\quot{T}{X}, \!\quot{T}{f}^{-1})$ constitute a pseudonatural transformation $T\colon F\Rightarrow F$, in light of Lemma \ref{Lemma: Constructing the tangent functor} we are asking that the transition isomorphisms on an object $\underline{T}A$ take the form
\[
\Sigma_{\underline{T}A} = \left\lbrace \!\quot{T}{X}\left(\tau_f^{A}\right) \circ\! \quot{T}{f}_{\!\quot{A}{Y}} \; : \; f \in \Cscr_1, \tau_f^A \in \Sigma_A \right\rbrace.
\]
In particular, this means that the transition isomorphisms are given as a composition of the tangent functor as applied to the transition isomorphisms an object starts with, pre-composed with the tangent morphism's naturality witness.
\end{rmk}

\begin{lem}\label{Lemma: Construction of Bundle Map}
Let $F\colon \Cscr^{\op} \to \fCat$ be a tangent indexing functor. There is a modification $p\colon T \Rrightarrow \iota_F$ as in the diagram
\[
\begin{tikzcd}
\Cscr^{\op} \ar[rrr, bend left = 30, ""{name = U}]{}{F} \ar[rrr, bend right = 30, swap, ""{name = B}]{}{F} & & &\fCat \ar[from = U, to = B, Rightarrow, shorten <= 4pt, shorten >= 4pt, bend right = 30, swap, ""{name = L}]{}{T} \ar[from = U, to = B, Rightarrow, shorten <= 4pt, shorten >= 4pt, bend left = 30, ""{name = R}]{}{\iota_F} \ar[from = L, to = R, symbol = \underset{p}{\Rrightarrow}, swap]
\end{tikzcd}
\] 
where the natural transformations $p_{X}$ of the modification are all given by
\[
p_{X} := \!\quot{p}{X}\colon \!\quot{T}{X} \to \id_{F(X)}
\]
where $\!\quot{p}{X}$ is the bundle transformation in the tangent structure $(F(X),\!\quot{\Tbb}{X})$.
\end{lem}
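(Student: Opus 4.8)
The plan is to exhibit the family $\{\,\quot{p}{X} : \quot{T}{X} \to \id_{F(X)}\,\}_{X \in \Cscr_0}$ as a modification $p \colon T \Rrightarrow \iota_F$. By definition of a modification, this means verifying the single compatibility axiom relating each component $\quot{p}{X}$ to the naturality $2$-cells of the two transformations $T$ and $\iota_F$ over each morphism $f \colon X \to Y$ of $\Cscr$. Since $\iota_F$ is the identity pseudonatural transformation on $F$, its naturality witness over $f$ is the identity; the witness of $T$ over $f$ is, by Condition (3) of Definition \ref{Defn: tangent pre-equivariant indexing functor}, the natural isomorphism $\quot{T}{f}^{-1}$. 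So the modification axiom collapses to the assertion that, for each $f \colon X \to Y$, the square
\[
\begin{tikzcd}
F(f) \circ \quot{T}{Y} \ar[r]{}{\quot{T}{f}^{-1}} \ar[d, swap]{}{F(f) \ast \quot{p}{Y}} & \quot{T}{X} \circ F(f) \ar[d]{}{\quot{p}{X} \ast F(f)} \\
F(f) \ar[r, equal] & F(f)
\end{tikzcd}
\]
commutes in $[F(Y), F(X)]$; equivalently, $(\quot{p}{X} \ast F(f)) \circ \quot{T}{f}^{-1} = F(f) \ast \quot{p}{Y}$, or after inverting, $F(f) \ast \quot{p}{Y} = (\quot{p}{X} \ast F(f)) \circ \quot{T}{f}^{-1}$.

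This last equation is precisely the first of the commuting triangles in Definition \ref{Defn: Tangent Morphism} (the one relating $\alpha$, $F \ast p$, and $q \ast F$), instantiated with $F \rightsquigarrow F(f)$, the lax distributive law $\alpha \rightsquigarrow \quot{T}{f}$, the source bundle $p \rightsquigarrow \quot{p}{Y}$, and the target bundle $q \rightsquigarrow \quot{p}{X}$. But Condition (2) of Definition \ref{Defn: tangent pre-equivariant indexing functor} asserts exactly that $(F(f), \quot{T}{f})$ is a (strong) morphism of tangent categories $(F(Y), \quot{\Tbb}{Y}) \to (F(X), \quot{\Tbb}{X})$, so this triangle commutes by hypothesis. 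Hence the modification axiom holds for every $f \in \Cscr_1$, and $p \colon T \Rrightarrow \iota_F$ is a modification.

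So there is essentially no real obstacle: the statement is an unwinding of definitions, and the content is entirely supplied by Conditions (2) and (3) of Definition \ref{Defn: tangent pre-equivariant indexing functor} together with the form of the naturality witnesses of $T$ and $\iota_F$. The only point requiring a line of care is bookkeeping the direction of the witness $2$-cells: Remark \ref{Rmk: Explicit form of transition isomorphisms} fixes the convention that $T$ carries the witness $\quot{T}{f}^{-1}$ (so that transition isomorphisms on $\underline{T}A$ are $\quot{T}{X}(\tau_f^A) \circ \quot{T}{f}_{\quot{A}{Y}}$), and one must feed the morphism-of-tangent-categories triangle the non-inverted $\quot{T}{f}$; reconciling these two sign conventions is the entire subtlety, and it is routine. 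Optionally, one may also note for later use that each $\quot{p}{X}$ is in fact the bundle projection of $\quot{\Tbb}{X}$, so that the induced natural transformation $\underline{p} \colon \underline{T} \Rightarrow \id_{\PC(F)}$ produced via Theorem \ref{Theorem: Defining functors and nat transforms between equivariant categories and functors} has object-local components $(\underline{p})_A = \{\,(\quot{p}{X})_{\quot{A}{X}}\,\}_{X \in \Cscr_0}$, which will be the bundle projection of the tangent structure being built on $\PC(F)$.
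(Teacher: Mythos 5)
Your proposal is correct and follows essentially the same route as the paper's own proof: since $\iota_F$ has identity witnesses, the modification axiom for $p$ collapses to the first coherence triangle of Definition \ref{Defn: Tangent Morphism} for the pair $(F(f),\!\quot{T}{f})$, which holds precisely because each such pair is a (strong) tangent morphism by Condition (2) of Definition \ref{Defn: tangent pre-equivariant indexing functor}. The only blemish is the direction bookkeeping you yourself flag: the arrow $F(f)\circ\!\quot{T}{Y}\Rightarrow\!\quot{T}{X}\circ F(f)$ in your square is $\!\quot{T}{f}$ rather than $\!\quot{T}{f}^{-1}$, and the equation obtained ``after inverting'' should read $F(f)\ast\!\quot{p}{Y}=(\!\quot{p}{X}\ast F(f))\circ\!\quot{T}{f}$ rather than repeating the previous identity --- but this is exactly the routine reconciliation you describe and does not affect the argument.
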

\begin{proof}
To prove that $p$ is a modification it suffices to verify that the diagram of functors and natural transformations
\[
\begin{tikzcd}
F(f) \circ \!\quot{T}{Y} \ar[rr]{}{\iota_{F(f)}\ast \quot{p}{Y}} \ar[d,swap]{}{\quot{T}{f}} & & F(f) \circ \id_{F(Y)} \ar[r, equals] \ar[d] & F(f) \ar[d, equals] \\
\!\quot{T}{X} \circ F(f) \ar[rr, swap]{}{\quot{p}{X}\ast \iota_{F(f)}} & & \id_{F(X)} \circ F(f) \ar[r, equals] & F(f) 
\end{tikzcd}
\]
commutes for any $f\colon X \to Y$ in $\Cscr_1$. However, this is equivalent to asking that the diagram of functors and natural transformations
\[
\begin{tikzcd}
F(f) \circ \!\quot{T}{Y} \ar[dr, swap]{}{F(f) \ast \!\quot{\!p}{Y}} \ar[r]{}{\quot{T}{f}} & \!\quot{T}{X} \circ F(f) \ar[d]{}{\quot{\!p}{X} \ast F(f)} \\
& F(f)
\end{tikzcd}
\]
commutes, which follows from the fact that $(F(f),\!\quot{T}{f})$ is a (strong) tangent morphism. Thus $p\colon T \Rrightarrow \iota_{F}$ is a modification.
\end{proof}
\begin{cor}\label{Cor: Bundle transformation equivariant tan functor}
If $F$ is a tangent indexing functor then there is a natural transformation $$\underline{p}\colon \underline{T} \to \id_{\PC(F)}\colon \PC(F) \to \PC(F).$$ 
\end{cor}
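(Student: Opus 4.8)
The plan is to read $\underline{p}$ off directly from the functoriality of the pseudocone construction recorded in Theorem \ref{Theorem: Defining functors and nat transforms between equivariant categories and functors}. By Lemma \ref{Lemma: Construction of Bundle Map} the bundle transformations $\!\quot{p}{X}$ assemble into a modification $p\colon T \Rrightarrow \iota_F$ between pseudonatural transformations $F \Rightarrow F$, and Theorem \ref{Theorem: Defining functors and nat transforms between equivariant categories and functors} tells us that any modification $\rho\colon \alpha \Rrightarrow \beta$ induces a natural transformation $\underline{\rho}\colon \underline{\alpha} \Rightarrow \underline{\beta}$ with component $\underline{\rho}_A = \lbrace (\!\quot{\rho}{X})_{\!\quot{A}{X}} \; | \; X \in \Cscr_0 \rbrace$ at $(A,\Sigma_A) \in \PC(F)_0$. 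Instantiating this at $\rho = p$ produces a natural transformation $\underline{p}\colon \underline{T} \Rightarrow \underline{\iota_F}$ whose component at $(A,\Sigma_A)$ is the family $\lbrace (\!\quot{p}{X})_{\!\quot{A}{X}}\colon \!\quot{T}{X}(\!\quot{A}{X}) \to \!\quot{A}{X} \; | \; X \in \Cscr_0 \rbrace$.

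It then remains only to identify the target $\underline{\iota_F}$ with $\id_{\PC(F)}$, and this is where the normalization convention on $F$ enters. The identity pseudonatural transformation $\iota_F$ has each component functor equal to $\id_{F(X)}$ and all of its structure $2$-cells are identities, so the transition isomorphisms that the formula of Theorem \ref{Theorem: Defining functors and nat transforms between equivariant categories and functors} assigns to $\underline{\iota_F}(A,\Sigma_A)$ collapse to the original family $\Sigma_A = \lbrace \tau_f^A \rbrace$; hence $\underline{\iota_F}$ is the identity on objects, and it is manifestly the identity on morphisms as well. With the identification $\underline{\iota_F} = \id_{\PC(F)}$ in hand, $\underline{p}$ is the desired natural transformation $\underline{T} \Rightarrow \id_{\PC(F)}$.

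I do not anticipate a real obstacle: Theorem \ref{Theorem: Defining functors and nat transforms between equivariant categories and functors} already guarantees both that each $\underline{p}_A$ is a legitimate morphism of $\PC(F)$, i.e.\ a modification compatible with the transition isomorphisms of $\underline{T}A$ and of $A$, and that these morphisms are natural in $A$. The only genuine bookkeeping point is the identification $\underline{\iota_F} = \id_{\PC(F)}$ above. If one wishes to avoid invoking Theorem \ref{Theorem: Defining functors and nat transforms between equivariant categories and functors} as a black box, the single thing that needs checking is that $\lbrace (\!\quot{p}{X})_{\!\quot{A}{X}} \rbrace$ respects transition isomorphisms; using the explicit shape $\Sigma_{\underline{T}A} = \lbrace \!\quot{T}{X}(\tau_f^A) \circ \!\quot{T}{f}_{\!\quot{A}{Y}} \rbrace$ from Remark \ref{Rmk: Explicit form of transition isomorphisms}, this reduces precisely to the triangle $(\!\quot{p}{X} \ast F(f)) \circ \!\quot{T}{f} = F(f) \ast \!\quot{p}{Y}$ expressing compatibility of the strong tangent morphism $(F(f),\!\quot{T}{f})$ with the bundle projections, which was already verified in the proof of Lemma \ref{Lemma: Construction of Bundle Map}.
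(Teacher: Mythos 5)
Your proposal is correct and follows exactly the route the paper intends: the corollary is obtained by applying Theorem \ref{Theorem: Defining functors and nat transforms between equivariant categories and functors} to the modification $p\colon T \Rrightarrow \iota_F$ produced in Lemma \ref{Lemma: Construction of Bundle Map}, together with the (routine) identification $\underline{\iota_F} = \id_{\PC(F)}$. Your closing remark correctly isolates the one substantive check --- compatibility of the components $(\!\quot{p}{X})_{\!\quot{A}{X}}$ with the transition isomorphisms of $\underline{T}A$ --- and correctly reduces it to the bundle-projection triangle for the strong tangent morphisms $(F(f),\!\quot{T}{f})$.
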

\begin{lem}\label{Lemma: Zero modification}
Let $F\colon \Cscr^{\op} \to \fCat$ be a tangent indexing functor. Then there is a modification $0\colon \iota_F \Rrightarrow T$ as in the diagram
\[
\begin{tikzcd}
\Cscr^{\op} \ar[rrr, bend left = 30, ""{name = U}]{}{F} \ar[rrr, bend right = 30, swap, ""{name = B}]{}{F} & & &\fCat \ar[from = U, to = B, Rightarrow, shorten <= 4pt, shorten >= 4pt, bend right = 30, swap, ""{name = L}]{}{\iota_F} \ar[from = U, to = B, Rightarrow, shorten <= 4pt, shorten >= 4pt, bend left = 30, ""{name = R}]{}{T} \ar[from = L, to = R, symbol = \underset{0}{\Rrightarrow}, swap]
\end{tikzcd}
\]
whose object-local natural transformations are given by 
\[
\!\quot{0}{X}\colon\id_{F(X)} \Rightarrow \!\quot{T}{X}
\]
where $\!\quot{0}{X}$ is the zero of the tangent structure $(F(X),\!\quot{\Tbb}{X})$.
\end{lem}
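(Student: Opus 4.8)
\emph{Proof proposal.} The plan is to follow the proof of Lemma~\ref{Lemma: Construction of Bundle Map} essentially verbatim, with the bundle projection $p$ replaced throughout by the zero section and the direction of the modification reversed. The only thing requiring verification is the modification axiom for the family $0 = \{\quot{0}{X} \mid X \in \Cscr_0\}$, and I expect this to reduce immediately to one of the coherence squares guaranteed by the hypothesis that each $(F(f),\quot{T}{f})$ is a strong tangent morphism.

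First I would unpack what it means for $0$ to be a modification $\iota_F \Rrightarrow T$. Such a modification is a family of $2$-cells $\quot{0}{X}\colon (\iota_F)_X \Rightarrow T_X$, i.e.\ natural transformations $\quot{0}{X}\colon \id_{F(X)} \Rightarrow \quot{T}{X}$, subject to a compatibility square, for each $f\colon X \to Y$ in $\Cscr_1$, built from the witness $2$-cell of $\iota_F$ at $f$, the witness $2$-cell of $T$ at $f$ (which by Condition~(3) of Definition~\ref{Defn: tangent pre-equivariant indexing functor} and the convention recorded in Remark~\ref{Rmk: Explicit form of transition isomorphisms} is $\quot{T}{f}^{-1}$, with inverse the tangent-morphism witness $\quot{T}{f}$), and the whiskerings $F(f)\ast\quot{0}{Y}$ and $\quot{0}{X}\ast F(f)$. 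Since $\iota_F$ is the identity pseudonatural transformation, its witness $2$-cell at $f$ is an identity, so the square collapses to the single equation
\[
\quot{T}{f} \circ (F(f) \ast \quot{0}{Y}) = \quot{0}{X} \ast F(f)
\]
of natural transformations $F(f) \circ \id_{F(Y)} \Rightarrow \quot{T}{X} \circ F(f)$.

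Next I would observe that this is exactly the zero-section coherence square in Definition~\ref{Defn: Tangent Morphism} — the one asserting $\alpha \circ (F \ast 0) = 0' \ast F$ — instantiated at the pair $(F(f),\quot{T}{f})\colon (F(Y),\quot{\Tbb}{Y}) \to (F(X),\quot{\Tbb}{X})$. By Condition~(2) of Definition~\ref{Defn: tangent pre-equivariant indexing functor} this pair is a strong (in particular, a) morphism of tangent categories, so the square commutes; therefore $0\colon \iota_F \Rrightarrow T$ is a modification. As with Corollary~\ref{Cor: Bundle transformation equivariant tan functor}, one then feeds this modification into Theorem~\ref{Theorem: Defining functors and nat transforms between equivariant categories and functors} to obtain the induced natural transformation $\underline{0}\colon \id_{\PC(F)} \Rightarrow \underline{T}$ on the pseudolimit.

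I do not anticipate a genuine obstacle here: all the real content has been pushed into the hypotheses defining a tangent indexing functor, and the proof is a single diagram identification. The only point demanding care is bookkeeping — matching the directional conventions for the transition isomorphisms (Remark~\ref{Rmk: Explicit form of transition isomorphisms}) and the whiskering orders against the fact that the modification now points $\iota_F \Rrightarrow T$ rather than $T \Rrightarrow \iota_F$ — but because the witness $2$-cells of $\iota_F$ are identities this cannot actually cause a direction error.
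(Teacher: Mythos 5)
Your proposal is correct and matches the paper's proof: both reduce the modification axiom (which collapses because $\iota_F$ has identity witness cells) to the single triangle $\quot{T}{f} \circ (F(f) \ast \quot{0}{Y}) = \quot{0}{X} \ast F(f)$, which is exactly the zero-section coherence of the strong tangent morphism $(F(f),\quot{T}{f})$. The extra remark about feeding the modification into Theorem \ref{Theorem: Defining functors and nat transforms between equivariant categories and functors} is the content of the subsequent corollary rather than of the lemma itself, but it is harmless.
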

\begin{proof}
To prove that $0$ is a modification it suffices to verify that the diagram of functors and natural transformations
\[
\xymatrix{
F(f) \ar@{=}[r] \ar@{=}[d] & F(f) \circ \id_{F(Y)} \ar[r]^-{F(f) \ast \quot{0}{Y}} \ar@{=}[d] & F(f) \circ\! \quot{T}{Y} \ar[d]^{\!\quot{T}{f}} \\
F(f) \ar@{=}[r] & \id_{F(X)} \circ F(f) \ar[r]_-{\!\quot{0}{X} \ast F(f)} & \!\quot{T}{X} \circ F(f)
}
\]
commutes for any $f\colon X \to Y$ in $\Cscr_1$. However, this is equivalent to asking that the diagram of functors and natural transformations
\[
\xymatrix{
F(f) \ar[drr]_{\!\quot{0}{X} \ast F(f)} \ar[rr]^-{F(f) \ast \!\quot{0}{Y}} & & F(f) \circ \!\quot{T}{Y} \ar[d]^{\quot{T}{f}} \\
& & \!\quot{T}{X} \circ F(f)
}
\]
commutes, which follows from $(F(f),\!\quot{T}{f})$ being a strong tangent morphism.
\end{proof}

\begin{cor}\label{Cor: Construction of equivariant zero transformation}
If $F$ is a tangent indexing functor then there is a natural transformation:
\[
\begin{tikzcd}
\PC(F) \ar[rr, bend left = 30, ""{name = Up}]{}{\id_{\PC(F)}} \ar[rr, bend right = 30, swap, ""{name = Down}]{}{\underline{T}} & & \PC(F) \ar[from = Up, to = Down, Rightarrow, shorten <= 4pt, shorten >= 4pt]{}{\underline{0}}
\end{tikzcd}
\]
%\underline{0}:\id_{\PC(F)} \Rightarrow \underline{T}:\PC(F) \to \PC(F)\underline{0}:\id_{\PC(F)} \Rightarrow \underline{T}:\PC(F) \to \PC(F).
\end{cor}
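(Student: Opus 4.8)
The plan is to obtain $\underline{0}$ directly from the modification $0\colon \iota_F \Rrightarrow T$ constructed in Lemma \ref{Lemma: Zero modification} by pushing it through the pseudocone construction as made explicit in Theorem \ref{Theorem: Defining functors and nat transforms between equivariant categories and functors}. First I would observe that the identity pseudonatural transformation $\iota_F\colon F \Rightarrow F$ induces the identity functor $\underline{\iota_F} = \id_{\PC(F)}$: its object-local components are the identity functors $\id_{F(X)}$ and its witnessing $2$-cells are identities, so when one specializes the formulas of Theorem \ref{Theorem: Defining functors and nat transforms between equivariant categories and functors} for $\underline{\alpha}(A)$ and $\Sigma_{\underline{\alpha}A}$ to $\alpha = \iota_F$ one leaves an object $(A,\Sigma_A)$ and its transition isomorphisms unchanged, and likewise on morphisms. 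Meanwhile, Lemma \ref{Lemma: Constructing the tangent functor} already records that the pseudonatural transformation $T\colon F \Rightarrow F$ induces the endofunctor $\underline{T}\colon \PC(F) \to \PC(F)$.

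Next I would apply the second half of Theorem \ref{Theorem: Defining functors and nat transforms between equivariant categories and functors} to the modification $0\colon \iota_F \Rrightarrow T$. This produces a natural transformation $\underline{0}\colon \underline{\iota_F} \Rightarrow \underline{T}$, that is, $\underline{0}\colon \id_{\PC(F)} \Rightarrow \underline{T}$, whose component at an object $(A,\Sigma_A)$ of $\PC(F)$ is the family
\[
\underline{0}_{A} := \left\lbrace \left(\!\quot{0}{X}\right)_{\!\quot{A}{X}}\colon \!\quot{A}{X} \to \!\quot{T}{X}\left(\!\quot{A}{X}\right) \; | \; X \in \Cscr_0, \!\quot{A}{X} \in A \right\rbrace.
\]
The same theorem already certifies both that each such family is a modification (hence a genuine morphism of $\PC(F)$) and that the assignment $(A,\Sigma_A) \mapsto \underline{0}_A$ is natural in $(A,\Sigma_A)$, so no further verification is required; one may simply set $\underline{0}$ to be this induced natural transformation.

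I do not anticipate a genuine obstacle here: the only mildly delicate point is the bookkeeping identification $\underline{\iota_F} = \id_{\PC(F)}$, which is immediate from the normalization convention on $F$ together with the explicit transition-isomorphism formula of Theorem \ref{Theorem: Defining functors and nat transforms between equivariant categories and functors}. All of the actual mathematical content of the corollary has already been absorbed into Lemma \ref{Lemma: Zero modification}, and what remains is precisely the functoriality of the assignment $\PC(-)$ along modifications.
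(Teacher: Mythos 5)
Your proposal is correct and is exactly the paper's (implicit) argument: just as for the addition and vertical-lift corollaries, the paper obtains $\underline{0}$ by applying Theorem \ref{Theorem: Defining functors and nat transforms between equivariant categories and functors} to the modification $0\colon \iota_F \Rrightarrow T$ of Lemma \ref{Lemma: Zero modification}, with the identification $\underline{\iota_F}=\id_{\PC(F)}$ left tacit. Your explicit component formula and the remark about the normalization convention are consistent with the paper's conventions.
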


%%\biggeoff{Move this to Section 2}
%%\bigdorette{section moved: Geoff needs to rewrite some prose here. Link limits and stuff}
%%\biggeoff{/Move to Section 2}

Recall that it is a centrally important aspect of tangent categories that we have a notion of additive tangent bundles. As such, we will require a short discussion of finite pullback powers of the tangent bundle $\underline{p}\colon \underline{T} \Rightarrow \id_{\PC(F)}$ in $\PC(F)$. Perhaps unsurprisingly, this will make use of Theorem \ref{Thm: Limits in pseudcone categories} above. 

\begin{lem}\label{Lemma: Existence of tangent pullbacks in FGX}
Let $F\colon \Cscr^{\op} \to \fCat$ be a tangent indexing functor and let $A \in \PC(F)_0$. Then for any $n \in \N$ the pullback $T_nA$ exists in $\PC(F)$.
\end{lem}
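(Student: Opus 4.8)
The plan is to realize $T_nA$ as an ordinary ($1$-categorical) limit in $\PC(F)$ and then apply Theorem \ref{Thm: Limits in pseudcone categories}. Concretely, let $I$ be the ``wide cospan'' category with objects $1,\dots,n,\star$ and exactly one non-identity arrow $i\to\star$ for each $i\in\{1,\dots,n\}$, and let $d\colon I\to\PC(F)$ be the diagram sending each $i$ to $\underline{T}A$, sending $\star$ to $A$, and sending each arrow $i\to\star$ to the bundle map $\underline{p}_A\colon\underline{T}A\to A$ of Corollary \ref{Cor: Bundle transformation equivariant tan functor}. By definition the pullback power $T_nA$ is precisely the limit $\lim d$, so it suffices to show that $\PC(F)$ admits this limit, and for that I verify the two hypotheses of Theorem \ref{Thm: Limits in pseudcone categories}.

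First I would identify the induced diagrams $d_X=\pi_X\circ\tilde{\imath}\circ d\colon I\to F(X)$. By Lemma \ref{Lemma: Constructing the tangent functor} (which rests on Theorem \ref{Theorem: Defining functors and nat transforms between equivariant categories and functors}) the object-local component of $\underline{T}A$ at $X$ is $\quot{T}{X}(\quot{A}{X})$, and by Lemma \ref{Lemma: Construction of Bundle Map} the component of $\underline{p}_A$ at $X$ is $\quot{p}{X}_{\quot{A}{X}}\colon\quot{T}{X}(\quot{A}{X})\to\quot{A}{X}$. Hence $d_X$ is the diagram consisting of $n$ copies of this bundle map over $\quot{A}{X}$, whose limit is exactly the pullback power $(\quot{T}{X})_n(\quot{A}{X})$ in the tangent category $(F(X),\quot{\Tbb}{X})$. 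This limit exists by axiom (1) of Definition \ref{Defn: Tangent Category}, which guarantees that all pullback powers of $\quot{p}{X}$ exist in $F(X)$; this establishes the first hypothesis of Theorem \ref{Thm: Limits in pseudcone categories}.

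Second, I would check that for every $f\colon X\to Y$ in $\Cscr_1$ the transition functor $F(f)$ preserves $\lim d_Y=(\quot{T}{Y})_n(\quot{A}{Y})$. But this limit is one of the pullbacks of the tangent structure $(F(Y),\quot{\Tbb}{Y})$, and since $F$ is a tangent indexing functor the pair $(F(f),\quot{T}{f})$ is a \emph{strong} tangent morphism by Definition \ref{Defn: tangent pre-equivariant indexing functor}(2); by Definition \ref{Defn: Tangent Morphism} a strong tangent morphism preserves exactly the pullbacks (and equalizers) of the tangent structure. Thus $F(f)$ preserves $\lim d_Y$, the second hypothesis holds, and Theorem \ref{Thm: Limits in pseudcone categories} produces the limit $\lim d=T_nA$ in $\PC(F)$.

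The only delicate point is bookkeeping rather than substantive mathematics: one must be confident that the object-local projections extract precisely the tangent bundle $\quot{T}{X}(\quot{A}{X})$ together with the bundle map $\quot{p}{X}_{\quot{A}{X}}$ — which is exactly what the explicit descriptions in Theorem \ref{Theorem: Defining functors and nat transforms between equivariant categories and functors}, Lemma \ref{Lemma: Constructing the tangent functor} and Lemma \ref{Lemma: Construction of Bundle Map} supply — and that the phrase ``pullbacks of the tangent structure'' in the definition of a strong tangent morphism genuinely includes all the wide pullback powers $T_n$, not merely the binary pullback $T_2$. Once these identifications are made, the statement follows immediately from Theorem \ref{Thm: Limits in pseudcone categories}, and in particular no separate manipulation of transition isomorphisms is needed, since that theorem already assembles them into the limiting pseudocone object.
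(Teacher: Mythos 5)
Your proposal is correct and takes essentially the same route as the paper: both verify the two hypotheses of Theorem \ref{Thm: Limits in pseudcone categories} by observing that the object-local cospans have limits because each $(F(X),\quot{\Tbb}{X})$ is a tangent category and that these limits are preserved because each $(F(f),\quot{T}{f})$ is a strong tangent morphism. The only cosmetic difference is that you treat general $n$ directly via a wide cospan, whereas the paper reduces to the case $n=2$; your version is if anything slightly more complete on that point.
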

\begin{proof}
Since $T_0A = A$ and $T_1A = p_A\colon TA \to A$, it suffices to show that $T_2A$ exists. For this consider the cospan
\[
TA \xrightarrow{\underline{p}_A} A \xleftarrow{\underline{p}_A} TA
\]
in $\PC(F)$. At each $X \in \Cscr_0$ this gives the cospan
\[
\!\quot{T}{X}\left(\!\quot{A}{X}\right) \xrightarrow{\quot{\! p}{X}_{\!\quot{A}{X}}} \!\quot{A}{X} \xleftarrow{\!\quot{p}{X}_{\!\quot{A}{X}}} \!\quot{T}{X}\left(\!\quot{A}{X}\right)
\]
in the tangent category $(F(X),\!\quot{\Tbb}{X})$. Because $F(X)$ is a tangent category with tangent structure $\!\quot{\Tbb}{X}$, we have that $\!\quot{T}{X}_2\left(\!\quot{A}{X}\right)$ exists in $F(X)$ for all $X \in \Cscr_0$. Furthermore, because each functor $F(f)$ is the functor component of a strong tangent morphism $(F(f),\!\quot{T}{f})$, each functor $F(f)$ preserves each tangent pullback. It thus follows from Theorem \ref{Thm: Limits in pseudcone categories} that the pullback
\[
\xymatrix{
T_2A \ar[r] \ar[d] \pullbackcorner & TA \ar[d]^{\underline{p}_A} \\
TA \ar[r]_-{\underline{p}_A} & A
}
\]
exists.
\end{proof}

We now prove that the pullback powers of the tangent functor $\underline{T}_n$ on $\PC(F)$ arise themselves as pseudocone functors coming from pseudonatural transformations. While not strictly speaking necessary for the existence of the pullbacks $T_2$, this shows how the natural transformations $\!\quot{T_2}{f}$ connecting $F(f) \circ \!\quot{T_2}{Y} \Rightarrow \!\quot{T_2}{X} \circ F(f)$ for any $f\colon X \to Y$ arise and are constructed. In particular, it shows that we can use Theorem \ref{Theorem: Defining functors and nat transforms between equivariant categories and functors} to define our addition natural transformation.

\begin{prop}\label{Prop: Existence of pullback pseudonatural transformation}
Let $F\colon \Cscr^{\op} \to \fCat$ be a tangent indexing functor. There is a pseudonatural transformation $T_2$
\[
\begin{tikzcd}
\Cscr^{\op} \ar[rr, bend left = 30, ""{name = U}]{}{F} \ar[rr, bend right = 30, swap, ""{name = L}]{}{F} & & \fCat \ar[from = U, to = L, Rightarrow, shorten <= 4pt, shorten >= 4pt]{}{T_2}
\end{tikzcd}
\]
whose functors $\!\quot{T_2}{X}$ are given by sending an object $A \in F(X)_0$ to the pullback $\!\quot{T_2}{X}(A)$ of the cospan $\!\quot{T}{X}(A) \xrightarrow{\quot{p}{X}_A} A \xleftarrow{\quot{p}{X}_A} \!\quot{T}{X}(A)$.
\end{prop}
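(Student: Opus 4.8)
The plan is to construct $T_2$ componentwise from the tangent pullbacks, read off the naturality witnesses from the universal property of those pullbacks, and then reduce the pseudonaturality coherences to the ones already available for the pseudonatural transformation $T$ of Definition \ref{Defn: tangent pre-equivariant indexing functor}(3). First I would fix, for each $X \in \Cscr_0$, a choice of pullback $\!\quot{T_2}{X}(A)$ of the cospan $\!\quot{T}{X}(A) \xrightarrow{\!\quot{p}{X}_A} A \xleftarrow{\!\quot{p}{X}_A} \!\quot{T}{X}(A)$ for every $A \in F(X)_0$; these exist because $F(X)$ is a tangent category (cf.\ Lemma \ref{Lemma: Existence of tangent pullbacks in FGX}). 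The standard argument promoting a pointwise choice of limits to a functor makes $\!\quot{T_2}{X}\colon F(X) \to F(X)$ into a functor equipped with projection natural transformations $\!\quot{\pi_1}{X}, \!\quot{\pi_2}{X}\colon \!\quot{T_2}{X} \Rightarrow \!\quot{T}{X}$ satisfying $\!\quot{p}{X} \ast \!\quot{\pi_1}{X} = \!\quot{p}{X} \ast \!\quot{\pi_2}{X}$, with the property that at each $A$ the cone $(\!\quot{T_2}{X}(A), \!\quot{\pi_1}{X}_A, \!\quot{\pi_2}{X}_A)$ is universal among pairs of maps into $\!\quot{T}{X}(A)$ equalized by $\!\quot{p}{X}_A$; in particular $\!\quot{\pi_1}{X}$ and $\!\quot{\pi_2}{X}$ are jointly monic.

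Next I would produce the naturality witnesses. For a morphism $f\colon X \to Y$ in $\Cscr_1$, the functor $F(f)$ preserves the tangent pullback defining $\!\quot{T_2}{Y}(B)$ because $(F(f), \!\quot{T}{f})$ is a \emph{strong} tangent morphism; hence $F(f)$ sends the cone $(\!\quot{T_2}{Y}(B), \!\quot{\pi_1}{Y}_B, \!\quot{\pi_2}{Y}_B)$ to a pullback cone for the cospan $F(f)(\!\quot{T}{Y}(B)) \xrightarrow{F(f)(\!\quot{p}{Y}_B)} F(f)(B) \xleftarrow{F(f)(\!\quot{p}{Y}_B)} F(f)(\!\quot{T}{Y}(B))$. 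Composing the two legs with the isomorphism $\!\quot{T}{f}_B\colon F(f)(\!\quot{T}{Y}(B)) \xrightarrow{\sim} \!\quot{T}{X}(F(f)(B))$ and invoking the tangent-morphism identity $\!\quot{p}{X}_{F(f)(B)} \circ \!\quot{T}{f}_B = F(f)(\!\quot{p}{Y}_B)$ (the first triangle of Definition \ref{Defn: Tangent Morphism}) identifies this cone, over $F(f)(B)$, with the cospan defining $\!\quot{T_2}{X}(F(f)(B))$. The universal property then yields a unique isomorphism
\[
\!\quot{(T_2)}{f}_B\colon F(f)\big(\!\quot{T_2}{Y}(B)\big) \xrightarrow{\sim} \!\quot{T_2}{X}\big(F(f)(B)\big)
\]
characterized by $\!\quot{\pi_i}{X}_{F(f)(B)} \circ \!\quot{(T_2)}{f}_B = \!\quot{T}{f}_B \circ F(f)(\!\quot{\pi_i}{Y}_B)$ for $i=1,2$. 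Naturality of $\!\quot{(T_2)}{f}$ in $B$ is then automatic: the two candidate composites agree after post-composition with each $\!\quot{\pi_i}{X}$, by naturality of $\!\quot{T}{f}$ and of the $\!\quot{\pi_i}{Y}$, so they coincide by joint monicity.

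Finally I would check that $\big(\!\quot{T_2}{X}, \!\quot{(T_2)}{f}^{-1}\big)$ obeys the pseudonaturality axioms for a transformation $T_2\colon F \Rightarrow F$. The normalization axiom $\!\quot{(T_2)}{\id_X} = \id$ is forced by the characterizing equations together with the normalization of $F$ and of $T$. For a composable pair $X \xrightarrow{f} Y \xrightarrow{g} Z$, the remaining coherence equates the appropriate pasting built from $\!\quot{(T_2)}{f}$, $\!\quot{(T_2)}{g}$, and the structure cell $\phi_{f,g}$ of $F$ with $\!\quot{(T_2)}{g \circ f}$. To verify it, I would post-compose both sides with $\!\quot{\pi_i}{X}$ and expand using the characterizing equations; this turns the identity into the corresponding coherence for $T$ composed with $\!\quot{\pi_i}{Z}$, which holds because $T$ is a pseudonatural transformation by Definition \ref{Defn: tangent pre-equivariant indexing functor}(3). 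Joint monicity of $\!\quot{\pi_1}{X}, \!\quot{\pi_2}{X}$ then forces the two sides to agree. Once $T_2$ is in hand, Theorem \ref{Theorem: Defining functors and nat transforms between equivariant categories and functors} produces the induced endofunctor $\underline{T_2}\colon \PC(F) \to \PC(F)$, which by Theorem \ref{Thm: Limits in pseudcone categories} coincides with the pullback power from Lemma \ref{Lemma: Existence of tangent pullbacks in FGX}.

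The step I expect to be the main obstacle is this last coherence verification: although it is conceptually a diagram chase, $F$ is only a pseudofunctor and $F(f)$ preserves pullbacks merely up to the canonical comparison isomorphism, so the cells $\phi_{f,g}$ and those comparisons have to be threaded through with care. The device that keeps it tractable is the joint monicity of the projections $\!\quot{\pi_1}{X}, \!\quot{\pi_2}{X}$: every arrow into a tangent pullback is pinned down by its two components, so each coherence identity is determined the moment it is checked after composing with the projections, where it reduces to the already-established coherence for $T$.
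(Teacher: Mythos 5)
Your proposal is correct and follows essentially the same route as the paper: the comparison cells $\!\quot{T_2}{f}$ are induced by the universal property of the tangent pullbacks (which $F(f)$ preserves because $(F(f),\!\quot{T}{f})$ is strong), naturality and the pseudonaturality coherences are then verified by composing with the jointly monic projections and reducing to the corresponding facts for $T$. The paper phrases the construction via a commuting cube rather than the explicit characterizing equations $\!\quot{\pi_i}{X}\circ\!\quot{(T_2)}{f}_B = \!\quot{T}{f}_B\circ F(f)(\!\quot{\pi_i}{Y}_B)$, but the content is identical, and your explicit use of joint monicity is if anything a cleaner account of the coherence check the paper leaves as "routine."
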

\begin{proof}
We need to construct the natural isomorphism $\!\quot{T_2}{f}\colon F(f) \circ \!\quot{T_2}{Y} \xRightarrow{\cong}\! \quot{T_2}{X} \circ F(f)$. For this let $f\colon X \to Y$ be a morphism in $\Cscr_1$ and let $A \in F(Y)_0$. Consider the cube:
\[
\begin{tikzcd}
F(f)\left(\!\quot{T_2}{Y}(A)\right) \ar[rr]{}{F(f)(\!\quot{\pi_2}{Y})} \ar[dd, swap]{}{F(f)(\!\quot{\pi_1}{Y})} \ar[dr, dashed]{}{\exists!} & & F(f)\left(\!\quot{T}{Y}(A)\right) \ar[dr]{}{\quot{T}{f}_A} \ar[dd, swap, near start]{}{(F(f) \ast \!\quot{p}{Y})_A} \\
 & \!\quot{T_2}{X}\left(F(f)(A)\right) & & \!\quot{T}{X}\left(F(f)(A)\right) \ar[dd]{}{(\!\quot{\!p}{X} \ast F(f))_A} \\
F(f)\left(\!\quot{T}{Y}(A)\right) \ar[rr, near end, swap]{}{(F(f) \ast \!\quot{p}{Y})_A} \ar[dr, swap]{}{\quot{T}{f}_A} & & F(f)(A) \ar[dr, equals]\\
 & \!\quot{T}{X}\left(F(f)(A)\right) \ar[rr, swap]{}{(\!\quot{\!p}{X} \ast F(f))_{A}} & & F(f)(A)
 \ar[from = 2-2, to = 2-4, crossing over, near start, swap]{}{\!\quot{\pi_2}{X}} \ar[from = 2-2, to = 4-2, crossing over, near start]{}{\quot{\!\pi_1}{X}}
\end{tikzcd}
\]
We define $(\!\quot{T_2}{f})_A$ to be the unique dotted arrow which renders the cube above commutative and define $\quot{T_2}{f} := \lbrace (\!\quot{T_2}{f})_A \; \left. \right| \; A \in F(Y)_0\rbrace$. That $\!\quot{T_2}{f}$ is a natural transformation is routinely checked by using that if $\rho \in F(X)(A,B)$ then the morphism $\!\quot{T_2}{X}(\rho)$ is the unique map making the cube
\[
\begin{tikzcd}
\!\quot{\!T_2}{X}(A) \ar[rr]{}{\!\quot{\!\pi_2^{A}}{X}} \ar[dd, swap]{}{\!\quot{\!\pi_1^A}{X}} \ar[dr, dashed]{}{\exists!} & & \!\quot{\!T}{X}(A) \ar[dr]{}{\quot{T}{X}(\rho)} \ar[dd, swap, near start]{}{\quot{p}{X}_A} \\
& \!\quot{\!T_2}{X}(B) & & \!\quot{\!T}{X}(B) \ar[dd]{}{\!\quot{\!p}{X}_B} \\
\!\quot{\!T}{X}(A) \ar[rr, near end, swap]{}{\!\quot{\!p}{X}_A} \ar[dr, swap]{}{\!\quot{\!T}{X}(\rho)} & & A \ar[dr]{}{\rho}\\
& \!\quot{\!T}{X}(B) \ar[rr, swap]{}{\!\quot{\!p}{X}_{B}} & & B
\ar[from = 2-2, to = 2-4, crossing over, near start, swap]{}{\!\quot{\!\pi_2^B}{X}} \ar[from = 2-2, to = 4-2, crossing over, near start]{}{\!\quot{\!\pi_1^B}{X}}
\end{tikzcd}
\]
commute. 
We also note that $\!\quot{T_2}{f}$ is a natural isomorphism because the functor $F(f)$, as a strong tangent morphism, preserves tangent pullbacks and the maps which constitute $\!\quot{T_2}{f}$ witness this.
Finally the pseudonaturality of the collection $(\!\quot{\!T_2}{X},\!\quot{T_2}{f})$ follows from the fact that the object functors $\!\quot{\!T_2}{X}$ are defined by taking pullbacks of the pseudonatural transformation $T$ while the natural isomorphisms $\!\quot{T_2}{f}$ are defined by the limit preservation isomorphisms of $F(f)$.
\end{proof}
\begin{cor}\label{Cor: Pullback power of tangent functor modification arbitrary powers}
For any $n \in \N$ there is a pseudonatural transformation
\[
\begin{tikzcd}
\Cscr^{\op} \ar[rr, bend left = 30, ""{name = U}]{}{F} \ar[rr, bend right = 30, swap, ""{name = L}]{}{F} & & \fCat \ar[from = U, to = L, Rightarrow, shorten <= 4pt, shorten >= 4pt]{}{T_n}
\end{tikzcd}
\]
whose underlying fibre functors $\quot{T_n}{X}$ are the $n$-fold tangent pullback powers.
\end{cor}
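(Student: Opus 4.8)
The plan is to prove the statement by induction on $n$, with Proposition \ref{Prop: Existence of pullback pseudonatural transformation} serving both as the substantive base case $n=2$ and as the template for the inductive step.

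For $n=0$ the $0$-fold pullback power of $\quot{p}{X}\colon \quot{T}{X}\Rightarrow \id_{F(X)}$ is the identity functor, so we take $T_0:=\iota_F$; for $n=1$ the $1$-fold power is $\quot{T}{X}$ itself, so $T_1:=T$, the pseudonatural transformation supplied by Condition (3) of Definition \ref{Defn: tangent pre-equivariant indexing functor} (cf.\ Lemma \ref{Lemma: Constructing the tangent functor}); and $n=2$ is precisely Proposition \ref{Prop: Existence of pullback pseudonatural transformation}. For the inductive step, suppose we have a pseudonatural transformation $T_n\colon F\Rightarrow F$ with fibre functors $\quot{T_n}{X}$ computing the $n$-fold pullback powers and structure isomorphisms $\quot{T_n}{f}\colon F(f)\circ \quot{T_n}{Y}\xRightarrow{\cong}\quot{T_n}{X}\circ F(f)$. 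The $(n+1)$-fold pullback power $\quot{T_{n+1}}{X}(A)$ is the pullback of the cospan $\quot{T_n}{X}(A)\to \quot{A}{X}\leftarrow \quot{T}{X}(A)$, in which the left leg is any projection of $\quot{T_n}{X}(A)$ post-composed with $\quot{p}{X}_A$ (all such composites agree). These pullbacks exist in each $F(X)$ because $\quot{\Tbb}{X}$ is a tangent structure, and they are preserved by every $F(f)$ because $(F(f),\quot{T}{f})$ is a strong tangent morphism; hence by Theorem \ref{Thm: Limits in pseudcone categories} they fit together, and we may run the cube-chase of Proposition \ref{Prop: Existence of pullback pseudonatural transformation} verbatim. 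Concretely, $(\quot{T_{n+1}}{f})_A$ is defined to be the unique comparison morphism $F(f)(\quot{T_{n+1}}{Y}(A))\to \quot{T_{n+1}}{X}(F(f)(A))$ induced by the universal property, built from $\quot{T_n}{f}$ and $\quot{T}{f}$ on the two legs; it is invertible because $F(f)$ preserves this pullback.

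The remaining verifications --- naturality of $\quot{T_{n+1}}{f}$ in the variable $A$, the coherence (cocycle) identity relating $\quot{T_{n+1}}{g\circ f}$ to the whiskered composite of $\quot{T_{n+1}}{f}$ and $\quot{T_{n+1}}{g}$ with the structure cell $\phi_{f,g}$ of $F$, and the normalization $\quot{T_{n+1}}{\id}=\id$ --- all follow from uniqueness of morphisms into a limit: each equation holds after post-composition with the canonical projections out of $\quot{T_{n+1}}{X}(-)$, since the corresponding equations for $\quot{T_n}{f}$ and $\quot{T}{f}$ hold by the inductive hypothesis and by $T$ being pseudonatural, and therefore the equations hold on the nose. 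This exhibits $(\quot{T_{n+1}}{X},\quot{T_{n+1}}{f})$ as a pseudonatural transformation $T_{n+1}\colon F\Rightarrow F$, and applying Theorem \ref{Theorem: Defining functors and nat transforms between equivariant categories and functors} produces the induced endofunctor $\underline{T_{n+1}}\colon \PC(F)\to \PC(F)$, which by the construction of the tangent pullbacks in $\PC(F)$ (Lemma \ref{Lemma: Existence of tangent pullbacks in FGX}) is the $(n+1)$-fold pullback power of $\underline{T}$, as claimed. There is no genuine obstacle here --- this is exactly why the statement is phrased as a corollary; the only points needing a modicum of care are bookkeeping ones, namely fixing a coherent choice of the pullback powers $\quot{T_n}{X}(A)$ inside each fibre and checking independence of the left leg of the inductive cospan from the chosen projection, both of which are standard facts about additive bundles in tangent categories. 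Alternatively one may dispense with the induction and simply observe that the proof of Proposition \ref{Prop: Existence of pullback pseudonatural transformation} goes through word for word when the binary pullback is replaced by the $n$-ary wide pullback of $n$ copies of $\quot{p}{X}\colon \quot{T}{X}\Rightarrow \id_{F(X)}$.
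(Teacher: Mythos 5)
Your proof is correct and follows the same route as the paper, which simply declares the result "a routine induction with $T_0A := A$, $T_1A := TA$, and $T_2A$ as the base case"; you have supplied the details (the iterated cospan, the comparison cells built from $\quot{T_n}{f}$ and $\quot{T}{f}$, and the coherence checks via uniqueness into a limit) that the paper leaves implicit. No gaps.
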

\begin{proof}
This is a routine induction with $T_0A := A$, $T_1A := TA$, and taking $T_2A$ as the base case for an induction involving finite pullback powers.
\end{proof}
\begin{cor}\label{Cor: Tangent Pullback functors as equivariant functors}
If $F\colon \Cscr^{\op} \to \fCat$ is a tangent indexing functor then the $n$-fold tangent pullback functor $\underline{T_n}\colon \Cscr^{\op} \to \fCat$ arises as the functor associated to the pseudonatural transformation $T_n:F \Rightarrow F$. In particular, $[\underline{T}]_2 = \underline{T_2}$.
\end{cor}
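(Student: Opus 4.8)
The plan is to recognize that the first assertion is essentially a matter of unwinding definitions, and that the substantive content is the equality $[\underline{T}]_n = \underline{T_n}$ identifying the $n$-fold tangent pullback power functor built directly in $\PC(F)$ with the functor obtained by transporting the pseudonatural transformation $T_n$ through the functoriality of $\PC(-)$. By Corollary \ref{Cor: Pullback power of tangent functor modification arbitrary powers} we already have a pseudonatural transformation $T_n\colon F \Rightarrow F$ whose fibre functors $\quot{T_n}{X}$ are the $n$-fold tangent pullback powers in $F(X)$; feeding $T_n$ into Theorem \ref{Theorem: Defining functors and nat transforms between equivariant categories and functors} produces the endofunctor $\underline{T_n}\colon \PC(F) \to \PC(F)$, which is exactly what is meant by ``the functor associated to $T_n$''. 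So the first sentence needs no argument beyond this observation, and the whole corollary reduces to checking that this functor agrees with the pullback power $[\underline{T}]_n$ whose existence is granted by Lemma \ref{Lemma: Existence of tangent pullbacks in FGX}.

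I would first settle the case $n = 2$, fixing an object $A \in \PC(F)_0$. On one side, Lemma \ref{Lemma: Existence of tangent pullbacks in FGX} gives the pullback $[\underline{T}]_2 A = T_2 A$ of $\underline{p}_A$ against itself in $\PC(F)$, and Theorem \ref{Thm: Limits in pseudcone categories} computes it object-locally: its $X$-component is the pullback in $F(X)$ of the bundle map $\quot{p}{X}$ at $\quot{A}{X}$ against itself, i.e.\ $\quot{T_2}{X}(\quot{A}{X})$; its two projections are the modifications assembled from the object-local projections $\quot{\pi_i}{X}$; and its transition isomorphism at $f\colon X \to Y$ is the comparison map expressing that $F(f)$ preserves this pullback, composed with the legwise action of $\tau_f^A$. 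On the other side, Proposition \ref{Prop: Existence of pullback pseudonatural transformation} \emph{defines} $\underline{T_2}A$ to be the object of $\PC(F)$ with $X$-component $\quot{T_2}{X}(\quot{A}{X})$ and transition isomorphism $\quot{T_2}{X}(\tau_f^A) \circ (\quot{T_2}{f})^{-1}$ at $f$, where $\quot{T_2}{f}$ was chosen in that proposition to be precisely the limit-preservation isomorphism of $F(f)$, and its projections $\underline{\pi_i}$ are built from the same $\quot{\pi_i}{X}$. Hence the two pullback squares coincide componentwise, and therefore coincide in $\PC(F)$; since both constructions are manifestly functorial in $A$, this gives $[\underline{T}]_2 = \underline{T_2}$.

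The general statement then follows by the same induction used for Corollary \ref{Cor: Pullback power of tangent functor modification arbitrary powers}: the cases $[\underline{T}]_0 = \id_{\PC(F)} = \underline{T_0}$ and $[\underline{T}]_1 = \underline{T} = \underline{T_1}$ are immediate, and the inductive step realizes $[\underline{T}]_{n+1}$ as a pullback of $[\underline{T}]_n$-legs over $\underline{T}$, which by the $n = 2$ argument applied with $[\underline{T}]_n$ in place of $\underline{T}$ (again through Theorem \ref{Thm: Limits in pseudcone categories}) is $\underline{T_{n+1}}$. The one point requiring genuine care --- and hence the main obstacle --- is verifying that the transition isomorphisms attached to $\underline{T_n}A$ by Theorem \ref{Theorem: Defining functors and nat transforms between equivariant categories and functors} really are the comparison maps that appear in the object-local limit description of Theorem \ref{Thm: Limits in pseudcone categories}; but this is forced, since Proposition \ref{Prop: Existence of pullback pseudonatural transformation} took the $\quot{T_2}{f}$ to be exactly those comparison maps, and everything else is pinned down by the uniqueness clause of the pullback's universal property in $\PC(F)$.
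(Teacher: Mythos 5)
Your proposal is correct and follows essentially the same route as the paper: obtain $\underline{T_n}$ by feeding the pseudonatural transformation $T_n$ of Corollary \ref{Cor: Pullback power of tangent functor modification arbitrary powers} into Theorem \ref{Theorem: Defining functors and nat transforms between equivariant categories and functors}, then observe that $[\underline{T}]_2A$ and $\underline{T_2}A$ have identical object components and that their transition isomorphisms agree because the naturality witnesses $\quot{T_2}{f}$ were chosen in Proposition \ref{Prop: Existence of pullback pseudonatural transformation} to be exactly the limit-preservation isomorphisms of $F(f)$ appearing in the object-local pullback description of Theorem \ref{Thm: Limits in pseudcone categories}. Your write-up is merely more explicit (and adds the routine induction for general $n$, which the paper leaves implicit); there is no substantive difference.
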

\begin{proof}
The existence of $\underline{T_n}$ for all $n \in \N$ follows from applying Theorem \ref{Theorem: Defining functors and nat transforms between equivariant categories and functors} to Corollary \ref{Cor: Pullback power of tangent functor modification arbitrary powers}. Finally, note that if $(A,T_A)$ is an object in $\PC(F)$ then $\underline{T_2}A = \underline{T}_2A$. Thus, the fact that $[\underline{T}]_2 = \underline{T_2}$ comes down to noticing that for any $f\colon X \to Y$ in $\Cscr_1$, the naturality isomorphisms
\[
\begin{tikzcd}
F(Y) \ar[rr, bend left = 30, ""{name = U}]{}{F(f) \circ \quot{T_2}{Y}} \ar[rr, bend right = 30, swap, ""{name = L}]{}{\quot{T_2}{X} \circ F(f)} & & F(X) \ar[from = U, to = L, Rightarrow, shorten <= 4pt, shorten >= 4pt]{}{\quot{T_2}{f}}
\end{tikzcd}
\] 
of the pseudonatural transformation $T_2$ are exactly the limit preservation isomorphisms induced by $F(f)$ preserving tangent pullbacks. As such the transition isomorphisms of $T_{\underline{T_2}A}$ coincide with the transition isomorphisms of $T_{\underline{T}_2A}$ and so $\underline{T}_2 = \underline{T_2}$.
\end{proof}
\begin{cor}\label{Cor: Tangent powers preserve pullback functors}
Let $n, m \in \N$ and let $F$ be a tangent indexing functor on $X$. Then $\underline{T}^n \circ \underline{T}_m \cong \underline{T}_m \circ \underline{T}^n,$ i.e., powers of $\underline{T}$ preserve all tangent pullbacks $\underline{T}_m$.
\end{cor}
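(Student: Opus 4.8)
The approach is to reduce the claim to the corresponding fact inside each fibre tangent category $(F(X),\!\quot{\Tbb}{X})$ and then transport it back to $\PC(F)$, exactly in the spirit of Corollaries \ref{Cor: Pullback power of tangent functor modification arbitrary powers} and \ref{Cor: Tangent Pullback functors as equivariant functors}. Recall from Lemma \ref{Lemma: Constructing the tangent functor} that $\underline{T}$ is computed fibrewise --- its $X$-component is $\!\quot{T}{X}$ --- so $\underline{T}^{\,n}$ has $X$-component $\!\quot{T}{X}^{\,n}$; and recall from Corollary \ref{Cor: Tangent Pullback functors as equivariant functors} that $\underline{T}_m=\underline{T_m}$ has $X$-component $\!\quot{T_m}{X}$, the $m$-fold pullback power of $\!\quot{p}{X}$. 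Thus on objects both $\underline{T}^{\,n}\underline{T}_m A$ and $\underline{T}_m\underline{T}^{\,n}A$ are computed fibrewise, as $\!\quot{T}{X}^{\,n}\!\quot{T_m}{X}(\!\quot{A}{X})$ and $\!\quot{T_m}{X}\!\quot{T}{X}^{\,n}(\!\quot{A}{X})$ respectively.

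Next I would invoke the fibrewise input. In any tangent category the $n$-th power of the tangent functor preserves all pullback powers of the bundle projection (Definition \ref{Defn: Tangent Category}(1)), and composing this with the canonical flip $\!\quot{c}{X}$ and its coherences (Definition \ref{Defn: Tangent Category}(4)--(5)) --- which is precisely what is needed to re-associate a pullback power of $\!\quot{T}{X}^{\,n}(\!\quot{p}{X})$ into a pullback power of $\!\quot{p}{X}$ whiskered by $\!\quot{T}{X}^{\,n}$ --- yields an invertible natural transformation $\!\quot{\theta}{X}\colon \!\quot{T}{X}^{\,n}\circ\!\quot{T_m}{X}\Rightarrow\!\quot{T_m}{X}\circ\!\quot{T}{X}^{\,n}$ for each $X\in\Cscr_0$; for $n=0$ or $m\le 1$ this is simply an identity.

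It then remains to check that the family $\{\!\quot{\theta}{X}\}_{X\in\Cscr_0}$ is compatible with the transition isomorphisms on $\underline{T}^{\,n}\underline{T}_m A$ and $\underline{T}_m\underline{T}^{\,n}A$, and the cleanest way to organize this is to package the $\!\quot{\theta}{X}$ as an invertible modification $\theta\colon T^{\,n}\circ T_m\Rrightarrow T_m\circ T^{\,n}$ between the relevant vertical composites of pseudonatural transformations. These composites exist and are strictly associative and unital because the target $\fCat$ is a $2$-category, the assignment $\alpha\mapsto\underline{\alpha}$ is post-composition and hence strictly functorial, so the associated endofunctors of $\PC(F)$ are exactly $\underline{T}^{\,n}\circ\underline{T}_m$ and $\underline{T}_m\circ\underline{T}^{\,n}$; applying the modification part of Theorem \ref{Theorem: Defining functors and nat transforms between equivariant categories and functors} to $\theta$ then delivers the desired natural isomorphism. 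The modification axiom to be verified relates $\!\quot{\theta}{X}$, $\!\quot{\theta}{Y}$ to the naturality witnesses $\!\quot{T}{f}$ (the distributive laws of the strong tangent morphisms $(F(f),\!\quot{T}{f})$) and $\!\quot{T_m}{f}$ (the limit-comparison isomorphisms constructed in Proposition \ref{Prop: Existence of pullback pseudonatural transformation}), together with the structure cells $\phi$ of $F$; since every morphism involved is a canonical comparison into a pullback built from the same data, the equation follows from the uniqueness clause of the universal property of pullbacks in $F(X)$.

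Alternatively, and perhaps more economically, one can proceed by induction on $m$ via Theorem \ref{Thm: Limits in pseudcone categories} directly: the cases $m=0,1$ are trivial, the case $m=2$ is handled by the cube argument of Proposition \ref{Prop: Existence of pullback pseudonatural transformation} together with the identity $\!\quot{p}{TX}\circ\!\quot{c}{X}=\!\quot{T}{X}(\!\quot{p}{X})$ in each fibre, and the inductive step reduces a general pullback power to iterated fibre products of the $m=2$ case. In either approach the main obstacle is the same, and it is purely a matter of bookkeeping: one must track how $\!\quot{T}{f}$, $\!\quot{T_m}{f}$, the structure cells $\phi$ of $F$, and the fibrewise canonical flips fit together so that the resulting fibrewise isomorphisms respect the transition isomorphisms of $\PC(F)$; there is no new conceptual ingredient.
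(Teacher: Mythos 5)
Your proof is correct and takes essentially the same route as the paper: the paper's own argument is a one-line reduction to the object-local isomorphisms $\!\quot{T^n}{X} \circ \!\quot{T_m}{X} \cong \!\quot{T_m}{X} \circ \!\quot{T^n}{X}$, which hold because each $(F(X),\!\quot{\Tbb}{X})$ is a tangent category (so $\!\quot{T}{X}^{\,n}$ preserves the tangent pullbacks, re-associated via the flip). Your additional packaging of the fibrewise isomorphisms as a modification compatible with the transition isomorphisms, and the appeal to Theorem \ref{Theorem: Defining functors and nat transforms between equivariant categories and functors}, just makes explicit the gluing step the paper leaves implicit.
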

\begin{proof}
This follows from the fact that object-locally we have natural isomorphisms 
\[
\!\quot{\!T^n}{X} \circ \!\quot{\!T_m}{X} \cong \!\quot{\!T_m}{X} \circ \!\quot{\!T^n}{X}
\] 
by virtue of $(F(X),\!\quot{\Tbb}{X})$ being a tangent category and each $\!\quot{\!T}{X}$ being a tangent functor. From here checking the modification axioms for these natural isomorphisms is a straightforward but tedious process we omit here.
\end{proof}

We now use the tangent pullback functor $\underline{T_2} = \underline{T}_2$ we just built in order to prove that we can add pseudolimit bundles by making sure that the object-local pieces of the bundle addition on $\underline{T}_2A$ are given by the bundle addition $\quot{+}{X}\colon \!\quot{T_2}{X}(\!\quot{A}{X}) \to \!\quot{T}{X}(\!\quot{A}{X})$.
\begin{lem}\label{Lemma: Addition modification}
Let $F\colon \Cscr^{\op} \to \fCat$. Then there is an addition modification $+\colon T_2 \Rrightarrow T$ as in the diagram
\[
\begin{tikzcd}
\Cscr^{\op} \ar[rrr, bend left = 30, ""{name = U}]{}{F} \ar[rrr, bend right = 30, swap, ""{name = B}]{}{F} & & &\fCat \ar[from = U, to = B, Rightarrow, shorten <= 4pt, shorten >= 4pt, bend right = 30, swap, ""{name = L}]{}{T_2} \ar[from = U, to = B, Rightarrow, shorten <= 4pt, shorten >= 4pt, bend left = 30, ""{name = R}]{}{T} \ar[from = L, to = R, symbol = \underset{+}{\Rrightarrow}, swap]
\end{tikzcd}
\]
whose natural transformations are given by 
\[
\quot{+}{X}\colon \!\quot{\!T_2}{X} \to \!\quot{\!T}{X}
\]
where $\quot{+}{X}$ is the bundle addition transformation in the tangent structure $(F(X),\!\quot{\Tbb}{X})$.
\end{lem}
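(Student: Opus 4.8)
The plan is to follow the template used for the bundle and zero modifications in Lemmas~\ref{Lemma: Construction of Bundle Map} and~\ref{Lemma: Zero modification}: record the object-local components and then reduce the modification coherence condition to one of the defining axioms of a strong tangent morphism. Since $T_2$ and $T$ are both pseudonatural transformations $F \Rightarrow F$ (by Proposition~\ref{Prop: Existence of pullback pseudonatural transformation} and Lemma~\ref{Lemma: Constructing the tangent functor}), giving a modification $+\colon T_2 \Rrightarrow T$ with the stated object-local natural transformations $\quot{+}{X}\colon \quot{T_2}{X} \Rightarrow \quot{T}{X}$ amounts to checking, for every $f\colon X \to Y$ in $\Cscr_1$, that the square
\[
\begin{tikzcd}
F(f) \circ \quot{T_2}{Y} \ar[rr]{}{\quot{T_2}{f}} \ar[d, swap]{}{F(f) \ast \quot{+}{Y}} & & \quot{T_2}{X} \circ F(f) \ar[d]{}{\quot{+}{X} \ast F(f)} \\
F(f) \circ \quot{T}{Y} \ar[rr, swap]{}{\quot{T}{f}} & & \quot{T}{X} \circ F(f)
\end{tikzcd}
\]
of functors and natural transformations commutes.

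The key step is to recognise this square as the addition axiom of Definition~\ref{Defn: Tangent Morphism} — the diagram relating $F \ast +$, $\oplus \ast F$, $\alpha$, and $\alpha_2$ — for the strong tangent morphism $(F(f),\quot{T}{f})$, with $\alpha \rightsquigarrow \quot{T}{f}$, $+ \rightsquigarrow \quot{+}{Y}$, and $\oplus \rightsquigarrow \quot{+}{X}$. For this to be \emph{literally} that axiom we must identify $\quot{T_2}{f}$ with the comparison $2$-cell $(\quot{T}{f})_2\colon F(f) \circ \quot{T_2}{Y} \Rightarrow \quot{T_2}{X} \circ F(f)$ built from $\quot{T}{f}$ in the tangent-morphism formalism. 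This identification is immediate from the construction in Proposition~\ref{Prop: Existence of pullback pseudonatural transformation}: there $(\quot{T_2}{f})_A$ was defined as the unique arrow into the pullback $\quot{T_2}{X}(F(f)(A))$ whose composites with the projections $\quot{\pi_1}{X}$ and $\quot{\pi_2}{X}$ are $(\quot{T}{f})_A \circ F(f)(\quot{\pi_1}{Y})$ and $(\quot{T}{f})_A \circ F(f)(\quot{\pi_2}{Y})$, and this pairing is precisely $(\quot{T}{f})_2$ at $A$; moreover the source $F(f)(\quot{T_2}{Y}(A))$ is a genuine pullback because $F(f)$, being a strong tangent morphism, preserves the tangent pullback $\quot{T_2}{Y}$. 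Granting this, the square above commutes for the single reason that $(F(f),\quot{T}{f})$ is a strong morphism of tangent categories, which is Condition~(2) of Definition~\ref{Defn: tangent pre-equivariant indexing functor}.

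I expect the only real friction to be bookkeeping rather than mathematics: one must be careful about the direction of the pseudonaturality $2$-cells (recall from Condition~(3) of Definition~\ref{Defn: tangent pre-equivariant indexing functor} that the structure cell of the pseudonatural transformation $T$ at $f$ is $\quot{T}{f}^{-1}$ rather than $\quot{T}{f}$, and similarly for $T_2$), and about the precise description of $\alpha_2$ in Definition~\ref{Defn: Tangent Morphism}. Once $\quot{T_2}{f}$ is matched with $(\quot{T}{f})_2$ — a matching forced by the universal property of the pullback — there is no remaining content, the modification equation being exactly one axiom of a strong tangent morphism. As with Lemmas~\ref{Lemma: Construction of Bundle Map} and~\ref{Lemma: Zero modification}, this modification will then yield, via Theorem~\ref{Theorem: Defining functors and nat transforms between equivariant categories and functors} together with the identification $\underline{T}_2 = \underline{T_2}$ from Corollary~\ref{Cor: Tangent Pullback functors as equivariant functors}, a natural transformation $\underline{+}\colon \underline{T}_2 \Rightarrow \underline{T}$ on $\PC(F)$, which one records as the following corollary.
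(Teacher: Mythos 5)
Your proposal is correct and follows essentially the same route as the paper: reduce the modification condition to the single square relating $F(f)\ast\!\quot{+}{Y}$, $\!\quot{+}{X}\ast F(f)$, $\!\quot{T_2}{f}$, and $\!\quot{T}{f}$, and observe that this is exactly the addition axiom for the strong tangent morphism $(F(f),\!\quot{T}{f})$. Your extra remark identifying $\!\quot{T_2}{f}$ with the comparison cell $(\!\quot{T}{f})_2$ via the universal property of the pullback is a detail the paper leaves implicit, but it is the right justification and does not change the argument.
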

\begin{proof}
To prove that $+$ is a modification it suffices to prove that the diagram of functors and natural transformations
\[
\xymatrix{
F(f) \circ \!\quot{\!T_2}{Y} \ar[d]_{\!\quot{T_2}{f}} \ar[rr]^-{F(f) \ast \quot{\!+}{Y}} & & F(f) \circ \!\quot{\!T}{Y} \ar[d]^{\!\quot{T}{f}} \\
\!\quot{\!T_2}{X} \circ F(f) \ar[rr]_-{\quot{+}{X} \ast F(f)} & & \!\quot{\!T}{X} \circ F(f)
}
\]
commutes. However, this follows immediately because $(F(f),\!\quot{T}{f})$ is a strong tangent morphism.
\end{proof}

\begin{cor}\label{Cor: Bundle addition transformation equivariant category}
For any tangent indexing functor $F$ on $X$ there is an addition natural transformation $\underline{+}\colon \underline{T}_2 \Rightarrow \underline{T}$.
\end{cor}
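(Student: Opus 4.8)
The plan is to combine Lemma~\ref{Lemma: Addition modification} with the second half of the functoriality result Theorem~\ref{Theorem: Defining functors and nat transforms between equivariant categories and functors}. By Lemma~\ref{Lemma: Addition modification} we already have a modification $+\colon T_2 \Rrightarrow T$ between the pseudonatural transformations $T_2, T\colon F \Rightarrow F$ (the former existing by Proposition~\ref{Prop: Existence of pullback pseudonatural transformation} and Corollary~\ref{Cor: Pullback power of tangent functor modification arbitrary powers}, the latter by Lemma~\ref{Lemma: Constructing the tangent functor}). Theorem~\ref{Theorem: Defining functors and nat transforms between equivariant categories and functors} assigns to any modification $\rho\colon \alpha \Rrightarrow \beta$ of pseudonatural transformations $F \Rightarrow F$ a natural transformation $\underline{\rho}\colon \underline{\alpha} \Rightarrow \underline{\beta}$ between the induced endofunctors of $\PC(F)$, with component at $(A,\Sigma_A) \in \PC(F)_0$ the family $\{(\quot{\rho}{X})_{\quot{A}{X}}\}_{X \in \Cscr_0}$. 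Applying this with $\rho = +$ produces a natural transformation $\underline{+}\colon \underline{T_2} \Rightarrow \underline{T}$ whose component at $A$ is $\underline{+}_A = \{\quot{+}{X}_{\quot{A}{X}}\colon \quot{T_2}{X}(\quot{A}{X}) \to \quot{T}{X}(\quot{A}{X})\}_{X \in \Cscr_0}$.

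It then remains only to reconcile the domain of $\underline{+}$ with the statement. The functor $\underline{T}_2$ in the statement means the pullback power $[\underline{T}]_2$ of the induced tangent functor $\underline{T}$ computed inside $\PC(F)$, which exists by Lemma~\ref{Lemma: Existence of tangent pullbacks in FGX}. By Corollary~\ref{Cor: Tangent Pullback functors as equivariant functors} we have $[\underline{T}]_2 = \underline{T_2}$, i.e. this $1$-categorical pullback power agrees on the nose with the functor $\underline{T_2}$ attached to the pseudonatural transformation $T_2$. Substituting this identification, the natural transformation obtained above is exactly of the asserted type $\underline{+}\colon \underline{T}_2 \Rightarrow \underline{T}\colon \PC(F) \to \PC(F)$.

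I do not expect a genuine obstacle here: all the components have been assembled in the preceding lemmas and corollaries, and the corollary is essentially an invocation of Theorem~\ref{Theorem: Defining functors and nat transforms between equivariant categories and functors}. The one point requiring (already-completed) care is the identification $\underline{T}_2 = \underline{T_2}$: the pullback power can a priori be described either as a limit taken inside $\PC(F)$ via Theorem~\ref{Thm: Limits in pseudcone categories} or as the pseudocone functor of the pseudonatural transformation $T_2$ of Proposition~\ref{Prop: Existence of pullback pseudonatural transformation}, and one must know these coincide so that the source of $\underline{+}$ matches the statement strictly rather than merely up to isomorphism; this is precisely the content of Corollary~\ref{Cor: Tangent Pullback functors as equivariant functors}, so nothing further is needed.
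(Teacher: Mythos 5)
Your proof is correct and matches the paper's argument: the corollary is obtained by applying Theorem~\ref{Theorem: Defining functors and nat transforms between equivariant categories and functors} to the modification $+\colon T_2 \Rrightarrow T$ from Lemma~\ref{Lemma: Addition modification}. Your extra care in invoking Corollary~\ref{Cor: Tangent Pullback functors as equivariant functors} to identify $\underline{T}_2$ with $\underline{T_2}$ is exactly the identification the paper relies on (it leaves this implicit in the one-line proof), so nothing is missing or superfluous.
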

\begin{proof}
Apply Theorem \ref{Theorem: Defining functors and nat transforms between equivariant categories and functors} to the modification $+\colon T_2 \Rrightarrow T$.
\end{proof}
\begin{prop}\label{Prop: Equivariant Tangent over A is Additive Bundle over A}
Let $F$ be a tangent indexing functor %\todo{I think that ``pre-equivariant tangent indexing functor'' is cleaner sounding than ``tangent pre-equivariant indexing functor.'' Do a find+replace search for this later.} %\dorette{The fact that the two amount to the same seems to be one of our main results.} 
on $X$. For any $A \in \PC(F)_0$, the tangent bundle $\underline{T}A$ is an additive bundle in $\PC(F)$, i.e., \[
\left(\underline{p}_A\colon \underline{T}A \to A,\underline{+}_A\colon \underline{T}_2A \to \underline{T}A, \underline{0}_A\colon A \to \underline{T}A\right)
\] 
is a commutative monoid in $\PC(F)_{/A}$.
\end{prop}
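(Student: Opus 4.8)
The plan is to reduce the statement to the object-local tangent categories $(F(X),\quot{\Tbb}{X})$, exploiting two facts about $\PC(F)$: that morphisms are modifications and hence detected componentwise by the projections $p_X\colon \PC(F)\to F(X)$, and that the relevant pullbacks are computed componentwise by Theorem \ref{Thm: Limits in pseudcone categories}. Concretely, asserting that $(\underline{p}_A,\underline{+}_A,\underline{0}_A)$ is a commutative monoid in $\PC(F)_{/A}$ amounts to the commutativity of a short finite list of diagrams in $\PC(F)$: the two unit laws, associativity of $\underline{+}_A$, and commutativity of $\underline{+}_A$. These are expressed using the pullback powers $\underline{T}_2A$ and $\underline{T}_3A$, which exist by Lemma \ref{Lemma: Existence of tangent pullbacks in FGX} and agree with $\underline{T_2}A$, $\underline{T_3}A$ by Corollary \ref{Cor: Tangent Pullback functors as equivariant functors}; each diagram is an equality of two parallel morphisms in $\PC(F)$ with codomain $\underline{T}A$ or $A$.

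The first step would be to record that the family $\{p_X\}_{X\in\Cscr_0}$ is jointly faithful: by the description of $\PC(F)$ in Remark \ref{Rmk: limiting_pseudocone}, a morphism $P$ is precisely a compatible family $\{\quot{\rho}{X}\}$, so two parallel morphisms coincide iff all their $X$-components in $F(X)$ coincide. The second step is the bookkeeping step: I would check that applying $p_X$ to the globally defined data $(\underline{p}_A,\underline{+}_A,\underline{0}_A,\underline{T}_2A,\underline{T}_3A)$ returns exactly $(\quot{p}{X}_{\quot{A}{X}},\quot{+}{X}_{\quot{A}{X}},\quot{0}{X}_{\quot{A}{X}},\quot{T_2}{X}(\quot{A}{X}),\quot{T_3}{X}(\quot{A}{X}))$. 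For the structure maps this is immediate from the constructions of the modifications $p$, $0$, and $+$ in Corollaries \ref{Cor: Bundle transformation equivariant tan functor}, \ref{Cor: Construction of equivariant zero transformation}, and \ref{Cor: Bundle addition transformation equivariant category} (their $X$-components are literally the corresponding maps of $\quot{\Tbb}{X}$); for the pullback objects it is Theorem \ref{Thm: Limits in pseudcone categories} together with the fact that $p_X$ preserves these pullbacks since the projections form a limiting pseudocone. Consequently, applying $p_X$ to each of the monoid diagrams for $\underline{T}A$ produces exactly the corresponding monoid diagram for $\quot{p}{X}_{\quot{A}{X}}\colon \quot{T}{X}(\quot{A}{X})\to \quot{A}{X}$ in $F(X)$.

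The third step is then immediate: since $(F(X),\quot{\Tbb}{X})$ is a tangent category, Part 2 of Definition \ref{Defn: Tangent Category} says precisely that $\quot{p}{X}_{\quot{A}{X}}$ is an additive bundle in $F(X)$, i.e., a commutative monoid in $F(X)_{/\quot{A}{X}}$, so all the componentwise diagrams commute. Joint faithfulness of the $p_X$ then lifts this to commutativity of the original diagrams in $\PC(F)$, which is the assertion; equivalently, one can phrase the whole argument through Theorem \ref{Theorem: Defining functors and nat transforms between equivariant categories and functors}, observing that $\underline{p}$, $\underline{0}$, $\underline{+}$ are the functors associated to the modifications $p$, $0$, $+$ and that modification composition is computed componentwise.

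The main obstacle is the second step, the compatibility bookkeeping: one must make sure there is genuinely no mismatch between ``take the pullback in $\PC(F)$, then project to $F(X)$'' and ``project to $F(X)$, then take the pullback there'', and that the transition isomorphisms $\tau_f^{\underline{T}A}$ (whose explicit form is recorded in Remark \ref{Rmk: Explicit form of transition isomorphisms}) never enter the diagram chase in a way that obstructs commutativity. Both are true because every structure map in sight is defined object-locally and $F(f)$ preserves tangent pullbacks, but this needs to be spelled out carefully so that the commuting squares really do transport between $\PC(F)$ and the $F(X)$; everything else is formal.
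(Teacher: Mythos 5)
Your proposal is correct and follows essentially the same route as the paper: reduce the commutative-monoid diagrams to their object-local components in each $F(X)$ (using that morphisms in $\PC(F)$ are determined componentwise and that the tangent pullbacks are computed fibrewise), and then invoke Part 2 of Definition \ref{Defn: Tangent Category} for each tangent category $(F(X),\quot{\Tbb}{X})$. Your extra care about the compatibility between pullbacks in $\PC(F)$ and projection to $F(X)$ is a point the paper leaves implicit, but it is the same argument.
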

\begin{proof}
It suffices to show that $\underline{0}_A$ is a left-and-right unit for $\underline{p}_A\colon \underline{T}A \to A$ and that bundle addition $\underline{+}_A$ is commutative and associative over $A$. However, checking that the diagrams
\[
\begin{tikzcd}
\underline{T}A \ar[r]{}{\cong} \ar[d, swap]{}{\cong} \ar[drrr, equals] & \underline{T}A \times_A A \ar[rr]{}{\id_{\underline{T}A} \times \underline{0}_A} & & \underline{T}_2A \ar[d]{}{\underline{+}_A} \\
A \times_A \underline{T}A \ar[rr, swap]{}{\underline{0}_A \times \id_{\underline{T}A}} & & \underline{T}_2A \ar[r, swap]{}{\underline{+}_A} & \underline{T}A
\end{tikzcd}
\begin{tikzcd}
\underline{T}_2A \ar[r]{}{s} \ar[dr, swap]{}{\underline{+}_A} & \underline{T}_2A \ar[d]{}{\underline{+}_A} \\
 & \underline{T}A
\end{tikzcd}
\]
\[
\begin{tikzcd}
(\underline{T}A \times_A \underline{T}A) \times \underline{T}A \ar[rr]{}{\underline{+}_A \times \id_{\underline{T}A}} \ar[d, swap]{}{\cong} & & \underline{T}_2A \ar[d]{}{\underline{+}_A} \\
\underline{T}A \times_A (\underline{T}A \times_A \underline{T}A) \ar[dr, swap]{}{\id_{\underline{T}A} \times \underline{+}A} & & \underline{T}A \\
 & \underline{T}_2A \ar[ur, swap]{}{\underline{+}_A}
\end{tikzcd}
\]
all hold, where $s$ is the product-swapping map, may be done object-locally, i.e., it may be checked over each category $F(X)$. However, because each category $(F(X), \quot{\Tbb}{X})$ is a tangent category we have that $\quot{T}{X}(\quot{A}{X})$ is an additive bundle over $\quot{A}{X}$ and so each desired object-local diagram commutes. Thus $\underline{T}A$ is an additive $A$-bundle.
\end{proof}

We now recall a basic construction in $2$-category and bicategory theory in order to better understand how to define and work with  the composite $\underline{T}^2$ directly from its descent incarnation from the vertical composition $T^2 = T \circ T$ of pseudonatural transformations. If $F,G,H\colon \Cfrak \to \Dfrak$ are two pseudofunctors between $2$-categories and if we have pseudonatural transformations as in the diagram
\[
\begin{tikzcd}
\Cfrak \ar[rr, bend left = 50, ""{name = U}]{}{F} \ar[rr, ""{name = M}]{}[description]{G} \ar[rr, bend right = 50, swap, ""{name = B}]{}{H} & & \Dfrak \ar[from = U, to = M, Rightarrow, shorten <= 4pt]{}{\alpha} \ar[from = M, to = B, Rightarrow, shorten <= 4pt, shorten >= 4pt]{}{\beta}
\end{tikzcd}
\]
then the vertical composite $\beta \circ \alpha\colon F \Rightarrow H\colon \Cfrak \to \Dfrak$ has object functors defined by, for all $X \in \Cfrak_0$
\[
(\beta \circ \alpha)_X = \beta_X \circ \alpha_X,
\]
while it has morphism natural transformations defined by, for all $f\colon X \to Y \in \Cfrak_1$:
\[
\begin{tikzcd}
(\beta \circ \alpha)_f = (\beta_Y \ast \alpha_f) \circ (\beta_f \ast \alpha_X).
\end{tikzcd}
\]
From this we get that $\underline{T}^2\colon \PC(F) \to \PC(F)$ arises as the functor associated to the pseudonatural transformation $T^2 = T \circ T\colon F \Rightarrow F$ described by fibre functors
\[
\quot{(T^2)}{X} := (\quot{T}{X})^2
\]
and fibre transformations
\[
\quot{T^2}{f} := (\quot{T}{X} \ast \quot{T}{f}) \circ (\quot{T}{f} \ast \quot{T}{Y})
\]
for any $f\colon X \to Y$ in $\Cscr_1$. We will use this observation below in proving the existence of both the verical lift $\underline{\ell}\colon \underline{T} \Rightarrow \underline{T}^2$ and the canonical flip $\underline{c}\colon \underline{T}^2 \Rightarrow \underline{T}^2$.

\begin{lem}\label{Lemma: Vertical lift modification}
Let $F$ be a tangent indexing functor on $X$. There is then a vertical lift modification $\ell\colon T \Rrightarrow T^2$ as in the diagram
\[
\begin{tikzcd}
\Cscr^{\op} \ar[rrr, bend left = 30, ""{name = U}]{}{F} \ar[rrr, bend right = 30, swap, ""{name = B}]{}{F} & & &\fCat \ar[from = U, to = B, Rightarrow, shorten <= 4pt, shorten >= 4pt, bend right = 30, swap, ""{name = L}]{}{T} \ar[from = U, to = B, Rightarrow, shorten <= 4pt, shorten >= 4pt, bend left = 30, ""{name = R}]{}{T^2} \ar[from = L, to = R, symbol = \underset{\ell}{\Rrightarrow}, swap]
\end{tikzcd}
\]
whose object-local natural transformations are given by the vertical lifts 
\[
\!\quot{\ell}{X}\colon \!\quot{T}{X} \to \!\quot{T^2}{X}
\]
of the tangent structure $(F(X),\!\quot{\Tbb}{X})$.
\end{lem}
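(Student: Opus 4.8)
The plan is to follow the template already used for the bundle projection, the zero section, and the fibrewise addition (Lemmas \ref{Lemma: Construction of Bundle Map}, \ref{Lemma: Zero modification}, \ref{Lemma: Addition modification}). Setting $\ell_X := \!\quot{\ell}{X}\colon \!\quot{T}{X} \Rightarrow \!\quot{T^2}{X}$ for each $X \in \Cscr_0$, the only thing to verify is the modification axiom: for every $f\colon X \to Y$ in $\Cscr_1$ the equation
\[
\!\quot{T^2}{f} \circ \bigl(F(f) \ast \!\quot{\ell}{Y}\bigr) = \bigl(\!\quot{\ell}{X} \ast F(f)\bigr) \circ \!\quot{T}{f}
\]
of natural transformations $F(f) \circ \!\quot{T}{Y} \Rightarrow \!\quot{T^2}{X} \circ F(f)$ holds, where $\!\quot{T}{f}$ and $\!\quot{T^2}{f}$ are the witness $2$-cells of the pseudonatural transformations $T$ and $T^2 = T \circ T$.

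First I would substitute the explicit description of the witness cells of $T^2$ recalled immediately before the statement, namely $\!\quot{T^2}{f} = (\!\quot{T}{X} \ast \!\quot{T}{f}) \circ (\!\quot{T}{f} \ast \!\quot{T}{Y})$, so that the equation to be checked becomes
\[
\bigl((\!\quot{T}{X} \ast \!\quot{T}{f}) \circ (\!\quot{T}{f} \ast \!\quot{T}{Y})\bigr) \circ \bigl(F(f) \ast \!\quot{\ell}{Y}\bigr) = \bigl(\!\quot{\ell}{X} \ast F(f)\bigr) \circ \!\quot{T}{f}.
\]
I would then observe that this is verbatim the vertical-lift coherence square from Definition \ref{Defn: Tangent Morphism} for the morphism $(F(f), \!\quot{T}{f})\colon (F(Y), \!\quot{\Tbb}{Y}) \to (F(X), \!\quot{\Tbb}{X})$: in that definition the relevant diagram asserts $(\ell^{\prime} \ast F) \circ \alpha = \bigl((S \ast \alpha) \circ (\alpha \ast T)\bigr) \circ (F \ast \ell)$, which reduces to the displayed equation under the identifications $F = F(f)$, $T = \!\quot{T}{Y}$, $S = \!\quot{T}{X}$, $\ell = \!\quot{\ell}{Y}$, $\ell^{\prime} = \!\quot{\ell}{X}$, $\alpha = \!\quot{T}{f}$. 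By Condition (2) of Definition \ref{Defn: tangent pre-equivariant indexing functor} the pair $(F(f), \!\quot{T}{f})$ is a strong morphism of tangent categories, so this square commutes and $\ell\colon T \Rrightarrow T^2$ is a modification.

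The one step demanding any care --- and the one I would flag as the main (minor) obstacle --- is the bookkeeping that rewrites $\!\quot{T^2}{f}$ as the composite whiskering $(\!\quot{T}{X} \ast \!\quot{T}{f}) \circ (\!\quot{T}{f} \ast \!\quot{T}{Y})$ and lines this up against the precise shape of the $\ell$-square in Definition \ref{Defn: Tangent Morphism}; once both $2$-cells are in the same normal form the identification is purely formal and no computation remains. An entirely parallel argument produces the canonical flip modification $c\colon T^2 \Rrightarrow T^2$ from the $c$-coherence square for $(F(f), \!\quot{T}{f})$, and Theorem \ref{Theorem: Defining functors and nat transforms between equivariant categories and functors} then upgrades $\ell$ (and $c$) to a natural transformation $\underline{\ell}\colon \underline{T} \Rightarrow \underline{T}^2$ on $\PC(F)$.
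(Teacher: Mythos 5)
Your proposal is correct and follows essentially the same route as the paper's proof: both reduce the modification axiom to the vertical-lift coherence square of Definition \ref{Defn: Tangent Morphism} for the strong tangent morphism $(F(f),\!\quot{T}{f})$, after rewriting $\!\quot{T^2}{f}$ as the composite whiskering $(\!\quot{T}{X} \ast \!\quot{T}{f}) \circ (\!\quot{T}{f} \ast \!\quot{T}{Y})$. The identifications you list are exactly the ones the paper uses implicitly, so nothing is missing.
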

\begin{proof}
To show that $\ell$ is a modification it suffices to check that the diagram of functors and natural transformations
\[
\xymatrix{
F(f) \circ \quot{T}{Y} \ar[rr]^-{F(f) \ast \quot{\ell}{Y}} \ar[d]_{\quot{T}{f}} & & F(f) \circ \quot{T^2}{Y} \ar[d]^{\quot{T^2}{f}} \\
\quot{T}{X} \circ F(f) \ar[rr]_-{\quot{\ell}{X} \ast F(f)} & & \quot{T^{2}}{X} \circ F(f)
}
\]
commutes for any $f\colon X \to Y$ in $\Cscr_1$. However, this is equivalent to asking that the diagram
\[
\xymatrix{
F(f) \circ \quot{T}{Y} \ar[d]_{\quot{T}{f}} \ar[rr]^-{F(f) \ast \quot{\ell}{Y}} & & F(f) \circ \quot{T^2}{Y} \ar[d]^{(\quot{T}{X} \ast \quot{T}{f}) \circ (\quot{T}{f} \ast \quot{T}{Y})} \\
\quot{T}{X} \circ F(f) \ar[rr]_-{\quot{\ell}{X} \ast F(f)} & & \quot{T^2}{X} \circ F(f)
}
\]
commutes, which holds because $(F(f),\quot{T}{f})$ is a (strong) tangent morphism and
\[
\quot{T^2}{f} = (\quot{T}{X} \ast \quot{T}{f}) \circ (\quot{T}{f} \ast \quot{T}{Y}).
\]
\end{proof}
\begin{cor}\label{Cor: Existence of Equivariant Vertical Lift}
Let $F$ be a tangent indexing functor. Then there is a vertical lift natural transformation $\underline{\ell}\colon \underline{T} \Rightarrow \underline{T^2}$.
\end{cor}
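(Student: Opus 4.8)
The plan is to follow the now-familiar template used for every other piece of the pseudolimit tangent structure: each natural transformation on $\PC(F)$ is produced by feeding a modification of pseudonatural transformations into Theorem~\ref{Theorem: Defining functors and nat transforms between equivariant categories and functors}. Lemma~\ref{Lemma: Vertical lift modification} has already produced the modification $\ell\colon T \Rrightarrow T^2$ whose object-local components are the vertical lifts $\quot{\ell}{X}\colon \quot{T}{X} \to \quot{T^2}{X}$ of the tangent structures $(F(X),\quot{\Tbb}{X})$. Applying Theorem~\ref{Theorem: Defining functors and nat transforms between equivariant categories and functors} to this modification yields a natural transformation $\underline{\ell}$ between the endofunctors of $\PC(F)$ associated to the pseudonatural transformations $T$ and $T^2$, namely $\underline{T}$ and $\underline{T}^2$. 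Here I would invoke the identification of $\underline{T}^2$ with the functor induced by the vertical composite $T \circ T$ recorded in the paragraph immediately preceding Lemma~\ref{Lemma: Vertical lift modification}, which plays exactly the role that the identification $\underline{T_2} = \underline{T}_2$ of Corollary~\ref{Cor: Tangent Pullback functors as equivariant functors} plays in the proof of Corollary~\ref{Cor: Bundle addition transformation equivariant category}.

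For concreteness I would then unwind what Theorem~\ref{Theorem: Defining functors and nat transforms between equivariant categories and functors} gives: the component of $\underline{\ell}$ at an object $(A,\Sigma_A) \in \PC(F)_0$ is the family $\{(\quot{\ell}{X})_{\quot{A}{X}}\colon \quot{T}{X}(\quot{A}{X}) \to (\quot{T}{X})^2(\quot{A}{X}) \mid X \in \Cscr_0\}$. That this family is a modification, hence a legitimate morphism in $\PC(F)$, is precisely the content of Lemma~\ref{Lemma: Vertical lift modification}: the squares relating the transition isomorphisms $\tau_f^A$, $\tau_f^{\underline{T}A}$ and $\tau_f^{\underline{T}^2 A}$ to the maps $(\quot{\ell}{X})_{\quot{A}{X}}$ commute because $(F(f),\quot{T}{f})$ is a strong tangent morphism. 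Naturality of $\underline{\ell}$ in $(A,\Sigma_A)$ is inherited componentwise from the naturality of each $\quot{\ell}{X}$ in $F(X)$, so nothing new needs to be checked.

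There is no genuine obstacle here, since this is a corollary and all ingredients are already in place; the only point worth a sentence is the bookkeeping that the codomain is honestly $\underline{T}^2$ rather than merely ``some endofunctor of $\PC(F)$,'' and that is exactly what the discussion before Lemma~\ref{Lemma: Vertical lift modification} supplies. Accordingly the written proof will be essentially one line: apply Theorem~\ref{Theorem: Defining functors and nat transforms between equivariant categories and functors} to the modification $\ell\colon T \Rrightarrow T^2$ of Lemma~\ref{Lemma: Vertical lift modification}.
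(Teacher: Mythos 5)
Your proposal is correct and matches the paper's own proof, which is exactly the one-line application of Theorem~\ref{Theorem: Defining functors and nat transforms between equivariant categories and functors} to the modification $\ell\colon T \Rrightarrow T^2$ from Lemma~\ref{Lemma: Vertical lift modification}. The extra unwinding of components and the identification of the codomain with $\underline{T}^2$ are accurate but not needed beyond what the paper records.
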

\begin{proof}
Apply Theorem \ref{Theorem: Defining functors and nat transforms between equivariant categories and functors} to the vertical lift modification $\ell\colon T \Rrightarrow T^2$.
\end{proof}
\begin{lem}\label{Lemma: Equivariant vertical lift is bundle morphism}
	Let $F$ be a tangent indexing functor on $X$ and let $A$ be an object of $\PC(F)$. Then $(\underline{\ell}_A,0_{A})$ is a bundle morphism in $\Cscr$.
\end{lem}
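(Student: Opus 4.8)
The statement asserts that the three squares appearing in Definition~\ref{Defn: Tangent Category}(3) — the one relating $\underline{\ell}_A$, $\underline{p}_A$, $\underline{T}\,\underline{p}_A$ and $\underline{0}_A$; the additivity square relating $\underline{\ell}_A$ and $\underline{+}_A$; and the square relating $\underline{\ell}_A$ to the zero maps — all commute in $\PC(F)$. Each of these is an equality of parallel morphisms of $\PC(F)$, and by the explicit description recalled in Remark~\ref{Rmk: limiting_pseudocone}, a morphism of $\PC(F)$ is a family $\{\quot{\rho}{X}\}_{X \in \Cscr_0}$ of maps in the fibres, with composition computed fibrewise. Hence two parallel morphisms of $\PC(F)$ coincide as soon as their $X$-components agree in $F(X)$ for every $X \in \Cscr_0$, so the plan is to verify each of the three squares after applying the projection $p_X\colon \PC(F) \to F(X)$ and to reduce to the fact that each $(F(X),\quot{\Tbb}{X})$ is already a tangent category.

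The next step is to pin down the $X$-components of all the data occurring in the three squares. By Corollary~\ref{Cor: Existence of Equivariant Vertical Lift} (obtained by feeding the modification $\ell$ of Lemma~\ref{Lemma: Vertical lift modification} into Theorem~\ref{Theorem: Defining functors and nat transforms between equivariant categories and functors}) the $X$-component of $\underline{\ell}_A$ is $\quot{\ell}{X}_{\quot{A}{X}}$; likewise the $X$-components of $\underline{p}_A$, $\underline{0}_A$ and $\underline{+}_A$ are $\quot{p}{X}_{\quot{A}{X}}$, $\quot{0}{X}_{\quot{A}{X}}$ and $\quot{+}{X}_{\quot{A}{X}}$ by Corollaries~\ref{Cor: Bundle transformation equivariant tan functor}, \ref{Cor: Construction of equivariant zero transformation} and \ref{Cor: Bundle addition transformation equivariant category}, while the $X$-fibre functors of $\underline{T}$ and $\underline{T}^2$ are $\quot{T}{X}$ and $(\quot{T}{X})^2$. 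For the pullbacks that occur — $\underline{T}_2 A$ and the pullback $\underline{T}^2 A \times_{\underline{T}A} \underline{T}^2 A = \underline{T}(\underline{T}_2 A)$ of $\underline{T}\,\underline{p}_A$ with itself — I would invoke Lemma~\ref{Lemma: Existence of tangent pullbacks in FGX}, Theorem~\ref{Thm: Limits in pseudcone categories} and Corollary~\ref{Cor: Tangent powers preserve pullback functors} to conclude that $p_X$ carries them to $\quot{T_2}{X}(\quot{A}{X})$ and $\quot{T}{X}(\quot{T_2}{X}(\quot{A}{X}))$ together with the expected projections, so that $p_X$ carries the pairing map $\langle\underline{\ell}_A\circ\pi_1,\underline{\ell}_A\circ\pi_2\rangle$ to $\langle \quot{\ell}{X}_{\quot{A}{X}}\circ\quot{\pi_1}{X},\, \quot{\ell}{X}_{\quot{A}{X}}\circ\quot{\pi_2}{X}\rangle$.

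With these identifications in hand, applying $p_X$ to each of the three squares produces precisely the corresponding square of Definition~\ref{Defn: Tangent Category}(3) for the tangent structure $\quot{\Tbb}{X}$ at the object $\quot{A}{X}$. These commute because $(F(X),\quot{\Tbb}{X})$ is a tangent category, so that $(\quot{\ell}{X}_{\quot{A}{X}},\quot{0}{X}_{\quot{A}{X}})$ is a bundle morphism in $F(X)$. Since this holds for every $X \in \Cscr_0$, the three original squares commute in $\PC(F)$, and hence $(\underline{\ell}_A,\underline{0}_A)$ is a bundle morphism there.

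The only genuinely delicate point is the bookkeeping in the second paragraph: one must be sure that the pullbacks $\underline{T}A \times_A \underline{T}A$ and $\underline{T}^2 A \times_{\underline{T}A}\underline{T}^2 A$ used to phrase the additivity square really are the ones computed fibrewise, so that the $X$-component of the pairing map is what one expects. This is exactly where Theorem~\ref{Thm: Limits in pseudcone categories} together with the limit-preservation built into the strong tangent morphisms $F(f)$ does the real work; once that is settled, the rest is a routine fibrewise application of the tangent-category axioms.
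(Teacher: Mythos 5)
Your proposal is correct and matches the paper's own argument: the paper likewise reduces the three bundle-morphism squares to their object-local components in each $F(X)$ and then invokes Axiom 3 of Definition \ref{Defn: Tangent Category} for the tangent structure $\quot{\Tbb}{X}$. Your extra bookkeeping identifying the $X$-components of the pullbacks and pairing maps via Theorem \ref{Thm: Limits in pseudcone categories} is left implicit in the paper but is exactly the justification needed for the fibrewise reduction.
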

\begin{proof}
	It suffices to show that the squares%\todo{Fix teh top horizontal arrow in the middle diagram}
	\[
	\begin{tikzcd}
	\underline{T}A \ar[r]{}{\underline{\ell}_A} \ar[d, swap]{}{\underline{p}_A} & \underline{T}^2A \ar[d]{}{(\underline{T} \ast \underline{p})_{A}} & \underline{T}A \times_{A} \underline{T}A \ar[d, swap]{}{\underline{+}_{A}} \ar[rr]{}{\langle\underline{\ell}_A \circ \pi_1, \underline{\ell}_A \circ \pi_2 \rangle} & & \underline{T}^2A \times_{\underline{T}A}\underline{T}^2A \ar[d]{}{(\underline{T} \ast \underline{+})_A} & A \ar[d, swap]{}{ \underline{0}_A} \ar[r]{}{\underline{0}_A} & \underline{T}A \ar[d]{}{(\underline{T} \ast \underline{0})_A} \\
	A \ar[r, swap]{}{\underline{0}_A} & \underline{T}A & \underline{T}A \ar[rr, swap]{}{\underline{\ell}_A} & & \underline{T}^2A & \underline{T}A \ar[r, swap]{}{\underline{\ell}_A} & \underline{T}^2A
	\end{tikzcd}
	\]
	all commute in $\PC(F)$. However, because these amount to checking the commutativity of diagrams in $\PC(F)$, it suffices to check that the diagrams all commute object-locally. That is, it suffices to prove that for all $X \in \Cscr_0$, the squares%\todo{Fix the top horizontal map in the middle square}
	\[
	\begin{tikzcd}
	\quot{T}{X}\left(\quot{A}{X}\right) \ar[r]{}{\quot{\ell}{X}_{\quot{A}{X}}} \ar[d, swap]{}{\quot{p}{X}_{\quot{A}{X}}} & \quot{T^2}{X}\left(\quot{A}{X}\right) \ar[d]{}{(\quot{T}{X} \ast \quot{p}{X})_{\quot{A}{X}}}  \\
	\quot{A}{X} \ar[r, swap]{}{\quot{0}{X}_{\quot{A}{X}}} & \quot{T}{X}\left(\quot{A}{X}\right) 
	\end{tikzcd}
	\]
	\[
	\begin{tikzcd}
	\quot{T}{X}\left(\quot{A}{X}\right) \times_{\quot{A}{X}} \quot{T}{X}\left(\quot{A}{X}\right) \ar[d, swap]{}{\quot{+}{X}_{\quot{A}{X}}} \ar[rrrr]{}{\langle\quot{\ell}{X}_{\quot{A}{X}} \circ \quot{\pi_1}{X}, \quot{\ell}{X}_{\quot{A}{X}} \circ \quot{\pi_2}{X} \rangle} & & & & \quot{T^2}{X}\left(\quot{A}{X}\right) \times_{\quot{T}{X}\left(\quot{A}{X}\right)}\quot{T^2}{X}\left(\quot{A}{X}\right) \ar[d]{}{(\quot{T}{X} \ast \quot{+}{X})_{\quot{A}{X}}} \\
	\quot{T}{X}\left(\quot{A}{X}\right) \ar[rrrr, swap]{}{\quot{\ell}{X}_{\quot{A}{X}}} & & & & \quot{T^2}{X}\left(\quot{A}{X}\right)
	\end{tikzcd}
	\]
	\[
	\begin{tikzcd}
	\quot{A}{X} \ar[d, swap]{}{\quot{0}{X}_{\quot{A}{X}}} \ar[r]{}{\quot{0}{X}_{\quot{A}{X}}} & \quot{T}{X}\left(\quot{A}{X}\right) \ar[d]{}{(\quot{T}{X} \ast \quot{0}{X})_{\quot{A}{X}}} \\
	\quot{T}{X}\left(\quot{A}{X}\right) \ar[r, swap]{}{\quot{\ell}{X}_{\quot{A}{X}}} & \quot{T^2}{X}\left(\quot{A}{X}\right)
	\end{tikzcd}
	\]
	all commute. However, this follows by Axiom 3 of Definition \ref{Defn: Tangent Category} applied to $(F(X),\quot{\Tbb}{X})$.
\end{proof}

\begin{lem}\label{Lemma: Canonical flip modification}
Let $F$ be a tangent indexing functor on $X$. There is then a canonical flip modification $c\colon T^2 \Rrightarrow T^2$ as in the diagram
\[
\begin{tikzcd}
\Cscr^{\op} \ar[rrr, bend left = 30, ""{name = U}]{}{F} \ar[rrr, bend right = 30, swap, ""{name = B}]{}{F} & & &\fCat \ar[from = U, to = B, Rightarrow, shorten <= 4pt, shorten >= 4pt, bend right = 30, swap, ""{name = L}]{}{T^2} \ar[from = U, to = B, Rightarrow, shorten <= 4pt, shorten >= 4pt, bend left = 30, ""{name = R}]{}{T^2} \ar[from = L, to = R, symbol = \underset{c}{\Rrightarrow}, swap]
\end{tikzcd}
\]
whose object-local natural transformations are given by the canonical flips 
\[
\quot{c}{X}\colon \quot{T^2}{X} \to \quot{T^2}{X}
\]
of the tangent structure $(F(X),\quot{\Tbb}{X})$.
\end{lem}
\begin{proof}
To show that $c$ determines a modification it suffices to prove that for any $f\colon X \to Y$ in $\Cscr_1$ the diagram of functors and natural transformations
\[
\xymatrix{
F(f) \circ \quot{T^2}{Y} \ar[d]_{F(f) \ast \quot{c}{Y}} \ar[r]^-{\quot{T^2}{f}} & \quot{T^2}{X} \circ F(f) \ar[d]^{\quot{c}{X} \ast F(f)} \\
F(f) \circ \quot{T^2}{Y} \ar[r]_-{\quot{T^2}{f}} & \quot{T^2}{X} \circ F(f)
}
\]
commutes. However, this is equivalent to asking that the diagram
\[
\xymatrix{
F(f) \circ \quot{T^2}{Y} \ar[d]_{F(f) \ast \quot{c}{Y}} \ar[rrr]^-{(\quot{T}{X} \ast \quot{T}{f}) \circ (\quot{T}{f} \ast \quot{T}{Y})} & & & \quot{T^2}{X} \circ F(f) \ar[d]^{\quot{c}{X} \ast F(f)} \\
F(f) \circ \quot{T^2}{Y} \ar[rrr]_-{(\quot{T}{X} \ast \quot{T}{f}) \circ (\quot{T}{f} \ast \quot{T}{Y})} & & & \quot{T^2}{X} \circ F(f)
}
\]
commutes, which holds because $(F(f), \quot{T}{f})$ is a tangent morphism and because
\[
\!\quot{T^2}{f} = (\!\quot{T}{X} \ast \!\quot{T}{f}) \circ (\!\quot{T}{f} \ast \!\quot{T}{Y}).
\]
\end{proof}
\begin{cor}\label{Cor: Equivariant canonical flip}
For any tangent indexing functor $F$ on $X$ there is a canonical flip natural transformation $\underline{c}\colon \underline{T}^2 \Rightarrow \underline{T}^2$.
\end{cor}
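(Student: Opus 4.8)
The plan is to follow the template of Corollaries \ref{Cor: Construction of equivariant zero transformation}, \ref{Cor: Bundle addition transformation equivariant category}, and \ref{Cor: Existence of Equivariant Vertical Lift}: feed the canonical flip modification $c\colon T^2 \Rrightarrow T^2$ supplied by Lemma \ref{Lemma: Canonical flip modification} into Theorem \ref{Theorem: Defining functors and nat transforms between equivariant categories and functors}. A modification between pseudonatural endotransformations of $F$ is precisely a $2$-cell in $\Bicat(\Cscr^{\op},\fCat)$, so pre-whiskering it by an object $A \in \PC(F)_0$ (regarded as a transformation $\const(\mathbbm{1}) \Rightarrow F$) yields a natural transformation between the associated endofunctors of $\PC(F)$; this is exactly the content of the second half of Theorem \ref{Theorem: Defining functors and nat transforms between equivariant categories and functors}.

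First I would recall, as in the paragraph preceding Lemma \ref{Lemma: Vertical lift modification}, that the endofunctor of $\PC(F)$ associated by Theorem \ref{Theorem: Defining functors and nat transforms between equivariant categories and functors} to the vertical composite $T^2 = T \circ T\colon F \Rightarrow F$ is literally the composite $\underline{T}^2 = \underline{T} \circ \underline{T}$; here one uses that $T^2$ has fibre functors $\quot{(T^2)}{X} = (\quot{T}{X})^2$ and fibre transformations $\quot{T^2}{f} = (\quot{T}{X} \ast \quot{T}{f}) \circ (\quot{T}{f} \ast \quot{T}{Y})$, and that plugging this data into the transition-isomorphism formula of Theorem \ref{Theorem: Defining functors and nat transforms between equivariant categories and functors} reproduces the transition isomorphisms obtained by applying $\underline{T}$ twice (the compositional analogue of Corollary \ref{Cor: Tangent Pullback functors as equivariant functors}). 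Applying Theorem \ref{Theorem: Defining functors and nat transforms between equivariant categories and functors} to $c\colon T^2 \Rrightarrow T^2$ then produces the desired natural transformation $\underline{c}\colon \underline{T}^2 \Rightarrow \underline{T}^2$, whose component at $A$ is the pseudocone morphism
\[
\underline{c}_A = \left\lbrace \quot{c}{X}_{\quot{A}{X}}\colon \quot{T^2}{X}\left(\quot{A}{X}\right) \to \quot{T^2}{X}\left(\quot{A}{X}\right) \; | \; X \in \Cscr_0 \right\rbrace,
\]
the collection of object-local canonical flips; that this tuple really is a modification (hence a morphism of $\PC(F)$) is precisely what Lemma \ref{Lemma: Canonical flip modification} established.

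There is essentially no obstacle here beyond bookkeeping: the one point requiring care is to make sure the modification square checked in Lemma \ref{Lemma: Canonical flip modification} is phrased against the fibre transformations $\quot{T^2}{f} = (\quot{T}{X} \ast \quot{T}{f}) \circ (\quot{T}{f} \ast \quot{T}{Y})$ of the genuine composite $T \circ T$, so that the identification $\underline{T}^2 = \underline{T^2}$ lets us transport it verbatim; once that is in place the statement follows immediately and no further computation is needed.
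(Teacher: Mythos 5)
Your proposal is correct and matches the paper's (implicit, one-line) argument exactly: the corollary is obtained by applying Theorem \ref{Theorem: Defining functors and nat transforms between equivariant categories and functors} to the canonical flip modification $c\colon T^2 \Rrightarrow T^2$ of Lemma \ref{Lemma: Canonical flip modification}, just as Corollary \ref{Cor: Existence of Equivariant Vertical Lift} is obtained from the vertical lift modification. Your additional care about identifying $\underline{T}^2$ with the functor associated to the vertical composite $T \circ T$, using the fibre transformations $\quot{T^2}{f} = (\quot{T}{X} \ast \quot{T}{f}) \circ (\quot{T}{f} \ast \quot{T}{Y})$, is exactly the bookkeeping the paper sets up in the discussion preceding Lemma \ref{Lemma: Vertical lift modification}.
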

\begin{lem}\label{Lemma: Equivariant canonical flip is bundle morphism}
Let $F$ be a tangent indexing functor on $X$ and let $A$ be an object of $\PC(F)$. Then $(\underline{c}_A,\id_{\underline{T}A})$ is a bundle morphism in $\Cscr$.
\end{lem}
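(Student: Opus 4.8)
The plan is to follow the proof of Lemma \ref{Lemma: Equivariant vertical lift is bundle morphism} almost verbatim, with part 4 of Definition \ref{Defn: Tangent Category} (the canonical-flip axiom) replacing part 3. To say that $(\underline{c}_A,\id_{\underline{T}A})$ is a bundle morphism is to assert the commutativity, in $\PC(F)$, of the three squares of part 4 of Definition \ref{Defn: Tangent Category} formed from the pseudolimit tangent data $\underline{T},\underline{p},\underline{0},\underline{+},\underline{c}$ in place of $T,p,0,+,c$: the square expressing $\underline{p}_{\underline{T}A}\circ\underline{c}_A = (\underline{T}\ast\underline{p})_A$, the square expressing that $\underline{c}_A$ is additive on the pullback bundles $\underline{T}^2 A\times_{\underline{T}A}\underline{T}^2 A$, and the square expressing $\underline{c}_A\circ(\underline{T}\ast\underline{0})_A = (\underline{0}\ast\underline{T})_A$.

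First I would observe that each of these is an equality of parallel morphisms in $\PC(F)$, and that the projection functors $p_X\colon \PC(F)\to F(X)$ are jointly faithful, since by the description in Remark \ref{Rmk: limiting_pseudocone} a morphism of $\PC(F)$ is literally the family of its components; hence it suffices to verify the three squares after applying $p_X$, i.e., object-locally in each $F(X)$. Next I would invoke the now-standard object-local dictionary to rewrite these object-local squares: Corollary \ref{Cor: Equivariant canonical flip} together with Theorem \ref{Theorem: Defining functors and nat transforms between equivariant categories and functors} identifies the $X$-component of $\underline{c}_A$ with $\quot{c}{X}_{\quot{A}{X}}$; the remarks preceding Lemma \ref{Lemma: Vertical lift modification} on vertical composites of pseudonatural transformations identify the $X$-component of $\underline{T}^2 A$ with $(\quot{T}{X})^2(\quot{A}{X})$; and Lemmas \ref{Lemma: Construction of Bundle Map}, \ref{Lemma: Zero modification}, \ref{Lemma: Addition modification} (via Theorem \ref{Theorem: Defining functors and nat transforms between equivariant categories and functors}) identify the $X$-components of $\underline{p}$, $\underline{0}$, $\underline{+}$ with $\quot{p}{X}$, $\quot{0}{X}$, $\quot{+}{X}$. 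Under this dictionary the three object-local squares are precisely the three squares of part 4 of Definition \ref{Defn: Tangent Category} for the tangent category $(F(X),\quot{\Tbb}{X})$, which commute by hypothesis; therefore the original squares commute in $\PC(F)$, and $(\underline{c}_A,\id_{\underline{T}A})$ is a bundle morphism.

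The only point requiring care — and the step I would flag as the main obstacle — is making sure the pullbacks $\underline{T}^2 A\times_{\underline{T}A}\underline{T}^2 A$ appearing in the additive square (taken over $(\underline{T}\ast\underline{p})_A$ on one side and over $\underline{p}_{\underline{T}A}$ on the other) actually exist in $\PC(F)$ and are computed componentwise, so that the object-local reduction of that square is legitimate. This follows from Lemma \ref{Lemma: Existence of tangent pullbacks in FGX} and Corollary \ref{Cor: Tangent powers preserve pullback functors} applied to the object $\underline{T}A$ (respectively $A$): each $F(f)$, being a strong tangent morphism, preserves all tangent pullbacks, so Theorem \ref{Thm: Limits in pseudcone categories} builds these pullbacks in $\PC(F)$ with projections inherited object-locally. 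Granting that, the remainder is a routine transcription through the object-local dictionary that I would not carry out in full.
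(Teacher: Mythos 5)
Your proof is correct and follows essentially the same route as the paper: both reduce the three bundle-morphism squares to their object-local counterparts in each $F(X)$ and then invoke Axiom 4 of Definition \ref{Defn: Tangent Category} for the tangent structure $\quot{\Tbb}{X}$. Your extra care about the componentwise computation of the pullbacks $\underline{T}^2A \times_{\underline{T}A} \underline{T}^2A$ is a reasonable addition but is already covered implicitly by the paper's earlier results on tangent pullbacks in $\PC(F)$.
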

\begin{proof}
It suffices to show that the squares
\[
	\begin{tikzcd}
\underline{T}^2A \ar[r]{}{\underline{c}_A} \ar[d, swap]{}{\underline{T}\underline{p}_A} & \underline{T}^2A \ar[d]{}{\underline{p}_{\underline{T}A}} & \underline{T}^2A \times_{\underline{T}A} \underline{T}^2A \ar[d, swap]{}{(\underline{T} \ast \underline{+})_{A}} \ar[rr]{}{\langle\underline{c}_A \circ \pi_1, \underline{c}_A\circ \pi_2\rangle} & & \underline{T}^2A \times_{\underline{T}A}\underline{T}^2A \ar[d]{}{(\underline{+} \ast \underline{T})_A} & \underline{T}A \ar[d, swap]{}{(\underline{T} \ast 0)_A} \ar[r]{}{\id_{\underline{T}A}} & \underline{T}A \ar[d]{}{(0 \ast \underline{T})_A} \\
\underline{T}A \ar[r, swap]{}{\id_{\underline{T}A}} & \underline{T}A & \underline{T}^2A \ar[rr, swap]{}{\underline{c}_A} & & \underline{T}^2A & \underline{T}^2A \ar[r, swap]{}{\underline{c}_A} & \underline{T}^2A
\end{tikzcd}
\]
all commute in $\PC(F)$. However, because these amount to checking the commutativity of diagrams in $\PC(F)$, it suffices to check that the diagrams all commute object-locally. That is, it suffices to prove that for all $X \in \Cscr_0$, the squares
\[
	\begin{tikzcd}
\quot{T^2}{X}\left(\quot{A}{X}\right) \ar[r]{}{\quot{c}{X}_{\quot{A}{X}}} \ar[d, swap]{}{\quot{T}{X}(\quot{p}{X}_{\quot{A}{X}})} & \quot{T^2}{X}\left(\quot{A}{X}\right) \ar[d]{}{(\quot{p}{X} \ast \quot{T}{X})_{\quot{A}{X}}} \\ 
\quot{T}{X}\left(\quot{A}{X}\right) \ar[r, swap]{}{\id_{\quot{T}{X}\quot{A}{X}}} & \quot{T}{X}\left(\quot{A}{X}\right) 
\end{tikzcd}
\]
\[
\begin{tikzcd}
\quot{T^2}{X}\left(\quot{A}{X}\right) \times_{\quot{T}{X}\left(\quot{A}{X}\right)} \quot{T^2}{X}\left(\quot{A}{X}\right) \ar[d, swap]{}{(\quot{T}{X} \ast \quot{+}{X})_{\quot{A}{X}}} \ar[rrrr]{}{\langle\quot{c}{X}_{\quot{A}{X}} \circ \quot{\pi_1}{X}, \quot{c}{X}_{\quot{A}{X}}\circ \quot{\pi_2}{X}\rangle}& & & & \quot{T^2}{X}\left(\quot{A}{X}\right) \times_{\quot{T}{X}\left(\quot{A}{X}\right)}\quot{T^2}{X}\left(\quot{A}{X}\right) \ar[d]{}{(\quot{+}{X} \ast \quot{T}{X})_{\quot{A}{X}}}  \\
\quot{T^2}{X}\left(\quot{A}{X}\right) \ar[rrrr, swap]{}{\quot{c}{X}_{\quot{A}{X}}} & & & & \quot{T^2}{X}\left(\quot{A}{X}\right)
\end{tikzcd}
\]
\[
\begin{tikzcd}
\quot{T}{X}\left(\quot{A}{X}\right) \ar[d, swap]{}{(\quot{T}{X} \ast 0)_{\quot{A}{X}}} \ar[r]{}{\id_{\quot{T}{X}\quot{A}{X}}} & \quot{T}{X}\left(\quot{A}{X}\right) \ar[d]{}{(0 \ast \quot{T}{X})_{\quot{A}{X}}} \\
\quot{T^2}{X}\left(\quot{A}{X}\right) \ar[r, swap]{}{\quot{c}{X}_{\quot{A}{X}}} & \quot{T^2}{X}\left(\quot{A}{X}\right)
\end{tikzcd}
\]
all commute. However, this follows by Axiom 4 of Definition \ref{Defn: Tangent Category} applied to $(F(X),\quot{\Tbb}{X})$.
\end{proof}

\begin{prop}\label{Prop: Coherences of equivairaint vertical lift and canonical flip}
Let $F$ be a tangent indexing functor. Then the canonical flip and vertical flip transformations satisfy the following coherences: $\underline{c}^2 = \iota_{\underline{F}}$, $\underline{c} \circ \underline{\ell} = \underline{\ell}$, and the diagrams of functors and natural transformations
\[
\begin{tikzcd}
\underline{T} \ar[r]{}{\underline{\ell}} \ar[d, swap]{}{\underline{\ell}} & \underline{T}^2 \ar[d]{}{\underline{T} \ast \underline{\ell}} & \underline{T}^3 \ar[d, swap]{}{\underline{c} \ast \underline{T}} \ar[r]{}{\underline{T} \ast \underline{c}} & T^3 \ar[r]{}{\underline{c} \ast \underline{T}} & \underline{T}^3\ar[d]{}{\underline{c} \ast \underline{T}} \\
T^2 \ar[r, swap]{}{\underline{\ell} \ast \underline{T}} & \underline{T}^3 & \underline{T}^3 \ar[r, swap]{}{\underline{T} \ast \underline{c}} & \underline{T}^3 \ar[r, swap]{}{\underline{c} \ast \underline{T}} & \underline{T}^3
\end{tikzcd}
\begin{tikzcd}
\underline{T}^2 \ar[d, swap]{}{\underline{c}} \ar[r]{}{\underline{\ell} \ast \underline{T}} & \underline{T}^3 \ar[r]{}{\underline{T} \ast \underline{c}} & \underline{T}^3 \ar[d]{}{\underline{c} \ast \underline{T}} \\
\underline{T}^2 \ar[rr, swap]{}{\underline{T} \ast \underline{\ell}} & & \underline{T}^3
\end{tikzcd}
\]
all commute.
\end{prop}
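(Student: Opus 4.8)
\emph{Proof plan.} The strategy is to reduce all six assertions to the corresponding coherence equations of Axiom~5 of Definition~\ref{Defn: Tangent Category} in each tangent category $(F(X),\quot{\Tbb}{X})$, following the same pattern used for Lemmas~\ref{Lemma: Equivariant vertical lift is bundle morphism} and~\ref{Lemma: Equivariant canonical flip is bundle morphism}. The guiding principle is that a natural transformation between two functors $\PC(F)\to\PC(F)$ is determined by its components, each component is a morphism of $\PC(F)$, and a morphism of $\PC(F)$ is in turn determined by the family of its object-local pieces $\quot{\rho}{X}$ in the categories $F(X)$; hence any equation between natural transformations on $\PC(F)$ can be verified object-locally, i.e.\ after applying each projection $p_X\colon \PC(F)\to F(X)$.

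First I would record the object-local description of every functor and transformation in the statement. By Lemma~\ref{Lemma: Constructing the tangent functor}, Corollary~\ref{Cor: Pullback power of tangent functor modification arbitrary powers}, Corollary~\ref{Cor: Tangent Pullback functors as equivariant functors}, and the discussion of vertical composites of pseudonatural transformations immediately preceding Lemma~\ref{Lemma: Vertical lift modification}, the functor $\underline{T}^n$ is the pseudocone functor associated to the $n$-fold composite pseudonatural transformation $T^n = T\circ\cdots\circ T\colon F\Rightarrow F$, whose fibre functors are $(\quot{T}{X})^n$ and whose fibre isomorphisms are the evident iterated whiskerings of the $\quot{T}{f}$. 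Likewise, by Theorem~\ref{Theorem: Defining functors and nat transforms between equivariant categories and functors} applied to the modifications $\ell\colon T\Rrightarrow T^2$ and $c\colon T^2\Rrightarrow T^2$ of Lemmas~\ref{Lemma: Vertical lift modification} and~\ref{Lemma: Canonical flip modification}, the natural transformations $\underline{\ell}$ and $\underline{c}$ have object-local pieces $\quot{\ell}{X}$ and $\quot{c}{X}$. The one genuine bookkeeping step is to check that the whiskerings $\underline{c}\ast\underline{T}$, $\underline{T}\ast\underline{c}$, $\underline{\ell}\ast\underline{T}$, $\underline{T}\ast\underline{\ell}$ and the vertical composites $\underline{c}\circ\underline{c}$, $\underline{c}\circ\underline{\ell}$ of transformations on $\PC(F)$ have as their object-local pieces exactly the corresponding whiskerings and composites $\quot{c}{X}\ast\quot{T}{X}$, $\quot{T}{X}\ast\quot{c}{X}$, $\quot{\ell}{X}\ast\quot{T}{X}$, $\quot{T}{X}\ast\quot{\ell}{X}$, $\quot{c}{X}\circ\quot{c}{X}$, $\quot{c}{X}\circ\quot{\ell}{X}$ in $F(X)$; this follows from the strict functoriality of $\PC(-)$ together with the explicit formulas of Theorem~\ref{Theorem: Defining functors and nat transforms between equivariant categories and functors} describing how composition and whiskering of modifications are transported to $\PC(F)$.

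Once this translation is in place, each of the six assertions --- $\underline{c}^2 = \iota_{\underline{F}}$, $\underline{c}\circ\underline{\ell}=\underline{\ell}$, and the three pentagon/hexagon-type diagrams --- becomes, after applying $p_X$, precisely the statement that the corresponding diagram of Axiom~5 of Definition~\ref{Defn: Tangent Category} commutes in $(F(X),\quot{\Tbb}{X})$, which holds because $F(X)$ is a tangent category. Since this holds for every $X\in\Cscr_0$ and equality of natural transformations on $\PC(F)$ is detected object-locally, the coherences hold in $[\PC(F),\PC(F)]$.

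I expect the only real obstacle to be the bookkeeping of the second paragraph: carefully matching the globally-defined whiskerings and composites against their object-local counterparts, since $\underline{T}^2$ and $\underline{T}^3$ are not defined literally as powers of a single endofunctor but as the pseudocone functors of composite pseudonatural transformations, so one must track the structure $2$-cells $\quot{T}{f}$ (and the $\phi_{f,g}$ of $F$) in the transition isomorphisms of objects such as $\underline{T}^3 A$. However, since every modification involved is built fibrewise from the data of a tangent indexing functor and Theorem~\ref{Theorem: Defining functors and nat transforms between equivariant categories and functors} already packages exactly this transport, no genuinely new computation is required beyond what appears in the proofs of Lemmas~\ref{Lemma: Vertical lift modification}--\ref{Lemma: Equivariant canonical flip is bundle morphism}.
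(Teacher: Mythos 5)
Your proof is correct and follows essentially the same route as the paper: reduce the six coherences via Theorem \ref{Theorem: Defining functors and nat transforms between equivariant categories and functors} to identities of modifications checked object-locally in each tangent category $(F(X),\quot{\Tbb}{X})$, where they hold by Axiom 5 of Definition \ref{Defn: Tangent Category}. The bookkeeping about whiskerings and composites being computed fibrewise, which you flag as the only real obstacle, is exactly what the paper leaves implicit, so your write-up is if anything slightly more careful.
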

\begin{proof}
It suffices through Theorem \ref{Theorem: Defining functors and nat transforms between equivariant categories and functors} to show that the corresponding identities and commuting diagrams hold for the corresponding pseudonatural transformations and modifications. In particular, it suffices to verify each corresponding identity in $F(X)$ for all $X \in \Cscr_0$. However, each identity holds because each object is given in terms of the corresponding functor/natural transformation in the tangent structure $(F(X),\quot{\Tbb}{X})$.
\end{proof}

As a last lonely proposition we show the universality of the vertical lift in $\PC(F)$. This is our final ingredient in proving that $\PC(F)$ carries the structure of a tangent category.
\begin{prop}\label{Prop: Equivariant universality vertical lift}
Let $F$ be a tangent indexing functor on $X$. Then for any $A \in \PC(F)_0$ the diagram
\[
\begin{tikzcd}
\underline{T}_2A \ar[rrrr]{}{\left(\underline{T} \ast \underline{+}\right)_A \circ \left\langle \underline{\ell}_A \circ \pi_1, (\underline{0} \ast \underline{T})_A \circ \pi_2\right\rangle} & & & & \underline{T}^2A \ar[shift left = 0.5ex, rr]{}{(\underline{T} \ast \underline{p})_A} \ar[shift left = -0.5, rr, swap]{}{\underline{0}_{A} \circ \underline{p}_{A} \circ \underline{p}_{\underline{T}A}} & & \underline{T}A
\end{tikzcd}
\]
is an equalizer in $\PC(F)$.
\end{prop}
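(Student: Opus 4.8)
The plan is to reduce the statement to the corresponding equalizer in each fibre category $F(X)$ by means of Theorem \ref{Thm: Limits in pseudcone categories}, in exactly the same object-local spirit as every previous construction in this section. First I would observe that the parallel pair $(\underline{T} \ast \underline{p})_A, \underline{0}_A \circ \underline{p}_A \circ \underline{p}_{\underline{T}A} \colon \underline{T}^2 A \to \underline{T}A$ and the candidate equalizing map $(\underline{T} \ast \underline{+})_A \circ \langle \underline{\ell}_A \circ \pi_1, (\underline{0} \ast \underline{T})_A \circ \pi_2\rangle$ are genuine morphisms of $\PC(F)$, being built out of the natural transformations $\underline{p}$, $\underline{0}$, $\underline{\ell}$, $\underline{+}$ produced in Corollaries \ref{Cor: Bundle transformation equivariant tan functor}, \ref{Cor: Construction of equivariant zero transformation}, \ref{Cor: Existence of Equivariant Vertical Lift}, \ref{Cor: Bundle addition transformation equivariant category} together with the pullback projections out of $\underline{T}_2 A = [\underline{T}]_2 A$ (Corollary \ref{Cor: Tangent Pullback functors as equivariant functors}). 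Since the projection $\pi_X\colon \PC(F) \to F(X)$ strictly commutes with $\underline{T}$, $\underline{T}_2$ and $\underline{T}^2$ and carries $\underline{p},\underline{0},\underline{\ell},\underline{+}$ to $\quot{p}{X},\quot{0}{X},\quot{\ell}{X},\quot{+}{X}$, unwinding the whiskerings and composites shows that applying $\pi_X$ to the displayed diagram yields precisely the universality-of-vertical-lift diagram
\[
\quot{T_2}{X}(\quot{A}{X}) \to \quot{T^2}{X}(\quot{A}{X}) \rightrightarrows \quot{T}{X}(\quot{A}{X})
\]
of the tangent structure $(F(X),\quot{\Tbb}{X})$, with the same three maps as in Part 6 of Definition \ref{Defn: Tangent Category}.

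Next I would invoke Part 6 of Definition \ref{Defn: Tangent Category} applied to $(F(X),\quot{\Tbb}{X})$: this fibre diagram is an equalizer in $F(X)$ for every $X \in \Cscr_0$. Moreover, for each $f\colon X \to Y$ in $\Cscr_1$ the functor $F(f)$ preserves these equalizers, because $(F(f),\quot{T}{f})$ is a \emph{strong} tangent morphism in the sense of Definition \ref{Defn: Tangent Morphism}, and strong tangent morphisms preserve the equalizers and pullbacks of the tangent structure. Theorem \ref{Thm: Limits in pseudcone categories} then applies to the parallel-pair diagram $\underline{T}^2 A \rightrightarrows \underline{T} A$ in $\PC(F)$ and furnishes an equalizer whose object-local components are exactly the fibre equalizers $\quot{T_2}{X}(\quot{A}{X})$ and whose transition isomorphisms are the comparison maps induced by the limit-preservation isomorphisms of the functors $F(f)$.

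Finally I would identify this created equalizer with $\underline{T}_2 A$ equipped with the displayed map. Its object components $\quot{T_2}{X}(\quot{A}{X})$ coincide with those of $\underline{T}_2 A = [\underline{T}]_2 A$, and its transition isomorphisms coincide with the ones described for $\underline{T}_2 A$ in Proposition \ref{Prop: Existence of pullback pseudonatural transformation} and Corollary \ref{Cor: Tangent Pullback functors as equivariant functors}, since in both constructions the transition maps are the unique comparison maps witnessing that $F(f)$ preserves the relevant tangent (pullback, then equalizer) limit cone. Together with the fibrewise computation of the first paragraph — which shows the displayed map has the correct object-local components — the uniqueness clause in Theorem \ref{Thm: Limits in pseudcone categories} (equivalently, the fact that a morphism of $\PC(F)$ that is object-locally an isomorphism is an isomorphism, applied to the comparison map $\underline{T}_2 A \to \mathrm{(created\ equalizer)}$) lets me conclude that the displayed diagram is itself an equalizer in $\PC(F)$.

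The step I expect to be the main obstacle is the bookkeeping in the last two paragraphs: carefully matching the whiskered and vertically-composed (pseudo)natural transformations against the fibre-wise tangent data under $\pi_X$, and confirming that the two a priori differently-constructed families of transition isomorphisms — the one coming from Theorem \ref{Thm: Limits in pseudcone categories} and the one already carried by $\underline{T}_2 A$ — genuinely agree. Everything else is a direct appeal to the already-established fibrewise description of limits in $\PC(F)$ together with the tangent-category axioms in each $F(X)$.
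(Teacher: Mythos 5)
Your proposal is correct and follows essentially the same route as the paper's proof: reduce to the object-local diagrams via Theorem \ref{Thm: Limits in pseudcone categories}, use that each $(F(f),\!\quot{T}{f})$ is a strong tangent morphism to guarantee preservation of the relevant limits, and then invoke the universality of the vertical lift (Part 6 of Definition \ref{Defn: Tangent Category}) in each tangent category $(F(X),\!\quot{\Tbb}{X})$. Your extra bookkeeping identifying the created equalizer with $\underline{T}_2A$ is a more careful spelling-out of a step the paper leaves implicit, but it is not a different argument.
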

\begin{proof}
Recall that because each pair $(F(f),\quot{T}{f})$ for $f \in \Cscr_1$ is a strong tangent morphism, all the pullbacks expressing the universality of the vertical lift are preserved by $F(f)$. Thus, in order to prove the given diagram is an equalizer it suffices by Theorem \ref{Thm: Limits in pseudcone categories} to check that for every $X \in \Cscr_0$, the diagram
\[
\begin{tikzcd}
\quot{T_2}{X}\left(\quot{A}{X}\right) \ar[rrrrrr]{}{\left(\quot{T}{X} \ast \quot{+}{X}\right)_{\quot{A}{X}} \circ \left\langle \quot{\ell}{X}_{\quot{A}{X}} \circ \pi_1, (\quot{0}{X} \ast \quot{T}{X})_{\quot{A}{X}} \circ \pi_2\right\rangle} & & & & & & \quot{T}{X}^2\left(\quot{A}{X}\right) \ar[shift left = 0.5ex, rrrr]{}{(\quot{T}{X} \ast \underline{p})_{\quot{A}{X}}} \ar[shift left = -0.5, rrrr, swap]{}{\underline{0}_{\quot{A}{X}} \circ \underline{p}_{\quot{A}{X}} \circ \underline{p}_{\quot{T}{X}\left(\quot{A}{X}\right)}} & & & & \quot{T}{X}\left(\quot{A}{X}\right)
\end{tikzcd}
\]
is an equalizer. However, this holds because $(F(X),\quot{\Tbb}{X}) = (F(X),\quot{T}{X},\quot{p}{X}, \quot{+}{X}, \quot{0}{X}, \quot{\ell}{X}, \quot{c}{X})$ is a tangent category.
\end{proof}

With this we can finally prove that when $F$ is a tangent indexing functor the pseudolimit category, $\PC(F)$ is a tangent category.

\begin{Theorem}\label{Thm: Pre-Equivariant Tangent Category}
If $F\colon \Cscr^{\op} \to \fCat$ is a tangent indexing functor then $\PC(F)$ is a tangent category where the tangent structure $\Tbb$ is given by $\Tbb = (\underline{T}, \underline{p}, \underline{0}, \underline{+}, \underline{\ell}, \underline{c})$ for the functors and natural transformations determined in Lemma \ref{Lemma: Constructing the tangent functor} and in Corollaries \ref{Cor: Bundle transformation equivariant tan functor}, \ref{Cor: Construction of equivariant zero transformation}, \ref{Cor: Bundle addition transformation equivariant category}, \ref{Cor: Existence of Equivariant Vertical Lift}, \ref{Cor: Equivariant canonical flip}.
\end{Theorem}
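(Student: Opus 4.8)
The plan is to verify, one at a time, the six conditions of Definition \ref{Defn: Tangent Category} for the tuple $\Tbb = (\underline{T}, \underline{p}, \underline{0}, \underline{+}, \underline{\ell}, \underline{c})$ assembled in the preceding lemmas and corollaries. Essentially every ingredient is already in hand, so the body of the argument consists of collecting the pieces in the right order and observing that each tangent-category axiom, being the assertion that a diagram commutes in $\PC(F)$ or that a cone in $\PC(F)$ is limiting, can be checked object-locally in the tangent categories $(F(X),\quot{\Tbb}{X})$. This object-local reduction is exactly what Theorem \ref{Thm: Limits in pseudcone categories} gives, and it is available precisely because every transition functor $F(f)$ is the underlying functor of a strong tangent morphism $(F(f),\quot{T}{f})$ and hence preserves all the pullbacks and equalizers that appear in the axioms.

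Concretely, I would argue as follows. For Condition (1) I invoke Lemma \ref{Lemma: Constructing the tangent functor} for the endofunctor $\underline{T}$, Corollary \ref{Cor: Bundle transformation equivariant tan functor} for the bundle transformation $\underline{p}\colon \underline{T} \Rightarrow \id_{\PC(F)}$, Lemma \ref{Lemma: Existence of tangent pullbacks in FGX} for the existence of all pullback powers $\underline{T}_nA$, and Corollary \ref{Cor: Tangent powers preserve pullback functors} for the fact that the composite powers $\underline{T}^n$ preserve these pullbacks. For Condition (2) I invoke Corollaries \ref{Cor: Construction of equivariant zero transformation} and \ref{Cor: Bundle addition transformation equivariant category} for $\underline{0}$ and $\underline{+}$, and Proposition \ref{Prop: Equivariant Tangent over A is Additive Bundle over A} for the fact that each $\underline{p}_A$ is a commutative monoid in $\PC(F)_{/A}$, i.e.\ an additive bundle. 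Conditions (3) and (4) follow from Corollary \ref{Cor: Existence of Equivariant Vertical Lift} together with Lemma \ref{Lemma: Equivariant vertical lift is bundle morphism}, and from Corollary \ref{Cor: Equivariant canonical flip} together with Lemma \ref{Lemma: Equivariant canonical flip is bundle morphism}, respectively. Condition (5) — the identities $\underline{c}^2 = \id$, $\underline{c}\circ\underline{\ell} = \underline{\ell}$, and the coherence pentagon and hexagons for $\underline{\ell}$ and $\underline{c}$ — is exactly Proposition \ref{Prop: Coherences of equivairaint vertical lift and canonical flip}, and Condition (6), the equalizer expressing the universality of the vertical lift, is exactly Proposition \ref{Prop: Equivariant universality vertical lift}. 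Having discharged all six conditions, $\PC(F)$ equipped with $\Tbb$ is a tangent category.

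The step I expect to require the most care — and the one place where the pseudonaturality clause of Definition \ref{Defn: tangent pre-equivariant indexing functor} is genuinely indispensable — is confirming that $\underline{T}, \underline{p}, \underline{0}, \underline{+}, \underline{\ell}, \underline{c}$ are actually a functor and natural transformations on $\PC(F)$ to begin with. Each of these is produced by feeding a modification between pseudonatural transformations $F \Rightarrow F$ into Theorem \ref{Theorem: Defining functors and nat transforms between equivariant categories and functors}, and the explicit transition-isomorphism formula recorded in Remark \ref{Rmk: Explicit form of transition isomorphisms} (the relevant tangent functor applied to the original transition maps, precomposed with the distributive-law witnesses $\quot{T}{f}$) is what makes the whiskering descriptions coherent; this is exactly where invertibility of the witnesses cannot be dispensed with, as discussed after Theorem \ref{Theorem: Defining functors and nat transforms between equivariant categories and functors}. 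Once this bookkeeping is in place, the six axioms reduce fibrewise to the tangent-category axioms already known to hold in each $(F(X),\quot{\Tbb}{X})$, so no further computation is needed and the proof is complete.
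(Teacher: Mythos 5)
Your proposal is correct and follows essentially the same route as the paper's own proof: both assemble the six axioms of Definition \ref{Defn: Tangent Category} by citing the preceding lemmas, corollaries, and propositions in the same order, with the object-local reduction via Theorem \ref{Thm: Limits in pseudcone categories} and Theorem \ref{Theorem: Defining functors and nat transforms between equivariant categories and functors} doing the real work. The only cosmetic difference is that you cite Lemma \ref{Lemma: Existence of tangent pullbacks in FGX} for the pullback powers where the paper cites Corollary \ref{Cor: Tangent Pullback functors as equivariant functors}; either reference suffices.
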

\begin{proof}
The existence of each functor and natural transformation is given in the statement of the theorem, so we only need to prove the desired coherences and structural properties of Definition \ref{Defn: Tangent Category}. However, one may prove that the axioms in Definition \ref{Defn: Tangent Category} hold for $(\PC(F),\Tbb)$ as follows:
\begin{enumerate}
	\item Part $1$ is verified by the existence of $\underline{T}$ (cf. Lemma \ref{Lemma: Constructing the tangent functor}), the existence of $\underline{p}$ (cf. Corollary \ref{Cor: Bundle transformation equivariant tan functor}), the existence of pullback powers of $\underline{p}_A\colon \underline{T}A \to A$ for all $A \in \PC(F)_0$ (cf. Corollary \ref{Cor: Tangent Pullback functors as equivariant functors}), and the commutativity of composite powers of $\underline{T}$ with pullback powers of $\underline{p}$ (cf. Corollary \ref{Cor: Tangent powers preserve pullback functors}).
	\item Part $2$ holds by the existence of the bundle addition map $\underline{+}$ (cf. Corollary \ref{Cor: Bundle addition transformation equivariant category}), the existence of the bundle unit $\underline{0}$ (cf. Corollary \ref{Cor: Construction of equivariant zero transformation}), and Proposition \ref{Prop: Equivariant Tangent over A is Additive Bundle over A}.
	\item Part $3$ is verified in two steps. First, the existence of $\underline{\ell}$ follows from Corollary \ref{Cor: Existence of Equivariant Vertical Lift}. Second, each pair $(\underline{\ell}_A,\underline{0})_A$ is a bundle map for all $A \in \PC(F)_0$ by Lemma \ref{Lemma: Equivariant vertical lift is bundle morphism}.
	\item Part $4$ is verified in two steps. First, the canonical flip $\underline{c}$ exists by Corollary \ref{Cor: Equivariant canonical flip}. Second, the pair $(\underline{c}_A, \id_{\underline{T}A})$ is a bundle map for any $A \in \PC(F)_0$ by Lemma \ref{Lemma: Equivariant canonical flip is bundle morphism}.
	\item Part $5$ holds by Proposition \ref{Prop: Coherences of equivairaint vertical lift and canonical flip}. 
	\item Part $6$ holds by Proposition \ref{Prop: Equivariant universality vertical lift}.
\end{enumerate}
Thus $(\PC(F),\Tbb)$ is a tangent category.
\end{proof}

\begin{example}
Let $F\colon \Cscr^{\op} \to \fCat$ be any pseudofunctor and equip each fibre category $F(X)$ with the trivial tangent structure $\quot{I}{X}$. Then the pseudolimit tangent structure $(\PC(F),{\Ibb})$ asserted by Theorem \ref{Thm: Pre-Equivariant Tangent Category} coincides with the trivial tangent structure $(\PC(F), \Ibb_{\PC(F)})$ on $\PC(F)$.
\end{example}
For further (families of) examples of pseudolimit tangent structures, see Definitions \ref{Defn: Equivariant Zariski category} and \ref{Defn: Descent Equivariant Manfiolds} below.

\begin{rmk}\label{Remark: What we actually need}
The general theme so far in this paper has been to take a pseudofunctor $F\colon\Cscr^{\op} \to \fCat$, assume each category $F(X)$ for $X \in \Cscr_0$ has some structure $S$ (such as a limit of shape $D$, a tangent structure, etc.), assume that each functor $F(f)$ for $f\colon X \to Y$ preserves this structure, and determine if and when the pseudolimit $\PC(F)$ has the same structure (cf.\! Theorems \ref{Thm: Limits in pseudcone categories}, \ref{Thm: Pre-Equivariant Tangent Category}). This is an instance of a more general approach for doing descent-theoretic $2$-and-$bi$-category theory which can be outlined/summarized as follows:
\begin{enumerate}
	\item Begin by choosing a structure $S$ which can apply to some (subclass, if need be) of categories. That is, $S$ applies to a subclass $\Scal$ of $\mathbf{Cat}_0$.
	\item Assume that we have a category $\Cscr$ and a pseudofunctor $F\colon\Cscr^{\op} \to \fCat$ for which: each category $F(X)$ has structure $S$ for all $X \in \Cscr_0$ and each functor $F(f)$ preserves $S$ or ``commutes with'' $S$, up to a natural isomorphism (such as a distributive law), for all morphisms $f$ in $\Cscr$.
	\item Assume that the isomorphisms which witness that each $F(f)$ preserves/commutes with the structure $S$ vary coherently in $\Cscr^{\op}$.
	\item Then deduce that the pseudolimit $\PC(F)$ satisfies the structure $S$ as well.
\end{enumerate}
Effectively, this comes down to witnessing that these are precisely the pseudofunctors $F$ and properties $S$ for which one can do effective descent. This has been studied in depth in \cite{GeneralGeoffThesis}. For instance, there it is shown using $S$ as a category being a (braided) monoidal category with translation functors being strong (braided) monoidal functors; $S$ being a regular category with translation functors regular functors; $S$ being that a category has all (co)limits of a fixed shape and the translation functors preserve these (co)limits; $S$ being that a category is triangulated and having translation functors all be triangulated functors; and more. Theorem \ref{Thm: Pre-Equivariant Tangent Category} continues this trend, but an exact study of which structures $S$ one can examine, internal to which $2$-categories/bicategories $\Kcal$, and how they relate to both effective descent and the Grothendieck construction is the subject of future work and future examination.
\end{rmk}

\subsection{Tangent Objects in the Hom-2-Category}
We close this section with a discussion of the theory of tangent objects and the relation of tangent indexing functors and tangent objects in the hom $2$-category $\Bicat(\Cscr^{\op},\fCat)$. Recently, in \cite{Marcello} Lanfranchi has defined the notion of a tangent object within a $2$-category as a natural generalization of what it means to be a tangent category; in fact, Lanfranchi's definition is motivated by characterizing the tangent objects of $\fCat$ as precisely tangent categories. 
%\biggeoff{Below is new}
We begin by showing that the tangent objects of $\Bicat(\Cscr^{\op},\fCat)$ can be described in terms of pseudofunctors $F:\Cscr^{\op} \to \fCat$ which factor through the forgetful $2$-functor
\[
\begin{tikzcd}
\Cscr^{\op}  \ar[rr]{}{F} \ar[dr, swap]{}{F} & & \fCat \\
    & \fTan_{\operatorname{strong}} \ar[ur, swap]{}{\Forget}
\end{tikzcd}
\]
and so we characterize the indexed tangent categories of \cite[Definition 3.8]{Marcello} as exactly the tangent objects in $\Bicat(\Cscr^{\op},\fCat)$\footnote{In \cite{Marcello}, the definition of an indexed tangent category involves a pseudofunctor $F\colon\Cscr^{\op} \to \fTan_{\operatorname{strong}}$ where $\Cscr = (\Cscr,\Tbb)$ is some given tangent category. However, the definition involves no other use of the tangent structure on $\Cscr$ and so may be omitted.}. To prove this, we begin with a convenient and important technical lemma.
\begin{lem}\label{Lemma: strong lax mor iff strong colax}
Let $(\Cscr,\Tbb)$ and $(\Dscr,\Sbb)$ be tangent categories. Then a morphism $(F,\alpha)\colon\Cscr \to \Dscr$ is a strong lax morphism of tangent categories if and only if $(F,\alpha^{-1})\colon\Cscr \to \Dscr$ is a strong colax morphism of tangent categories.
\end{lem}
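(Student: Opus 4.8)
The whole statement is an exercise in inverting the coherence diagrams of Definition \ref{Defn: Tangent Morphism}, so the plan is to set up the bookkeeping carefully and then observe that each axiom dualizes by a two-move rearrangement. First I would dispose of the ``strong'' half: the requirements that $\alpha$ be a natural isomorphism and that $F$ preserve the tangent pullbacks $T_n$ and the universal-vertical-lift equalizer make no reference to the direction of $\alpha$, hence they hold for $(F,\alpha)$ if and only if they hold for $(F,\alpha^{-1})$. Consequently the only content is to verify that if $\alpha\colon F\circ T\Rightarrow S\circ F$ satisfies the five lax coherences then $\alpha^{-1}\colon S\circ F\Rightarrow F\circ T$ satisfies the five colax analogues of Remark \ref{Remark: Colax tangent morphisms}, and conversely; since $(\alpha^{-1})^{-1}=\alpha$, it suffices to prove one implication.

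For the two triangles this is a single manipulation. The lax $p$-triangle reads $(q\ast F)\circ\alpha = F\ast p$; post-composing with $\alpha^{-1}$ gives $q\ast F = (F\ast p)\circ\alpha^{-1}$, which is exactly the colax $p$-triangle. Dually, the lax $0$-triangle $\alpha\circ(F\ast 0)=0'\ast F$ becomes, after pre-composing with $\alpha^{-1}$, the identity $F\ast 0 = \alpha^{-1}\circ(0'\ast F)$, i.e. the colax $0$-triangle. For the $+$-square I would first record that, because $\alpha$ is invertible and $F$ preserves the pullback defining $T_2$ (part of ``strong''), the induced comparison $\alpha_2\colon F\circ T_2\Rightarrow S_2\circ F$ is invertible and coincides with the comparison built from $\alpha^{-1}$; granting this, inverting the lax square $(\oplus\ast F)\circ(F\ast\alpha_2)=\alpha\circ(F\ast +)$ yields the colax square directly.

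The $\ell$- and $c$-squares go the same way once one notes that the iterated distributive law $D:=(S\ast\alpha)\circ(\alpha\ast T)\colon F\circ T^2\Rightarrow S^2\circ F$ is invertible with $D^{-1}=(\alpha^{-1}\ast T)\circ(S\ast\alpha^{-1})$, and that this is precisely the iterated distributive law built from the colax cell $\alpha^{-1}$ (namely $(\alpha^{-1}\ast T)\circ(S\ast\alpha^{-1})$, the colax analogue of $(\alpha\ast T)\circ(S\ast\alpha)$). Then the lax $\ell$-square $D\circ(F\ast\ell)=(\ell'\ast F)\circ\alpha$ rearranges to $(F\ast\ell)\circ\alpha^{-1}=D^{-1}\circ(\ell'\ast F)$, the colax $\ell$-square, and the lax $c$-square $D\circ(F\ast c)=(c'\ast F)\circ D$ rearranges, by pre- and post-composing with $D^{-1}$, to $(F\ast c)\circ D^{-1}=D^{-1}\circ(c'\ast F)$, the colax $c$-square.

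The only mildly delicate point, and hence the \emph{main obstacle}, is purely organizational: confirming that the colax coherence data of Remark \ref{Remark: Colax tangent morphisms} is the diagram-by-diagram mirror image of Definition \ref{Defn: Tangent Morphism}, in particular pinning down the direction conventions for $\alpha_2$ and checking that the colax iterated distributive law is $(\alpha\ast T)\circ(S\ast\alpha)$, so that the substitution $\alpha\mapsto\alpha^{-1}$ produces exactly $D^{-1}$. Once these identifications are made explicit there is no further input: every axiom is a rearrangement of an equation between pasting composites obtained by cancelling $\alpha$ with $\alpha^{-1}$.
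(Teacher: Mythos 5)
Your proof is correct and follows essentially the same approach as the paper, which simply observes that the coherences dualize by inverting $\alpha$; you have merely carried out the diagram-by-diagram bookkeeping (including the invertibility of $\alpha_2$ and of the iterated distributive law) that the paper's one-paragraph argument leaves implicit.
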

\begin{proof}
$\implies$: If $(F,\alpha)$ is a strong lax morphism of tangent categories then
\[
\begin{tikzcd}
\Cscr \ar[r, ""{name = U}]{}{F} \ar[d, swap]{}{T} & \Dscr \ar[d]{}{S} \\
\Cscr \ar[r, swap, ""{name = D}]{}{F} & \Dscr
\ar[from = U, to = D, Rightarrow, shorten <= 4pt, shorten >= 4pt]{}{\alpha^{-1}}
\end{tikzcd}
\]
is a natural isomorphism. That this forms the natural transformation of a colax morphism of tangent categories follows by virtue of inverting the direction of all the maps $\alpha:F \circ T \Rightarrow S \circ F$ together with the corresponding coherences. 

$\impliedby$: If we instead know that $(F,\alpha)$ is a strong colax morphism of tangent categories, then
\[
\begin{tikzcd}
\Cscr \ar[r, ""{name = U}]{}{T} \ar[d, swap]{}{F} & \Cscr \ar[d]{}{F} \\
\Dscr \ar[r, swap, ""{name = D}]{}{S} & \Dscr
\ar[from = U, to = D, Rightarrow, shorten <= 4pt, shorten >= 4pt]{}{\alpha^{-1}}
\end{tikzcd}
\]
is a natural isomorphism. That this forms the natrual transformation of a lax morphism of tangent categories follows by virtue of inverting the direction of all the maps $\alpha:S \circ F \Rightarrow F \circ T$ together with the corresponding coherences.
\end{proof}

Because we require it in the remainder of this section and in Section \ref{Section: Eq Tan Mor}, we now recall the definition of what it means to be a tangent object in a $2$-category $\Cfrak$. This definition requires the use of \cite{LeungWeil} and the approach to tangent categories as categories $\Cscr$ together with a $\mathsf{Weil}_1$-actegory structure, i.e., a strong monoidal functor $\mathsf{Weil}_1 \to [\Cscr,\Cscr]$. We consequently also recall the definition of the category $\mathsf{Weil}_1$ as well.
\begin{dfn}[{\cite{LeungWeil}}]\label{Defn: Weil algebras}
The category $\mathsf{Weil}_1$ is the monoidal category where:
\begin{itemize}
    \item Objects: Finite tensor products of the $\N$-algebras
    \[
    W^n := \frac{\N[x_1, \cdots, x_n]}{(x_ix_j: 1 \leq i, j \leq n)}.
    \]
    \item Morphisms: As in the category of augmented $\N$-algebras.
    \item Composition: As in the category of augmented $\N$-algebras.
    \item Identities: The identity of a Weil algebra $A$ is $\id_A$.
    \item Monoidal Product: The monoidal product is the $\N$-tensor product $\otimes_{\N}$.
    \item Monoidal unit: The monoidal unit is $\N$.
    \item Unitors: The tensor units $A \otimes_{\N} \N \cong A$ and $A \cong \N \otimes_{\N} A$.
    \item Associators: The tensor associator: $(A \otimes_{\N} B) \otimes_{\N} C \cong A \otimes_{\N} (B \otimes_{\N} C)$.
\end{itemize}
\end{dfn}
The category $\mathsf{Weil}_1$ is equipped with a special algebra $W := \N[x]/(x^2)$ which largely generates the category. It is used as a placeholder for determining the tangent structure on $\Cscr$ by determining the functor that $W$ is mapped to by the strong monoidal functor $\alpha:\mathsf{Weil}_1 \to [\Cscr,\Cscr]$. The corresponding ways to translate the remainder of the tangent structure (namely the bundle projection, the sum, the zero, the lift, and the flip) are given by the following morphisms:
\begin{itemize}
    \item $p:W \to \N$ induced by sending $x \mapsto 0$; this is the map which realizes $W$ as an augmented $\N$-algebra.
    \item The zero morphism $z:\N \to W$ is the rig map which witnesses $W$ as an $\N$-algebra.
    \item The sum map $\operatorname{add}:W^2 \to W$ which sends the two generators $x_1$ and $x_2$ of $W^2$ to the generator $x$ of $W$.
    \item The vertical lift $\ell:W \to W \otimes_{\N} W$ is the map which sends the generator $x$ of $W$ to the element $x \otimes y$.
    \item The canonical flip $c:W \otimes_{\N} W \to W \otimes_{\N} W$ is defined to be the map
    \[
    \frac{\N[x_1,x_2]}{(x_1^2, x_2^2)} \to \frac{\N[x_1, x_2]}{(x_1^2, x_2^2)}
    \]
    given by $x_1 \mapsto x_2$ and $x_2 \mapsto x_1$.
\end{itemize}
Generally we will denote by $p_A:A \to \N$ the augmentation of the algebra $A$ for all Weil algebras $A$.
\begin{dfn}[{\cite{Marcello}, Definition 4.3}]
Let $\Cfrak$ be a $2$-category. We say that an object $X$ of $\Cfrak$ is a \emph{tangent object} with a tangent structure $\Tbb$ if there is a strong monoidal functor $F_{\Tbb}:\mathsf{Weil}_1 \to \Cfrak(X,X)$ for which:
\begin{enumerate}
    \item $F_{\Tbb}$ preserves the pullbacks of the form
    \[
    \begin{tikzcd}
    A \otimes (B \times C) \ar[r]{}{\id_A \otimes \operatorname{pr}_2} \ar[d, swap]{}{\id_A \otimes \operatorname{pr}_1} & A \otimes C \ar[d]{}{\id_A \otimes p_C} \\
    A \otimes B \ar[r, swap]{}{\id_A \otimes p_B} & A
    \end{tikzcd}
    \]
    for all Weil-algebras $A, B, C$ and that these pullbacks be pointwise, i.e., they are preserved by all functors $\Cfrak(f,X)$. Note that we also have omitted the unitor $A \otimes_{\N} \N \cong A$ in the description of the pullback above.
    \item The functor $F_{\Tbb}$ perserves the universality of the vertical lift, i.e., if $\xi$ is the pairing map of
    \[
    \begin{tikzcd}
    W^2 \ar[r]{}{\cong} \ar[d, swap]{}{\ell_{W^2}} & \N \otimes_{\N} W^2 \ar[r]{}{z \otimes \id_W} & W \otimes_{\N} W^2 \ar[d]{}{\id_W \otimes \operatorname{add}} \\
    W \otimes_{\N} W^2 \ar[rr, swap]{}{\id_W \otimes \operatorname{add}} & & W \otimes_{\N} W
    \end{tikzcd}
    \]
    then $F_{\Tbb}$ preserves the pullback square
    \[
    \begin{tikzcd}
    W^2 \ar[r]{}{\xi} \ar[d, swap]{}{x_1, x_2 \mapsto 0} & W \otimes_{\N} W \ar[d]{}{\id_W \otimes p_{W}} \\
    \N \ar[r, swap]{}{z} & W
    \end{tikzcd}   
    \]
    and sends it to a pointwise limit in $\Cfrak(X,X)$.
\end{enumerate}
\end{dfn}
\begin{rmk}
The definition of a tangent object is more general than we will require in this paper. What it means to give a tangent object in the $2$-category
$\Bicat(\Cscr^{\op},\fCat)$ is to require:
\begin{enumerate}
    \item The existence of a pseudofunctor $F\colon\Cscr^{\op} \to \fCat$,
    a pseudonatural transformation $T$ and a modification $p$,
\[
\begin{tikzcd}
\Cscr^{\op} \ar[rr, bend left = 30, ""{name = U}]{}{F} \ar[rr, bend right = 30, swap, ""{name = D}]{}{F}  & & \fCat
\ar[from = U, to = D, Rightarrow, shorten <= 4pt, shorten >= 4pt]{}{T}
\end{tikzcd}\quad \begin{tikzcd}
\Cscr^{\op} \ar[rrr, bend left = 30, ""{name = U}]{}{F} \ar[rrr, bend right = 30, swap, ""{name = B}]{}{F} & & &\fCat \ar[from = U, to = B, Rightarrow, shorten <= 4pt, shorten >= 4pt, bend right = 30, swap, ""{name = L}]{}{T} \ar[from = U, to = B, Rightarrow, shorten <= 4pt, shorten >= 4pt, bend left = 30, ""{name = R}]{}{\id_{F}} \ar[from = L, to = R, symbol = \underset{p}{\Rrightarrow}, swap]
\end{tikzcd}
\]
such that all iterated pullbacks against the cospan $T \xrightarrow{p} \id_{F} \xleftarrow{p} T$ exist in $\Bicat(\Cscr^{\op},\fCat)(F,F)$ and are preserved by all powers $T^m$. We define the pullback of $T \xrightarrow{p} \id_{F} \xleftarrow{p} T$ to be the pseudonatural transformation $T_2$.
\item The existence of a pair of modifications
\[
\begin{tikzcd}
\Cscr^{\op} \ar[rrr, bend left = 30, ""{name = U}]{}{F} \ar[rrr, bend right = 30, swap, ""{name = B}]{}{F} & & &\fCat \ar[from = U, to = B, Rightarrow, shorten <= 4pt, shorten >= 4pt, bend right = 30, swap, ""{name = L}]{}{T_2} \ar[from = U, to = B, Rightarrow, shorten <= 4pt, shorten >= 4pt, bend left = 30, ""{name = R}]{}{T} \ar[from = L, to = R, symbol = \underset{\operatorname{add}}{\Rrightarrow}, swap]
\end{tikzcd}\quad
\begin{tikzcd}
\Cscr^{\op} \ar[rrr, bend left = 30, ""{name = U}]{}{F} \ar[rrr, bend right = 30, swap, ""{name = B}]{}{F} & & &\fCat \ar[from = U, to = B, Rightarrow, shorten <= 4pt, shorten >= 4pt, bend right = 30, swap, ""{name = L}]{}{\id_{F}} \ar[from = U, to = B, Rightarrow, shorten <= 4pt, shorten >= 4pt, bend left = 30, ""{name = R}]{}{T} \ar[from = L, to = R, symbol = \underset{0}{\Rrightarrow}, swap]
\end{tikzcd}
\]
which make $(T, \operatorname{add}:T_2 \to T, 0:\id_F \to T)$ into a commutative monoid in $\Bicat(\Cscr^{\op},\fCat)(T,T)$.
\item The existence of modifications
\[
\begin{tikzcd}
\Cscr^{\op} \ar[rrr, bend left = 30, ""{name = U}]{}{F} \ar[rrr, bend right = 30, swap, ""{name = B}]{}{F} & & &\fCat \ar[from = U, to = B, Rightarrow, shorten <= 4pt, shorten >= 4pt, bend right = 30, swap, ""{name = L}]{}{T} \ar[from = U, to = B, Rightarrow, shorten <= 4pt, shorten >= 4pt, bend left = 30, ""{name = R}]{}{T^2} \ar[from = L, to = R, symbol = \underset{\ell}{\Rrightarrow}, swap]
\end{tikzcd}\quad
\begin{tikzcd}
\Cscr^{\op} \ar[rrr, bend left = 30, ""{name = U}]{}{F} \ar[rrr, bend right = 30, swap, ""{name = B}]{}{F} & & &\fCat \ar[from = U, to = B, Rightarrow, shorten <= 4pt, shorten >= 4pt, bend right = 30, swap, ""{name = L}]{}{T^2} \ar[from = U, to = B, Rightarrow, shorten <= 4pt, shorten >= 4pt, bend left = 30, ""{name = R}]{}{T^2} \ar[from = L, to = R, symbol = \underset{c}{\Rrightarrow}, swap]
\end{tikzcd}
\]
which satisfy the coherences in Lines (\ref{Eqn: First line of the pasting equations}) -- (\ref{Eqn: Fifth line fo pasting equations}) below.
\item The diagram
\[
\begin{tikzcd}
T_2 \ar[r]{}{\operatorname{add} \circ (\ell \times 0)} \ar[d, swap]{}{p \circ \pi_1} & T^2 \ar[d]{}{T \ast p} \\
\id_{F} \ar[r, swap]{}{0} & T
\end{tikzcd}
\]
is a (pointwise) pullback in $\Bicat(\Cscr^{\op},\fCat)(F,F)$.
\end{enumerate} 
Below are the coherences we require the tangent objects to satisfy:
\begin{align}
\ell \circ \operatorname{add} &= \left(\operatorname{add} \ast T\right) \circ \left(\ell \times \ell\right) & c \circ (T \ast \operatorname{add})&= (\operatorname{add} \ast T) \circ (c \times c) \label{Eqn: First line of the pasting equations} \\
(p \ast T) &= T \ast p & 0 \ast T&= c \circ (T \ast 0) \label{Eqn: Second line of pasting equations} \\
c \circ \ell &= \ell & c \circ c &= \id_{T^2} \label{Eqn: Third line of pasting equations} \\
(T \ast \ell) \circ \ell &= (\ell \ast T) \circ \ell & (\ell \ast T) \circ c &= (T \ast c) \circ (c \ast T) \circ \ell \label{Eqn: Fourth line of pasting equations} \\
(T \ast c) \circ (c \ast T) \circ (T \ast c) &= (c \ast T) \circ (T \ast c) \circ (c \ast T) \label{Eqn: Fifth line fo pasting equations}
\end{align}
\end{rmk}
\begin{rmk}
In the proof of the theorem below, we indicate where it is that we need to use pseudofunctors which land in $\fTan_{\operatorname{strong}}$ and not merely lax functors which land in $\fTan$. The issue arises in each case where we invoke Lemma \ref{Lemma: strong lax mor iff strong colax}, as this indicates that we require the naturality morphisms to be actual \emph{isomorphisms}.
\end{rmk}
\begin{Theorem}\label{Thm: Indexed tangent category}
Tangent objects in $\Bicat(\Cscr^{\op},\fCat)$ are determined by pseudofunctors $F\colon\Cscr^{\op} \to \fCat$ which factor through the forgetful functor:
\[
\begin{tikzcd}
\Cscr^{\op} \ar[rr]{}{F} \ar[dr, swap]{}{F} & & \fCat \\
 & \fTan_{\operatorname{strong}} \ar[ur, swap]{}{\Forget}
\end{tikzcd}
\]
\end{Theorem}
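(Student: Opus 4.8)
The plan is to unwind both sides of the asserted correspondence into the explicit data spelled out in the Remark above and then to observe that this data is exactly what a normalized pseudofunctor $\widetilde{F}\colon \Cscr^{\op} \to \fTan_{\operatorname{strong}}$ lifting $F$ along $\Forget$ records. The central bookkeeping device will be that every limit, composite, identity, and equation of modifications in $\Bicat(\Cscr^{\op},\fCat)(F,F)$ is detected ``object-locally'', i.e.\ after applying the strict monoidal evaluation functors $\ev_X\colon \Bicat(\Cscr^{\op},\fCat)(F,F) \to [F(X),F(X)]$ which send a pseudonatural transformation $S$ to its component $\!\quot{S}{X}$ and a modification to its $X$-component; and that the ``pointwise'' limits appearing in the definition of a tangent object in the hom-$2$-category are precisely those whose $\ev_X$-image is, for every $X \in \Cscr_0$, a pointwise limit in $[F(X),F(X)]$ --- which is what pointwiseness against the $1$-cells $\Cscr(-,X) \Rightarrow F$ amounts to, via the bicategorical Yoneda lemma.

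For the forward direction I would start from a tangent object structure $(F,T,p,\operatorname{add},0,\ell,c)$ and, for each $X \in \Cscr_0$, compose the classifying strong monoidal functor $\mathbf{Weil}_1 \to \Bicat(\Cscr^{\op},\fCat)(F,F)$ with $\ev_X$; pointwiseness guarantees the composite still preserves the pullbacks of products of Weil algebras and the universality-of-vertical-lift pullback, so by the characterization of the tangent objects of $\fCat$ as exactly the tangent categories (\cite{Marcello}) this endows $F(X)$ with a tangent structure $\!\quot{\Tbb}{X}$ with tangent functor $\!\quot{T}{X}$, bundle projection $\!\quot{p}{X}$, and so on. For $f\colon X \to Y$ in $\Cscr_1$, the pseudonatural transformation $T$ supplies an invertible transition $2$-cell $\!\quot{T}{f}\colon F(f)\circ\!\quot{T}{Y} \xRightarrow{\cong} \!\quot{T}{X}\circ F(f)$, and I would check that the modification axioms for $p$, $0$, $\operatorname{add}$, $\ell$, $c$ are, one by one, exactly the coherence squares of Definition \ref{Defn: Tangent Morphism} expressing that $(F(f),\!\quot{T}{f})\colon (F(Y),\!\quot{\Tbb}{Y}) \to (F(X),\!\quot{\Tbb}{X})$ is a lax tangent morphism --- using that the transition cells of $T_2$ and of $T^2$ are the canonical ones built from $\!\quot{T}{f}$ as in Proposition \ref{Prop: Existence of pullback pseudonatural transformation} and via $\!\quot{T^2}{f} = (\!\quot{T}{X} \ast \!\quot{T}{f}) \circ (\!\quot{T}{f} \ast \!\quot{T}{Y})$. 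This tangent morphism is strong because $\!\quot{T}{f}$ is invertible and because pointwiseness of the tangent pullbacks and of the vertical-lift equalizer, together with the compatibility of the transition cells of $T_2$ with the projection modifications, forces $F(f)$ to preserve those limits. Finally the pseudonaturality coherences of $T$ against the structure cells $\phi_{f,g}$ of $F$ say exactly that the $\phi_{f,g}$ are tangent natural transformations, and normalization gives $\!\quot{T}{\id_X} = \id$, so $X \mapsto (F(X),\!\quot{\Tbb}{X})$, $f \mapsto (F(f),\!\quot{T}{f})$ assembles into a normalized pseudofunctor $\widetilde{F}\colon \Cscr^{\op} \to \fTan_{\operatorname{strong}}$ with $\Forget\circ\widetilde{F} = F$.

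For the converse I would note that a pseudofunctor $\widetilde{F}\colon \Cscr^{\op} \to \fTan_{\operatorname{strong}}$ lifting $F$ carries exactly the data of a tangent indexing functor in the sense of Definition \ref{Defn: tangent pre-equivariant indexing functor} (with $\!\quot{T}{f}$ the distributive law of $\widetilde{F}(f)$, modulo the direction switch of Lemma \ref{Lemma: strong lax mor iff strong colax}), so that Lemma \ref{Lemma: Constructing the tangent functor}, Lemmas \ref{Lemma: Construction of Bundle Map}, \ref{Lemma: Zero modification}, \ref{Lemma: Addition modification}, \ref{Lemma: Vertical lift modification}, \ref{Lemma: Canonical flip modification}, Proposition \ref{Prop: Existence of pullback pseudonatural transformation}, and Corollary \ref{Cor: Pullback power of tangent functor modification arbitrary powers} already produce the pseudonatural transformation $T\colon F\Rightarrow F$, the pullback pseudonatural transformations $T_n$, and the modifications $p$, $0$, $\operatorname{add}$, $\ell$, $c$. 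What remains is to check that $(T,\operatorname{add},0)$ is a commutative monoid, that the coherences \eqref{Eqn: First line of the pasting equations}--\eqref{Eqn: Fifth line fo pasting equations} hold, and that the universality-of-vertical-lift square is a pullback; these are equations between modifications and hence checkable object-locally, where they become the tangent category axioms for $\!\quot{\Tbb}{X}$ --- i.e.\ the same object-local arguments already carried out for Propositions \ref{Prop: Equivariant Tangent over A is Additive Bundle over A}, \ref{Prop: Coherences of equivairaint vertical lift and canonical flip}, \ref{Prop: Equivariant universality vertical lift} and Lemmas \ref{Lemma: Equivariant vertical lift is bundle morphism}, \ref{Lemma: Equivariant canonical flip is bundle morphism}, now read in $[F(X),F(X)]$ in place of $\PC(F)$. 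That the pullbacks $T_2$ and their iterates are pointwise in $\Bicat(\Cscr^{\op},\fCat)(F,F)$ follows because object-locally they are the tangent pullbacks of $\!\quot{\Tbb}{X}$ (pointwise in $F(X)$) while the transition cells $\!\quot{T_2}{f}$ are precisely the isomorphisms witnessing that the strong tangent morphism $F(f)$ preserves them. Reassembling via the correspondence between tangent structures and $\mathbf{Weil}_1$-actegory structures then yields the classifying strong monoidal functor $\mathbf{Weil}_1 \to \Bicat(\Cscr^{\op},\fCat)(F,F)$; and since in both directions the datum at each object and morphism is merely transported back and forth, the two constructions plainly undo one another.

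The hard part will be the careful handling of pointwiseness --- first, identifying which limits in $\Bicat(\Cscr^{\op},\fCat)(F,F)$ count as pointwise in the sense required of a tangent object, and second, in the forward direction, extracting from the pointwiseness of $T_2$ (and of the vertical-lift equalizer) the fact that each transition functor $F(f)$ actually \emph{preserves} the tangent pullbacks and equalizer of $\!\quot{\Tbb}{Y}$, which is exactly what upgrades $(F(f),\!\quot{T}{f})$ from a lax to a strong tangent morphism. A secondary subtlety, flagged in the Remark preceding the statement, is keeping the variance of the distributive laws straight: the transition cells of a pseudonatural transformation point in the lax direction, so moving to a strong tangent morphism and back is the invertibility move of Lemma \ref{Lemma: strong lax mor iff strong colax}, and this is precisely the point at which one must work with $\fTan_{\operatorname{strong}}$ rather than all of $\fTan$.
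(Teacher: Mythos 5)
Your proposal is correct and follows essentially the same route as the paper's proof: both directions unwind the tangent-object data object-locally, identify the modification axioms for $p$, $0$, $\operatorname{add}$, $\ell$, $c$ with the coherence squares of Definition \ref{Defn: Tangent Morphism}, and use Lemma \ref{Lemma: strong lax mor iff strong colax} to reconcile the colax direction of the pseudonaturality cells of $T$ with the lax direction of strong tangent morphisms. The one place you are more careful than the paper is in extracting, from the pointwiseness of $T_2$ and of the vertical-lift limit, the fact that each $F(f)$ actually preserves the tangent pullbacks and equalizer (the limit-preservation half of ``strong''), a step the paper's proof passes over without comment.
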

\begin{proof}
$\implies:$ Assume that $(F,T,p,0,\operatorname{add},\ell,c)$ is a tangent object in $\Bicat(\Cscr^{\op},\fCat)$. The object-local versions of the pasting diagrams and coherences required of a tangent object (cf. Lines (\ref{Eqn: First line of the pasting equations}) -- (\ref{Eqn: Fifth line fo pasting equations})) imply that for every object $X \in \Cscr^{\op}$, the category $F(X)$ is a tangent category with tangent functor $\quot{T}{X}$, bundle projection $\quot{p}{X}$, zero section $\quot{0}{X}$, addition $\quot{\operatorname{add}}{X}$, lift $\quot{\ell}{X}$, and canonical flip $\quot{c}{X}$. This shows us that the object-assignment $F_0:\Cscr^{\op}_0 \to \fCat_0$ factors through $\Forget_0:\fTan_0 \to \fCat_0$. To see that the morphisms $Ff:FY \to FX$ are the functor components of (strong) lax tangent morphisms, recall that because $T:F \Rightarrow F$ is a pseudonatural transformation for any $f:X \to Y$ in $\Cscr$, there is a corresponding invertible $2$-cell
\[
\begin{tikzcd}
FY \ar[r, ""{name = U}]{}{F(f)} \ar[d, swap]{}{\quot{T}{Y}} & FX \ar[d]{}{\quot{T}{X}} \\
FY \ar[r, swap, ""{name = D}]{}{F(f)} & FX
\ar[from = U, to = D, Rightarrow, shorten <= 4pt, shorten >= 4pt]{}{\quot{T}{f}}
\end{tikzcd}
\]
which, by virtue of the coherences implied in Lines (\ref{Eqn: First line of the pasting equations}) -- (\ref{Eqn: Fifth line fo pasting equations}), gives rise to a colax morphism of tangent categories. Moreover, because $T$ is a pseudonatural transformation, $\quot{T}{f}$ is invertible and so $(F(f),\quot{T}{f})$ is a strong colax morphism of tangent categories. However, applying Lemma \ref{Lemma: strong lax mor iff strong colax} shows that $(F(f),\quot{T}{f}^{-1})$ is a strong lax morphism of tangent categories and hence shows that $F_1$ factors through $\fTan$ as well. Thus $F:\Cscr^{\op} \to \fCat$ factors through $\Forget:\fTan_{\operatorname{strong}} \to \fCat$.

$\impliedby$: Assume that $F:\Cscr^{\op} \to \fCat$ factors as:
\[
\begin{tikzcd}
\Cscr^{\op} \ar[rr]{}{F} \ar[dr, swap]{}{F} & & \fCat \\
 & \fTan_{\operatorname{strong}} \ar[ur, swap]{}{\Forget}
\end{tikzcd}
\]
Because $F$ lands first in the $2$-category $\fTan_{\operatorname{strong}}$, for any morphism $f:X \to Y$ in $\Cscr$ there is a strong tangent morphism $(F(f),\quot{\alpha}{f})$. But then by Lemma \ref{Lemma: strong lax mor iff strong colax} the map $\quot{\alpha}{f}^{-1}$ fits into a $2$-cell
\[
\begin{tikzcd}
FY \ar[d, swap]{}{\quot{T}{Y}} \ar[r, ""{name = U}]{}{F(f)} & FX \ar[d]{}{\quot{T}{X}} \\
FY \ar[r, swap, ""{name = D}]{}{F(f)} & FX
\ar[from = U, to = D, Rightarrow, shorten <= 4pt, shorten >= 4pt]{}{\quot{\alpha}{f}^{-1}}
\end{tikzcd}
\]
which varies pseudofunctorially in $\Cscr^{\op}$. Arguing as in Lemmas \ref{Lemma: Constructing the tangent functor} shows that the data described by $T = (\quot{T}{X},\quot{\alpha}{f}^{-1})_{X \in \Cscr_0, f \in \Cscr_1}$ is then a pseudonatural transformation $T:F \Rightarrow F$. Similarly, arguing as in Lemmas \ref{Lemma: Construction of Bundle Map}, \ref{Lemma: Zero modification}, \ref{Lemma: Addition modification}, Proposition \ref{Prop: Equivariant Tangent over A is Additive Bundle over A}, Lemma \ref{Lemma: Vertical lift modification}, and Lemma \ref{Lemma: Canonical flip modification} shows that the object-local assignments for the bundle projections $\quot{p}{X}$, the zero maps $\quot{0}{X}$, the additions $\quot{\operatorname{add}}{X}$, the lifts $\quot{\ell}{X}$, and the canonical flips $\quot{c}{X}$ determine modifications $p$, $0$, $\operatorname{add}$, $\ell$, and $c$ which determine a commutative monoid in $\Bicat(\Cscr^{\op},\fCat)(F,F)$, satisfy the coherences in Lines (\ref{Eqn: First line of the pasting equations}) --  (\ref{Eqn: Fifth line fo pasting equations}), and have universal vertical lifts. As such, by extracting this information we find that $F$ determines a tangent object in $\Bicat(\Cscr^{\op},\fCat)$.
\end{proof}

We now show how our notion of a tangent indexing functor fits into this formulation and language via the theorem below.
\begin{Theorem}\label{Thm: Tangent objects in Bicat Hom category are tangent indexing functors}
Let $\Cscr$ be a $1$-category and consider the $2$-category $\Bicat(\Cscr^{\op},\fCat)$. Then a tuple\\ $(F,T,p,\operatorname{add},0,\ell,c)$ in $\Bicat(\Cscr^{\op},\fCat)$ is a tangent object in the sense of \cite[Definition 3.2]{Marcello} if and only if $F\colon \Cscr^{\op} \to \fCat$ is a tangent indexing functor.
\end{Theorem}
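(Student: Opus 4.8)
The plan is to deduce the statement from Theorem~\ref{Thm: Indexed tangent category}, which already identifies the tangent objects of $\Bicat(\Cscr^{\op},\fCat)$ with exactly those pseudofunctors $F\colon\Cscr^{\op}\to\fCat$ that factor through the forgetful $2$-functor $\Forget\colon\fTan_{\operatorname{strong}}\to\fCat$. Granting that theorem, the only thing left to establish is the purely formal equivalence
\[
F \text{ factors through } \Forget\colon\fTan_{\operatorname{strong}}\to\fCat
\quad\Longleftrightarrow\quad
F \text{ is a tangent indexing functor},
\]
in the sense of Definition~\ref{Defn: tangent pre-equivariant indexing functor}, after which the two chains of implications compose to give the claim. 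So the bulk of the proof is this unwinding, which amounts to observing that the data of a tangent indexing functor is literally the data of a pseudofunctor landing in $\fTan_{\operatorname{strong}}$.

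For the direction from a factorization to a tangent indexing functor, I would start with a lift $\widetilde F\colon\Cscr^{\op}\to\fTan_{\operatorname{strong}}$ satisfying $\Forget\circ\widetilde F=F$ and read off its data as a pseudofunctor. On objects it assigns to each $X$ a tangent structure $\quot{\Tbb}{X}$ on $F(X)$, which is Condition~(1) of Definition~\ref{Defn: tangent pre-equivariant indexing functor}; on morphisms it assigns to each $f\colon X\to Y$ a strong tangent morphism $(F(f),\quot{T}{f})\colon (F(Y),\quot{\Tbb}{Y})\to(F(X),\quot{\Tbb}{X})$ in the sense of Definition~\ref{Defn: Tangent Morphism}, which is Condition~(2); and it comes with structure $2$-cells $\phi_{f,g}$ and $\phi_X$ which, since $\widetilde F$ lands in $\fTan_{\operatorname{strong}}$, are tangent natural transformations in the sense of Definition~\ref{Defn: Tangent natural transformation}. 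As our pseudofunctors are normalized the cells $\phi_X$ are identities, and for each composable pair the equation declaring $\phi_{f,g}$ a tangent natural transformation is, once the lax distributive laws $\quot{T}{f}$ are replaced by their inverses $\quot{T}{f}^{-1}$ via Lemma~\ref{Lemma: strong lax mor iff strong colax}, precisely the coherence identity tying the $2$-cells $\quot{T}{f}^{-1}$ to the structure cells of $F$; i.e., it is exactly the assertion that $(\quot{T}{X},\quot{T}{f}^{-1})$ assembles into a pseudonatural transformation $T\colon F\Rightarrow F$, which is Condition~(3). Hence $F$ is a tangent indexing functor.

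For the converse, given a tangent indexing functor $F$, Conditions~(1) and~(2) provide the object and morphism assignments $X\mapsto(F(X),\quot{\Tbb}{X})$ and $f\mapsto(F(f),\quot{T}{f})$ into $\fTan_{\operatorname{strong}}$, while Condition~(3), read backwards through Lemma~\ref{Lemma: strong lax mor iff strong colax}, says exactly that the structure cells $\phi_{f,g}$ of $F$ (and the identities $\phi_X$) satisfy the tangent-naturality equation, so they lift to $2$-cells of $\fTan_{\operatorname{strong}}$; combined with the pseudofunctor coherences already satisfied by $F$ as a map into $\fCat$, this yields a pseudofunctor $\widetilde F\colon\Cscr^{\op}\to\fTan_{\operatorname{strong}}$ with $\Forget\circ\widetilde F=F$. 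Together with Theorem~\ref{Thm: Indexed tangent category} this finishes the proof. The step I expect to be the main obstacle is the dictionary in the middle: matching ``$\phi_{f,g}$ is a tangent natural transformation with respect to the lax distributive laws $\quot{T}{f}$'' with ``the $\quot{T}{f}^{-1}$ are the witnessing $2$-cells of a pseudonatural transformation $T\colon F\Rightarrow F$''. Lining these up requires transposing the distributive laws into the colax direction, which is exactly where invertibility of the $\quot{T}{f}$ is used, as in Lemma~\ref{Lemma: strong lax mor iff strong colax} and as flagged in the remarks following Theorem~\ref{Theorem: Defining functors and nat transforms between equivariant categories and functors}, and then checking that the two-sided whiskering appearing in the pseudonaturality coherence matches the equation $(\id_S\ast\rho)\circ\alpha=\beta\circ(\rho\ast\id_T)$ of Definition~\ref{Defn: Tangent natural transformation}. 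Once this correspondence is set up, everything else is the bookkeeping already carried out in Lemmas~\ref{Lemma: Constructing the tangent functor}--\ref{Lemma: Canonical flip modification} and in the proof of Theorem~\ref{Thm: Indexed tangent category}.
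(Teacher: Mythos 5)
Your proposal is correct, but it is organized differently from the paper's argument. The paper proves this theorem directly in both directions: given a tangent indexing functor it assembles the tangent-object data $(T,p,\operatorname{add},0,\ell,c)$ by citing Lemmas \ref{Lemma: Constructing the tangent functor}--\ref{Lemma: Canonical flip modification} and verifying the pasting equations and the universality of the lift pointwise, and conversely it unwinds a tangent object into object-local tangent structures and observes that the modification axioms for $p,0,\operatorname{add},\ell,c$ are exactly the strong-tangent-morphism diagrams for $(F(f),\!\quot{T}{f}^{-1})$. You instead use Theorem \ref{Thm: Indexed tangent category} as the bridge to tangent objects and supply the third side of the triangle, namely the formal identification of tangent indexing functors with pseudofunctors landing in $\fTan_{\operatorname{strong}}$ --- precisely the equivalence the paper says ``one could check directly'' but never writes out. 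The pivotal step you isolate is genuine and does go through: the tangent-naturality equation $(\id_{\!\quot{T}{X}}\ast\phi_{f,g})\circ\bigl((\!\quot{T}{f}\ast F(g))\circ(F(f)\ast\!\quot{T}{g})\bigr)=\!\quot{T}{g\circ f}\circ(\phi_{f,g}\ast\!\quot{T}{Z})$ for the compositors is obtained from the pseudonaturality coherence of $(\!\quot{T}{X},\!\quot{T}{f}^{-1})$ by inverting the invertible distributive laws, which is exactly where Lemma \ref{Lemma: strong lax mor iff strong colax} enters and why the argument would fail for merely lax transformations; your argument is also not circular, since the paper's proof of Theorem \ref{Thm: Indexed tangent category} does not invoke the present theorem. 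What your route buys is brevity and a cleaner logical picture, with the three notions (tangent indexing functor, lift through $\Forget$, tangent object) pairwise identified; what it costs is that the explicit constructions of the modifications $p,0,\operatorname{add},\ell,c$, which the paper needs anyway for the pseudolimit tangent structure of Theorem \ref{Thm: Pre-Equivariant Tangent Category} and throughout Section \ref{Section: Eq Tan Mor}, are no longer produced by the proof itself and must still be extracted from Lemmas \ref{Lemma: Construction of Bundle Map}--\ref{Lemma: Canonical flip modification}.
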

\begin{rmk}
Note that in Theorem \ref{Thm: Tangent objects in Bicat Hom category are tangent indexing functors},  the roles of the witness isomorphism of the pseudonatural transformation $\quot{T}{f}$ and its inverse $\quot{T}{f}^{-1}$ are reversed when passing between tangent indexing functors and tangent objects. This illustrates another reason why we work with pseudonatural transformations as opposed to lax transformations.
\end{rmk}
\begin{proof}
The theorem follows because to give a tangent indexing functor $F\colon\Cscr^{\op} \to \fCat$ is precisely to give a pseudofunctor $\underline{F}\colon\Cscr^{\op} \to \fTan_{\operatorname{strong}}$ by Proposition \ref{Prop: Tangent indexing functor is a pseudofunctor into Tanstrong} and because tangent objects in $\Bicat(\Cscr^{\op},\fCat)$ are exactly pseudofunctors $\underline{F}\colon\Cscr^{\op} \to \fTan_{\operatorname{strong}} \xrightarrow{\Forget} \fCat$ by Theorem \ref{Thm: Indexed tangent category}.
\end{proof}

Since tangent objects in $\fCat$ are tangent categories, we obtain now the following corollary to Theorem \ref{Thm: Pre-Equivariant Tangent Category}.

\begin{cor}
    The pseudolimit 2-functor $\PC\colon \Bicat(\Cscr^{\op}, \fCat)\to\fCat$ sends tangent objects to tangent objects.
\end{cor}

The fact that $\operatorname{pseudolim}$ lifts to a 2-functor on 2-categories of tangent objects is a direct consequence of Theorem \ref{Thm: PC of tangent objects in hom two category takes values in tangent categories}.

\section{Pseudolimit Tangent Morphisms}\label{Section: Eq Tan Mor}
%\todo[inline]{I'll likely want to rewrite this intro paragraph once we've got the section done. At the moment it feels clunky due to lack of forward references and other such things.}
%%\biggeoff{The point is ultimately to show that $\PC:\Bicat(\Cscr^{\op},\fCat) \to \fCat$ restricts to a strict $2$-functor $\PC:\mathfrak{Tan}\left(\Bicat(\Cscr^{\op},\fCat)\right) \to \fTan$.}
So far we have seen in Theorem \ref{Thm: Pre-Equivariant Tangent Category} that tangent indexing functors $F\colon \Cscr^{\op} \to \fCat$ give rise to tangent structures on their pseudolimit $\PC(F)$. In this section we will examine the functoriality of this construction based on the strict $2$-functor $\PC(-)\colon \Bicat(\Cscr^{\op},\fCat) \to \fCat$ constructed in \cite[Lemma 4.1.11, Page 80]{GeneralGeoffThesis}; note that we have already seen the object assignment of $\PC(-)$ in the construction of the pseudocone categories and $1$-and-$2$-cell assignments of $\PC(-)$ in Theorem \ref{Theorem: Defining functors and nat transforms between equivariant categories and functors}. The content of the citation given here is that this construction is strictly $2$-functorial, i.e., $\PC(-)$ is a pseudofunctor with identity structure cells.
%and show that under mild and natural structural assumptions when given an equivariant morphism $h:X \to Y$ of $G$-varieties, we can build equivariant functors $h^{\circ}:E_G(Y) \to F_G(X)$ and/or $h_{\circ}:F_G(X) \to E_G(Y)$ when $F$ is a pre-equivariant pseudofunctor on $X$, $E$ is a pre-equivariant pseudofunctor on $Y$, and when there are either descent pullbacks of type $(\quot{\bar{h}^{\circ}}{X}, \quot{h}{f})$ or descent pullbacks of type $(\quot{h_{\circ}}{X},\quot{h}{f})$, respectively (cf. Definition 
%\ref{Defn: Descent Pullbacks}, \ref{Defn: Descent Pushforwards} below).

Our next main goal is to establish Theorem \ref{Thm: PC of tangent objects in hom two category takes values in tangent categories}, which states that the diagram
\[
\begin{tikzcd}
\Bicat(\Cscr^{\op},\fCat) \ar[rr]{}{\PC(-)} & & \fCat \\
\mathfrak{Tan}\left(\Bicat(\Cscr^{\op},\fCat)\right)  \ar[u]{}{\Forget} \ar[rr, swap]{}{\PC(-)} & & \fTan \ar[u, swap]{}{\Forget}
\end{tikzcd}
\]
exists, commutes strictly, and creates pseudolimits indexed by $1$-categories in the $2$-category $\fTan$. To do this, however, we will need to understand the $2$-category $\mathfrak{Tan}\left(\Bicat(\Cscr^{\op},\fCat)\right)$ of tangent objects, tangent morphisms, and tangent transformations in the $2$-category $\Bicat(\Cscr^{\op},\fCat)$ introduced in \cite{Marcello} as well as what they correspond to in terms of tangent indexing functors and the data $\Bicat(\Cscr^{\op}, \fCat)$ encodes.

We begin the task above by recalling what it means to be a morphism of tangent objects in the sense of \cite{Marcello}, as this will form the crux of our discussion of functoriality on $1$-cells and $2$-cells.
\begin{dfn}[{\cite{Marcello}, Definition 4.15}]\label{Defn: Tangent Mor}
If $\Cfrak$ is a $2$-category with tangent objects $(X,\Tbb)$ and $(Y,\Sbb)$, then a \emph{(lax) morphism of tangent objects} $(X,\Tbb) \to  (Y,\Sbb)$ is a pair $(f,\alpha)$ where:
\begin{enumerate}
    \item $f\colon X \to Y$ is a $1$-cell in $\Cfrak$;
    \item $\alpha$ is a $2$-cell:
    \[
    \begin{tikzcd}
    X \ar[r, ""{name = U}]{}{T} \ar[d, swap]{}{f} & X \ar[d]{}{f} \\
    Y \ar[r, swap, ""{name = D}]{}{S} & Y
    \ar[from = U, to = D, Rightarrow, shorten <= 4pt, shorten >= 4pt]{}{\alpha}
    \end{tikzcd} 
    \]
    \item The diagrams of $1$-cells and $2$-cells
    \[
    \begin{tikzcd}
    f \circ T \ar[r]{}{\alpha} \ar[dr, swap]{}{f \ast \!\quot{p}{T}} & S \circ f \ar[d]{}{\!\quot{p}{S} \ast f} \\
     & f
    \end{tikzcd}\quad
    \begin{tikzcd}
    f \ar[r]{}{f \ast \!\quot{0}{T}} \ar[dr, swap]{}{\!\quot{0}{S} \ast f} & f \circ T \ar[d]{}{\alpha} \\
 & S \circ f
    \end{tikzcd}\quad
    \begin{tikzcd}
    f \circ T_2 \ar[r]{}{\alpha_2} \ar[d, swap]{}{f \ast \!\quot{\operatorname{add}}{T}} & S_2 \circ f \ar[d]{}{\!\quot{\operatorname{add}}{S} \ast f} \\ 
    f \circ T \ar[r, swap]{}{\alpha} & S \circ f
    \end{tikzcd}
    \]
    \[
    \begin{tikzcd}
    f \circ T \ar[rrrr]{}{\alpha} \ar[d, swap]{}{f \ast \!\quot{\ell}{T}} & & & & S \circ f \ar[d]{}{\!\quot{\ell}{S} \ast f} \\
    f \circ T^2 \ar[rrrr, swap]{}{(S \ast \alpha) \circ (\alpha \ast T)} & & & & S^2 \circ f
    \end{tikzcd}
    \]
    \[
\begin{tikzcd}
    f \circ T^2 \ar[rrrr]{}{(S \ast \alpha) \circ (\alpha \ast T)} \ar[d, swap]{}{f \ast \!\quot{c}{T}} & & & & S^2 \circ f \ar[d]{}{\!\quot{c}{S} \ast f} \\
    f \circ T^2 \ar[rrrr, swap]{}{(S \ast \alpha) \circ (\alpha \ast T)} & & & & S^2 \circ f
    \end{tikzcd}
    \]
    each commute in the relevant hom-categories.
\end{enumerate}
\end{dfn}

Specializing this to the $2$-category $\Bicat(\Cscr^{\op},\fCat)$ we find the following characterization of (lax) tangent morphisms of tangent indexing functors. By Theorem \ref{Thm: Tangent objects in Bicat Hom category are tangent indexing functors}, this characterizes the tangent morphisms between tangent objects in the hom-$2$-category $\Bicat(\Cscr^{\op},\fCat)$.
\begin{prop}\label{Prop: tangent morphisms between tangent index functors}
Let $F,G\colon \Cscr^{\op} \to \fCat$ be tangent indexing functors with tangent object structures $(F,T,\!\quot{p}{T},\quot{0}{T},\!\quot{\operatorname{add}}{T},\!\quot{\ell}{T},\quot{c}{T})$ and $(G,S,\!\quot{p}{S}, \!\quot{0}{S}, \!\quot{\operatorname{add}}{S}, \!\quot{\ell}{S}, \!\quot{c}{S})$, respectively. To give a tangent morphism $(h,\alpha)\colon F \to G$ it is necessary and sufficient to give a pseudonatural transformation $h\colon F \Rightarrow G$ and a modification
\[
\begin{tikzcd}
\Cscr^{\op} \ar[rrr, bend left = 30, ""{name = U}]{}{F} \ar[rrr, bend right = 30, swap, ""{name = B}]{}{G} & & &\fCat 
\ar[from = U, to = B, Rightarrow, shorten <= 4pt, shorten >= 4pt, bend right = 30, swap, ""{name = L}]{}{h \circ T} 
\ar[from = U, to = B, Rightarrow, shorten <= 4pt, shorten >= 4pt, bend left = 30, ""{name = R}]{}{S \circ h} 
\ar[from = L, to = R, symbol = \underset{\alpha}{\Rrightarrow}, swap]
\end{tikzcd}
\]
such that for all objects $X$ in $\Cscr$, the equations
\begin{align}
\!\quot{h}{X} \ast \!\quot{\left(\quot{p}{T}\right)}{X} &= \!\quot{\left(\!\quot{p}{S} \ast h\right)}{X} \circ \!\quot{\alpha}{X} \label{Eqn: Necessary eqn 0 and bundle eqns}\\
\!\quot{\alpha}{X} \circ \!\quot{\left(h \ast \!\quot{0}{T}\right)}{X}&=\!\quot{\left(\!\quot{0}{S} \ast h\right)}{X} \\
\!\quot{\left(\!\quot{\operatorname{add}}{S} \ast h\right)}{X} \circ \!\quot{\alpha_2}{X} &= \!\quot{\alpha}{X} \circ \!\quot{\left(h \ast \quot{\operatorname{add}}{T}\right)}{X} \label{Eqn: Necessary eqn addition eqn}\\
\!\quot{\left(\!\quot{\ell}{G}\right)}{X} \circ\! \quot{\alpha}{X} &= \!\quot{\left(S \ast \alpha\right)}{X} \circ \!\quot{\left(\alpha \ast T\right)}{X} \circ \!\quot{\left(\!\quot{\ell}{F}\right)}{X} \label{Eqn: Necessary eqn lift eqn} \\
\!\quot{\left(S \ast \alpha\right)}{X} \circ \!\quot{\left(\alpha \ast T\right)}{X} \circ \!\quot{\left(\!\quot{c}{T}\right)}{X} &= \!\quot{\left(\!\quot{c}{S}\right)}{X} \circ \!\quot{\left(S \ast \alpha\right)}{X} \circ \!\quot{\left(\alpha \ast T\right)}{X} \label{Eqn: Necessary eqn flip eqn}
\end{align}
hold. Additionally, $(h,\alpha)$ is strong if and only if $\alpha$ is an isomorphism.
\end{prop}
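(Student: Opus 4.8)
The plan is to unwind Definition~\ref{Defn: Tangent Mor} in the special case $\Cfrak = \Bicat(\Cscr^{\op},\fCat)$, using the observation from Theorem~\ref{Thm: Tangent objects in Bicat Hom category are tangent indexing functors} that tangent objects here are precisely tangent indexing functors, so $F$ and $G$ already carry the structures $(F,T,\!\quot{p}{T},\ldots)$ and $(G,S,\!\quot{p}{S},\ldots)$. First I would recall that in $\Bicat(\Cscr^{\op},\fCat)$ a $1$-cell is a pseudonatural transformation and a $2$-cell is a modification, so a tangent morphism $(h,\alpha)$ necessarily consists of a pseudonatural transformation $h\colon F\Rightarrow G$ together with a modification $\alpha$ filling the square
\[
\begin{tikzcd}
F \ar[r, ""{name = U}]{}{T} \ar[d, swap]{}{h} & F \ar[d]{}{h} \\
G \ar[r, swap, ""{name = D}]{}{S} & G
\ar[from = U, to = D, Rightarrow, shorten <= 4pt, shorten >= 4pt]{}{\alpha}
\end{tikzcd}
\]
which is exactly the modification $\alpha\colon h\circ T \Rrightarrow S\circ h$ asserted in the statement. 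This accounts for the ``necessary'' direction of the bijection: the data $(h,\alpha)$ of a tangent morphism \emph{is} the data of a pseudonatural transformation and such a modification.

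Next I would translate the five commuting diagrams of Definition~\ref{Defn: Tangent Mor} — the coherences with $\!\quot{p}{}$, $\!\quot{0}{}$, $\!\quot{\operatorname{add}}{}$, $\!\quot{\ell}{}$, $\!\quot{c}{}$ — into statements in $\Bicat(\Cscr^{\op},\fCat)$. The point is that equality of modifications in the hom-$2$-category $\Bicat(\Cscr^{\op},\fCat)(F,G)$ is detected \emph{object-locally}: two modifications between pseudonatural transformations $F\Rightarrow G$ agree if and only if their component natural transformations agree at every $X\in\Cscr_0$. Here one invokes the explicit formulas for vertical and horizontal composition of modifications (components of $\beta\circ\gamma$ at $X$ are $\!\quot{\beta}{X}\circ\!\quot{\gamma}{X}$, components of whiskering $\rho\ast\sigma$ at $X$ are the corresponding whiskering of the $X$-components), together with the fact that $T_2$ was built object-locally in Proposition~\ref{Prop: Existence of pullback pseudonatural transformation} so that $\!\quot{(T_2)}{X} = \!\quot{T_2}{X}$ and $\alpha_2$ has $X$-component $\!\quot{\alpha_2}{X}$. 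Reading off the $X$-component of each of the five diagrams of Definition~\ref{Defn: Tangent Mor} then yields exactly Equations~(\ref{Eqn: Necessary eqn 0 and bundle eqns})--(\ref{Eqn: Necessary eqn flip eqn}); conversely, if those equations hold for all $X$, the corresponding modification equalities hold, so $(h,\alpha)$ is a tangent morphism. This gives the ``sufficient'' direction and closes the biconditional.

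I expect the only genuinely delicate step to be bookkeeping: carefully matching the whiskerings $S\ast\alpha$, $\alpha\ast T$, etc., which live in $\Bicat(\Cscr^{\op},\fCat)$, with their object-local incarnations $\!\quot{(S\ast\alpha)}{X}$, $\!\quot{(\alpha\ast T)}{X}$ in each $\fCat$, and verifying that these indeed coincide with the whiskerings of $\!\quot{\alpha}{X}$ by $\!\quot{S}{X}$, $\!\quot{T}{X}$ — this uses that $T$ and $S$ are pseudonatural transformations whose vertical-composite powers $T^2$, $S^2$ have $X$-components $(\!\quot{T}{X})^2$, $(\!\quot{S}{X})^2$ (as recorded in the discussion preceding Lemma~\ref{Lemma: Vertical lift modification}). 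Once that dictionary is in place the proof is a direct, if notation-heavy, transcription. There is no conceptual obstacle; the main risk is simply keeping the left/right-index conventions for iterated-transformation components straight, as flagged in the Remark following the statement regarding the interchange of $\!\quot{T}{f}$ and $\!\quot{T}{f}^{-1}$.
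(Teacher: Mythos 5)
Your proposal is correct and follows essentially the same route as the paper: both arguments simply unwind Definition \ref{Defn: Tangent Mor} in the $2$-category $\Bicat(\Cscr^{\op},\fCat)$, identify the $1$-cell and $2$-cell data as a pseudonatural transformation and a modification, and observe that the five coherence diagrams are exactly the object-local equations (\ref{Eqn: Necessary eqn 0 and bundle eqns})--(\ref{Eqn: Necessary eqn flip eqn}) since equality of modifications is detected componentwise. Your extra care about matching whiskerings with their $X$-components is a sound elaboration of what the paper leaves implicit.
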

\begin{proof}
To see that the equations are sufficient, we note that by requiring that each of the equations in lines (\ref{Eqn: Necessary eqn 0 and bundle eqns}) -- (\ref{Eqn: Necessary eqn flip eqn}) hold for each object $X \in \Cscr_0$, the coherence diagrams given in Item (3) of Definition \ref{Defn: Tangent Mor} are all satisfied. 

Alternatively, to see that giving a pseudonatural transformation $h\colon F \Rightarrow G$ and modification $\alpha\colon h \circ T \Rrightarrow S \circ h$ for which the equations given in Lines (\ref{Eqn: Necessary eqn 0 and bundle eqns}) -- (\ref{Eqn: Necessary eqn flip eqn}) is necessary. First we note that in order for Items (1) and (2) of Definition \ref{Defn: Tangent Morphism} to hold, we require a pseudonatural transformation $h$ and modification $\alpha$ of the typing declared in the statement of the proposition. Finally, each of the coherence diagrams in Item (3) of Definition \ref{Defn: Tangent Mor} are described object-wise by the equations described in Lines (\ref{Eqn: Necessary eqn 0 and bundle eqns}) -- (\ref{Eqn: Necessary eqn flip eqn}), which shows their necessity. The final claim follows immediately by inspection.
\end{proof}
The proposition above allows us to deduce that giving tangent morphisms between tangent objects in $\Bicat(\Cscr^{\op},\fCat)$ is equivalent to giving a pseudonatural transformation and modification pair which is object-locally a tangent functor for every object in the base category $\Cscr$. 

\begin{cor}\label{Cor: Tangent morphisms in hom two category}
Let $F, G\colon \Cscr^{\op} \to \fCat$ be tangent indexing functors with corresponding object-local tangent structures $(F(X),\!\quot{\Tbb}{X})$ and $(G(X),\!\quot{\Sbb}{X})$, respectively. To give a tangent morphism $(h,\rho)\colon F \to G$ is to give a pseudonatural transformation and modification pair
\[
\begin{tikzcd}
\Cscr^{\op} \ar[rr, bend left = 30, ""{name = U}]{}{F} \ar[rr, swap, bend right = 30, ""{name = D}]{}{G} & & \fCat
\ar[from = U, to = D, Rightarrow, shorten <= 4pt, shorten >= 4pt]{}{h}
\end{tikzcd}\quad
\begin{tikzcd}
\Cscr^{\op} \ar[rrr, bend left = 30, ""{name = U}]{}{F} \ar[rrr, bend right = 30, swap, ""{name = B}]{}{G} & & &\fCat 
\ar[from = U, to = B, Rightarrow, shorten <= 4pt, shorten >= 4pt, bend right = 30, swap, ""{name = L}]{}{h \circ T} 
\ar[from = U, to = B, Rightarrow, shorten <= 4pt, shorten >= 4pt, bend left = 30, ""{name = R}]{}{S \circ h} 
\ar[from = L, to = R, symbol = \underset{\alpha}{\Rrightarrow}, swap]
\end{tikzcd}
\]
such that for all $X \in \Cscr_0$ the pair $(\!\quot{h}{X}, \!\quot{\alpha}{X})$ is a morphism of tangent categories. Additionally, $(h,\alpha)$ is strong if and only if each map $(\quot{h}{X}, \quot{\alpha}{X})$ is strong.
\end{cor}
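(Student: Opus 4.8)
\emph{Proof plan.} The plan is to read this off directly from Proposition \ref{Prop: tangent morphisms between tangent index functors}. That proposition already identifies a tangent morphism $(h,\alpha)\colon F \to G$ with the data of a pseudonatural transformation $h\colon F \Rightarrow G$, a modification $\alpha\colon h \circ T \Rrightarrow S \circ h$, and the list of equations (\ref{Eqn: Necessary eqn 0 and bundle eqns})--(\ref{Eqn: Necessary eqn flip eqn}), required to hold at every object $X \in \Cscr_0$. So the only thing left to check is that, for a fixed $X$, those equations are literally the coherence diagrams of Definition \ref{Defn: Tangent Morphism} instantiated for the pair $(\quot{h}{X}, \quot{\alpha}{X})$ between the tangent categories $(F(X),\quot{\Tbb}{X})$ and $(G(X),\quot{\Sbb}{X})$.

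First I would record the bookkeeping that pins down the $X$-components of the $2$-cells involved. Vertical composition and whiskering of pseudonatural transformations and modifications are computed component-wise, so evaluating $h \circ T$ at $X$ gives the functor $\quot{h}{X} \circ \quot{T}{X}$, evaluating $S \circ h$ at $X$ gives $\quot{S}{X} \circ \quot{h}{X}$, and hence the $X$-component of $\alpha$ is a natural transformation $\quot{\alpha}{X}\colon \quot{h}{X} \circ \quot{T}{X} \Rightarrow \quot{S}{X} \circ \quot{h}{X}$ --- precisely a (lax) distributive law for $\quot{h}{X}$ in the sense of Definition \ref{Defn: Tangent Morphism}. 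Likewise the $X$-components of the structure modifications $p, 0, \operatorname{add}, \ell, c$ of the tangent objects $F$ and $G$ are exactly the natural transformations $\quot{p}{X}, \quot{0}{X}, \quot{\operatorname{add}}{X}, \quot{\ell}{X}, \quot{c}{X}$ of the tangent structures $\quot{\Tbb}{X}$ and $\quot{\Sbb}{X}$ supplied by Definition \ref{Defn: tangent pre-equivariant indexing functor} and Theorem \ref{Thm: Tangent objects in Bicat Hom category are tangent indexing functors}; so, for instance, $\quot{(h \ast p)}{X} = \quot{h}{X} \ast \quot{p}{X}$ and the composite $(S \ast \alpha) \circ (\alpha \ast T)$ has $X$-component $(\quot{S}{X} \ast \quot{\alpha}{X}) \circ (\quot{\alpha}{X} \ast \quot{T}{X})$.

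Feeding these identifications into (\ref{Eqn: Necessary eqn 0 and bundle eqns})--(\ref{Eqn: Necessary eqn flip eqn}) shows that, for each fixed $X$, they coincide term by term with the five commuting diagrams (bundle, zero, addition, lift, flip) defining a morphism of tangent categories in Definition \ref{Defn: Tangent Morphism}; since Definition \ref{Defn: Tangent Morphism} in its lax form imposes no preservation of pullbacks or equalizers, there is genuinely nothing beyond these five equations to verify. Thus a datum $(h,\alpha)$ as in Proposition \ref{Prop: tangent morphisms between tangent index functors} is the same as a pseudonatural transformation/modification pair $(h,\alpha)$ for which every $(\quot{h}{X}, \quot{\alpha}{X})$ is a morphism of tangent categories, which is the statement of the corollary. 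I do not anticipate a genuine obstacle: the only point requiring minor care is translating the whiskered composites in the lift and flip equations, where the pseudonaturality witnesses $\quot{T}{f}$ and $\quot{S}{f}$ enter implicitly through $h \circ T^2$ and $S^2 \circ h$, but this is again just component-wise computation. The corollary is a repackaging of Proposition \ref{Prop: tangent morphisms between tangent index functors} whose purpose is to make the object-local description available for the constructions in Section \ref{Section: Eq Tan Mor} and in the applications.
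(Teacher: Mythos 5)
Your proposal is correct and follows essentially the same route as the paper: the paper's proof of this corollary likewise just observes that the equations of Proposition \ref{Prop: tangent morphisms between tangent index functors}, read at a fixed object $X$, are verbatim the coherence diagrams of Definition \ref{Defn: Tangent Morphism} for the pair $(\!\quot{h}{X},\!\quot{\alpha}{X})$. Your component-wise bookkeeping for the whiskered composites (and your correct remark that the lax notion imposes no limit-preservation conditions) is exactly the content of the paper's argument, so there is nothing to add.
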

\begin{proof}
In both cases, we are given the existence of a pseudonatural transformation $h\colon F \Rightarrow G$ and a modification $\alpha\colon h \circ T \Rrightarrow S \circ h$, so it suffices to verify that being a tangent morphism is equivalent to having each functor/natural transformation pair $(\!\quot{h}{X},\!\quot{\alpha}{X})$ be a morphism of tangent categories. Observe, however, that the equations (\ref{Eqn: Necessary eqn 0 and bundle eqns}) -- (\ref{Eqn: Necessary eqn lift eqn}) precisely declare that each pair $(\!\quot{h}{X}, \!\quot{\alpha}{X})$ is a morphism of tangent categories, as they are exactly the coherences declared in Definition \ref{Defn: Tangent Morphism}. Thus declaring that the equations in Lines (\ref{Eqn: Necessary eqn 0 and bundle eqns}) -- (\ref{Eqn: Necessary eqn lift eqn}) hold for all $X \in \Cscr_0$ is equivalent to $(\!\quot{h}{X}, \!\quot{\alpha}{X})$ being a morphism of tangent categories for all objects. As in Proposition \ref{Prop: tangent morphisms between tangent index functors}, the final claim of the corollary is clear from inspection.
\end{proof}
\begin{Theorem}\label{Thm: Tangent morphisms pseudoconify to tangent functors}
Let $F,G\colon \Cscr^{\op} \to \fCat$ be tangent indexing functors and let $(h,\alpha)\colon F \Rightarrow G$ be a tangent morphism between them in $\Bicat(\Cscr^{\op},\fCat)$. Then the pair $(\underline{h},\underline{\alpha})\colon \PC(F) \to \PC(G)$ is a morphism of tangent categories.
\end{Theorem}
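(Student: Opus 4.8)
The plan is to reduce the statement to the object-local data we have already assembled, using the dictionary between tangent objects in $\Bicat(\Cscr^{\op},\fCat)$ and tangent indexing functors (Theorem \ref{Thm: Tangent objects in Bicat Hom category are tangent indexing functors}) together with the explicit formulas for $\underline{h}$ and $\underline{\alpha}$ coming from Theorem \ref{Theorem: Defining functors and nat transforms between equivariant categories and functors}. Concretely, by Corollary \ref{Cor: Tangent morphisms in hom two category}, a tangent morphism $(h,\alpha)\colon F \to G$ is the same as a pseudonatural transformation $h\colon F \Rightarrow G$ and a modification $\alpha\colon h \circ T \Rrightarrow S \circ h$ such that for every object $X$ of $\Cscr$ the pair $(\!\quot{h}{X},\!\quot{\alpha}{X})$ is a (lax) morphism of tangent categories $(F(X),\!\quot{\Tbb}{X}) \to (G(X),\!\quot{\Sbb}{X})$. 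Applying $\PC(-)$, the functor $\underline{h}\colon \PC(F) \to \PC(G)$ exists by Theorem \ref{Theorem: Defining functors and nat transforms between equivariant categories and functors}, and the modification $\alpha$ pseudoconifies to a natural transformation $\underline{\alpha}$ fitting into the square $\underline{h} \circ \underline{T} \Rightarrow \underline{S} \circ \underline{h}$ (here $\underline{T}$ and $\underline{S}$ are the pseudolimit tangent functors from Theorem \ref{Thm: Pre-Equivariant Tangent Category}, and we use that $\underline{h \circ T} = \underline{S} \circ \underline{h}$ on the nose by strict $2$-functoriality of $\PC(-)$ combined with the description of $\underline{T},\underline{S}$ as the pseudocone functors attached to the pseudonatural transformations $T$, $S$).

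The heart of the proof is then to check that $(\underline{h},\underline{\alpha})$ satisfies the defining coherences of Definition \ref{Defn: Tangent Morphism}: compatibility with $\underline{p}$, $\underline{0}$, $\underline{+}$, $\underline{\ell}$, and $\underline{c}$. Each of these is an equality of natural transformations between functors $\PC(F) \to \PC(G)$, hence may be verified object-locally, i.e., after applying the projections $p_X\colon \PC(G) \to G(X)$. But object-locally the components of $\underline{h}$, $\underline{\alpha}$, $\underline{p}$, $\underline{0}$, $\underline{+}$, $\underline{\ell}$, $\underline{c}$ are exactly $\!\quot{h}{X}$, $\!\quot{\alpha}{X}$, $\!\quot{p}{X}$, $\!\quot{0}{X}$, $\!\quot{+}{X}$, $\!\quot{\ell}{X}$, $\!\quot{c}{X}$ (by construction in Lemmas \ref{Lemma: Construction of Bundle Map}, \ref{Lemma: Zero modification}, \ref{Lemma: Addition modification}, \ref{Lemma: Vertical lift modification}, \ref{Lemma: Canonical flip modification} and the explicit formula for $\underline{h}$, $\underline{\alpha}$), and each required diagram becomes exactly the corresponding coherence diagram in Definition \ref{Defn: Tangent Morphism} for $(\!\quot{h}{X},\!\quot{\alpha}{X})$, which holds because $(\!\quot{h}{X},\!\quot{\alpha}{X})$ is a morphism of tangent categories by Corollary \ref{Cor: Tangent morphisms in hom two category}. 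There is one subtlety worth pointing out: in the lift and flip axioms one must match the two-fold composite $(\underline{S} \ast \underline{\alpha}) \circ (\underline{\alpha} \ast \underline{T})$ with its object-local counterpart $(\!\quot{S}{X} \ast \!\quot{\alpha}{X}) \circ (\!\quot{\alpha}{X} \ast \!\quot{T}{X})$; this follows from the description, recalled just before Lemma \ref{Lemma: Vertical lift modification}, of how whiskering and vertical composition of pseudonatural transformations and modifications are computed object-locally, so that $\underline{T}^2$, $\underline{S}^2$, and the relevant whiskerings all have the expected fibrewise components.

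I expect the only real obstacle to be bookkeeping: one must be careful that the transition-isomorphism parts of $\underline{h}(A)$, of $\underline{T}A$, and of $\underline{\alpha}_A$ are all handled consistently, since each of these involves the naturality witnesses $\!\quot{h}{f}$, $\!\quot{T}{f}$, $\!\quot{S}{f}$ and their inverses (cf. Remark \ref{Rmk: Explicit form of transition isomorphisms} and the formula for $\Sigma_{\underline{\alpha}A}$). However, the verification that $\underline{\alpha}$ is a well-defined natural transformation with the stated components is exactly the content of Theorem \ref{Theorem: Defining functors and nat transforms between equivariant categories and functors} applied to the modification $\alpha$, so this is already done for us, and nothing beyond routine object-local checking remains. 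Finally, if $\alpha$ happens to be invertible (so that $(h,\alpha)$ is a strong tangent morphism in $\Bicat(\Cscr^{\op},\fCat)$) then each $\!\quot{\alpha}{X}$ is invertible and each $\!\quot{h}{X}$ preserves the tangent pullbacks and the equalizer of the vertical lift; by Theorem \ref{Thm: Limits in pseudcone categories} these limits in $\PC(F)$ and $\PC(G)$ are computed object-locally, so $\underline{h}$ preserves them and $\underline{\alpha}$ is invertible, whence $(\underline{h},\underline{\alpha})$ is a strong morphism of tangent categories. This strengthened version is what will feed into Theorem \ref{Thm: PC of tangent objects in hom two category takes values in tangent categories}.
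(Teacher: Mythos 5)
Your proposal is correct and follows essentially the same route as the paper: both reduce the coherence diagrams of Definition \ref{Defn: Tangent Morphism} for $(\underline{h},\underline{\alpha})$ to their object-local counterparts in each $G(X)$, which hold because each pair $(\!\quot{h}{X},\!\quot{\alpha}{X})$ is a morphism of tangent categories by Corollary \ref{Cor: Tangent morphisms in hom two category}. (The only blemish is the displayed identity ``$\underline{h \circ T} = \underline{S} \circ \underline{h}$,'' which should read $\underline{h \circ T} = \underline{h} \circ \underline{T}$ and $\underline{S \circ h} = \underline{S} \circ \underline{h}$; your added remarks on the strong case go beyond what the paper proves here but are consistent with what is later used.)
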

\begin{proof}
Note that the content of this theorem amounts to proving that the diagrams of functors and natural transformations
\begin{equation}\label{Eqn: Diagrams to check for tangent mor1}
\begin{tikzcd}
\underline{h} \circ \underline{T} \ar[r]{}{\underline{\alpha}} \ar[dr, swap]{}{\underline{h} \ast \underline{p}_{\Tbb}} & \underline{S} \circ \underline{h} \ar[d]{}{\underline{p}_{\Sbb} \ast \underline{h}} \\
 & \underline{h}
\end{tikzcd}\quad
\begin{tikzcd}
\underline{h} \ar[r]{}{\underline{h} \ast \underline{0}_{\Tbb}} \ar[dr, swap]{}{\underline{0}_{\Sbb} \ast \underline{h}} & \underline{h} \circ \underline{T} \ar[d]{}{\underline{\alpha}} \\
 & \underline{S} \circ \underline{h}
\end{tikzcd}\quad
\begin{tikzcd}
\underline{h} \circ \underline{T}_2 \ar[rr]{}{\underline{h} \ast \underline{\alpha}_2} \ar[d, swap]{}{\underline{h} \ast \underline{\operatorname{add}}_{\Tbb}} & & \underline{S}_2 \circ \underline{h} \ar[d]{}{\underline{\operatorname{add}}_{\Sbb} \ast \underline{h}} \\
\underline{h} \ast \underline{T} \ar[rr, swap]{}{\underline{\alpha}} & & \underline{S} \circ \underline{h}
\end{tikzcd}
\end{equation}
\begin{equation}\label{Eqn: Diagrams to check for tangent mor2}
\begin{tikzcd}
\underline{h} \circ \underline{T} \ar[rr]{}{\underline{\alpha}} \ar[d, swap]{}{\underline{h} \ast \underline{\ell}_{\Tbb}} & & \underline{S} \circ \underline{h} \ar[d]{}{\underline{\ell}_{\Sbb} \ast \underline{h}} \\
\underline{h} \circ \underline{T}^2 \ar[rr, swap]{}{(\underline{S} \ast \underline{h}) \circ (\underline{\alpha} \circ \underline{T})} & & \underline{S}^2 \circ \underline{h}
\end{tikzcd}\qquad
\begin{tikzcd}
\underline{h} \circ \underline{T}^2 \ar[rr]{}{(\underline{S} \ast \underline{h}) \circ (\underline{\alpha} \circ \underline{T})} \ar[d, swap]{}{\underline{h} \ast \underline{c}_{\Tbb}} & & \underline{S}^2 \circ \underline{h} \ar[d]{}{\underline{c}_{\Sbb} \ast \underline{h}} \\
\underline{h} \circ \underline{T}^2 \ar[rr, swap]{}{(\underline{S} \ast \underline{h}) \circ (\underline{\alpha} \circ \underline{T})} & & \underline{S}^2 \circ \underline{h}
\end{tikzcd}
\end{equation}
all must commute. However, to show this it suffices to prove that for any object $A = \lbrace \!\quot{A}{X} \; \left. \right| \; X \in \Cscr_0 \rbrace$ of $\PC(F)$, the $X$-local versions of the diagrams above,
\[
\begin{tikzcd}
\left(\!\quot{h}{X} \circ\! \quot{T}{X}\right)\left(\!\quot{A}{X}\right) \ar[r]{}{\!\quot{\alpha}{X}} \ar[dr, swap]{}{\left(\!\quot{h}{X} \ast\! \quot{{p}_{\Tbb}}{X}\right)} & \left(\!\quot{S}{X} \circ \!\quot{h}{X}\right)\left(\!\quot{A}{X}\right) \ar[d]{}{\left(\!\quot{{p}_{\Sbb}}{X} \ast \!\quot{h}{X}\right)} \\
 & \left(\!\quot{h}{X}\right)\left(\!\quot{A}{X}\right)
\end{tikzcd}\quad
\begin{tikzcd}
\!\quot{h}{X}\left(\!\quot{A}{X}\right) \ar[rr]{}{\left(\!\quot{h}{X} \ast \!\quot{{0}_{\Tbb}}{X}\right)} \ar[drr, swap]{}{\left(\!\quot{{0}_{\Sbb}}{X} \ast \!\quot{h}{X}\right)} & & \left(\!\quot{h}{X} \circ \!\quot{T}{X}\right)\left(\!\quot{A}{X}\right) \ar[d]{}{\!\quot{\alpha}{X}} \\
 & & \left(\!\quot{S}{X} \circ \!\quot{h}{X}\right)\left(\!\quot{A}{X}\right)
\end{tikzcd}
\]
\[
\begin{tikzcd}
\left(\!\quot{h}{X} \circ \left(\!\quot{T}{X}\right)_2\right)\left(\!\quot{A}{X}\right) \ar[rr]{}{\left(\!\quot{h}{X} \ast \left(\!\quot{\alpha}{X}\right)_2\right)} \ar[d, swap]{}{\left(\!\quot{h}{X} \ast \!\quot{\operatorname{add}_{\Tbb}}{X}\right)} & & \left(\left(\!\quot{S}{X}\right)_2 \circ \quot{h}{X}\right)\left(\!\quot{A}{X}\right) \ar[d]{}{\left(\!\quot{\operatorname{add}_{\Sbb}}{X} \ast \!\quot{h}{X}\right)} \\
\left(\!\quot{h}{X} \ast \!\quot{T}{X}\right)\left(\!\quot{A}{X}\right) \ar[rr, swap]{}{\!\quot{\alpha}{X}} & & \left(\!\quot{S}{X} \circ\! \quot{h}{X}\right)\left(\!\quot{A}{X}\right)
\end{tikzcd}
\]
\[
\begin{tikzcd}
\left(\!\quot{h}{X} \circ \!\quot{T}{X}\right)\left(\!\quot{A}{X}\right) \ar[rrr]{}{\!\quot{\alpha}{X}} \ar[d, swap]{}{\left(\!\quot{h}{X} \ast \!\quot{{\ell}_{\Tbb}}{X}\right)} & & & \left(\!\quot{S}{X} \circ \!\quot{h}{X}\right)\left(\!\quot{A}{X}\right) \ar[d]{}{\left(\!\quot{{\ell}_{\Sbb}}{X} \ast\! \quot{h}{X}\right)} \\
\left(\!\quot{h}{X} \circ \left(\!\quot{T}{X}\right)^2\right)\left(\!\quot{A}{X}\right) \ar[rrr, swap]{}{(\!\quot{S}{X} \ast \!\quot{h}{X}) \circ (\!\quot{\alpha}{X} \circ \!\quot{T}{X})} & & & \left(\left(\!\quot{S}{X}\right)^2 \circ\! \quot{h}{X}\right)\left(\!\quot{A}{X}\right)
\end{tikzcd}
\]
\[
\begin{tikzcd}
\left(\!\quot{h}{X} \circ \left(\!\quot{T}{X}\right)^2\right)\left(\!\quot{A}{X}\right) \ar[rrr]{}{(\!\quot{S}{X} \ast \!\quot{h}{X}) \circ (\!\quot{\alpha}{X} \circ \!\quot{T}{X})} \ar[d, swap]{}{\left(\!\quot{h}{X} \ast \!\quot{{c}_{\Tbb}}{X}\right)} & & & \left(\left(\!\quot{S}{X}\right)^2 \circ \!\quot{h}{X}\right)\left(\!\quot{A}{X}\right) \ar[d]{}{\left(\!\quot{{c}_{\Sbb}}{X} \ast \!\quot{h}{X}\right)} \\
\left(\!\quot{h}{X} \circ \left(\!\quot{T}{X}\right)^2\right)\left(\!\quot{A}{X}\right) \ar[rrr, swap]{}{(\!\quot{S}{X} \ast \!\quot{h}{X}) \circ (\!\quot{\alpha}{X} \circ \!\quot{T}{X})} & & & \left(\left(\!\quot{S}{X}\right)^2 \circ \!\quot{h}{X}\right)\left(\!\quot{A}{X}\right)
\end{tikzcd}
\]
all must commute; note that for readability, we have omitted the $\!\quot{A}{X}$ subscript for the component of each natural transformation. However, each of these diagrams holds by virtue of the fact that $(h,\alpha)$ is a tangent morphism, an application of Corollary \ref{Cor: Tangent morphisms in hom two category}, and the fact that each $\!\quot{A}{X} \in F(X)_0$. As such, it follows that the diagrams expressed in Lines (\ref{Eqn: Diagrams to check for tangent mor1}) and (\ref{Eqn: Diagrams to check for tangent mor2}) commute and so that $(\underline{h},\underline{\alpha})$ is a morphism of tangent categories.
\end{proof}

We now characterize the tangent transformations of \cite{Marcello} in $\Bicat(\Cscr^{\op},\fCat)$ and then prove that they become tangent natural transformations after applying the pseudocone $2$-functor $\PC(-)$. For this, however, we will start by recalling what it means to be a tangent transformation.
\begin{dfn}[{\cite{Marcello}, Definition 4.16}]
Let $\Cfrak$ be a $2$-category with tangent objects $(X,\Tbb), (Y,\Sbb)$ and with tangent morphisms $(h,\alpha), (k,\beta)\colon (X,\Tbb) \to (Y,\Sbb)$. A tangent transformation $\rho\colon (h,\alpha) \Rightarrow (k,\beta)$, described visually as
\[
\begin{tikzcd}
(X,\Tbb) \ar[rr, bend left = 30, ""{name = U}]{}{(h,\alpha)} \ar[rr, bend right = 30, swap, ""{name = D}]{}{(k,\beta)} & & (Y,\Sbb)
\ar[from = U, to = D, Rightarrow, shorten <= 4pt, shorten >= 4pt]{}{\rho}
\end{tikzcd}
\]
is given by a $2$-cell $\rho\colon h \Rightarrow k$ for which the composite $2$-cell
\[
\begin{tikzcd}
X \ar[rrrr, bend left = 60, ""{name = U}]{}{h \circ T} \ar[rrrr, ""{name = M}]{}[description]{S \circ h} \ar[rrrr, swap, bend right = 60, ""{name = D}]{}{S \circ k} & & & &  Y
\ar[from = U, to = M, Rightarrow, shorten <= 4pt, shorten >= 6pt]{}{\alpha}
\ar[from = M, to = D, Rightarrow, shorten <= 6pt, shorten >= 4pt]{}{\id_S \ast \rho}
\end{tikzcd}
\]
is equal to the composite $2$-cell:
\[
\begin{tikzcd}
X \ar[rrrr, bend left = 60, ""{name = U}]{}{h \circ T} \ar[rrrr, ""{name = M}]{}[description]{k \circ T} \ar[rrrr, bend right = 60, swap, ""{name = D}]{}{S \circ k} & & & & Y
\ar[from = U, to = M, Rightarrow, shorten <= 4pt, shorten >= 6pt]{}{\rho \ast \id_T}
\ar[from = M, to = D, Rightarrow, shorten <= 6pt, shorten >= 4pt]{}{\beta}
\end{tikzcd}
\]
\end{dfn}

\begin{prop}\label{Prop: Tangent two morphisms equivalent conditions in hom two cat}
To give a tangent transformation in $\Bicat(\Cscr^{\op},\fCat)$ of type
\[
\begin{tikzcd}
F \ar[rr, bend left = 30, ""{name = U}]{}{(h,\alpha)} \ar[rr, bend right = 30, swap, ""{name = D}]{}{(k,\beta)} & & G
\ar[from = U, to = D, Rightarrow, shorten <= 4pt, shorten >= 4pt]{}{\rho}
\end{tikzcd}
\]
for tangent indexing functors $F, G$ and tangent morphisms $(h,\alpha), (k,\beta)$ between them is equivalent to giving a modification $\rho\colon h \Rrightarrow k$ such that for all $X \in \Cscr_0$, the component natural transformation $\!\quot{\rho}{X}$ is a tangent natural transformation.
\end{prop}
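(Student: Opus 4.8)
The plan is to unwind the definition of a tangent $2$-cell directly in the hom-$2$-category $\Bicat(\Cscr^{\op},\fCat)$ and observe that the single pasting equation it must satisfy is detected component-wise over $\Cscr^{\op}$. First I would recall that, by definition, a tangent transformation $\rho\colon (h,\alpha) \Rightarrow (k,\beta)$ is a $2$-cell $\rho\colon h \Rightarrow k$ in $\Bicat(\Cscr^{\op},\fCat)$ — that is, a modification $\rho\colon h \Rrightarrow k$ — for which the composite $2$-cell $(\id_S \ast \rho) \circ \alpha$ equals the composite $2$-cell $\beta \circ (\rho \ast \id_T)$. By Corollary \ref{Cor: Tangent morphisms in hom two category} the data $(h,\alpha)$ and $(k,\beta)$ already present, for each $X \in \Cscr_0$, morphisms of tangent categories $(\quot{h}{X},\quot{\alpha}{X})$ and $(\quot{k}{X},\quot{\beta}{X})$ between $(F(X),\quot{\Tbb}{X})$ and $(G(X),\quot{\Sbb}{X})$, so the only thing left to analyse is that pasting equation.

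The second step is to recall how whiskering and vertical composition of modifications are computed in $\Bicat(\Cscr^{\op},\fCat)$: both are formed pointwise over $\Cscr^{\op}$, so that for each $X \in \Cscr_0$ the component of $(\id_S \ast \rho) \circ \alpha$ at $X$ is $(\id_{\quot{S}{X}} \ast \quot{\rho}{X}) \circ \quot{\alpha}{X}$ and the component of $\beta \circ (\rho \ast \id_T)$ at $X$ is $\quot{\beta}{X} \circ (\quot{\rho}{X} \ast \id_{\quot{T}{X}})$. Since two modifications between a fixed pair of pseudonatural transformations are equal if and only if they agree on all components, the pasting equation for $\rho$ holds if and only if for every $X \in \Cscr_0$ one has $(\id_{\quot{S}{X}} \ast \quot{\rho}{X}) \circ \quot{\alpha}{X} = \quot{\beta}{X} \circ (\quot{\rho}{X} \ast \id_{\quot{T}{X}})$.

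Finally I would observe that this last equation is precisely the condition in Definition \ref{Defn: Tangent natural transformation} asserting that $\quot{\rho}{X}\colon (\quot{h}{X},\quot{\alpha}{X}) \Rightarrow (\quot{k}{X},\quot{\beta}{X})$ is a tangent natural transformation of tangent category morphisms. Reading this chain of equivalences in both directions gives the claim: a modification $\rho\colon h \Rrightarrow k$ underlies a tangent $2$-cell exactly when each component $\quot{\rho}{X}$ is a tangent natural transformation. The only mildly delicate point — and the one I would spell out carefully — is the bookkeeping in the second step, namely confirming from the explicit bicategorical whiskering and vertical-composition formulas (together with the normalization convention on $F$ and $G$) that the components of the two pasted $2$-cells are exactly the pointwise pastings with no leftover structure cells; once this is in hand the rest is immediate.
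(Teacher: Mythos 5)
Your proposal is correct and follows essentially the same route as the paper's own proof: both reduce the single pasting equation $(\id_S \ast \rho)\circ\alpha = \beta\circ(\rho\ast\id_T)$ to its $X$-components via the pointwise computation of whiskering and vertical composition of modifications, and then identify each component equation with the defining condition of a tangent natural transformation. The bookkeeping step you flag as delicate is exactly the middle pair of equalities the paper records in its displayed chain, so nothing is missing.
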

\begin{proof}
This is similar to proving Corollary \ref{Cor: Tangent morphisms in hom two category}, and so some details are omitted. To this end, note that asking for the composites of modifications $(\id_{S} \ast \rho) \circ \alpha = \beta \circ (\rho \ast \id_T)$ is equivalent to asking that for all $X \in \Cscr_0$, the equations
\[
\left(\left(\id_{\!\quot{S}{X}}\right) \ast \!\quot{\rho}{X}\right) \circ \!\quot{\alpha}{X} = \!\quot{\left(\id_{S} \ast \rho\right)}{X} \circ \!\quot{\alpha}{X} = \!\quot{\beta}{X} \circ \!\quot{\left(\rho \ast \id_T\right)}{X} = \!\quot{\beta}{X} \circ \left(\!\quot{\rho}{X} \ast \left(\id_{\!\quot{T}{X}}\right)\right)
\]
hold. However, each such equation is equivalent to $\!\quot{\rho}{X}$ being a tangent natural transformation, which gives the proposition.
\end{proof}
\begin{Theorem}\label{Thm: tangent 2 morphisms are PCd to tangent transformations}
Let $F,G\colon\Cscr^{\op} \to \fCat$ be a tangent indexing functors with tangent morphisms $(h,\alpha), (k,\beta)\colon F \rightarrow G$ and with a tangent $2$-cell $\rho\colon (h,\alpha) \Rightarrow (k,\beta)$. Then the induced natural transformation $\underline{\rho}$ fitting into the diagram
\[
\begin{tikzcd}
\PC(F) \ar[rr, bend left = 30, ""{name = U}]{}{\underline{h}} \ar[rr, bend right = 30, swap, ""{name = D}]{}{\underline{k}} & & \PC(G)
\ar[from = U, to = D, Rightarrow, shorten <= 4pt, shorten >= 4pt]{}{\underline{\rho}}
\end{tikzcd}
\]
is a tangent natural transformation $\underline{\rho}\colon (\underline{h},\underline{\alpha}) \Rightarrow (\underline{k},\underline{\beta})$.
\end{Theorem}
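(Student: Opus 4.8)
The plan is to verify the single defining equation of a tangent natural transformation from Definition \ref{Defn: Tangent natural transformation}, namely
\[
\left(\id_{\underline{S}} \ast \underline{\rho}\right) \circ \underline{\alpha} = \underline{\beta} \circ \left(\underline{\rho} \ast \id_{\underline{T}}\right),
\]
where $\underline{T}$ and $\underline{S}$ are the pseudolimit tangent functors on $\PC(F)$ and $\PC(G)$ furnished by Theorem \ref{Thm: Pre-Equivariant Tangent Category}, and $\underline{\alpha}$, $\underline{\beta}$, $\underline{\rho}$ are obtained by applying the strict $2$-functor $\PC(-)$ to the modifications $\alpha$, $\beta$, $\rho$. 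First I would observe that $(\underline{h},\underline{\alpha})$ and $(\underline{k},\underline{\beta})$ are indeed morphisms of tangent categories by Theorem \ref{Thm: Tangent morphisms pseudoconify to tangent functors}, so the asserted typing of $\underline{\rho}$ makes sense, and that $\underline{\rho}$ is a natural transformation $\underline{h} \Rightarrow \underline{k}$ by Theorem \ref{Theorem: Defining functors and nat transforms between equivariant categories and functors}.

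Next I would reduce the displayed identity to an object-local check. An equality of natural transformations between functors $\PC(F) \to \PC(G)$ is an equality of families of morphisms in $\PC(G)$, and two morphisms in a pseudocone category agree precisely when all of their $X$-local components agree for $X \in \Cscr_0$. So it suffices to show that for every object $A = \lbrace \negthinspace\quot{A}{X} \mid X \in \Cscr_0 \rbrace$ of $\PC(F)$ and every $X \in \Cscr_0$ the $X$-local instance
\[
\left(\left(\id_{\negthinspace\quot{S}{X}}\right) \ast \negthinspace\quot{\rho}{X}\right) \circ \negthinspace\quot{\alpha}{X} = \negthinspace\quot{\beta}{X} \circ \left(\negthinspace\quot{\rho}{X} \ast \left(\id_{\negthinspace\quot{T}{X}}\right)\right)
\]
holds on the component $\negthinspace\quot{A}{X} \in F(X)_0$. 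Here I would invoke the strict $2$-functoriality of $\PC(-)$ recalled at the start of Section \ref{Section: Eq Tan Mor}, together with the explicit formulas of Theorem \ref{Theorem: Defining functors and nat transforms between equivariant categories and functors}, to identify the $X$-local pieces of $\id_{\underline{S}} \ast \underline{\rho}$, $\underline{\alpha}$, $\underline{\rho} \ast \id_{\underline{T}}$, $\underline{\beta}$ with $\id_{\negthinspace\quot{S}{X}} \ast \negthinspace\quot{\rho}{X}$, $\negthinspace\quot{\alpha}{X}$, $\negthinspace\quot{\rho}{X} \ast \id_{\negthinspace\quot{T}{X}}$, $\negthinspace\quot{\beta}{X}$, exactly as in the proofs of Corollary \ref{Cor: Tangent morphisms in hom two category} and Proposition \ref{Prop: Tangent two morphisms equivalent conditions in hom two cat}.

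Finally, the object-local equation holds because Proposition \ref{Prop: Tangent two morphisms equivalent conditions in hom two cat} tells us that $\negthinspace\quot{\rho}{X}$ is a tangent natural transformation $(\negthinspace\quot{h}{X},\negthinspace\quot{\alpha}{X}) \Rightarrow (\negthinspace\quot{k}{X},\negthinspace\quot{\beta}{X})$ for each $X \in \Cscr_0$, which is precisely the statement that the displayed object-local equation of natural transformations holds; evaluating at $\negthinspace\quot{A}{X}$ then yields the required componentwise identity, and assembling over all $X$ and all $A$ produces the global equation. Hence $\underline{\rho}$ is a tangent natural transformation $(\underline{h},\underline{\alpha}) \Rightarrow (\underline{k},\underline{\beta})$. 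I expect the only real subtlety — and therefore the main thing to get right — is the bookkeeping in the reduction step: confirming that forming $\underline{\rho}$, whiskering by $\underline{T}$ or $\underline{S}$, and then passing to $X$-local components genuinely commutes with first passing to $X$-local components and then whiskering by $\negthinspace\quot{T}{X}$ or $\negthinspace\quot{S}{X}$. This is exactly the point where the strictness of $\PC(-)$ and the pseudonaturality (rather than mere laxness) of the transformations involved are used, and it is worth checking carefully rather than glossing over, since it is the same phenomenon that forces the use of pseudofunctors throughout the paper.
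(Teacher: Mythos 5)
Your proposal is correct and follows essentially the same route as the paper: reduce the single defining equation $(\id_{\underline{S}} \ast \underline{\rho}) \circ \underline{\alpha} = \underline{\beta} \circ (\underline{\rho} \ast \id_{\underline{T}})$ to its $X$-local components on each $\negthinspace\quot{A}{X}$, identify those components with the whiskerings of $\negthinspace\quot{\rho}{X}$, $\negthinspace\quot{\alpha}{X}$, $\negthinspace\quot{\beta}{X}$, and conclude from Proposition \ref{Prop: Tangent two morphisms equivalent conditions in hom two cat} that each $\negthinspace\quot{\rho}{X}$ is a tangent natural transformation. The paper carries out the componentwise identification you flag as the "main thing to get right" via an explicit chain of equalities on the $\negthinspace\quot{A}{X}$-components, but the substance is identical.
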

\begin{proof}
Note that to ensure that $\underline{\rho}$ is a tangent transformation, we must show that the equation 
\[
(\id_{\underline{S}} \ast \underline{\rho}) \circ \underline{\alpha} = \underline{\beta} \circ (\underline{\rho} \ast \id_{\underline{T}})
\]
holds. To this end, let $A = \lbrace \!\quot{A}{X} \; \left. \right| \; X \in \Cscr_0 \rbrace$ be an object in $\PC(F)$. By Proposition \ref{Prop: Tangent two morphisms equivalent conditions in hom two cat} we know that each $\!\quot{\rho}{X}$ is a tangent natural transformation. As such, because morphisms in $\PC(F)$ are determined by their object-local actions, it suffices to show the above identity holds for all $X \in \Cscr_0$ and for all component morphisms of the natural transformation. We now compute that the $\!\quot{A}{X}$-component of $(\id_{\underline{S}} \ast \underline{\rho})  \circ \underline{\alpha}$ satisfies
\begin{align*}
\left((\id_{\underline{S}} \ast \underline{\rho}) \circ \underline{\alpha}\right)_{\!\quot{A}{X}} &= \left(\id_{\underline{S}} \ast \underline{\rho}\right)_{\!\quot{A}{X}} \circ \underline{\alpha}_{\!\quot{A}{X}} = \left(\left(\id_{\!\quot{S}{X}}\right) \ast \!\quot{\rho}{X}\right)_{\!\quot{A}{X}} \circ \left(\!\quot{\alpha}{X}\right)_{\!\quot{A}{X}} \\
&= \left(\!\quot{\beta}{X}\right)_{\!\quot{A}{X}} \circ \left(\!\quot{\rho}{X} \ast \left(\id_{\!\quot{T}{X}}\right)\right)_{\!\quot{A}{X}} 
 =\left(\underline{\beta}\right)_{\!\quot{A}{X}} \circ \left(\underline{\rho} \ast \id_{\underline{T}}\right)_{\!\quot{A}{X}} \\
 &= \left(\underline{\beta} \circ \left(\underline{\rho} \ast \id_{\underline{T}}\right)\right)_{\!\quot{A}{X}}
\end{align*}
exactly because $\!\quot{\rho}{X}$ is a tangent transformation. However, this implies that
\[
\left((\id_{\underline{S}} \ast \underline{\rho}) \circ \underline{\alpha}\right)_{A} = \left(\underline{\beta} \circ (\underline{\rho} \ast \id_{\underline{T}})\right)_{A}
\]
and so proves that $\underline{\rho}$ is a tangent transformation.
\end{proof}

We now collect our results above (namely Theorems \ref{Thm: Pre-Equivariant Tangent Category}, \ref{Thm: Tangent morphisms pseudoconify to tangent functors}, and \ref{Thm: tangent 2 morphisms are PCd to tangent transformations}) to give a strict $2$-functor 
\[
\PC\colon \mathfrak{Tan}\left(\Bicat\left(\Cscr^{\op},\fCat\right)\right) \to \fTan.
\]

\begin{Theorem}\label{Thm: PC of tangent objects in hom two category takes values in tangent categories}
The strict $2$-functor $\PC(-)$ lifts to a strict $2$-functor $\PC\colon \mathfrak{Tan}\left(\Bicat\left(\Cscr^{\op},\fCat\right)\right) \to \fTan$ and makes the diagram of $2$-categories
\[
\begin{tikzcd}
\Bicat\left(\Cscr^{\op},\fCat\right) \ar[rr]{}{\PC(-)} & & \fCat \\
\fTan\left(\Bicat\left(\Cscr^{\op},\fCat\right)\right) \ar[u]{}{\Forget} \ar[rr, swap]{}{\PC(-)}  &  &\fTan \ar[u, swap]{}{\Forget}
\end{tikzcd}
\]
commute strictly.
\end{Theorem}
\begin{proof}
The strictness of $\PC$ is described in \cite[Lemma 4.1.11, Page 80]{GeneralGeoffThesis}. That it sends tangent objects in $\Bicat(\Cscr^{\op},\fCat)$ to tangent categories is a combination of Theorems \ref{Thm:  Pre-Equivariant Tangent Category} and \ref{Thm:  Tangent objects in Bicat Hom category are tangent indexing functors}, while the statement that $\PC$ takes tangent morphisms to tangent functors is Theorem \ref{Thm: Tangent morphisms pseudoconify to tangent functors} and that $\PC$ takes tangent $2$-cells to tangent natural transformations is Theorem \ref{Thm: tangent 2 morphisms are PCd to tangent transformations}. As such, it follows that $\PC$ restricts to
\[
\PC\colon \mathfrak{Tan}\left(\Bicat\left(\Cscr^{\op},\fCat\right)\right) \to \fTan.
\]
Finally, the commutativity of the diagram
\[
\begin{tikzcd}
\Bicat\left(\Cscr^{\op},\fCat\right) \ar[rr]{}{\PC(-)} & & \fCat \\
\fTan\left(\Bicat\left(\Cscr^{\op},\fCat\right)\right) \ar[u]{}{\Forget} \ar[rr, swap]{}{\PC(-)} & & \fTan \ar[u, swap]{}{\Forget}
\end{tikzcd}
\]
is immediate by construction.
\end{proof}

We now close this section with a discussion of pseudolimits indexed by $1$-categories in $\fTan$. In particular, we'll show that if $F\colon \Cscr^{\op} \to \fCat$ is a tangent indexing functor then $\PC(F)$ is the pseudolimit of the induced diagram $\tilde{F}\colon \Cscr^{\op} \to \fTan$. Afterwards, we'll discuss how to use the strict $2$-functor $\PC(-)$ to show that the forgetful functor $\Forget\colon \fTan \to \fCat$ reflects and preserves (and hence creates) pseudolimits indexed by $1$-categories in $\fCat$.

\begin{Theorem}\label{Thm: Pseudolimits in Tancat}
Let $F\colon \Cscr^{\op} \to \fCat$ be a tangent indexing functor. The tangent category $\PC(F)$ is the pseudolimit in $\fTan$ and in $\fTan_{\operatorname{strong}}$ of shape $F$.
\end{Theorem}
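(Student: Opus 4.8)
The plan is to exhibit $(\PC(F),\Tbb)$, together with the projection morphisms and structure $2$-cells coming from the limiting pseudocone of $F$ in $\fCat$, as a limiting pseudocone in $\fTan$ for the diagram $\tilde F\colon \Cscr^{\op} \to \fTan$ induced by $F$. First I would fix $\tilde F$: by the remark following Definition \ref{Defn: tangent pre-equivariant indexing functor} (equivalently by Theorem \ref{Thm: Indexed tangent category}) a tangent indexing functor is precisely a pseudofunctor factoring as $\Cscr^{\op} \to \fTan_{\operatorname{strong}} \xrightarrow{\Forget} \fCat$, and $\tilde F$ is this lift composed with the inclusion $\fTan_{\operatorname{strong}} \hookrightarrow \fTan$. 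The one structural fact I would use repeatedly is that, by construction (Lemma \ref{Lemma: Constructing the tangent functor}, Remark \ref{Rmk: Explicit form of transition isomorphisms}, Theorem \ref{Thm: Limits in pseudcone categories}), the tangent functor $\underline{T}$, all the tangent pullbacks, the equalizer of the universal lift, and hence every structure morphism and $2$-cell of $\Tbb$ on $\PC(F)$ are computed object-locally through the projections $p_X\colon \PC(F) \to F(X)$; in particular $p_X \circ \underline{T} = \!\quot{T}{X}\circ p_X$ strictly, so $(p_X,\id)$ is a strict tangent morphism $(\PC(F),\Tbb) \to (F(X),\!\quot{\Tbb}{X})$ (in fact strong, since $p_X$ also preserves the tangent pullbacks and equalizers by Theorem \ref{Thm: Limits in pseudcone categories}).

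Second, I would check that the limiting pseudocone of $F$ in $\fCat$ (Remark \ref{Rmk: limiting_pseudocone}) lifts to a pseudocone over $\tilde F$ in $\fTan$. The legs $(p_X,\id)$ are tangent morphisms by the previous step, and each structure isomorphism $p_f\colon F(f)\circ p_Y \Rightarrow p_X$, whose $A$-component is $\tau^A_f$, is a tangent $2$-cell precisely because the transition isomorphisms of $\underline{T}A$ were defined (Remark \ref{Rmk: Explicit form of transition isomorphisms}) as $\!\quot{T}{X}(\tau^A_f)\circ \!\quot{T}{f}$, which is exactly the equation of Definition \ref{Defn: Tangent natural transformation} for $p_f$ relative to the strict distributive law of $p_X$ and the composite distributive law of $F(f)\circ p_Y$. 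The pseudocone coherence axioms for the $p_f$ already hold in $\fCat$ and involve no additional tangent data.

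Third, for the one-dimensional universal property, let $(\Kscr,\Kbb)$ be a tangent category carrying a pseudocone over $\tilde F$ in $\fTan$, given by lax tangent morphisms $(\kappa_X,\gamma_X)$ and invertible tangent $2$-cells $\kappa_f\colon F(f)\circ\kappa_Y \Rightarrow \kappa_X$. Applying $\Forget$ yields a pseudocone over $F$ in $\fCat$, so there is a unique functor $r\colon \Kscr \to \PC(F)$ with $p_X r = \kappa_X$ and $p_f\ast r = \kappa_f$. I would then define the candidate distributive law $\rho\colon r\circ T_{\Kscr}\Rightarrow \underline{T}\circ r$ object-locally by $(\rho_K)_X := (\gamma_X)_K$; the point requiring work is that this family is a well-defined morphism of $\PC(F)$, i.e. compatible with the transition isomorphisms of $r(T_{\Kscr}K)$ and of $\underline{T}(rK)$, and this compatibility square is exactly the equation asserting that $\kappa_f$ is a tangent $2$-cell. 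That $(r,\rho)$ is then a (lax) tangent morphism, and that it is the unique morphism of pseudocones inducing the given factorization, both reduce object-locally — using that morphisms and $2$-cells in $\PC(F)$ are detected by their components (Theorem \ref{Theorem: Defining functors and nat transforms between equivariant categories and functors}, Theorem \ref{Thm: Limits in pseudcone categories}) — to the facts that each $(\kappa_X,\gamma_X)$ is a tangent morphism. For the two-dimensional property, a map of pseudocones with common vertex is a compatible family of tangent $2$-cells $\theta_X\colon(\kappa_X,\gamma_X)\Rightarrow(\kappa'_X,\gamma'_X)$; the underlying natural transformation $\Theta\colon r\Rightarrow r'$ with $p_X\ast\Theta=\theta_X$ exists and is unique in $\fCat$, and that $\Theta$ is a tangent $2$-cell again reduces object-locally to each $\theta_X$ being one.

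The main obstacle I anticipate is the bookkeeping in the third step: correctly computing the composite distributive laws along the legs, identifying them with the $\gamma_X$, and verifying the well-definedness of $\rho$ as a morphism of $\PC(F)$ — this is the single place where the passage from $\fCat$ to $\fTan$ and the invertibility of the cone's structure $2$-cells are genuinely used, mirroring the role of invertibility in Theorem \ref{Theorem: Defining functors and nat transforms between equivariant categories and functors}. Everything else has been front-loaded into the object-local description of the tangent structure on $\PC(F)$, so no further conceptual input is needed.
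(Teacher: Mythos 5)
Your plan is correct and follows essentially the same route as the paper's proof: showing the projections are strict (indeed strong) tangent morphisms via the object-local definition of $\underline{T}$, identifying the cone's structure cells $p_f$ (with components $\tau_f^A$) as tangent $2$-cells through the formula for the transition isomorphisms of $\underline{T}A$, and then defining the comparison morphism's distributive law componentwise and verifying it is a morphism of $\PC(F)$ using exactly the tangent-$2$-cell condition on the given cone, with both universal properties reducing object-locally. The paper carries out the same four steps in the same order, so no further comment is needed.
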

\begin{rmk}
It is worth noting that in this proof we are showing that even though a tangent indexing functor $F$ is equivalently described as a pseudofunctor $\underline{F}:\Cscr^{\op} \to \fTan_{\operatorname{strong}}$, $\PC(F)$ is a pseudolimit in \emph{both} the $2$-category of $\fTan$ of tangent categories and (potentially) lax morphisms \emph{and} in $\fTan_{\operatorname{strong}}$.
\end{rmk}
\begin{proof}
Our strategy here is to mimic the proof of the fact that $\PC(F)$ is the pseudolimit in $\fCat$ presented in \cite[Theorem 2.3.16, Page 43]{GeneralGeoffThesis} and adapt it to the tangent categorical situation and then finally conclude from this that if all relevant were in $\fTan_{\operatorname{strong}}$, so too are all comparisons. We do this in four main steps:
\begin{enumerate}
    \item We show that for all $X \in \Cscr_0$, the functor $\operatorname{pr}_X\colon \PC(F) \to F(X)$ is a strict tangent functor.
    \item We show that for all morphisms $f\colon X \to Y$ in $\Cscr$, the $2$-cell
    \[
    \begin{tikzcd}
        & \PC(F) \ar[dr, ""{name = R}]{}{\operatorname{pr}_Y} \ar[dl, swap, ""{name = L}]{}{\operatorname{pr}_X} \\
    F(X) & & F(Y) \ar[ll]{}{F(f)}
    \ar[from = R, to = L, Rightarrow, shorten <= 10pt, shorten >= 10pt]{}{\!\quot{\alpha}{f}}
    \end{tikzcd}
    \]
    is a tangent natural transformation $\!\quot{\alpha}{f}\colon (F(f) \circ \operatorname{pr}_X, \!\quot{T}{f} \ast \operatorname{pr}_X) \Rightarrow (\operatorname{pr}_Y, \id)$.
    \item We will show that if there is a tangent category $(\Dscr,\Sbb)$ with tangent functors $\!\quot{G}{X}\colon \Dscr \to F(X)$ for all objects $X \in \Cscr_0$ and tangent natural transformations
    \[
    \begin{tikzcd}
        & \Dscr \ar[dr, ""{name = R}]{}{\!\quot{G}{Y}} \ar[dl, swap, ""{name = L}]{}{\!\quot{G}{X}} \\
    F(X) & & F(Y) \ar[ll]{}{F(f)}
    \ar[from = R, to = L, Rightarrow, shorten <= 10pt, shorten >= 10pt]{}{\!\quot{\beta}{f}}
    \end{tikzcd}
    \]
    which vary pseudofunctorially in $\Cscr^{\op}$, there is a tangent functor $G\colon \Dscr \to \PC(F)$ for which the $2$-cell
    \[
    \begin{tikzcd}
     & \Dscr \ar[d]{}[description]{G} \ar[ddr, bend left = 20]{}{\!\quot{G}{Y}} \ar[ddl, bend right = 20, swap]{}{\!\quot{G}{X}} \\
     & \PC(F) \ar[dr, ""{name = R}]{}{\operatorname{pr}_X} \ar[dl, swap, ""{name = L}]{}{\operatorname{pr}_Y} \\
     F(X) & & F(Y) \ar[ll]{}{F(f)}
     \ar[from = R, to = L, Rightarrow, shorten <= 10pt, shorten >= 10pt]{}{\!\quot{\alpha}{f}}
    \end{tikzcd}
    \]
    pastes to the $2$-cell:
     \[
    \begin{tikzcd}
        & \Dscr \ar[dr, ""{name = R}]{}{\!\quot{G}{Y}} \ar[dl, swap, ""{name = L}]{}{\!\quot{G}{X}} \\
    F(X) & & F(Y) \ar[ll]{}{F(f)}
    \ar[from = R, to = L, Rightarrow, shorten <= 10pt, shorten >= 10pt]{}{\!\quot{\beta}{f}}
    \end{tikzcd}
    \]
    \item Assume we have a tangent category $(\Dscr,\Sbb)$ with two tangent morphisms $(G,\beta)\colon \Dscr \to \PC(F)$ and $(H,\gamma)\colon \Dscr \to \PC(F)$ and a family of tangent transformations
    \[
    \begin{tikzcd}
    \Dscr \ar[rr, ""{name = U}]{}{(G,\beta)} \ar[d, swap]{}{(H, \gamma)} & & \PC(F) \ar[d]{}{\operatorname{pr}_X} \\
    \PC(F) \ar[rr, swap, ""{name = D}]{}{\operatorname{pr}_X} & & F(X)
    \ar[from = U, to = D, Rightarrow, shorten <= 4pt, shorten >= 4pt]{}{\!\quot{\rho}{X}}
    \end{tikzcd}
    \]
    for all $X \in \Cscr_0$ subject to the coherences that for all $f\colon X \to Y$ in $\Cscr$,
    \[
    \!\quot{\rho}{X} \circ \left(\!\quot{\alpha}{f} \ast G\right) = \left(\!\quot{\alpha}{f} \ast H\right) \circ \left(F(f) \ast \!\quot{\rho}{Y}\right).
    \]
    We will then show that there exists a unique tangent transformation $P\colon (G,\beta) \Rightarrow (H,\gamma)$ for which $\operatorname{pr}_X \ast P = \!\quot{\rho}{X}$.
\end{enumerate}
We begin by proving (1). Fix an object $X \in \Cscr_0$ and recall that the functor $\operatorname{pr}_X\colon \PC(F) \to F(X)$ is defined by
\[
\operatorname{pr}_X\left(\left\lbrace \!\quot{A}{Y} \; \left. \right| \; Y \in \Cscr_0 \right\rbrace, \left\lbrace \tau_f^{A} \; \left. \right| \; f \in \Cscr_1 \right\rbrace\right) = \!\quot{A}{X}.
\]
We then find that for all objects $A$ of $\PC(F)$,
\begin{align*}
    \left(\operatorname{pr}_X \circ \underline{T}\right)(A) &= \left(\operatorname{pr}_X\left(\underline{T}\left\lbrace \!\quot{A}{Y} \; \left. \right| \; Y \in \Cscr_0 \right\rbrace \right)\right) = \operatorname{pr}_X\left\lbrace \!\quot{T}{X}\left(\!\quot{A}{X}\right)\right\rbrace = \!\quot{T}{X}\left(\!\quot{A}{X}\right) \\
    &=\! \quot{T}{X}\left(\operatorname{pr}_X\left\lbrace \!\quot{A}{Y} \; \left. \right| \; Y \in \Cscr_0\right\rbrace\right) = \left(\!\quot{T}{X} \circ \operatorname{pr}_X\right)(A),
\end{align*}
proving that $\operatorname{pr}_X\colon (\PC(F),\Tbb) \to (F(X),\!\quot{\Tbb}{X})$ is a strict tangent functor.

We now prove (2). Let $f\colon X \to Y$ be a morphism in $\Cscr$ and consider the $2$-cell:
\[
  \begin{tikzcd}
        & \PC(F) \ar[dr, ""{name = R}]{}{\operatorname{pr}_Y} \ar[dl, swap, ""{name = L}]{}{\operatorname{pr}_X} \\
    F(X) & & F(Y) \ar[ll]{}{F(f)}
    \ar[from = R, to = L, Rightarrow, shorten <= 10pt, shorten >= 10pt]{}{\!\quot{\alpha}{f}}
    \end{tikzcd}
\]
Note that the natural transformation $\!\quot{\alpha}{f}$ is given via the assignment
\[
\left(\!\quot{\alpha}{f}\right)_A := \left\lbrace \tau_{f}^{A} \; \left. \right| \; f \in \Cscr_1\right\rbrace
\]
for all objects $A$ in $\PC(F)$; it is proved to be natural in \cite[Theorem 2.3.16, Page 81]{GeneralGeoffThesis}. With this in mind, we now must prove that $\!\quot{\alpha}{f}$ constitutes a tangent natural transformation $(F(f) \circ \operatorname{pr}_X, \!\quot{T}{f} \ast \id) \to (\operatorname{pr}_Y,\id)$. To this end, we compute that on one hand
\[
\id_{\!\quot{T}{X}} \circ \left(\!\quot{\alpha}{f} \ast \id_{\underline{T}}\right)_{A} = \id_{\!\quot{T}{X}} \circ \left(\!\quot{\alpha}{f}\right)_{\underline{T}(A)} = \tau_f^{\underline{T}A}.
\]
On the other hand,
\[
\left(\id_{\!\quot{T}{X}} \ast \!\quot{\alpha}{f}\right)_{A} \circ \left(\!\quot{T}{f} \ast \id_{\PC(F)}\right)_{A} = \!\quot{T}{X}\left(\tau_{f}^{A}\right) \circ \left(\!\quot{T}{f}\right)_{\!\quot{A}{Y}} =  \tau_f^{\underline{T}A}
\]
where the identity in the last equality sign is described in Remark \ref{Rmk: Explicit form of transition isomorphisms}. As such, it follows that $\!\quot{\alpha}{f}$ is a tangent natural transformation.

We now prove (3). Assume that we have a tangent category $(\Dscr,\Sbb)$ together with tangent functors $(\!\quot{G}{X},\!\quot{\alpha}{X})\colon \Dscr \to F(X)$ for all objects $X \in \Cscr_0$ and tangent natural transformations
    \[
    \begin{tikzcd}
        & \Dscr \ar[dr, ""{name = R}]{}{(\!\quot{G}{Y},\!\quot{\alpha}{Y})} \ar[dl, swap, ""{name = L}]{}{(\!\quot{G}{X},\!\quot{\alpha}{X})} \\
    F(X) & & F(Y) \ar[ll]{}{(F(f),\!\quot{T}{f})}
    \ar[from = R, to = L, Rightarrow, shorten <= 10pt, shorten >= 10pt]{}{\!\quot{\beta}{f}}
    \end{tikzcd}
    \]
    which vary pseudofunctorially in $\Cscr^{\op}$. Again guided by \cite[Theorem 2.3.16]{GeneralGeoffThesis}, we consider the comparison functor $G\colon \Dscr \to \PC(F)$ defined by the object assignment
    \[
    G(Z) := \left\lbrace \!\quot{G}{X}\left(Z\right) \; \left. \right| \; X \in \Cscr_0 \right\rbrace
    \]
    with transition isomorphisms
    \[
    T_{G(Z)} := \left\lbrace\! \quot{\beta}{f}_{Z} \; \left. \right| \; Z \in \Cscr_0 \right\rbrace.
    \]
    Let us now show that $G$ is a part of a morphism of tangent categories. To do this define the natural transformation $\alpha\colon G \circ S \Rightarrow \underline{T} \circ G$ by setting, for all $Z \in \Dscr_0$,
    \[
    \alpha_A := \left\lbrace \!\quot{\alpha_Z}{X} \; \left. \right| \; Z \in \Cscr_0\right\rbrace.
    \]
    If we can show that $\alpha$ is a morphism in the category $\PC(F)$, the fact that $\alpha$ is a natural transformation follows immediately from the naturality of the $\!\quot{\alpha}{X}$. Similarly, if we know that $\alpha_Z$ is a morphism in $\PC(F)$, then the fact that $\alpha$ fits into a tangent morphism $(G,\alpha)$ follows from the fact that each $(\!\quot{G}{X},\!\quot{\alpha}{X})$ is a morphism of tangent categories. To this end, note that we must prove that for any $f\colon X \to Y$ in $\Cscr$, the diagram
    \[
    \begin{tikzcd}
    F(f)\left(\left(\!\quot{G}{Y} \circ S\right)(Z)\right) \ar[d, swap]{}{\tau_{f}^{G(SZ)}} \ar[rrr]{}{F(f)\left(\!\quot{\alpha}{Y}_Z\right)} & & & F(f)\left(\left(\!\quot{T}{Y} \circ \!\quot{G}{Y}\right)(Z)\right) \ar[d]{}{\tau_f^{\underline{T}(GZ)}} \\
    \left(\!\quot{G}{X} \circ S\right)(Z) \ar[rrr, swap]{}{\!\quot{\alpha}{X}_Z} & & & \left(\!\quot{T}{X} \circ \!\quot{G}{X}\right)(Z)
    \end{tikzcd}
    \]
    commutes. Because on one hand we have that
    \[
    \tau_f^{G(SZ)} = \!\quot{\beta}{f}^{SZ} = \left(\!\quot{\beta}{f} \ast \id_{S}\right)_Z
    \]
    and that
    \[
    \tau_f^{\underline{T}(GZ)} = \!\quot{T}{X}\left(\!\quot{\beta}{f}^{Z}\right) \circ \left(\!\quot{T}{f}\right)_{\!\quot{G}{Y}(Z)}.
    \]
    So, using that $\!\quot{\beta}{f}$ is a tangent natural transformation $\left(F(f) \circ \!\quot{G}{Y},\!\quot{T}{f} \ast \!\quot{\alpha}{Y}\right) \Rightarrow \left(\!\quot{G}{X}, \!\quot{\alpha}{X}\right)$, we compute that
    \begin{align*}
    \!\quot{\alpha}{X}_Z \circ \tau_{f}^{G(SZ)} &= \!\quot{\alpha}{X}_Z \circ \!\quot{\beta}{f}^{SZ} = \!\quot{\alpha}{X}_Z \circ \left(\!\quot{\beta}{f} \ast \id_S\right)_{Z} = \left(\!\quot{\alpha}{X} \circ \left(\!\quot{\beta}{f} \ast \id_S\right)\right)_Z \\
    &= \left(\left(\!\quot{T}{X} \ast \!\quot{\beta}{f}\right) \circ \left(\!\quot{T}{f} \ast \!\quot{\alpha}{Y}\right)\right)_Z = \left(\!\quot{T}{X} \ast \!\quot{\beta}{f}\right)_Z \circ \left(\!\quot{T}{f} \ast \!\quot{\alpha}{Y}\right)_Z \\
    &= \!\quot{T}{X}\left(\!\quot{\beta}{f}^{Z}\right) \circ \left(\!\quot{T}{f} \ast \!\quot{\alpha}{Y}\right)_Z = \!\quot{T}{X}\left(\!\quot{\beta}{f}^{Z}\right) \circ \!\quot{T}{f}_{\!\quot{G}{Y}(Z)} \circ F(f)\left(\!\quot{\alpha}{Y}_Z\right) \\
    &= \tau_{f}^{\underline{T}(GZ)} \circ F(f)\left(\!\quot{\alpha}{Y}_Z\right).
    \end{align*}
    This shows that $\alpha_Z$ is a morphism in $\PC(F)$ and hence proves that $(G,\alpha)\colon \Dscr \to \PC(F)$ is a morphism of tangent categories. Finally, that $(G,\alpha)$ makes the pasting diagram
    \[
     \begin{tikzcd}
     & \Dscr \ar[d]{}[description]{G} \ar[ddr, bend left = 20]{}{\!\quot{G}{Y}} \ar[ddl, bend right = 20, swap]{}{\!\quot{G}{X}} \\
     & \PC(F) \ar[dr, ""{name = R}]{}{\operatorname{pr}_X} \ar[dl, swap, ""{name = L}]{}{\operatorname{pr}_Y} \\
     F(X) & & F(Y) \ar[ll]{}{F(f)}
     \ar[from = R, to = L, Rightarrow, shorten <= 10pt, shorten >= 10pt]{}{\!\quot{\alpha}{f}}
    \end{tikzcd}
    \]
    equal to
    \[
    \begin{tikzcd}
        & \Dscr \ar[dr, ""{name = R}]{}{(\!\quot{G}{Y},\!\quot{\alpha}{Y})} \ar[dl, swap, ""{name = L}]{}{(\!\quot{G}{X},\!\quot{\alpha}{X})} \\
    F(X) & & F(Y) \ar[ll]{}{(F(f),\quot{T}{f})}
    \ar[from = R, to = L, Rightarrow, shorten <= 10pt, shorten >= 10pt]{}{\!\quot{\beta}{f}}
    \end{tikzcd}
    \]
    is immediate from definition of all the morphisms in sight. As such, we have that $\PC(F)$ is a pseudolimit in $\fTan$.

    We now prove (4). Assume we have a tangent category $(\Dscr,\Sbb)$ with two tangent morphisms $(G,\beta)\colon \Dscr \to \PC(F)$ and $(H,\gamma)\colon \Dscr \to \PC(F)$ and a family of tangent transformations
    \[
    \begin{tikzcd}
    \Dscr \ar[rr, ""{name = U}]{}{(G,\beta)} \ar[d, swap]{}{(H, \gamma)} & & \PC(F) \ar[d]{}{\operatorname{pr}_X} \\
    \PC(F) \ar[rr, swap, ""{name = D}]{}{\operatorname{pr}_X} & & F(X)
    \ar[from = U, to = D, Rightarrow, shorten <= 4pt, shorten >= 4pt]{}{\!\quot{\rho}{X}}
    \end{tikzcd}
    \]
    for all $X \in \Cscr_0$ subject to the coherences that for all $f\colon X \to Y$ in $\Cscr$,
    \[
   \! \quot{\rho}{X} \circ \left(\!\quot{\alpha}{f} \ast G\right) = \left(\!\quot{\alpha}{f} \ast H\right) \circ \left(F(f) \ast \!\quot{\rho}{Y}\right).
    \]
    Let $A \in \Cscr_0$. To define $P\colon G \Rightarrow H$ we set the $A$-component morphism $P_A\colon GA \to HA$ to be given by
    \[
    P_A := \left\lbrace \left(\!\quot{\rho}{X}\right)_A\colon \left(\operatorname{pr}_X \circ G\right)(A) \to \left(\operatorname{pr}_X \circ H\right)(A) \; : \; X \in \Cscr_0 \right\rbrace\colon GA \longrightarrow HA.
    \]
    To see that this is a morphism in $\PC(F)$, recall that $\tau_f^{GA} = (\!\quot{\alpha}{f})_{GA}$ and that $\tau_f^{HA} = (\!\quot{\alpha}{f})_{HA}$. We then find that the $A$-component of the required coherence
    \[
    \left(\!\quot{\rho}{X}\right)_A \circ \left(\!\quot{\alpha}{f} \ast G\right)_A = \left(\!\quot{\alpha}{f} \ast H\right)_A \circ \left(F(f) \ast \!\quot{\rho}{Y}\right)_A
    \]
    expands to the form
    \[
    \left(\!\quot{\rho}{X}\right)_A \circ \tau_f^{GA} = \left(\!\quot{\rho}{X}\right)_A \circ \left(\!\quot{\alpha}{f}\right)_{GA} = \left(\!\quot{\alpha}{f}\right)_{HA} \circ F(f)\left(\left(\!\quot{\rho}{Y}\right)_A\right) = \tau_f^{HA} \circ F(f)\left(\left(\!\quot{\rho}{Y}\right)_A\right)
    \]
    which shows that the diagram
    \[
    \begin{tikzcd}
    F(f)\left(\left(\!\quot{G}{Y}\right)A\right) \ar[rr]{}{F(f)\left(\left(\!\quot{\rho}{Y}\right)_A\right)} \ar[d, swap]{}{\tau_f^{GA}} & & F(f)\left(\left(\!\quot{H}{Y}\right)A\right) \ar[d]{}{\tau_f^{HA}} \\
    \left(\!\quot{G}{X}\right)(A) \ar[rr, swap]{}{\left(\!\quot{\rho}{X}\right)(A)} & & \left(\!\quot{H}{X}\right)(A)
    \end{tikzcd}
    \]
    commutes. Thus $P_A\colon GA \to HA$ is a morphism in $\PC(F)$.
    
    To see that the assignment $A \mapsto P_A$ is natural in $\Dscr$, note that if $f\colon A \to B$ is a morphism in $\Dscr$, the naturality of each component natural transformation $\!\quot{\rho}{X}$ implies that the diagram
    \[
    \begin{tikzcd}
    \left(\!\quot{G}{X}\right)(A) \ar[r]{}{\left(\!\quot{\rho}{X}\right)_A} \ar[d, swap]{}{\left(\!\quot{G}{X}\right)(f)} & \left(\!\quot{H}{X}\right)(A) \ar[d]{}{\left(\!\quot{H}{X}\right)(f)} \\
    \left(\!\quot{G}{X}\right)(B) \ar[r, swap]{}{\left(\!\quot{\rho}{X}\right)_B} & \left(\!\quot{H}{X}\right)(B)
    \end{tikzcd}
    \]
    commutes. Because this holds for all $f \in \Dscr_1$, we have that $P$ is a natural transformation with component morphisms $P_A$. That this is a tangent transformation follows from the fact that each $\!\quot{\rho}{X}$ is a tangent transformation by an analogue of Proposition \ref{Prop: tangent morphisms between tangent index functors} and the fact that being a tangent transformation valued in $\PC(F)$ can be checked object-locally. Moreover, a quick calculation for each object $X \in \Cscr_0$ shows that for any object $A$ of $\Dscr$
    \[
    \operatorname{pr}_X \ast P = \operatorname{pr}_X(P) = \!\quot{\rho}{X}.
    \]
    That $P$ is unique with this property is immediate to verify, as if there were any other tangent transformation $\Sigma\colon G \Rightarrow H$ with $\!\quot{\rho}{X} = \operatorname{pr}_X \ast \Sigma$ then we immediately have that for all objects $A$ in $\Dscr$ and for all objects $X$ in $\Cscr$,
    \[
    \left(\!\quot{P}{X}\right)_A = \left(\!\quot{\rho}{X}\right)_A = \left(\operatorname{pr}_X \ast P\right)_A = \left(\operatorname{pr}_X \ast \Sigma\right)_A = \left(\!\quot{\Sigma}{X}\right)_A.
    \]
    Since each component of $\Sigma$ agrees with each component of $P$, it follows that $P = \Sigma$ and so $P$ is unique.

Finally, observe that if tangent morphisms $\quot{G}{X}$ are strong tangent morphisms, then so too is $G$ by the $2$-out-of-property for isomorphisms. Thus the comparison morphims $G$ is a strong tangent morphism when the diagram factors though $\fTan_{\operatorname{strong}},$ which shows $\PC(F)$ is a pseudolimit in $\fTan_{\operatorname{strong}}$ as well.
\end{proof}

\begin{cor}\label{Cor: forgetful functor creates 1-categorical indexed pseudolimits}
The forgetful functor $\Forget\colon \fTan_{\operatorname{strong}} \to \fCat$ reflects and preserves pseudolimits indexed by $1$-categories.
\end{cor}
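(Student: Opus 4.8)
The plan is to read the corollary off of Theorems~\ref{Thm: PC of tangent objects in hom two category takes values in tangent categories} and~\ref{Thm: Pseudolimits in Tancat}. First I would fix the setting: a pseudolimit in $\fTan$ indexed by a $1$-category $\Cscr$ is the pseudolimit of a pseudofunctor $D\colon \Cscr^{\op} \to \fTan$, and for the preceding results to apply one works with those $D$ that land in $\fTan_{\operatorname{strong}}$ (as is forced by the discussion at the end of Section~\ref{Section: Higher Cat Review}); by Theorems~\ref{Thm: Indexed tangent category} and~\ref{Thm: Tangent objects in Bicat Hom category are tangent indexing functors} these are exactly the $D$ for which $F := \Forget \circ D$ is a tangent indexing functor. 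For such a $D$, Remark~\ref{Rmk: limiting_pseudocone} already tells us that $\PC(F)$ is the pseudolimit of $F = \Forget \circ D$ in $\fCat$, with limiting pseudocone given by the projection functors $p_X$ and the transition isomorphisms $p_f = \tau_f$ as structure cells.

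For preservation I would compare cones directly. Theorem~\ref{Thm: Pseudolimits in Tancat} gives $(\PC(F),\Tbb)$ as the pseudolimit of $D$ in $\fTan$, and its proof displays the limiting pseudocone explicitly: the legs are the strict tangent functors $\operatorname{pr}_X$ and the structure $2$-cells are the tangent natural transformations $\!\quot{\alpha}{f}$, whose underlying natural transformations are again the $\tau_f^A$. The strict commutativity of the square in Theorem~\ref{Thm: PC of tangent objects in hom two category takes values in tangent categories} says precisely that $\Forget$ sends this limiting $\fTan$-pseudocone to the limiting $\fCat$-pseudocone of the previous paragraph; since pseudolimits are determined up to equivalence, $\Forget$ preserves all pseudolimits of $D$.

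For reflection, let $(\Lscr,\lambda)$ be a pseudocone over $D$ in $\fTan$ whose image $(\Forget\Lscr, \Forget\lambda)$ is a limiting pseudocone over $F$ in $\fCat$. The $1$-dimensional part of the universal property of $(\PC(F),\Tbb)$ proved in Theorem~\ref{Thm: Pseudolimits in Tancat} yields a comparison tangent morphism $r\colon \Lscr \to \PC(F)$ compatible with the two pseudocones; forgetting to $\fCat$, $\Forget r$ is the comparison $1$-cell between the two limiting $\fCat$-pseudocones $(\Forget\Lscr,\Forget\lambda)$ and $(\PC(F),\dots)$, and hence an equivalence of categories. The remaining step — and the main obstacle — is to upgrade $r$ to an equivalence in $\fTan$: I would transport $\Tbb$ along the equivalence $\Forget r$ to obtain a tangent structure on $\Lscr$ for which $\Forget r$ underlies a strong tangent equivalence, and then use the $2$-dimensional part of the universal property (the uniqueness of comparison $2$-cells in the proof of Theorem~\ref{Thm: Pseudolimits in Tancat}) to identify this transported structure with $\Sbb_{\Lscr}$ in a way compatible with the legs $\lambda_X$; this forces the distributive law of $r$ to be invertible, so $r$ is a strong tangent morphism whose underlying functor is an equivalence, and hence an equivalence in $\fTan$. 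Then $(\Lscr,\lambda)$ is equivalent to the limiting pseudocone $(\PC(F),\Tbb)$ and so is itself limiting. Combining reflection with preservation (and with the existence of the lift $(\PC(F),\Tbb)$ of any limiting $\fCat$-pseudocone over $F$) shows that $\Forget$ creates, and in particular reflects and preserves, pseudolimits indexed by $1$-categories. I expect the upgrade of $r$ to be the only genuinely delicate point: it is exactly where one uses that the tangent structure on $\PC(F)$ is pinned down object-locally by the projections $\operatorname{pr}_X$, and where strongness rather than mere laxness of the comparison is indispensable — the same $2$-categorical subtlety that forces the use of pseudonatural (rather than lax) transitions in Theorem~\ref{Theorem: Defining functors and nat transforms between equivariant categories and functors} and Lemma~\ref{Lemma: strong lax mor iff strong colax}.
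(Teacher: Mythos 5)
Your preservation argument matches the paper's: both read the result off the strictly commuting square of Theorem \ref{Thm: PC of tangent objects in hom two category takes values in tangent categories} together with the facts that $\PC(F)$ is the pseudolimit of $F$ in $\fCat$ (Remark \ref{Rmk: limiting_pseudocone}) and of $\tilde F$ in $\fTan$ (Theorem \ref{Thm: Pseudolimits in Tancat}). Where you diverge is on reflection. The paper's proof is deliberately minimal: it rephrases ``reflects and preserves'' as the biconditional ``$\PC(F)$ is a pseudolimit in $\fCat$ if and only if it is a pseudolimit in $\fTan$,'' both directions of which are already-established theorems, so nothing further is argued. You instead attempt reflection in the standard sense --- an arbitrary $\fTan$-pseudocone $(\Lscr,\lambda)$ whose underlying $\fCat$-pseudocone is limiting --- and this is genuinely more than the paper proves.

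The gap is exactly where you flag it, and it is not merely a matter of filling in routine details. The $1$-cells of $\fTan$ (Definition \ref{Defn: Two cat of tangent cats}) are \emph{lax} tangent morphisms, so the legs $\lambda_X=(\!\quot{G}{X},\!\quot{\alpha}{X})$ of an arbitrary pseudocone need not be strong, and the comparison $r=(G,\alpha)$ built in step (3) of the proof of Theorem \ref{Thm: Pseudolimits in Tancat} has its distributive law $\alpha$ assembled object-locally from the $\!\quot{\alpha}{X}$; nothing in the construction makes $\alpha$ invertible unless the $\!\quot{\alpha}{X}$ already are. Your proposed remedy --- transport $\Tbb$ along the equivalence $\Forget r$ and invoke the $2$-dimensional universal property to identify the transported structure with $\Sbb_{\Lscr}$ --- is asserted rather than carried out: the $2$-dimensional property of Theorem \ref{Thm: Pseudolimits in Tancat} gives uniqueness of tangent $2$-cells between tangent morphisms into $\PC(F)$, which does not by itself compare two tangent structures on $\Lscr$ or force $\alpha$ to be invertible. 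To make your route work you would either need to restrict the $1$-cells of the cones to strong morphisms (i.e.\ work in $\fTan_{\operatorname{strong}}$, where a strong morphism whose underlying functor is an equivalence is an equivalence, essentially by Lemma \ref{Lemma: strong lax mor iff strong colax} plus the usual quasi-inverse construction), or supply an argument that limitingness of $\Forget(\Lscr,\lambda)$ forces strongness of the legs. As written, the safest course is to prove the corollary exactly as the paper does --- as the biconditional about the single model $\PC(F)$ --- and either omit the stronger reflection claim or state the extra hypothesis under which your argument closes.
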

\begin{proof}
An alternative way of phrasing this corollary is that for any tangent indexing functor $F\colon \Cscr^{\op} \to \fCat$ with corresponding pseudofunctor $\tilde{F}\colon \Cscr^{\op} \to \fTan_{\operatorname{strong}}$ induced by swapping the roles of $\!\quot{T}{f}$ with $\!\quot{T}{f}^{-1}$ (cf. Theorem \ref{Thm: Tangent objects in Bicat Hom category are tangent indexing functors}), $\PC(F)$ is the  pseudolimit of $F$ in $\fCat$ if and only if $\PC(F)$ is also the pseudolimit of $\PC(\tilde{F})$ in $\fTan$. However, both these facts are immediate by Remark \ref{Rmk: limiting_pseudocone} and the combination of Theorems \ref{Thm: PC of tangent objects in hom two category takes values in tangent categories} and \ref{Thm: Pseudolimits in Tancat}.
\end{proof}

\section{Equivariant Descent and Equivariant Categories on Smooth Manifolds and on Varieties}\label{Section: Review of EQCats}

We now change gears and discuss equivariant algebraic geometry and equivariant differential geometry so that we can apply the theory developed in the above sections and define the categories of equivariant descent schemes and equivariant descent manifolds. We also provide this section as a way of illustrating that the perspective on equivariant descent for both varieties and for manifolds is fibre-wise in nature. Note that because perpsectives are primarily descent-theoretic in nature, we provide a short topological descent discussion before focusing on manifolds.

To describe equivariant descent (tangent) categories on varieties and smooth manifolds, we begin by recalling some of the formalism developed in \cite{GeneralGeoffThesis} for this purpose. In essence, if $G$ is a smooth algebraic group over a field $K$ such an equivariant category on a left $G$-variety $X$ (or, in the topological formulation, a left $G$-space $X$ for a Lie group $G$ and a smooth real manifold $X$) has objects given by collections of equivariant descent data as it varies through a pseudofunctor defined on a class of principal $G$-fibrations. These fibrations serve to resolve the action $\alpha_X\colon G \times X \to X$ . The flexibility of allowing the equivariant category to be determined by a pseudofunctor is that it allows us to give flexible categorical structures with which to probe and study geometric and equivariant-geometric properties; cf. Example \ref{Example: Pre-equivariant pseudofunctors} below for a large list of some of the categories which appear in algebraic geometry, algebraic topology, differential geometry, and representation theory. 
\begin{assumption}
If $K$ is a field, all $K$-varieties $X$ are assumed to be \emph{quasi-projective}. That is\footnote{Here we use some scheme-theoretic facts to simplify the story of quasi-projectivity. Because we are working over a field, i.e., over $\Spec K$ we can use the definition of quasi-projectivity defined in \cite[Page 103]{Hartshorne}. This differs from Grothendieck's definition in \cite[Definition 5.3.1]{EGA2}, but the fact that $\Spec K$ is a Noetherian quasi-compact scheme gives that the definition presented here is equivalent to Grothendieck's definition. For explicit details, see \cite[Proposition 5.3.2]{EGA2}.}, there is a projective scheme $Y$ with closed immersion $i\colon Y \to \Pbb_K^n$ and an open immersion $j\colon X \to Y$ for which the structure map $X \to \Spec K$ factors as
\[
\begin{tikzcd}
X \ar[d] \ar[r]{}{j} & Y \ar[d]{}{i} \\
\Spec K & \Pbb_K^n \ar[l]
\end{tikzcd}
\]
\end{assumption}

We set the stage for the notion of equivariant category by defining the objects which we use to resolve the actions first in the variety-theoretic case and second in the topological case. Let $K$ be a field and let $G$ be an affine\footnote{In what follows, much of the theory we develop relies on the group $G$ being an affine algebraic group; i.e., an affine group object in the category of $K$-varieties. If we instead work with non-affine groups, the characterization of smooth free $G$-varieties as $G$-varieties with {\'e}tale locally trivializable actions $G \times \Gamma \to \Gamma$ fails. Instead we have to work with what are called locally isotrivial fibrations in order to have a theory which works properly; cf. \cite[Proposition 6.1.4, Page 141]{GeneralGeoffThesis}. Because of the extra detail being beyond the scope and nature of this paper, we have elected to work only with affine $G$. Interested readers can see the discussion around \cite[Proposition 6.1.4, Page 141]{GeneralGeoffThesis} for what goes into fixing this issue as well as the additional headaches that arise.} smooth algebraic group over $K$. The category of smooth free $G$-varieties, $\Sf(G)$, is defined as follows:
\begin{dfn}[\cite{LusztigCuspidal2}]\label{Defn: Free G Var}
We say that $\Gamma$ is a {\em smooth free $G$-variety} if $\Gamma$ is a left $G$-variety (over $K$) with a geometric quotient\footnote{In the category $\Var$, a quotient of $G$ on $X$ is simply a coequalizer of the action and projection maps $\alpha_X,\pi_2\colon G \times X \to X$ in the category of varieties. A {\em geometric} quotient of $G$ on $X$ is then a quotient with the extra property that the quotient topology of the $G$-action on $X$ lines up with the Zariski topology on $G \backslash X$ and that if $q\colon X \to G \backslash X$ is the quotient map then there is an isomorphism of sheaves $\Ocal_{G \backslash X} \cong q_{\ast}\Ocal_X^G$ of the structure sheaf on $G \backslash X$ with the pushforward of $G$-invariants on $\Ocal_X$.} (cf. \cite[Definition 0.6]{GIT}) 
$\quo_\Gamma\colon \Gamma \to G\backslash \Gamma$ for which $\quo_{\Gamma}$ is a smooth $G$-principal fibration; i.e., $\quo_{\Gamma}$ is {\'e}tale locally trivializable with fibre $G$.
\end{dfn}
\begin{dfn}
The category $\Sf(G)$ is defined as follows:
\begin{itemize}
	\item Objects: Smooth free $G$-varieties $\Gamma$ of pure dimension.
	\item Morphisms: Smooth $G$-equivariant maps $f\colon \Gamma \to \Gamma^{\prime}$ of constant fibre dimension.
	\item Composition and Identities: As in $\Var_{/K}$. 
\end{itemize}
\end{dfn}

We now describe some of the important technical properties $\Sf(G)$ has which justify it as a reasonable category with which to do equivariant descent for varieties. That it is sufficient is at least partially justified in practice by the various equivalences of categories described in \cite[Proposition 7.1.24, Theorem 7.1.26, Theorem 7.2.14, Theorem 9.0.38]{vooys2021equivariant}.
\begin{prop}[{\cite{GeneralGeoffThesis}, Proposition 6.1.4, Page 141}]\label{Prop: Smooth free quotients}
Let $\Gamma \in \Sf(G)_0$ and let $X$ be a (quasi-projective) left $G$-variety. Then the variety $\Gamma \times X$ is free with a smooth quotient morphism
\[
\quo_{\Gamma \times X}\colon \Gamma \times X \to G\backslash(\Gamma \times X).
\]
Furthermore, if $f\colon \Gamma \to \Gamma^{\prime}$ is a morphism in $\Sf(G)$ then there exists a unique smooth morphism $\overline{f}$ for which the diagram
\[
\xymatrix{
	\Gamma \times X \ar[rr]^-{f \times \id_X} \ar[d]_{\quo_{\Gamma \times X}} & & \Gamma^{\prime} \times X \ar[d]^{\quo_{\Gamma^{\prime} \times X}} \\
G \backslash(\Gamma \times X) \ar@{-->}[rr]_-{\exists! \overline{f}} & & G \backslash (\Gamma^{\prime} \times X)
}
\]
commutes in $\Var_{/K}$.
\end{prop}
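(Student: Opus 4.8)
The plan is to reduce the whole statement to the étale-local triviality of $\quo_\Gamma\colon \Gamma \to G\backslash\Gamma =: B$ that is built into the definition of $\Sf(G)$, and then to descend the local constructions along a trivializing étale cover. Throughout, $\Gamma\times X$ carries the diagonal left $G$-action $g\cdot(\gamma,x) = (g\gamma,gx)$, and $\Gamma\times X$ is regarded as a variety over $B$ through the $G$-invariant composite $\quo_\Gamma\circ\mathrm{pr}_1$.

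First I would check freeness: the morphism $G\times(\Gamma\times X)\to(\Gamma\times X)\times(\Gamma\times X)$ sending $(h,\gamma,x)$ to $(h\gamma,hx,\gamma,x)$ factors, after base change by $X$ and composition with the diagonal $\Delta_X$, through the closed immersion $G\times\Gamma\hookrightarrow\Gamma\times\Gamma$ witnessing freeness of $\Gamma$; since a morphism that becomes a monomorphism after post-composition is itself a monomorphism, freeness of $\Gamma$ alone forces freeness of $\Gamma\times X$. Next I would construct the quotient: fix an étale cover $\{U_i\to B\}$ over which $\quo_\Gamma$ trivializes, say $\Gamma\times_B U_i\cong G\times U_i$ as $G$-varieties over $U_i$, with transition cocycle $g_{ij}\colon U_{ij}\to G$. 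Then $(\Gamma\times X)\times_B U_i\cong G\times U_i\times X$ with $G$ acting by $h\cdot(g,u,x)=(hg,u,hx)$, whose geometric quotient is visibly $U_i\times X$ via $(g,u,x)\mapsto(u,g^{-1}x)$, the quotient map being the smooth projection $G\times(U_i\times X)\to U_i\times X$. Over overlaps these local quotients are identified by the automorphisms $(u,x)\mapsto(u,g_{ij}(u)\cdot x)$ of $U_{ij}\times X$, which satisfy the cocycle condition, so one obtains an étale descent datum for a quasi-projective $X$-bundle over $B$; its effectivity produces the variety $G\backslash(\Gamma\times X)$ together with $\quo_{\Gamma\times X}$. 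Smoothness of $\quo_{\Gamma\times X}$, the geometric-quotient condition, and the sheaf isomorphism $\Ocal_{G\backslash(\Gamma\times X)}\cong (\quo_{\Gamma\times X})_*\Ocal_{\Gamma\times X}^G$ are all étale-local on $B$, hence follow from the local model above.

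For the functoriality statement, given $f\colon\Gamma\to\Gamma'$ in $\Sf(G)$ the map $f\times\id_X$ is $G$-equivariant, so $\quo_{\Gamma'\times X}\circ(f\times\id_X)$ is $G$-invariant; since a geometric quotient is in particular a categorical quotient, there is a unique $\overline f$ with $\overline f\circ\quo_{\Gamma\times X}=\quo_{\Gamma'\times X}\circ(f\times\id_X)$, which is precisely the asserted square. Smoothness of $\overline f$ I would again verify étale-locally on $B'=G\backslash\Gamma'$: after trivializing both torsors, $\overline f$ is modelled by the map $U\times X\to U'\times X$ induced by the base map $\overline{f}_0\colon U\to U'$ coming from $f$ and compatible with the $X$-factor, and since $f$ is smooth of constant fibre dimension so is $\overline{f}_0$, whence $\overline f$ is smooth.

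The hard part will be the effectivity of the étale descent datum for the $X$-bundle, i.e.\ obtaining $G\backslash(\Gamma\times X)$ as an honest variety rather than merely an algebraic space: this is exactly where quasi-projectivity of $X$ and affineness of $G$ enter, via a Sumihiro-type linearization result producing a $G$-linearized ample line bundle on $X$, which makes the glued object representable. Once that point is granted, every remaining verification — freeness, smoothness of the two quotient maps, the structure-sheaf condition, and uniqueness of $\overline f$ — is local on the base and routine.
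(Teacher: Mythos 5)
The paper itself contains no proof of this proposition --- it is imported by citation from Proposition~6.1.4 of the thesis --- so there is no in-text argument to compare against line by line. Your sketch is the standard associated-fibre-bundle construction, and its architecture is sound. Freeness of the diagonal action does follow from freeness on the $\Gamma$-factor, though your phrasing (``base change by $X$ and composition with $\Delta_X$'') is garbled; the clean statement is that the graph morphism $(h,\gamma,x)\mapsto(h\gamma,hx,\gamma,x)$ composed with the projection forgetting the second $X$-coordinate equals (a permutation of) $\Psi_\Gamma\times\id_X$, which is a monomorphism, so the graph morphism is one too. The local model $G\times U_i\times X\to U_i\times X$, $(g,u,x)\mapsto(u,g^{-1}x)$ is the correct untwisting and simultaneously delivers smoothness of $\quo_{\Gamma\times X}$, the geometric-quotient conditions, and étale-local triviality; and existence plus uniqueness of $\overline f$ is exactly the universal property of the (categorical, hence geometric) quotient applied to the $G$-invariant map $\quo_{\Gamma'\times X}\circ(f\times\id_X)$, with smoothness of $\overline f$ checked after the same trivialization.

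The one step that is not yet a proof is the one you flag yourself: effectivity of the étale descent datum $\bigl\{U_i\times X,\ (u,x)\mapsto (u,g_{ij}(u)\cdot x)\bigr\}$. You appeal to ``a Sumihiro-type linearization,'' but Sumihiro's theorem produces a $G$-linearized ample line bundle only for \emph{normal} $X$ (and is usually stated for connected $G$), and neither normality nor connectedness is among the standing hypotheses --- the paper assumes only that $X$ is quasi-projective and $G$ is smooth and affine. So, as written, the representability of the glued object rests on a theorem that does not literally apply. The correct mechanism is to equip the local models $U_i\times X\to U_i$ with a relatively ample line bundle carrying a compatible descent datum (which is what a linearization buys you when it exists), or to exploit local isotriviality to reduce to Galois descent along finite étale covers; pinning this down is precisely the content of the cited thesis proposition and is why the paper's subsequent remark insists the statement fails without quasi-projectivity of $X$ or affineness of $G$. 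Everything else in your sketch is routine once that point is secured.
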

\begin{rmk}
The above proposition is false in general if $X$ is not a quasi-projective variety or if $G$ is a non-affine algebraic group. There are ways around the non-affine-ness of $G$, but we cannot drop the assumption that our $G$-varieties are quasi-projective.
\end{rmk}
\begin{comment}
\begin{dfn}\label{Defn: SfResl}
The category $\SfResl_G(X)$ of smooth free $G$-resolutions of $X$ is defined as follows:
\begin{itemize}
	\item Objects: $\Gamma \times X$ where $\Gamma \in \Sf(G)_0$;
	\item Morphisms: Maps of the form $f \times \id_X:\Gamma \times X \to \Gamma^{\prime} \times X$ where $f \in \Sf(G)_1$.
	\item Composition and Identities: As in $\Var_{/K}$.
\end{itemize}
\end{dfn}
\end{comment}

\begin{rmk}
It is possible to modify the definition of $\Sf(G)$ by instead taking the category $\Cscr$ of smooth free $G$-varieties which need not be of pure dimension and taking morphisms to be smooth $G$-equivariant morphisms which need not be of constant fibre dimension. If you do this you will arrive at a category which is not equivalent but is sufficient for doing equivariant homological algebra in the sense that the resulting equivariant derived category is equivalent to the equivariant derived category we define. The main difference is that computing the varieties $G \backslash \Gamma$ and $G \backslash (\Gamma \times X)$ is significantly simpler when $\Gamma$ is of constant dimension and having $f$ be of constant fibre dimension also simplifies computations of morphisms. The idea of working with the category $\Sf(G)$ explicitly is due to Lusztig (cf. \cite[Section 1.9]{LusztigCuspidal2}), although working with arbitrary pseudofunctors defined on $\Sf(G)$ (or categories isomorphic to $\Sf(G)$, at any rate)  was first seen in \cite{vooys2021equivariant}.
\end{rmk}
\begin{dfn}
For any $\Gamma \in \Sf(G)_0$ and any left $G$-variety $X$, we will write the quotient variety of $\Gamma \times X$ by $G$ as
\[
\!\quot{X}{\Gamma} := G \backslash (\Gamma \times X)
\]
in order to reduce the notational complexity of this work.
\end{dfn}
\begin{cor}
There is a functor $\quo_{(-)\times X}:\Sf(G) \to \Var_{/K}$ given on objects by $\Gamma \times X \mapsto \!\quot{X}{\Gamma}$ and on morphisms by $f \times \id_X \mapsto \overline{f}$.
\end{cor}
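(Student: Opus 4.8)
The plan is to verify the functor axioms directly, leaning on the universal property of the quotient morphisms $\quo_{\Gamma\times X}$ established in Proposition \ref{Prop: Smooth free quotients}. First I would observe that the object assignment $\Gamma \mapsto \!\quot{X}{\Gamma} = G\backslash(\Gamma\times X)$ is well-defined: given $\Gamma \in \Sf(G)_0$, Proposition \ref{Prop: Smooth free quotients} guarantees that $\Gamma\times X$ is a smooth free $G$-variety admitting the geometric quotient $\quo_{\Gamma\times X}$, so $\!\quot{X}{\Gamma}$ is a genuine object of $\Var_{/K}$. For the morphism assignment, given $f\colon\Gamma\to\Gamma^{\prime}$ in $\Sf(G)$, Proposition \ref{Prop: Smooth free quotients} provides the \emph{unique} smooth morphism $\overline{f}\colon \!\quot{X}{\Gamma} \to \!\quot{X}{\Gamma^{\prime}}$ making the square with $f\times\id_X$, $\quo_{\Gamma\times X}$, and $\quo_{\Gamma^{\prime}\times X}$ commute; we set $\quo_{(-)\times X}(f) := \overline{f}$.

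The substance of the proof is functoriality, and here the uniqueness clause of Proposition \ref{Prop: Smooth free quotients} does all the work. For identities: the morphism $\id_\Gamma$ induces a map $\overline{\id_\Gamma}$ characterized by $\overline{\id_\Gamma}\circ\quo_{\Gamma\times X} = \quo_{\Gamma\times X}\circ(\id_\Gamma\times\id_X) = \quo_{\Gamma\times X}$, and since $\id_{\!\quot{X}{\Gamma}}$ also satisfies this equation, uniqueness forces $\overline{\id_\Gamma} = \id_{\!\quot{X}{\Gamma}}$. For composition: given $\Gamma\xrightarrow{f}\Gamma^{\prime}\xrightarrow{g}\Gamma^{\prime\prime}$ in $\Sf(G)$, the composite $\overline{g}\circ\overline{f}$ satisfies
\[
(\overline{g}\circ\overline{f})\circ\quo_{\Gamma\times X} = \overline{g}\circ\quo_{\Gamma^{\prime}\times X}\circ(f\times\id_X) = \quo_{\Gamma^{\prime\prime}\times X}\circ(g\times\id_X)\circ(f\times\id_X) = \quo_{\Gamma^{\prime\prime}\times X}\circ((g\circ f)\times\id_X),
\]
which is exactly the characterizing equation for $\overline{g\circ f}$; uniqueness then gives $\overline{g\circ f} = \overline{g}\circ\overline{f}$. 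One small point to check along the way is that $g\circ f$ is indeed a morphism in $\Sf(G)$ (i.e. it is smooth, $G$-equivariant, and of constant fibre dimension), which is immediate since these properties are stable under composition.

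I do not anticipate a genuine obstacle here: the corollary is a formal consequence of Proposition \ref{Prop: Smooth free quotients}, and the only thing requiring any care is making sure each application of the uniqueness assertion is to a diagram of the precise shape appearing in that proposition (with $q\colon X\to\Spec K$ factoring appropriately, $\Gamma$ of pure dimension, etc.), so that the quasi-projectivity hypothesis on $X$ and affineness of $G$ are genuinely in force. The mild bookkeeping is simply tracking that the induced maps $\overline{f}$ are smooth (again given by Proposition \ref{Prop: Smooth free quotients}) so that, should one want it, $\quo_{(-)\times X}$ factors through the subcategory of $\Var_{/K}$ with smooth morphisms.
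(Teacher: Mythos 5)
Your proposal is correct and is exactly the argument the paper intends: the corollary is stated without proof as an immediate consequence of Proposition \ref{Prop: Smooth free quotients}, and your verification of the identity and composition axioms via the uniqueness clause of that proposition is the standard way to fill in the details.
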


What we have just seen in the geometric case can be run mutatis mutandis in the topological case, save that there are many simplifying aspects of the theory. Our discussion of the resolution theory of topological groups acting on topological spaces follows the exposition of \cite{BernLun}, as their work established the modern formalism used for the equivariant derived category of a topological space\footnote{It is a pervasive error in the literature to state that the equivariant derived category of a variety was defined in \cite{BernLun}; however, this is not correct. Throughout \cite{BernLun} all groups are topological groups (and more often are topological groups which admit acyclic resolutions which are manifolds) and all spaces are topological spaces. That being said, most modern incarnations of the equivariant derived category of a variety can be traced back, in some way, to the ideas put forth in \cite{BernLun}.} and established the use of the descent-theoretic perspective to equivariant derived categories in general. For what follows we will recall the notion of free spaces for a topological group before moving on to discussing free resolutions and the analogous category and quotient functor in the topological case.
\begin{dfn}[{\cite{BernLun}, Definition 2.1.1}]
Let $L$ be a topological group. We say that a left $L$-space $F$ is \emph{free} if the quotient map $q\colon F \to L \backslash F$ is locally trivializable with fibre $L$. That is, there is an open cover $\lbrace U_i \to L \backslash F \; \left. \right| \; i \in I \rbrace$ of $F$ for which in the pullback
\[
\begin{tikzcd}
q^{-1}(U_i) \ar[r] \ar[d] & F \ar[d]{}{q} \\
U_i \ar[r] & L \backslash F
\end{tikzcd}
\]
we have a commuting diagram with an $L$-equivariant isomorphism:
\[
\begin{tikzcd}
q^{-1}(U_i) \ar[dr] \ar[r]{}{\cong} & L \times U_i \ar[d]{}{\operatorname{pr}_2} \\
 & U_i
 \end{tikzcd}
\]
\end{dfn}
\begin{rmk}
When $L$ is a Lie group and $L\backslash F$ is a manifold, what we have called free $L$-spaces here are sometimes known as principal $L$-bundles in the differential geometry literature; cf. \cite[Section 10]{KMSDiffGeo} for details.
\end{rmk}
%%\biggeoff{Under constuction topology stuff}
We now define a category of free $L$-spaces, as well as the notion of what it means for a map of topological spaces to be $n$-acyclic for $n \in \N$. These notions are important for equivariant descent and the equivariant derived category on a topological space, as it is the descent through $n$-acyclic resolutions (as $n$ varies through $\N$) which control and define the equivariant derived category.
\begin{dfn}\label{Defn: The category of free G spaces}
Let $L$ be a topological group. We define the category $\Free(L)$ of free $L$-spaces by:
\begin{itemize}
    \item Objects: Free $L$-spaces $P$.
    \item Morphisms: $L$-equivariant surjective morphisms of spaces.
    \item Composition and Identities: As in $\Top$.
\end{itemize}
\end{dfn}

The definition above is well-suited to studying the equivariant descent theory of actions of topological groups on topological spaces by freely resolving the action by free spaces. However, we are primarily interested in the cases where the topological group $L$ is a Lie group (that is, a group object in $\SMan$) and when the space $M$ on which $L$ acts is a smooth manifold as well. In this case we wish to resolve the action of $L$ on $M$ by free $L$-spaces (that is, by principal $L$-bundles) in order to record how the differential structures of the actions vary. To do this, however, we necessitate knowing that the group $L$ admits free $L$-spaces which are themselves, smooth manifolds. Luckily, in most cases of interest to representation theorists, there are positive answers to this question.

\begin{rmk}
It is known from \cite[Lemma 3.1, Page 34]{BernLun} that if $L$ is either a closed subgroup of $\GL_n(\R)$ for some $n \in \N$ or if $L$ is a Lie group with finitely many connected components, then $L$ has free spaces which are smooth manifolds. In the case $L$ is a linear group, the Stiefel manifold $S_{n,k}$ of $k$-frames in $\R^{n+k}$ is a free $L$-space which is a smooth manifold for all $k$.
\end{rmk}

Because of the remark above, we make an assumption which allows us to ensure that we have free $L$-spaces which are smooth manifolds. This also will allow us to have a tractable handle on the principal bundle theory of the group $L$ which stays inside the category $\SMan$.

\begin{assumption}\label{Assume on Lie groups}
We assume that $L$ is a Lie group which is either a closed subgroup of some $\GL_n(\R)$ or a Lie group with finitely many connected components.
\end{assumption}

Under these assumptions, we have that $L$ admits free $L$-spaces which are smooth manifolds. In this case we will focus on the category of these free $L$-manifolds as opposed to all possible free $L$-spaces so as to stay within $\SMan$.
\begin{dfn}
Let $L$ be a Lie group satisfying Assumption \ref{Assume on Lie groups}. We define the category $\FMan(L)$ of free $L$-manifolds as follows:
\begin{itemize}
    \item Objects: Free $L$-spaces $M$ which are smooth manifolds and whose $L$-action on $M$ is smooth.
    \item Morphisms: $L$-equivariant surjective submersions.
    \item Composition and Identities: As in $\Free(L)$.
\end{itemize}
\end{dfn}

Because of the presence of surjective submersions in our definition of free $L$-manifolds, it will be helpful to recall that said maps are closed with respect to composition and post-compositional factors. While this is certainly well-known to experts, we prove it in this paper explicitly.
\begin{lem}\label{Lemma: Section Equivariant: Surjective Submersion Stability and permanence}
Let $f:X \to Y$ and $g:Y \to Z$ be morphisms in $\SMan$. Then:
\begin{enumerate}
    \item If $f$ and $g$ are surjective submersions, so too is $g \circ f$.
    \item If $g \circ f$ and $f$ are surjective submersions, so too is $g$.
\end{enumerate}
In particular, if $f$ is a surjective submersion then $g \circ f$ is a surjective submersion if and only if $g$ is a surjective submersion.
\end{lem}
\begin{proof}
Begin by observing that the final claim of the lemma follows by combining the statements in Items $(1)$ and $(2)$ simultaneously. Thus, it suffices to simply prove $(1)$ and $(2)$ in isolation.

$(1):$ Assume that $f\colon X \to Y$ and $g\colon Y \to  Z$ are surjective submersions. Then $g \circ f$ is surjective and for all $x \in X$,
\[
T(g \circ f)_x =D[g \circ f](x) = D[g](f(x)) \circ D[f](x)
\]
is a composite of surjective morphisms and hence also surjective. Thus $g \circ f$ is also a submersion.

$(2):$ Assume that $g \circ f$ and $f$ are surjective submersions. Because surjections are post-compositionally permanent, $g$ is necessarily a surjection as well. Additionally, consider that for all $y \in Y$, since $y = f(x)$ for some $x \in X$, we can write
\[
D[g](y) = Tg_y = Tg_{f(x)} = D[g](f(x)).
\]
Now because
\[
D[g](f(x)) \circ D[f](x) = D[g \circ f](x)
\]
is surjective it also follows that $D[g](f(x)) = D[g](y)$ is surjective for all $y \in Y$. Thus $g$ is a submersion.
\end{proof}

In the case that $M$ is a free $L$-manifold, the group action $L \times M \to M$ is free and acts on the fibres of $M$ by translation in the $L$-atlas of $M$. This allows us to put a smooth manifold structure on the quotient space $L \backslash M$ and also deduce that the morphism $\quo_{M}\colon M \to L \backslash M$ is a submersion of smooth manifolds, as the local trivialization of the quotient map $\quo_{M}\colon M \to L \backslash M$ gives the quotient map $\quo_{M}$ the structure of a differential $L\backslash M$-bundle. We collect the observations we require on free smooth $L$-manifolds in order to use later.

\begin{prop}\label{Prop: Obsrevations about free L manifolds}
Let $L$ be a Lie group satisfying Assumption \ref{Assume on Lie groups} and let $M$ be a smooth left $L$-space; i.e., a smooth manifold $M$ with a smooth group action $\alpha\colon L \times M \to M$. Then:
\begin{enumerate}
    \item If $F$ is a free $L$-manifold, then the quotient morphism $\quo_{F}\colon F \to L \backslash F$ is a submersion.
    \item For any manifold $F$ in $\FMan(L)$, the quotient space $L\backslash (F \times M)$ is a smooth manifold and the quotient morphism $\quo_{F \times M}\colon F \times M \to L \backslash (F \times M)$ is a submersion of manifolds.
    \item For any morphism $f\colon F \to E$ in $\FMan(L)$ there is a unique morphism $\overline{f}\colon L \backslash (F \times M) \to L \backslash (E \times M)$ fitting into the commutative square:
    \[
    \begin{tikzcd}
    F \times M \ar[rr]{}{f \times \id_M} \ar[d, swap]{}{\quo_{F \times M}} & & E \times M \ar[d]{}{\quo_{E \times M}} \\
    L \backslash (F \times M) \ar[rr, swap]{}{\overline{f}} & & L \backslash (E \times M)
    \end{tikzcd}
    \]
    In particular, there is a functor $\quo_{(-)\times M}\colon \FMan(L) \to \SMan$ given on objects by $F \mapsto L \backslash (F \times M)$ and on morphisms by $f \mapsto \overline{f}$.
    \item For any morphism $f\colon F \to E$ of free $L$-spaces in $\FMan(L)$, the map $\overline{f}\colon L \backslash (F \times M) \to L \backslash (E \times M)$ is a surjective submersion.
\end{enumerate}
\end{prop}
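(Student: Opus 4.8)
The proof of Proposition \ref{Prop: Obsrevations about free L manifolds} assembles four statements, each of which follows from standard principal-bundle theory together with the local triviality that defines free $L$-manifolds; the plan is to prove them in order, since later parts reuse the bundle structure produced in earlier parts. For part (1), I would argue locally: by definition of a free $L$-manifold $F$, the quotient map $\quo_F\colon F \to L \backslash F$ admits an open cover $\lbrace U_i \rbrace$ of $L \backslash F$ over which $\quo_F^{-1}(U_i) \cong L \times U_i$ compatibly with the projection to $U_i$; since $L$ is a smooth manifold (indeed a Lie group) and $\operatorname{pr}_2\colon L \times U_i \to U_i$ is a submersion, $\quo_F$ is a submersion on each $\quo_F^{-1}(U_i)$, hence a submersion. (This is where Assumption \ref{Assume on Lie groups} is used implicitly, to know $L\backslash F$ is a manifold and $L$ is a manifold in the first place.)

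For part (2), the key point is that $F \times M$ is again a free $L$-manifold when $F$ is: the diagonal action $L \times (F \times M) \to F \times M$ is free because the action on the $F$-factor already is, and the local trivializations of $\quo_F$ pull back to local trivializations of $\quo_{F \times M}$ — concretely, over $U_i$ one has $\quo_{F\times M}^{-1}(q_M(U_i \times M)) \cong L \times (U_i \times M)$ where $q_M$ is the evident map. One then invokes the standard fact (as in \cite[Section 10]{KMSDiffGeo}) that the quotient of a smooth manifold by a free proper action along a locally trivial principal bundle is again a smooth manifold, so $L\backslash(F\times M)$ is a manifold, and $\quo_{F\times M}$ is a submersion by the same local argument as in part (1). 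Part (3) is then a direct consequence: uniqueness and existence of $\overline{f}$ is the universal property of the quotient (the composite $\quo_{E\times M}\circ (f \times \id_M)$ is $L$-invariant since $f$ is $L$-equivariant, so it factors uniquely through $\quo_{F\times M}$), and smoothness of $\overline{f}$ follows because $\quo_{F\times M}$ is a surjective submersion, hence a quotient map in $\SMan$, so a set-map out of $L\backslash(F\times M)$ is smooth iff its precomposition with $\quo_{F\times M}$ is. Functoriality of $\quo_{(-)\times M}$ is then formal from uniqueness of the factorization.

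Part (4) is the part I expect to require the most care. One must show $\overline{f}\colon L\backslash(F\times M) \to L\backslash(E\times M)$ is a local diffeomorphism for \emph{any} morphism $f$ of free $L$-spaces. The idea is to work in local trivializations of both $\quo_{F\times M}$ and $\quo_{E\times M}$ over a common chart $U \subseteq L\backslash(E\times M)$ (refining as needed), where $f \times \id_M$ becomes, fibrewise, a map $L \times U' \to L \times U$ of the form $(g, u') \mapsto (g\cdot \varphi(u'), \psi(u'))$ for some smooth $\varphi, \psi$; passing to $L$-quotients kills the $L$-factor entirely and $\overline{f}$ becomes identified with $\psi$, and one checks $\psi$ is a local diffeomorphism by comparing dimensions and using that $f$ (being a morphism of free $L$-spaces whose quotients are equidimensional) has bijective differential transverse to the orbits. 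The subtlety is bookkeeping the identifications of charts and verifying that the induced map on the base is exactly the local expression of $\overline{f}$; once $\overline{f}$ is a local diffeomorphism it is automatically a submersion, completing the proof. Throughout, I would lean on \cite[Lemma 3.1]{BernLun} and the principal-bundle formalism of \cite{KMSDiffGeo} rather than redoing differential-topological lemmas from scratch.
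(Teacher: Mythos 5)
Your treatment of parts (1)--(3) is essentially the paper's. For (1) and (2) the paper simply cites \cite[Lemma 10.3]{KMSDiffGeo} and \cite[Theorem 10.7.(1)]{KMSDiffGeo} where you argue in trivializing charts, but the content is the same; for (3) the paper phrases the factorization as the universal property of the coequalizer of $\alpha_{F\times M}$ and $\pi_2$ in $\SMan$, which is exactly your invariance-plus-surjective-submersion argument. No issues there.

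Part (4) is where your proof does not close, and you were right to flag it as the delicate step. After reducing to trivializations you must show that the induced base map $\psi$ is a local diffeomorphism, and you justify this by saying that $f$ ``has bijective differential transverse to the orbits'' because the quotients are ``equidimensional.'' Neither of these is a hypothesis of the proposition: morphisms in $\FMan(L)$ are arbitrary smooth $L$-equivariant maps, with no dimension constraint and no condition on the rank of $f$ in the directions transverse to the orbits. So the single claim that carries all the content of (4) is asserted rather than proved. For comparison, the paper takes a different route: it asserts (in a footnote) that local triviality forces \emph{every} $L$-equivariant morphism of free $L$-spaces to be an isomorphism, so that $f \times \id_M$, and hence $\overline{f}$, is an isomorphism. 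That assertion is only correct for principal-bundle maps covering a diffeomorphism of the base; it fails for, e.g., the $O(k)$-equivariant inclusions of Stiefel manifolds $V_k(\R^n) \hookrightarrow V_k(\R^{n+1})$, which induce on quotients the closed embeddings of Grassmannians --- immersions, not local diffeomorphisms. So the difficulty you sensed is genuine: as stated, (4) requires an additional hypothesis on the morphisms of $\FMan(L)$ (for instance that they cover diffeomorphisms of the quotients, or a constant-fibre-dimension condition parallel to the one imposed on $\Sf(G)$ in the algebraic setting), and no amount of chart bookkeeping will manufacture the missing bijectivity of $d\psi$.
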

\begin{proof}
For (1) we note that the discussion of the fact that $L \backslash F$ is smooth is given directly prior to the statement of the proposition. That the map $\quo_{F}\colon F \to L\backslash F$ is a submersion follows from \cite[Lemma 10.3]{KMSDiffGeo} and the fact that the local trivialization of the quotient map makes $F$ into a differential $L \backslash F$-bundle.

The content of (2) is precisely \cite[Theorem 10.7.(1)]{KMSDiffGeo}.

To prove (3) note that the manifold $L \backslash (F \times M)$ is the coequalizer of $\alpha_{F \times M},\pi_2\colon L \times (F \times M) \to F \times M$ in $\SMan$ by virtue of (2). As such, the existence of the map $\overline{f}$ follows immediately from the universal property of the coequalizers. Finally, the functoriality of this construction in $\FMan(L)$ is immediate from (1), (2), and the existence of uniqueness of the maps $\overline{f}$.

To prove (4) note that since the diagram
   \[
    \begin{tikzcd}
    F \times M \ar[rr]{}{f \times \id_M} \ar[d, swap]{}{\quo_{F \times M}} & & E \times M \ar[d]{}{\quo_{E \times M}} \\
    L \backslash (F \times M) \ar[rr, swap]{}{\overline{f}} & & L \backslash (E \times M)
    \end{tikzcd}
    \]
    commutes with both $\quo_{E \times M}$ and $f \times \id_M$ surjective submersions, $\quo_{E \times M} \circ (f \times \id_M)$ is a surjective submersion by Part $(1)$ of Lemma \ref{Lemma: Section Equivariant: Surjective Submersion Stability and permanence}. Because
    \[
    \quo_{E \times M} \circ (f \times \id_M) = \overline{f} \circ \quo_{F \times M}
    \]
    is a surjective submersion with $\quo_{F \times M}$ a surjective submersion as well, Part $(2)$ of Lemma \ref{Lemma: Section Equivariant: Surjective Submersion Stability and permanence} gives that $\overline{f}$ is a surjective submersion, as was claimed.
\end{proof}

\begin{comment}
\begin{dfn}\label{Defn: nAcyclic Top}
Let $f:X \to Y$ be a morphism of topological spaces and let $n \in \N$. We say that $f$ is \emph{$n$-acyclic} if the following hold:
\begin{enumerate}
    \item For any sheaf $\Fscr$ in $\Shv(Y)$, regard $\Fscr$ as a complex in the derived category $D^b(Y)$. Then we ask that in $D^b(Y),$ the unit of adjunction at $\Fscr$
    \[
    \Fscr \to \left(R^0f_{\ast} \circ f^{\ast}\right)(\Fscr)
    \]
    be an isomorphism and that $(R^kf_{\ast} \circ f^{\ast})(\Fscr) \cong 0$ for $1 \leq k \leq n$.
    \item For any morphism $f:Z \to Y$, the projection $\operatorname{pr}_2:X \times_Y Z \to Z$ satisfies Condition (1).
\end{enumerate}
\end{dfn}
\end{comment}
%%\biggeoff{/Under construction}

We now recall the definition of a pre-equivariant pseudofunctor and then the equivariant category on a variety.
\begin{dfn}\label{Defn: Pre-Equivariant Pseudofunctor}
Let $G$ be a smooth algebraic group over a field $K$ with $X$ a left $G$-variety and let $L$ be a Lie group satisfying Assumption \ref{Assume on Lie groups} with $M$ a smooth left $L$-manifold with smooth $L$-action. A {\em pre-equivariant pseudofunctor} on $X$ is a pair $(F,\overline{F})$ of pseudofunctors $F\colon \Cscr^{\op} \to \fCat$ and $\overline{F}\colon \Var_{/K}^{\op} \to \fCat$ making the diagram
\[
\xymatrix{
\Sf(G)^{\op} \ar[rr]^-{\quo_{(-) \times X}^{\op}} \ar[drr]_{F} & & \Var_{/K}^{\op} \ar[d]^{\overline{F}} \\
 & & \fCat
}
\]
commute. Similarly, a {\em pre-equivariant pseudofunctor} on $M$ is a pair $(E, \overline{E})$ of pseudofunctors $E\colon \FMan(L)^{\op} \to \fCat$ and $\overline{E}\colon \SMan^{\op} \to \fCat$ making the diagram
\[
\xymatrix{
\FMan(L)^{\op} \ar[rr]^-{\quo_{(-) \times M}^{\op}} \ar[drr]_-{E} & & \SMan^{\op} \ar[d]^{\overline{E}} \\
 & & \fCat
}
\]
commute.
\end{dfn}
\begin{rmk}
%\biggeoff{This remark needs a sane reader to double check that it makes sense.}
The term ``pre-equivariant'' in the definition above seems strange at first glance. However, we call the pair $(F,\overline{F})$ a pre-equivariant pseudofunctor for the following (arguably aesthetic) reasons and as a way of organizing one's thoughts.
 
In practice, an equivariant category (such as a category of equivariant sheaves, the equivariant derived category, a category of equivariant perverse sheaves, equivariant local systems, equivariant quasi-coherent sheaves, equivariant $\Dcal$-modules, etc.) is determined by three aspects: A pseudofunctor defined on a ``sufficiently global'' category (such as the category of varieties, smooth varieties, smooth manifolds, etc.), all possible equivariant descent data recorded by the quotient functor $\quo_{(-) \times X}\colon \Sf(G) \to \Var_{/K}$ or $\quo_{(-) \times M}\colon \FMan(L) \to \SMan$, and the fact that in forming the equivariant category itself we take what amounts to all possible effective descent data mediating between the group action resolutions and the pseudofunctor itself. In this sense, by giving the pseudofunctors $F$ and $\overline{F}$, we furnish ourselves with the first two pieces of equivariant descent data. It's only after taking global sections (as in the Cartesian fibrations of the elements of $F$) that we arrive with an actual equivariant category, so a pre-equivariant pseudofunctor is ``pre-equivariant'' in the sense that it is the information with which we can begin to consider equivariant descent, but have not yet imposed any descent conditions and as such have not yet made the information completely equivariant.
\end{rmk}

\begin{dfn}
Let $F = (F,\overline{F})\colon \Sf(G)^{\op} \to \fCat$ be a pre-equivariant pseudofunctor on $X$. The \emph{equivariant category} of $F$ over $X$ is defined to be the category $F_G(X) := \PC(F)$.
\end{dfn}

\begin{example}\label{Example: Pre-equivariant pseudofunctors}
Here are some examples of equivariant categories which appear in equivariant algebraic geometry, equivariant differential geometry, and equivariant algebraic topology. As before, $G$ here is a smooth algebraic group over a field $K$, $X$ is a left $G$-variety, $L$ is a Lie group satisfying Assumption \ref{Assume on Lie groups}, and $M$ is a smooth left $L$-space with $\SMan$ group action $L \times M \to M$. Note that we are illustrating the uses of the equivariant category formalism and that these categories need not necessarily carry interesting tangent structures.
\begin{enumerate}
	\item  If $\Dbb,\Dbb^b$ are the pseudofunctors
 \[
 \xymatrix{
\Sf(G)^{\op} \ar[rr]^-{\quo_{(-) \times X}^{\op}} \ar[drr]_-{\Dbb} & & \Var_{/K}^{\op} \ar[d]^{D_c(-)} \\
  & & \fCat
 }\qquad
  \xymatrix{
\Sf(G)^{\op} \ar[rr]^-{\quo_{(-) \times X}^{\op}} \ar[drr]_-{\Dbb^b} & & \Var_{/K}^{\op} \ar[d]^{D_c^b(-)} \\
  & & \fCat
 }
 \]
 with
	\[
	\Dbb(\Gamma) := D_c(\XGamma), \quad \Dbb^b(\Gamma) = D_c^b(\XGamma)
	\]
	on objects $\Gamma$ and 
	\[
	\Dbb(f) = L\of^{\ast}, \quad \Dbb^b(f \times \id_X) = L\of^{\ast},
	\]
	then $\PC(\Dbb^{b}) = \Dbb^b_G(X) = D_G^b(X)$ and $\PC(\Dbb) = \Dbb_G(X) = D_G(X)$ are the bounded and unbounded equivariant derived categories, respectively. Note that the category $D_c^b(\XGamma)$ is the derived category of bounded constructible {\'e}tale sheaves on $\XGamma$; cf. \cite[Definition II.4.7]{FreitagKiehl} for details regarding constructible sheaves.
 \item The differential-geometric/algebraic-topological equivariant derived category on $M$ is described as follows. Define the pre-equivariant pseudofunctors $\Dbb, \Dbb^b$ by
 \[
 \xymatrix{
\FMan(L)^{\op} \ar[rr]^-{\quo_{(-) \times X}^{\op}} \ar[drr]_-{\Dbb} & & \SMan^{\op} \ar[d]^{D_c(-)} \\
  & & \fCat
 }\qquad
 \xymatrix{
\FMan(L)^{\op} \ar[rr]^-{\quo_{(-) \times L}^{\op}} \ar[drr]_-{\Dbb^b} & & \SMan^{\op} \ar[d]^{D^b_c(-)} \\
  & & \fCat
 }
 \]
 where $\Dbb(F) := D_c(L \backslash (F \times M))$, $\Dbb^b(F) := D_c^b(L \backslash (F \times M))$, and $\Dbb(f) = \overline{f}^{\ast}$ while $\Dbb^b(f) = \overline{f}^{\ast}$. Then the equivariant categories $D_L(M) =: \Dbb_L(M)$ and $D_L^b(M) =: \Dbb^b_L(M)$ are the topological equivariant derived and bounded derived categories on $M$, respectively.
	\item If $\underline{\DbQl{-}}$ is the pre-equivariant pseudofunctor
 \[
 \xymatrix{
\Sf(G)^{\op} \ar[rr]^-{\quo_{(-) \times X}^{\op}} \ar[drr]_-{\underline{\DbQl{-}}} & & \Var_{/K}^{\op} \ar[d]^{D_c^b(-;\overline{\Q}_{\ell})} \\
  & & \fCat
 }
 \]
 with
	\[
	\underline{\DbQl{\Gamma}} = \DbQl{\XGamma}
	\]
	on objects and
	\[
	\underline{\DbQl{f}} = \of^{\ast}
	\]
	then $\underline{\DbQl{X}}_G = \DbeqQl{X}$ is the equivariant derived category of $\ell$-adic sheaves on $X$ (for $\ell > 0$ an integer prime with $\gcd(\operatorname{char}(X),\ell) = 1$ if $\operatorname{char}(X) > 0$).
	\item If $\Sbb$ is the pre-equivariant pseudofunctor
 \[
\xymatrix{
\Sf(G)^{\op} \ar[rr]^-{\quo_{(-) \times X}^{\op}} \ar[drr]_-{\Sbb} & & \Var_{/K}^{\op} \ar[d]^{\Ab(-;\overline{\Q}_{\ell})} \\
  & & \fCat
 }
 \]
 where
	\[
	\Sbb(\Gamma) := \Ab(\XGamma;\overline{\Q}_{\ell})
	\]
	on objects and
	\[
	\Sbb(f) := \of^{\ast}
	\]
	on morphisms then $\Sbb_G(X) =: \Shv_G(X;\overline{\Q}_{\ell})$ is the category of equivariant $\ell$-adic sheaves on $X$ for $\ell > 0$ an integer prime with $\gcd(\operatorname{char}(X),\ell) = 1$ if $\operatorname{char}(X) > 0$. Note that $\Shv_G(X;\overline{\Q}_{\ell})$ is equivalent to the usual category of equivariant $\ell$-adic sheaves by \cite[Proposition 7.1.26]{vooys2021equivariant}.
	\item If $\Pbb$ is the pre-equivariant pseudofunctor
 \[
 \xymatrix{
\Sf(G)^{\op} \ar[rr]^-{\quo_{(-) \times X}^{\op}} \ar[drr]_-{\Pbb} & & \Var_{/K}^{\op} \ar[d]^{\Per(-;\overline{\Q}_{\ell})} \\
  & & \fCat
 }
 \]
 with
	\[
	\Pbb(\Gamma) := \Per(\XGamma;\overline{\Q}_{\ell})
	\]
	and
	\[
	\Pbb(f) := {}^{p}\of^{\ast}
	\]
	then $\Pbb_G(X) =: \Per_G(X;\overline{\Q}_{\ell})$ is the category of equivariant perverse $\ell$-adic sheaves on $X$. Note that $\Per_G(X;\overline{\Q}_{\ell})$ is equivalent to the usual category of equivariant $\ell$-adic perverse sheaves by \cite[Theorem 7.1.28]{vooys2021equivariant}.
	\item Let $\Lbb$ be the pre-equivariant pseudofunctor given by
 \[
\xymatrix{
\Sf(G)^{\op} \ar[rr]^-{\quo_{(-) \times X}^{\op}} \ar[drr]_-{\Lbb} & & \Var_{/K}^{\op} \ar[d]^{\Shv((-)_{\operatorname{\acute{e}t}})_{\operatorname{lcf}}} \\
  & & \fCat
 }
 \]
 where
	\[
	\Lbb(\Gamma) := \Shv(\XGamma_{\text{{\'e}t}})_{\text{lcf}}
	\]
	on objects and
	\[
	\Lbb(f) = \of^{\ast}
	\]
	on morphisms for $\Shv(\XGamma_{\text{\'e}t})_{\text{lcf}}$ the category of locally constant finite {\'e}tale sheaves on $\XGamma$. Then $\Lbb_G(X) =: \Shv_G(X_{\text{{\'e}t}})_{\text{lcf}}$ is the category of equivariant locally constant finite {\'e}tale sheaves on $X$.
	\item Let $\Q(-):\SfResl_G(X)^{\op} \to \fCat$ be the pre-equivariant pseudofunctor
 \[
 \xymatrix{
\Sf(G)^{\op} \ar[rr]^-{\quo_{(-) \times X}^{\op}} \ar[drr]_-{\Q} & & \Var_{/K}^{\op} \ar[d]^{\QCoh(-)} \\
  & & \fCat
 }
 \]
	\[
	\Q(\Gamma):= \QCoh(\XGamma)
	\]
	on objects and
	\[
	\Q(f) := \of^{\ast}.
	\]
	Then $\Q_G(X) =: \QCoh_G(X)$ is the category of equivariant quasi-coherent sheaves on $X$.
	\item Recall that if $K$ is a complete valued field, a $K$-analytic space is a locally ringed space $X = (\lvert X \rvert, \Ocal_X)$ for which there is an open cover $\Ufrak = \lbrace U_i \; \left. \right| \; i \in I \rbrace$ of $X$ by locally ringed spaces $U_i = (\lvert U_i \rvert, \Ocal_{U_i})$ (so in particular there is a gluing
	\[
	X \cong \bigcup_{i \in I} U_i
	\]
	in the category $\LocRingSpac$ of locally ringed spaces) where for each $i \in I$ there is an isomorphism of locally ringed spaces $U_i \cong Z_i$ where $Z_i$ is an analytic $K$-variety. More explicitly, there is an open subset $V_i \subseteq K^{n_i}$ for some $n_i \in \Nbb$ and a collection of analytic functions $f_1, \cdots, f_{m_i}$ on $V_i$ for which $\lvert Z_i\rvert = \lbrace x \in V_i \; \left. \right| \; f_1(x) = \cdots = f_{m_i}(x) = 0 \rbrace$ equipped with the subspace topology and $\Ocal_{Z_i} \cong \Ocal_{V_i}/\Iscr(f_1, \cdots, f_{m_i})$ where $\Ocal_{V_i}$ is the sheaf of analytic functions on $V_i$ and $\Iscr(f_1, \cdots, f_{m_i})$ is the ideal sheaf of $\Ocal_{Z_i}$ generated by the $f_j$. It is well-known that to each finite-type $K$-scheme $X$, there is an associated $K$-analytic space $X_{\text{an}}$ which is functorial in $\Sch_{/K}^{\text{f.t.}}$; i.e., there is a functor
	\[
	(-)_{\text{an}}\colon \Sch^{\text{f.t.}}_{/K} \to \mathbf{AnSp}_{K}
	\]
	where $\mathbf{AnSp}_K$ is the category of $K$-analytic spaces. We now construct a pre-equivariant pseudofunctor $\operatorname{AnCoh}\colon \Sf(G)^{\op} \to \fCat$ as follows. Let $\Coh\colon \mathbf{AnSp}_{K}^{\op} \to \fCat$ be the coherent module pseudofunctor (so for each $K$-analytic space $Y$, $\Coh(Y)$ is the category of coherent sheaves over $Y$ and for each morphism $f\colon Y \to Z$ of analytic spaces $\Coh(f) = f^{\ast}\colon \Coh(Z) \to \Coh(Y)$ is the pullback functor of coherent sheaves. Now define $\operatorname{AnCoh}$ by:
	\[
	\xymatrix{
	\Sf(G)^{\op} \ar[r]^-{\quo^{\op}} \ar[dr]_{\operatorname{AnCoh}} & \Var_{/K}^{\op} \ar[d]^{\Coh \circ (-)_{\operatorname{an}}} \\ 
	 & \fCat
	 }
	\]
	Then $\operatorname{AnCoh}_G(X)$ describes the category of analytic coherent sheaves on $X$ (when regarded as an analytic space).
   \item Define the pre-equivariant pseudofunctor $\Ccal$ by
   \[
   \xymatrix{
\FMan(L)^{\op} \ar[rr]^-{\quo_{(-) \times M}^{\op}} \ar[drr]_-{\Ccal} & & \SMan^{\op} \ar[d]^{C^{\infty}\Shv(-)} \\
  & & \fCat
 }
   \]
   where
   \[
    \Ccal(F) := C^{\infty}\Shv(L \backslash (F \times M))
   \]
   is the category of sheaves of $C^{\infty}$-modules on $L \backslash (F \times M)$ and where
   \[
   \Ccal(f) := f^{\ast}.
   \]
   Then $\Ccal_L(M) =: C^{\infty}\Shv_L(M)$ describes the category of $L$-equivariant $C^{\infty}$-modules on $M$.
   \item Let $\Vbb$ be the pre-equivariant pseudofunctor
   \[
   \xymatrix{
    \FMan(L)^{\op} \ar[rr]^-{\quo_{(-) \times M}^{\op}} \ar[drr]_{\Vbb} & & \SMan_{\operatorname{sub}}^{\op} \ar[d]^{\VVec} \\
    & & \fCat
   }
   \]
   where $\SMan_{\operatorname{sub}}^{\op}$ is the category of smooth manifolds with submersions as morphisms\footnote{That $\quo_{(-) \times M}$ lands in $\SMan_{\operatorname{sub}}$ follows from Part (4) of Proposition \ref{Prop: Obsrevations about free L manifolds}.} and where
   \[
   \Vbb(F) := \VVec(L \backslash (F \times M))
   \]
   is the category of (smooth) vector bundles over $L \backslash (F \times M)$ and where the functor
   \[
   \Vbb(f) := \overline{f}^{\ast}
   \]
   is given by pullback. Then the category $\Vbb_L(M) =: \VVec_L(M)$ describes the category of equivariant vector bundles on $M$.
   \item Let $\Scal$ be the pre-equivariant pseudofunctor
   \[
   \xymatrix{
\Sf(G)^{\op} \ar[rr]^-{\quo_{(-) \times X}^{\op}} \ar[drr]_-{\Scal} & & \Var_{/K}^{\op} \ar[d]^{\Sch_{/(-)}} \\
  & & \fCat
 }
   \]
   where 
   \[
   \Scal(\Gamma) := \Sch_{/\XGamma}
   \]
   and where
   \[
   \Scal(f) := \of^{\ast}.
   \]
   Then $\Scal_G(X)$ defines and describes the category of equivariant descent data in schemes over $X$. 
\item Similarly to the last example, if $\Mcal$ is the pre-equivariant pseudofunctor
   \[
   \xymatrix{
\FMan(L)^{\op} \ar[rr]^-{\quo_{(-) \times M}^{\op}} \ar[drr]_-{\Mcal} & & \SMan_{\operatorname{sub}}^{\op} \ar[d]^{\SMan\downarrow{(-)}} \\
  & & \fCat
 }
   \]
   where $\SMan_{\operatorname{sub}}$ is the category of smooth manifolds with submersions as morphisms,
   \[
   \Mcal(F) := \SMan\downarrow\left({(L \backslash (F \times M))}\right)
   \]
   is the category of smooth manifolds over $L \backslash (F \times M)$, and where
   \[
   \Mcal(f) := \overline{f}^{\ast}
   \]
   is given by pullback\footnote{Note that by Part (4) of Proposition \ref{Prop: Obsrevations about free L manifolds}, the map $\overline{f}$ is a submersion and so the pullback functor $\overline{f}^{\ast}$ exists.}. Then the category $\Mcal_{L}(M) = \SMan_{L}(M)$ describes the category of descent equivariant smooth manifolds over $M$.
\end{enumerate}
\end{example}
%\biggeoff{check the references and add the topological examples}
\begin{rmk}
Each of the categories $D_G(X),$ $\Dbeq{G}{X}$, $D_L(M),$ $D_L(M)^b$, $\DbeqQl{X}$, $\Per_G(X;\overline{\Q}_{\ell})$, $\Shv_G(X;\overline{\Q}_{\ell}),$ $\QCoh_G(X),$ $\VVec_L(M),$ $C^{\infty}\Shv_L(M)$ are additive categories. The categories $D_G(X),$ $D_L^b(M)$, $D_L(M)$, $\Dbeq{G}{X},$ and $\DbeqQl{X}$ are all triangulated categories with standard and perverse $t$-structures whose hearts are equivalent to the usual categories of equivariant ($\ell$-adic) sheaves and perverse sheaves; cf. \cite[Example 5.1.23, Corollaries 5.1.27, 5.1.28]{GeneralGeoffThesis}. The categories $\Per_G(X;\overline{\Q}_{\ell}), \Shv_G(X;\overline{\Q}_{\ell})$, and $\QCoh_G(X)$ are all Abelian categories; cf. \cite[Corollary 3.1.13]{GeneralGeoffThesis}. In general, to show an equivariant category $F_G(X)$ is enriched in models of a Lawvere theory, Abelian, additive, triangulated, etc., it suffices to show that each category $F(\Gamma)$ has the desired structure and that the fibre functors $F(f)$ all preserve this structure up to isomorphism; cf., for instance, \cite[Lemma 2.4.5, Proposition 3.1.12, Theorem 3.2.3, Proposition 3.2.7, Proposition 3.3.6, Theorem 5.1.10, Theorem 5.1.21]{GeneralGeoffThesis} --- these various results show that the categories $F_G(X)$ satisfy the flavour of structural theory discussed.
%%\bigdorette{Do these theorems establish that the elements construction above has the correct properties? Is there a more general overarching theorem? If so, then we should definitely mention this because we want to publish this in a category theory journal.}
\end{rmk}

\section{The Descent Equivariant Zariski Tangent Structure}\label{Section: Example of equivariant tangent structure on a variety}
In a recent paper, \cite{GeoffJS} have proved that the category $\Sch_{/S}$ admits a tangent structure for any base scheme $S$. The tangent functor $T\colon \Sch_{/S} \to \Sch_{/S}$ sends an $S$-scheme $X$ to the (Zariski) tangent fibre of $X$ relative to $S$ constructed by Grothendieck in \cite[Section 16.5]{EGA44}. In particular, for an $S$-scheme $X$ we have\footnote{The version of the tangent functor and maps constructed in \cite{GeoffJS} is given for affine schemes, but because the $\Sym$ functor commutes with tensors on $\QCoh(X)$ and because the sheaf of differentials $\Omega_{X/S}^1$ is a quasi-coherent sheaf, everything regarding this functor may be checked Zariski-locally, i.e., affine-locally on both the target and the base. We'll describe this more in detail later, but it is worth noting now.}
\[
T(X) := T_{X/S} = \Spec\left(\Sym(\Omega^1_{X/S})\right).
\]
For an affine scheme $S = \Spec A$ and an affine $S$-scheme $X = \Spec B$, the scheme $T_{X/S} = T_{B/A} = \Spec(\Sym(\Omega^1_{B/A}))$ is an affine scheme. The ring
\[
C = \Sym_B\left(\Omega_{B/A}^{1}\right)
\]
is generated by symbols $b, \mathrm{d}b$ for $b \in B$ generated by the rules that addition and multiplication for symbols from $b \in B$ are as in $B$ and the Leibniz rule
\[
\mathrm{d}(bb^{\prime}) = b^{\prime}\,\mathrm{d}(b) + b\,\mathrm{d}(b^{\prime})
\]
holds with $(\mathrm{d}b)\,(\mathrm{d}b^{\prime}) = 0$ and $\mathrm{d}(a) = 0$ for $a \in A$. With this definition we find that 
\[
T_2(X) = (T_{B/A})_{2} = \Spec\left(\Sym_B(\Omega_{B/A}^1) \otimes_B \Sym_B(\Omega_{B/A}^{1})\right)
\]
and that
\[
T^2(X) = T_{T_{B/A}/A} = \Sym_{\Sym_B(\Omega^1_{B/A})}\left(\Omega^{1}_{\Sym_B(\Omega_{B/A}^1)/B}\right).
\]
It is worth noting for what follows that the algebra $C$ with $T^2_{B/A} = \Spec C$,
\[
C = \Sym_{\Sym_B(\Omega_{B/A}^1)}\left(\Omega^1_{\Sym_B(\Omega_{B/A}^1)/B}\right),
\] 
is generated by symbols $b, \mathrm{d}b, \delta b,$ and $(\delta\mathrm{d})(b)$ for all $b \in B$. Essentially, there is a new derivational neighborhood $\delta$ of $\Sym(\Omega_{B/A}^1)$ which gives us a notion of $2$-jets and a distinct ``orthogonal'' direction of $1$-jets (the $\delta$-direction --- the idea is that $T^2$ roughly corresponds to the mixed partials $\partial^2/\partial x\partial y$ and $\partial^2/\partial y \partial x$ when both partials are regarded as nil-square operators).

\begin{Theorem}[\cite{GeoffJS}]\label{Thm: Zariski tangent structure}
For affine schemes $\Spec B \to \Spec A$ the tangent structure $(\Sch_{/\Spec A}, \mathbb{T})$ is generated by the maps and functors on affine schemes:
\begin{itemize}
	\item The tangent functor is given by $T_{\Spec B/\Spec A} = T_{B/A} = \Spec\left(\Sym_B(\Omega_{B/A}^1)\right)$.
	\item The bundle map $p_B\colon T_{B/A} \to \Spec B$ is the spectrum of the ring map
	\[
	q_B\colon B \to \Sym_B(\Omega_{B/A}^1)
	\]
	generated by $b \mapsto b$.
	\item The zero map $0_B\colon T_{B/A} \to \Spec B$ is the spectrum of the ring map
	\[
	\zeta_B\colon \Sym_B(\Omega_{B/A}^{1}) \to B
	\]
	given by $b \mapsto b, \mathrm{d}b \mapsto 0$. 
	\item The bundle addition map $+_B\colon (T_{B/A})_{2} \to T_{B/A}$ is the spectrum of the map
	\[
	\operatorname{add}_B\colon \Sym(\Omega^{1}_{B/A}) \to \Sym_B(\Omega^{1}_{B/A}) \otimes_B \Sym_B(\Omega^{1}_{B/A})
	\]
	given by $b \mapsto b \otimes 1_B, \mathrm{d}b \mapsto \mathrm{d}b \otimes 1 + 1 \otimes \mathrm{d}b$.
	\item The vertical lift $\ell_B\colon  T_{B/A} \to T^2_{B/A}$ is given as the spectrum of the ring map
	\[
	v_B\colon \Sym_{\Sym_B(\Omega^1_{B/A})}\left(\Omega^1_{\Sym_B(\Omega_{B/A}^1)/B}\right) \to \Sym_B\left(\Omega^1_{B/A}\right)
	\]
	generated by $b \mapsto b, \mathrm{d}b \mapsto 0, \delta b \mapsto 0$, $\delta\mathrm{d}(b) \mapsto \mathrm{d}b$.
	\item The canoncial flip is the map $c_B\colon T^2_{B/A} \to T^2_{B/A}$ generated as the spectrum of the ring map
	\[
	\gamma_B\colon \Sym_{\Sym_B(\Omega^1_{B/A})}\left(\Omega^1_{\Sym_B(\Omega_{B/A}^1)/B}\right) \to \Sym_{\Sym_B(\Omega^1_{B/A})}\left(\Omega^1_{\Sym_B(\Omega_{B/A}^1)/B}\right)
	\]
	which interchanges $1$-jets, i.e., the map is generated by $b \mapsto b, \mathrm{d}b \mapsto \delta b, \delta b \mapsto \mathrm{d}b$ and $(\delta\mathrm{d})b \mapsto (\delta\mathrm{d})b$.
\end{itemize}
\end{Theorem}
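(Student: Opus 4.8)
The plan is to reduce the statement to a purely ring-theoretic verification in $\mathbf{CAlg}_A^{\op}$ and then transport it to $\Sch_{/\Spec A}$ by Zariski descent. First I would observe that each functor and natural transformation in the statement is assembled from $\Omega^1_{(-)/A}$, the symmetric-algebra functor $\Sym$, and relative $\Spec$, all three of which are compatible with localization and with gluing of quasi-coherent sheaves of algebras: for a localization $B \to B_f$ one has $\Omega^1_{B_f/A} \cong (\Omega^1_{B/A})_f$, $\Sym$ commutes with base change, and $\underline{\Spec}$ of a quasi-coherent sheaf of $\Ocal_X$-algebras is representable and functorial. Hence it suffices to define $\mathbb{T}$ on $\mathbf{CAlg}_A^{\op}$ by the displayed formulas, check the axioms of Definition \ref{Defn: Tangent Category} there, and then glue the resulting affine-local data over an arbitrary $S$-scheme $X$ by covering $X$ with affine opens; independence of the glued data from the chosen cover is exactly the compatibility just noted.

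Next I would make the functoriality and the natural transformations conceptual rather than computed by hand, by identifying $T_{B/A} = \Spec \Sym_B(\Omega^1_{B/A})$ as the object representing the functor $\Spec C \mapsto \Hom_{\mathbf{CAlg}_A}(B, C[\epsilon]/(\epsilon^2))$, which by the universal properties of $\Omega^1$ and $\Sym$ is the ``space of tangent vectors'' $\{(\phi,\partial) : \phi \in \Hom_{\mathbf{CAlg}_A}(B,C),\ \partial \in \operatorname{Der}_A(B, C_\phi)\}$. Yoneda then gives $T$ as a functor and produces $p$ (forget $\partial$), $0$ (take $\partial = 0$), and $+$ (add derivations), while the iterated description of $T^2_{B/A}$ representing $A$-maps $B \to C[\epsilon_1][\epsilon_2]/(\epsilon_1^2,\epsilon_2^2)$ produces $\ell$ (from $\epsilon_1 \mapsto \epsilon_1\epsilon_2$) and $c$ (swap $\epsilon_1 \leftrightarrow \epsilon_2$); one checks these coincide with the explicit ring maps $q_B,\zeta_B,\operatorname{add}_B,v_B,\gamma_B$ by evaluating on the generators $b,\mathrm{d}b,\delta b,(\delta\mathrm{d})b$. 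With this in hand the axioms split into two kinds. The pullback clause of Axiom 1 holds because $p_B\colon T_{B/A}\to\Spec B$ is affine — affine-locally $T_nX = \Spec(\Sym_B(\Omega^1_{B/A})^{\otimes_B n})$ — so all pullback powers exist, and their preservation by $T^m$ is the statement that $\Sym$ and $\otimes_B$ commute suitably, a direct computation. The additive-bundle clause (Axiom 2) is the co-commutative, co-associative bialgebra structure on $\Sym_B(\Omega^1_{B/A})$ induced by the diagonal $\Omega^1 \to \Omega^1 \oplus \Omega^1$. Axioms 3, 4, and 5 — that $(\ell_B,0_B)$ and $(c_B,\id)$ are bundle morphisms and that $c^2 = \id$, $c\circ\ell = \ell$, and the three functor-level coherence diagrams hold — are then finite mechanical checks obtained by chasing $b, \mathrm{d}b, \delta b, (\delta\mathrm{d})b$ through $\gamma_B$ and $v_B$.

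The step I expect to be the main obstacle is Axiom 6, the universality of the vertical lift: for every affine $B$ one must show
\[
(T_{B/A})_2 \xrightarrow{(T\ast +)\circ\langle\ell\circ\pi_1,\ 0_{TB}\circ\pi_2\rangle} T^2_{B/A} \rightrightarrows T_{B/A}
\]
is an equalizer in $\Sch_{/\Spec A}$, equivalently that the dual diagram of $A$-algebras exhibits $\Sym_B(\Omega^1_{B/A})\otimes_B\Sym_B(\Omega^1_{B/A})$ as the coequalizer of $\Sym_{\Sym_B(\Omega^1_{B/A})}(\Omega^1_{\Sym_B(\Omega^1_{B/A})/B}) \rightleftarrows \Sym_B(\Omega^1_{B/A})$. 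The argument I would give analyzes $T^2_{B/A}$ through its generators: the two composites $T(p_B)$ and $0_B\circ p_B\circ p_{TB}$ are the maps killing, respectively, the mixed directions and all but the base directions, and an element of $T^2_{B/A}$ equalizes them precisely when it is linear in the $(\delta\mathrm d)$-direction with no pure $\delta$- or $\mathrm d$-terms — which is exactly the image of $(T_{B/A})_2$ under the displayed map. Making this precise requires an explicit description of $\Omega^1_{\Sym_B(\Omega^1_{B/A})/B}$ as a $\Sym_B(\Omega^1_{B/A})$-module (free on the symbols $\delta\mathrm d b$ modulo the Leibniz relations coming from $B$) and a direct check that the resulting short sequence of modules, hence of algebras, is exact; locality again reduces the general case to this. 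Assembling the affine verifications and invoking the gluing of the first paragraph then yields the theorem for arbitrary $A$ and arbitrary base scheme $S$.
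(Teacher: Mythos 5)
First, a contextual note: the paper does not actually prove Theorem \ref{Thm: Zariski tangent structure} --- it imports it wholesale from \cite{GeoffJS} --- so there is no in-paper argument to compare against. That said, your overall strategy (verify everything in $\mathbf{CAlg}_A^{\op}$, glue by quasi-coherence of $\Omega^1$ and $\Sym$, and derive the functoriality and the structure maps from the representability of $T_{B/A}$ by the Weil algebra $A[\epsilon]/(\epsilon^2)$) is the natural one and is consistent with the remarks the paper itself makes after Definition \ref{Defn: Zariski tangent structure on Sch over X} about affine-local gluing and the adjunction $(-)\times_S W_S \dashv T_{(-)/S}$.

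There is, however, a concrete error in your treatment of Axiom 6, which you correctly single out as the crux. The equalizer of $T(p_B)$ and $0_B \circ p_B \circ p_{T_{B/A}}$ is \emph{not} the locus of points of $T^2_{B/A}$ ``linear in the $(\delta\mathrm{d})$-direction with no pure $\delta$- or $\mathrm{d}$-terms.'' In the Weil-algebra picture a $C$-point of $T^2_{B/A}$ is an $A$-algebra map $\phi + \epsilon_1\partial_1 + \epsilon_2\partial_2 + \epsilon_1\epsilon_2\partial_{12}\colon B \to C[\epsilon_1,\epsilon_2]/(\epsilon_1^2,\epsilon_2^2)$; the map $T(p_B)$ extracts $\phi + \epsilon_2\partial_2$ while $0_B\circ p_B\circ p_{T_{B/A}}$ extracts $\phi$, so the equalizer is the locus $\partial_2 = 0$ \emph{only}: the other first-order component $\partial_1$ survives unconstrained, and together with $\partial_{12}$ (which becomes a genuine derivation exactly when $\partial_2=0$) it supplies the two derivations' worth of data identifying the equalizer with $(T_{B/A})_2$. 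Your condition kills both $\partial_1$ and $\partial_2$ and so describes a subobject isomorphic to $T_{B/A}$, which is too small; it is also inconsistent with your own description of the comparison map, since the leg $0_{T_{B/A}}\circ\pi_2$ contributes a nonzero pure first-order term to the image. The fix is routine, but as written the verification of the universality of the lift does not go through. Two smaller points: your ring-theoretic coequalizer should have its parallel pair running from $\Sym_B(\Omega^1_{B/A})$ \emph{into} the coordinate ring of $T^2_{B/A}$ (the coequalizer is a quotient of the latter, not of $\Sym_B(\Omega^1_{B/A})$), and the claim that $T^m$ preserves the pullback powers $T_n$ requires the decomposition of $\Omega^1_{(D\otimes_B D)/A}$ via the cotangent sequences rather than merely ``$\Sym$ and $\otimes_B$ commuting,'' though that part is standard.
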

\begin{rmk}
In order to reduce notational complexity, we will omit the subscripts on the $\Sym$ functors if no confusion is likely to arise from said omission.
\end{rmk}

\begin{dfn}\label{Defn: Zariski tangent structure on Sch over X}
We define the Zariski tangent structure on a scheme $S$ to be the tangent structure $\Tbb_{\Zar{S}} = (T_{-/S}, p, 0, +, \ell, c)$ on the category $\Sch_{/S}$ described by Theorem \ref{Thm: Zariski tangent structure} when $S$ is affine and the one induced by quasi-coherent gluing of the affine tangent structures when $S$ is nonaffine.
\end{dfn}

This is, in some sense, the ``canonical'' tangent structure on $\Sch_{/S}$, as the tangent scheme $T_{X/S}$ captures $S$-derivations of $X$ in the following sense: If $S = \Spec K$ for a field $K$ and if $\Spec A$ is an affine $K$-scheme, then the $A$-points of the tangent scheme $T_{X/S}(A)$ satisfy
\[
T_{X/S}(A) = \Sch_{/K}(\Spec A,T_{X/S}) \cong \Sch_{/K}\left(\Spec\left(\frac{A[x]}{(x^2)}\right), X\right)
\]
so in particular $K$-points of $T_{X/S}$ give the $1$-differentials $\Omega_{X/K}^1$. Moreover, for any closed point $x \in \lvert X \rvert$ we have a canonical isomorphism
\[
T_{X/S}(x) \cong \KAlg\left(\frac{\mfrak_{x}}{\mfrak^{2}_{x}},K\right)
\]
of $T_{X/S}(x)$ with the Zariski tangent space of $X$ over $K$.
%\biggeoff{Below is new. Should we omit it? I think it's neat and may help our readers understand this functor}
\begin{rmk}
The tangent functor $T_{(-)/S}:\Sch_{/S} \to \Sch_{/S}$ is a representable tangent functor. If $W_S$ is the object
\[
W_S := S \times_{\Spec \Z} \Spec\frac{\Z[x]}{(x^2)} = S \times_{\Spec \Z} \Z[\epsilon]
\]
then there is an adjunction $(-) \times_S W_S \dashv T_{(-)/S}$\footnote{When $S = \Spec A$ is an affine scheme, $W_S \cong \Spec A[\epsilon] \cong \Spec A[x]/(x^2)$. By the algebraic geometry pullback yoga, we can compute the pullback $X \times_S S[\epsilon]$ affine-locally by the pullbacks $X_j \times_{A_i} \Spec A_i[\epsilon]$ for $A_i$ an affine open of $S$ and $X_j$ an affine open of $X$ and then glue them along the corresponding open immersions (this is precisely the content, and method of proof, of \cite[Theorem II.3.3]{Hartshorne}). Each of the functors $X_j \times_{A_i} \Spec A_i[\epsilon]$ admits the right adjoints $T_{X_j/A_i}$ which in turn may be glued along the adjoint transposes to not only give $T_{X/S}$ but also construct the unit and counit morphisms.}. When $S = \Spec R$ is an affine scheme and we have $X = \Spec A$ and $Y = \Spec B$ affine $S$-schemes, we have isomorphisms
\begin{align*}
\Sch_{/S}(Y,T_{X/S}) &\cong \Sch_{/S}(Y, [W_S,X]) \cong \Sch_{/S}(Y \times_S W_S, X) \\
&\cong \Sch_{/S}(\Spec B \times_{R} \Spec R[\epsilon], \Spec A) \cong \Sch_{/S}\left(\Spec(B \otimes_R R[\epsilon]), \Spec A\right) \\
&\cong \Sch_{/S}\left(\Spec B[\epsilon], \Spec A\right) 
 \cong \mathbf{CAlg}_{R}(A, B[\epsilon]) \\
 &\cong \operatorname{Der}_R(A, B) \cong \RMod(\Omega^{1}_{A/R},B),
\end{align*}
so the tangent scheme functor may be thought of as a way of geometrizing the story of algebraic derivations.
\end{rmk}

With this tangent construction we now want to establish, for a smooth algebraic group $G$ and a left $G$-variety $X$, that there is an equivariant Zariski tangent structure on $X$; i.e., that the category of descent equivariant schemes on $X$ is a tangent category. In light of Theorem \ref{Thm: Pre-Equivariant Tangent Category} it suffices to prove that the pre-equivariant pseudofunctor 
\[
\begin{tikzcd}
\Sf(G)^{\op} \ar[rr]{}{\quo_{(-) \times X}} \ar[drr, swap]{}{F} & & \Var_{/K}^{\op} \ar[d]{}{\Sch_{/(-)}} \\
 & & \fCat
\end{tikzcd}
\]
defined by
\[
F(\Gamma) := \Sch_{/\XGamma}
\]
on objects and via the pullback functors
\[
F(f) := \overline{f}^{\ast}\colon \Sch_{/\XGammap} \to \Sch_{/\XGamma}
\]
on morphisms is a tangent pre-equivariant indexing functor on $X$. In particular, this involves showing four main ingredients:
\begin{enumerate}
	\item The pullback functors $\of^{\ast}$ preserve tangents in the sense that for any $Z \in \Sch_{/\XGammap}$, if $Z^{\prime} := Z \times_{\XGammap} \XGamma$ then there is an isomorphism
	\[
	T_{Z^{\prime}/\XGamma} \cong T_{Z/\XGammap} \times_{\XGammap} \XGamma;
	\]
	\item The pullback functors $\of^{\ast}$ preserve all tangent pullbacks/limits that arise in $\Sch_{/\XGammap}$.
	\item For any $f\colon \Gamma \to \Gamma^{\prime} \in \Sf(G)_0$ there is a natural isomorphism $\!\quot{T}{f}$
	\[
	\begin{tikzcd}
	\Sch_{/\XGammap} \ar[rr, bend left = 30, ""{name = U}]{}{\of^{\ast} \circ T_{-/\XGammap}} \ar[rr, bend right = 30, swap, ""{name = L}]{}{T_{-/\XGamma} \circ \of^{\ast}} & & \Sch_{/\XGamma} \ar[from = U, to = L, Rightarrow, shorten <= 4pt, shorten >= 4pt]{}{\!\quot{T}{f}}
	\end{tikzcd}
	\]
	for which the pair $(\of^{\ast},\!\quot{T}{f})$ is a strong tangent morphism.
	\item The $(T_{-/\XGamma},\!\quot{T}{f}^{-1})$ constitute a pseudonatural transformation $T\colon F \Rightarrow F$.
\end{enumerate}
We will now endeavour to show that these all hold in turn. However, we will do this at a slightly more general level: we will show that the pullback functors are always strong tangent morphisms for any morphism of schemes, as this significantly makes the theory easier to establish and follow notationally. We begin with the observation that the pullback functors do indeed preserve the tangent functors and all tangent limits that arise.
\begin{prop}\label{Prop: Pullback functor of schemes preserves tangent functor}
For any morphism $f\colon X \to Y$ of schemes and for any $Y$-scheme $Z$, there is a canonical natural isomorphism of schemes
\[
T_{f^{\ast}Z/X} = T_{Z \times_Y X/X} \cong T_{Z/Y} \times_Y X.
\]
\end{prop}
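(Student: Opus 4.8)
The plan is to reduce everything to the affine situation and to the well-known base-change formula for the module of Kähler differentials. Recall that for a morphism of schemes $f\colon X \to Y$ and a $Y$-scheme $g\colon Z \to Y$, the tangent scheme relative to the base is $T_{Z/Y} = \Spec_Z\bigl(\Sym_{\Ocal_Z}(\Omega^1_{Z/Y})\bigr)$, and similarly $T_{f^{\ast}Z/X} = \Spec_{Z'}\bigl(\Sym_{\Ocal_{Z'}}(\Omega^1_{Z'/X})\bigr)$ where $Z' := Z \times_Y X$. The two classical ingredients I would invoke are: (i) the base-change isomorphism for differentials, namely that for the Cartesian square with $Z' = Z \times_Y X$ and projections $\mathrm{pr}_Z\colon Z' \to Z$, $\mathrm{pr}_X\colon Z' \to X$, there is a canonical isomorphism of $\Ocal_{Z'}$-modules $\Omega^1_{Z'/X} \cong \mathrm{pr}_Z^{\ast}\Omega^1_{Z/Y}$ (this is EGA IV, 16.4.5, or Hartshorne II.8.10); and (ii) the compatibility of $\Sym$ with pullback of quasi-coherent sheaves, i.e. $\Sym_{\Ocal_{Z'}}(\mathrm{pr}_Z^{\ast}\Fcal) \cong \mathrm{pr}_Z^{\ast}\Sym_{\Ocal_Z}(\Fcal)$ for any quasi-coherent $\Fcal$ on $Z$, together with the fact that relative $\Spec$ commutes with base change, $\Spec_{Z'}(\mathrm{pr}_Z^{\ast}\Acal) \cong \Spec_Z(\Acal) \times_Z Z'$ for a quasi-coherent $\Ocal_Z$-algebra $\Acal$.

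Concretely, first I would reduce to the affine-local case: since all the constructions involved ($\Omega^1_{-/-}$, $\Sym$, relative $\Spec$, fibre products) are Zariski-local on source and base and are compatible with restriction to opens, it suffices to establish the isomorphism when $Y = \Spec A$, $X = \Spec R$, $Z = \Spec B$ are affine, with $f$ given by $A \to R$ and $g$ by $A \to B$. Then $Z' = \Spec(B \otimes_A R)$, and the claim becomes the ring-level assertion
\[
\Sym_{B \otimes_A R}\bigl(\Omega^1_{(B\otimes_A R)/R}\bigr) \;\cong\; \Sym_B\bigl(\Omega^1_{B/A}\bigr) \otimes_B (B \otimes_A R) \;\cong\; \Sym_B\bigl(\Omega^1_{B/A}\bigr) \otimes_A R,
\]
which follows from the standard isomorphism $\Omega^1_{(B \otimes_A R)/R} \cong \Omega^1_{B/A} \otimes_B (B \otimes_A R)$ and the base-change property of $\Sym$. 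Taking $\Spec$ of both sides and using that $\Spec$ turns tensor products of algebras into fibre products of schemes yields $T_{Z'/X} \cong T_{Z/Y} \times_Y X$ as $X$-schemes. Then I would glue: choosing affine covers of $Y$, of $X$ lying over them, and of $Z$, the local isomorphisms are patched using the canonical (hence compatible on overlaps) nature of the differentials base-change map, so they descend to a global isomorphism of schemes over $X$; this is exactly the kind of quasi-coherent gluing argument already used in Definition~\ref{Defn: Zariski tangent structure on Sch over X}.

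The naturality statement — that the isomorphism is a \emph{natural} isomorphism of functors $\Sch_{/Y} \to \Sch_{/X}$, i.e. $f^{\ast}\circ T_{-/Y} \cong T_{-/X}\circ f^{\ast}$ — then follows by checking compatibility with morphisms $Z_1 \to Z_2$ of $Y$-schemes, which again reduces affine-locally to functoriality of the differentials base-change isomorphism in $B$; this is a routine diagram chase that I would not spell out in full. I do not expect a serious obstacle here: the main (mild) annoyance will be bookkeeping in the gluing/naturality step, making sure the canonical local isomorphisms are genuinely canonical so that cocycle conditions on triple overlaps hold automatically, and keeping track of which $\Spec$ (relative vs.\ absolute) and which base ring the $\Sym$ is taken over. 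Everything substantive is already contained in EGA IV §16.4–16.5, so the proof is essentially an invocation of base change for $\Omega^1$ plus the interplay of $\Sym$ and relative $\Spec$ with pullback.
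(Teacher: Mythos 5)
Your proposal is correct and is essentially the argument underlying the paper's proof, which simply cites \cite[Equation IV.16.5.12.2]{EGA44}: that EGA statement is proved exactly by base change for $\Omega^1$ combined with the compatibility of $\Sym$ and relative $\Spec$ with pullback, and your affine-local reduction and explicit ring isomorphism reappear verbatim in the paper's Corollary \ref{Cor: Affine scheme tangent iso} and Lemma \ref{Lemma: Do tangent morphism stuff affine locally}. So you have unpacked the citation rather than taken a different route.
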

\begin{proof}
This is \cite[Equation IV.16.5.12.2]{EGA44}.
\end{proof}
In what follows when given a scheme map $f\colon X \to Y$ we define the natural isomorphism
\[
\!\quot{T}{f}\colon f^{\ast} \circ T_{-/Y} \xRightarrow{\cong} T_{-/X} \circ f^{\ast}
\]
by setting each $\!\quot{T}{f}_Z$, for $Z \to Y$ a $Y$-scheme, to be the isomorphism
\[
(f^{\ast} \circ T_{-/Y})(Z) = T_{Z/Y} \times_Y X \xrightarrow[\!\quot{T}{f}_{Z}]{\cong} T_{f^{\ast}Z/X} = (T_{-/X} \circ f^{\ast})(Z)
\]
described in Proposition \ref{Prop: Pullback functor of schemes preserves tangent functor}.
\begin{cor}\label{Cor: Affine scheme tangent iso}
	For a morphism $f\colon X \to Y$ of affine schemes and any affine $Y$-scheme $Z$, if $X \cong \Spec A$, $Y \cong \Spec C$, and $Z \cong \Spec B$ then the isomorphism $\!\quot{T_Z}{f}$ is induced by the ring isomorphism $\theta_B$
	\[
	\Sym\left(\Omega^1_{A \otimes_C B/A}\right) \xrightarrow[\theta_B]{\cong} \Sym\left(\Omega^1_{B/C}\right) \otimes_C A
	\]
	given on pure tensors by
	\[
	b \otimes a \mapsto b \otimes a, \mathrm{d}(b \otimes a) \mapsto \mathrm{d}b \otimes a.
	\]
\end{cor}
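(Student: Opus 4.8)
The plan is to translate the statement into commutative algebra and then unwind the definition of the scheme isomorphism supplied by Proposition \ref{Prop: Pullback functor of schemes preserves tangent functor}. Writing $X \cong \Spec A$, $Y \cong \Spec C$, $Z \cong \Spec B$, so that $f$ corresponds to a ring map $C \to A$ and the structure morphism $Z \to Y$ to a ring map $C \to B$, we have $f^{\ast}Z = Z \times_Y X \cong \Spec(B \otimes_C A)$. First I would record, using Theorem \ref{Thm: Zariski tangent structure} together with the fact that $\Spec$ turns the base change $(-) \otimes_C A$ into the fibre product $(-) \times_{\Spec C} \Spec A$, the identifications
\[
T_{f^{\ast}Z/X} \cong \Spec\left(\Sym_{B \otimes_C A}\left(\Omega^1_{B \otimes_C A/A}\right)\right), \qquad T_{Z/Y} \times_Y X \cong \Spec\left(\Sym_B\left(\Omega^1_{B/C}\right) \otimes_C A\right).
\]
So it suffices to exhibit a ring isomorphism $\theta_B$ between the two coordinate rings with the stated effect on the generators $b \otimes a$ and $\mathrm{d}(b \otimes a)$, and then to check that $\Spec \theta_B$ is the morphism $\!\quot{T_Z}{f}$ defined immediately after Proposition \ref{Prop: Pullback functor of schemes preserves tangent functor}.

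The key algebraic input I would invoke is the standard base-change isomorphism for K\"ahler differentials, which yields a canonical isomorphism of $(B \otimes_C A)$-modules
\[
\Omega^1_{B \otimes_C A/A} \xrightarrow{\ \cong\ } \Omega^1_{B/C} \otimes_B (B \otimes_C A) \cong \Omega^1_{B/C} \otimes_C A
\]
sending $\mathrm{d}(b \otimes a) \mapsto \mathrm{d}b \otimes a$; here one uses that $\mathrm{d}(b \otimes a) = (1 \otimes a)\,\mathrm{d}(b \otimes 1)$ in $\Omega^1_{B \otimes_C A/A}$, since $1 \otimes a$ is the image of $a \in A$ and the differentials are taken relative to $A$. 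Applying the symmetric-algebra functor and using that it commutes with base change of modules, i.e.\ $\Sym_{R'}(M \otimes_R R') \cong \Sym_R(M) \otimes_R R'$ for any $R \to R'$, I would obtain the desired ring isomorphism
\[
\theta_B\colon \Sym_{B \otimes_C A}\left(\Omega^1_{B \otimes_C A/A}\right) \xrightarrow{\ \cong\ } \Sym_B\left(\Omega^1_{B/C}\right) \otimes_C A,
\]
which on generators reads $b \otimes a \mapsto b \otimes a$ and $\mathrm{d}(b \otimes a) \mapsto \mathrm{d}b \otimes a$. Because $\mathrm{d}$ is additive and every element of $B \otimes_C A$ is a finite sum of pure tensors, this description determines $\theta_B$ uniquely, which is why it is enough to specify it on pure tensors.

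The step I expect to be the main obstacle is the final identification of $\Spec \theta_B$ with $\!\quot{T_Z}{f}$, i.e.\ verifying that the abstractly defined isomorphism of Proposition \ref{Prop: Pullback functor of schemes preserves tangent functor} genuinely is $\Spec$ of the above map on symmetric algebras. This is in the end bookkeeping: the cited equation \cite[IV.16.5.12.2]{EGA44} is itself derived from the base-change map for $\Omega^1$, so the content is to confirm that the normalization of the relative tangent scheme used in Theorem \ref{Thm: Zariski tangent structure} agrees with Grothendieck's and that no transpose or sign is introduced in passing through $\Sym$ and $\Spec$. Along the way I would also note that $\theta_B$ is natural in $Z$, which follows from naturality of the base-change map for $\Omega^1$ and functoriality of $\Sym$, so that the affine-local description is compatible with the natural transformation $\!\quot{T}{f}$; since only the affine case is asserted here, no gluing argument is needed.
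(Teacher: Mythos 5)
Your proposal is correct and is essentially the paper's own argument: the paper gives no separate proof of this corollary, treating it as the immediate affine-local unwinding of Proposition \ref{Prop: Pullback functor of schemes preserves tangent functor} (i.e.\ of \cite[IV.16.5.12.2]{EGA44}), and your derivation via base change for K\"ahler differentials ($\Omega^1_{(B\otimes_C A)/A}\cong \Omega^1_{B/C}\otimes_C A$, using $\mathrm{d}(1\otimes a)=0$) followed by commutation of $\Sym$ with base change is exactly the standard proof underlying that citation, with the correct effect $b\otimes a\mapsto b\otimes a$, $\mathrm{d}(b\otimes a)\mapsto \mathrm{d}b\otimes a$ on generators and the correct direction relative to the convention for $\!\quot{T_Z}{f}$ fixed after that proposition. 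Your closing remarks on uniqueness from the generators and on naturality in $Z$ are consistent with how the paper later uses $\theta_B$ in Lemmas \ref{Lemma: Pullback functor commutes with bundle map} through \ref{Lemma: Pullback commutes with canonical flip}.
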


\begin{lem}\label{Lemma: Pullback functor of schemes preserves tangent limits}
For any morphism $f\colon X \to Y$ of schemes, if $Z$ is a limit in $\Sch_{/Y}$ then $f^{\ast}Z$ is a limit in $\Sch_{/X}$. In particular, $f^{\ast}$ preserves all tangent pullbacks and equalizers.
\end{lem}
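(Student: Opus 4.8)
The plan is to reduce the claim to the well-known fact that pullback functors between slice categories are right adjoints. The key observation is that for any morphism $f\colon X \to Y$ of schemes, the pullback functor $f^{\ast}\colon \Sch_{/Y} \to \Sch_{/X}$ sending $Z \mapsto Z \times_Y X$ is right adjoint to the composition functor $(-)\circ f\colon \Sch_{/X} \to \Sch_{/Y}$ (this is the standard base-change adjunction, valid because fibre products exist in $\Sch$). Right adjoints preserve all limits that exist in their domain, so $f^{\ast}$ preserves all limits in $\Sch_{/Y}$; in particular it preserves any pullbacks and equalizers, whether or not they are among the ``tangent'' limits.

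First I would state this adjunction explicitly and invoke preservation of limits by right adjoints. Second, I would note that the tangent pullbacks and equalizers referenced in the statement are precisely certain limits in $\Sch_{/Y}$ (the pullback powers $T_nZ$ from Definition \ref{Defn: Tangent Category}, the equalizer expressing universality of the vertical lift, and the pullbacks $T^2Z \times_{TZ} T^2Z$ appearing in the bundle-morphism axioms for $\ell$ and $c$), so they fall under the general statement. Third, for the phrase ``$f^{\ast}Z$ is a limit in $\Sch_{/X}$'' to make sense one should observe that $f^{\ast}$ applied to a limit cone again yields a limit cone over the composite diagram $f^{\ast} \circ d$, where $d$ is the diagram in $\Sch_{/Y}$; this is just the content of limit-preservation. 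I would also remark that when combined with Proposition \ref{Prop: Pullback functor of schemes preserves tangent functor}, which identifies $T_{f^{\ast}Z/X}$ with $f^{\ast}T_{Z/Y}$, this gives that $f^{\ast}$ carries the entire tangent-limit bookkeeping of $\Sch_{/Y}$ to that of $\Sch_{/X}$ compatibly.

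There is essentially no obstacle here: the only mild subtlety is making sure that the adjunction $(-)\circ f \dashv f^{\ast}$ is correctly set up at the level of slice categories (the unit and counit coming from the universal property of the fibre product), and that one is careful about which variance the diagram $d\colon I \to \Sch_{/Y}$ has. Since the paper has already cited \cite[Theorem II.3.3]{Hartshorne} for the existence of fibre products and used the base-change adjunction implicitly in the remark following Definition \ref{Defn: Zariski tangent structure on Sch over X}, I would simply cite that discussion. The proof should be two or three sentences: $f^{\ast}$ is a right adjoint (to $(-)\circ f$), hence preserves all limits; the tangent pullbacks and equalizers of $\Tbb_{\Zar{Y}}$ are such limits; therefore $f^{\ast}$ preserves them.

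\begin{proof}
The pullback functor $f^{\ast}\colon \Sch_{/Y} \to \Sch_{/X}$, $Z \mapsto Z \times_Y X$, is right adjoint to the post-composition functor $(-) \circ f\colon \Sch_{/X} \to \Sch_{/Y}$; the unit and counit are supplied by the universal property of the fibre product, and the existence of all fibre products of schemes is \cite[Theorem II.3.3]{Hartshorne}. As a right adjoint, $f^{\ast}$ preserves all limits that exist in $\Sch_{/Y}$: if $d\colon I \to \Sch_{/Y}$ is a diagram with limit $Z = \lim d$, then $f^{\ast}Z$ together with the images under $f^{\ast}$ of the limit projections is a limit of $f^{\ast} \circ d$ in $\Sch_{/X}$. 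In particular, all pullbacks and equalizers appearing in the tangent structure $\Tbb_{\Zar{Y}}$ --- the pullback powers $T_nZ$ of $p_Z$, the pullbacks $T^2Z \times_{TZ} T^2Z$ of Definition \ref{Defn: Tangent Category}, and the equalizer expressing the universality of the vertical lift --- are limits in $\Sch_{/Y}$ and are therefore carried by $f^{\ast}$ to the corresponding limits in $\Sch_{/X}$.
\end{proof}
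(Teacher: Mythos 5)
Your proof is correct, and it reaches the same conclusion as the paper by a slightly different (and arguably tighter) justification. The paper disposes of this lemma in one line: ``the functor $f^{\ast}Z = Z \times_Y X$ is a pullback and hence commutes with all limits,'' i.e.\ it appeals to the fact that limits commute with limits. You instead invoke the base-change adjunction $\Sigma_f \dashv f^{\ast}$ and the preservation of limits by right adjoints. Both are standard and both work; your route has the advantage of being a precise categorical statement rather than the paper's somewhat informal ``a functor defined by a pullback commutes with limits,'' and it makes explicit that the limits being preserved are limits \emph{in the slice category} $\Sch_{/Y}$, which is the relevant category for the tangent structure $\Tbb_{\Zar{Y}}$. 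One trivial notational point: the left adjoint is post-composition with $f$ (the dependent sum $\Sigma_f$ sending $g\colon W \to X$ to $f \circ g\colon W \to Y$), so writing it as $(-)\circ f$ is backwards as literal notation, though your meaning is unambiguous. Your closing observation that this combines with Proposition \ref{Prop: Pullback functor of schemes preserves tangent functor} to transport the whole tangent-limit bookkeeping is exactly how the paper uses the lemma downstream in Proposition \ref{Prop: Pullback functor is strong tangent morphism}.
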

\begin{proof}
This is immediate from the fact that the functor $f^{\ast}Z = Z \times_Y X$ is a pullback and hence commutes with all limits.
\end{proof}

With Proposition \ref{Prop: Pullback functor of schemes preserves tangent functor} and Lemma \ref{Lemma: Pullback functor of schemes preserves tangent limits}, in order to prove that $(f^{\ast},\!\quot{T}{f})$ is a strong tangent morphism it suffices to prove that the five commutative diagrams displayed in Definition \ref{Defn: Tangent Morphism} actually commute. We will do this by proving six technical lemmas below --- one for being able to check things affine locally and then one for each diagram of Definition \ref{Defn: Tangent Morphism} --- before concluding in Proposition \ref{Prop: Pullback functor is strong tangent morphism} that $(f^{\ast},\!\quot{T}{f})$ is always a strong tangent morphism.

Our first technical lemma establishes that for any morphism of schemes $f\colon X \to Y$ and for any $Y$-scheme $Z$, checking any of the commuting diagrams in Definition \ref{Defn: Tangent Morphism} may be done affine locally on the base and target. In particular, it suffices in all situations to establish the tangent morphism diagrams for affine schemes.
\begin{lem}\label{Lemma: Do tangent morphism stuff affine locally}
To establish that $(f^{\ast},\!\quot{T}{f})$ is a tangent morphism it suffices to assume that $f\colon X \to Y$ is a morphism $f\colon \Spec B \to \Spec A$ between affine schemes and establish the diagrams of Definition \ref{Defn: Tangent Morphism} for affine schemes over $\Spec A$.
\end{lem}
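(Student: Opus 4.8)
The plan is to reduce the verification of each of the five coherence diagrams in Definition \ref{Defn: Tangent Morphism} to an affine-local calculation, using the quasi-coherent nature of all the sheaves and functors involved. The key observation is threefold: first, that the tangent functor $T_{(-)/S}$ is constructed from $\Sym$ applied to the quasi-coherent sheaf $\Omega^1_{(-)/S}$, and these operations commute with Zariski localization on both the scheme and the base; second, that the pullback functor $f^{\ast} = (-) \times_Y X$ is likewise computed affine-locally by \cite[Theorem II.3.3]{Hartshorne}; and third, that the natural isomorphism $\!\quot{T}{f}$ of Proposition \ref{Prop: Pullback functor of schemes preserves tangent functor} is itself the gluing of the affine isomorphisms $\theta_B$ described in Corollary \ref{Cor: Affine scheme tangent iso}.

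First I would recall that any morphism $f\colon X \to Y$ of schemes admits, for any chosen affine open $\Spec A \subseteq Y$ and any affine open $\Spec B \subseteq f^{-1}(\Spec A)$, a restriction to a morphism $\Spec B \to \Spec A$ of affine schemes, and that the affine opens $\Spec B$ obtained this way (as $\Spec A$ ranges over an affine cover of $Y$) cover $X$. Next I would note that a diagram of morphisms of $X$-schemes commutes if and only if its restriction to each member of an affine open cover of $X$ commutes — this is because morphisms of schemes are determined by their restrictions to an open cover, and likewise equality of morphisms may be checked on an open cover. Then I would observe that each of the functors $T_{(-)/Y}$, $T_{(-)/X}$, $T^2$, $T_2$, $f^{\ast}$, the bundle/zero/addition/lift/flip transformations, and the isomorphism $\!\quot{T}{f}$, when applied to a $Y$-scheme $Z$ and then restricted to an affine open $\Spec B \subseteq X$ lying over an affine open $\Spec A \subseteq Y$, is computed by the corresponding affine construction applied to $Z \times_Y \Spec A$ restricted to an affine open over $\Spec A$; this is exactly the content of Theorem \ref{Thm: Zariski tangent structure}, Definition \ref{Defn: Zariski tangent structure on Sch over X}, and Corollary \ref{Cor: Affine scheme tangent iso} together with the compatibility of $\Spec$, $\Sym$, $\Omega^1$, and $\otimes$ with localization. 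Putting these together, the restriction of any of the five diagrams of Definition \ref{Defn: Tangent Morphism} for $(f^{\ast},\!\quot{T}{f})$ to such an affine open $\Spec B \subseteq X$ is precisely the corresponding diagram for the affine morphism $\Spec B \to \Spec A$; hence if those affine diagrams all commute, so do the original ones.

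The main obstacle I anticipate is purely bookkeeping: one must be careful that the affine opens of $X$ arising as $\Spec B \subseteq f^{-1}(\Spec A)$ genuinely cover $X$ and genuinely carry the restricted tangent and pullback data compatibly — i.e., that no subtlety of gluing (cocycle conditions on overlaps) obstructs the reduction. This is handled by the standard fact that all the constructions in question ($T$, $T^2$, $T_2$, $f^{\ast}$, and the structure transformations) are defined precisely by quasi-coherent gluing, so their restrictions are automatically compatible on overlaps, and the overlaps contribute no new conditions beyond the affine ones. I would close by remarking that, combined with Proposition \ref{Prop: Pullback functor of schemes preserves tangent functor} (which supplies $\!\quot{T}{f}$ and shows $f^{\ast}$ preserves $T$) and Lemma \ref{Lemma: Pullback functor of schemes preserves tangent limits} (which shows $f^{\ast}$ preserves the tangent pullbacks and equalizers), establishing the five affine diagrams will complete the proof that $(f^{\ast},\!\quot{T}{f})$ is a strong tangent morphism.
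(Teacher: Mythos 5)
Your proposal is correct and follows essentially the same route as the paper: both arguments rest on the quasi-coherence of $\Omega^1$ and of the relative $\Sym$ construction, the affine-local construction of fibre products, and the fact that equality of scheme morphisms may be checked on an open cover. The only presentational difference is that the paper makes the covering fully explicit via the triple-indexed affine cover $\{U_i \times_{W_k} V_j\}$ of $Z \times_Y X$ (localizing simultaneously on $Z$, $Y$, and $X$), whereas you localize on $X$ and $Y$ first and treat the localization on $Z$ somewhat implicitly; this is worth spelling out since the diagrams live over $Z \times_Y X$ rather than over $X$ alone, but it does not affect the validity of the argument.
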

\begin{proof}
We begin by first noting for any scheme map $f\colon X \to Y$ and any scheme $Z$ over $Y$, the sheaf of $1$-differentials $\Omega_{Z/Y}^1$ is a quasi-coherent sheaf on $Z$ and is given affine-locally on $Z$ as $\Omega_{U/Y}^1$ for $U$ an affine open of $Z$. As such, when working with $\Omega_{Z/Y}^1$, provided all constructions in sight are quasi-coherent it suffices to work with sheaves on affine schemes and glue appropriately. The symmetric algebra construction $\Sym(\Omega^1_{U/Y})$ is a ring over $\Gamma(U)$ built out of a sheaf of quasi-coherent $\Ocal_U$-algebras; in particular, $\Sym(\Omega_{U/Y}^1)$ arises as a $\Gamma(S(\Omega_{U/Y}^1))$ where $S(\Omega_{U/Y}^1)$ is the sheaf of symmetric algebras on $\Omega_{U/Y}^1$ and $\Spec(S(\Omega_{U/Y}^1)) \to U$ is affine over $U$.\footnote{Because $U$ is affine, this implies that $\Spec(S(\Omega_{U/Y}^1))$ is an affine scheme. Checking global sections gives $\Gamma(S(\Omega_{U/Y}^1)) \cong \Sym(\Omega_{U/Y}^1)$. For details, see \cite[Proposition 1.3.1, Corollaire I.3.2, Definition 1.7.8]{EGA2}.} In particular, $\Sym(\Omega^1_{U/Y})$ is a quasi-coherent construction. Finally, since $\Sym$ commutes with tensor products (cf. \cite[Section 1.7.5]{EGA2}, \cite[Exercise II.5.16.e]{Hartshorne}) and since pullbacks are built affine-locally in any category of relative schemes $\Sch_{/S}$, calculating the sheaves which represent and construct $f^{\ast}T_{Z/Y}$ and $T_{Z/Y} \times_Y X$ may be done affine locally on $Z^{\prime} = Z \times_{Y} X$. However, since the set
\[
\left\lbrace U_i \times_{W_k} V_j \; \left. \right| \; U_i \to Z, V_j \to X, W_k \to Y \text{affine open}, i \in I, j \in J, k \in \Lambda \right\rbrace
\]
is an affine open cover of $Z \times_Y X$ and $S(f^{\ast}\Omega_{Z/Y}^1)$  and the sheaves of quasi-coherent $\Ocal_{Z \times_Y X}$-algebras $S(\Omega_{(Z \times_Y X)/X}^1)$ may be calculated and then glued over this cover as
\[
S\left(f^{\ast}\Omega_{U_i/{W_k}}^1\right) = S\left(f^{-1}\Omega^1_{U_i/W_k} \otimes_{f^{-1}\Ocal_{W_k}} \Ocal_{V_j}\right), \qquad S\left(\Omega^1_{(U_i \times_{W_k} V_j)/V_j}\right) = S\left(\Omega^1_{f^{\ast}U_i/W_k}\right)
\]
respectively, the lemma follows.
\end{proof}

\begin{rmk}
We thank the anonymous referee for the following argument to show Proposition \ref{Prop: Pullback functor is strong tangent morphism} below by making use of the fact that the tangent structure on $\mathbf{Aff}_{/\Spec A}$ is given by the dual tangent structure in the sense of \cite[Proposition 5.14]{GeoffRobinDiffStruct}. Recall that \cite[Proposition 5.14]{GeoffRobinDiffStruct} shows that if $(\Cscr,\Tbb)$ is a tangent category such that there is a left adjoint $L \dashv T$ for the tangent functor $T$ of $\Tbb$, then there is a corresponding tangent structure $\Lbb$ on $\Cscr^{\op}$ with tangent functor $L$. 

Observe that $\mathbf{Aff}_{/\Spec A} \simeq \Cring_{/A}^{\op} = \mathbf{CAlg}_{A}^{\op}$ for any commutative ring $A$ with unity and that if we have a morphism $f:A \to B$ of rings, then the functor $f^{\ast}:\mathbf{Aff}_{/\Spec B} \to \mathbf{Aff}_{\Spec A}$ is given as (the opposite of) the functor $(-) \otimes_A B:\mathbf{CAlg}_A \to \mathbf{CAlg}_B$. These functors all have right adjoints
\[
\begin{tikzcd}
\mathbf{CAlg}_B \ar[rr, swap, bend right = 20, ""{name = D}]{}{\operatorname{Res}_f} & & \mathbf{CAlg}_A \ar[ll, swap, bend right = 20, ""{name = U}]{}{(-) \otimes_A B}
\ar[from = U, to = D, symbol = \dashv]
\end{tikzcd}
\]
where $\operatorname{Res}_f$ is the algebra restriction of scalars functor, i.e., $\operatorname{Res}_f$ sends a commutative $B$-algebra $\nu:B \to R$ to the commutative $A$-algebra generated by pre-composing the structure map $\nu$ with $f$: $A \xrightarrow{f} B \xrightarrow{\nu} R$.

Note now that the dual numbers tangent structure $(\mathbf{CAlg}_A, \Dbb)$ which has $D(A) := A[\epsilon] \cong A[x]/(x^2)$. This tangent functor $D$ additionally has a left adjoint $T_{(-)/A}$. Observe now that we have an invertible $2$-cell
\[
\begin{tikzcd}
\mathbf{CAlg}_B \ar[r, ""{name = U}]{}{D_B} \ar[d, swap]{}{\operatorname{Res}_f}  & \mathbf{CAlg}_B \ar[d]{}{\operatorname{Res}_f} \\
\mathbf{CAlg}_A \ar[r, swap, ""{name = D}]{}{D_A} & \mathbf{CAlg}_A
\ar[from = U, to = D, Rightarrow, shorten <= 4pt, shorten >= 4pt]{}{\alpha_f}
\end{tikzcd}
\]
where $D_A$ denotes the dual numbers tangent structure on $\mathbf{CAlg}_A$ and where $D_B$ denotes the dual numbers tangent structure on $\mathbf{CAlg}_B$. The pair $(\operatorname{Res}_f, \alpha^{-1})$ is thus a strong tangent morphism from $(\mathbf{CAlg}_B, D_B)$ to the tangent category $(\mathbf{CAlg}_A,D_A)$. Upon taking mates, we arrive at the $2$-cell:
\[
\begin{tikzcd}
\mathbf{CAlg}_A^{\op} \ar[d, swap]{}{f^{\ast}} \ar[rr, ""{name = U}]{}{T_{(-)/A}} & & \mathbf{CAlg}_A^{\op} \ar[d]{}{f^{\ast}} \\
\mathbf{CAlg}_{B}^{\op} \ar[rr, swap, ""{name = D}]{}{T_{(-)/B}} & & \mathbf{CAlg}_B^{\op}
\ar[from = U, to = D, Rightarrow, shorten <= 4pt, shorten >= 4pt]{}{\hat{\alpha}_f}
\end{tikzcd}
\]
Upon recognizing that $\hat{\alpha}_f = \quot{T}{f}$ and observing that the $\alpha_f$ are pseudofunctorial in $\Cring^{\op} \simeq \mathbf{Aff}$, it follows that the functor/natural isomorphism pair $(f^{\ast},\quot{T}{f})$ is pseudofunctorial in $\Cring \simeq \mathbf{Aff}^{\op}$. Thus, upon gluing and performing Zariski descent, it follows that for any morphism of schemes $g:X \to Y$,  the pair of functors $(g^{\ast},\quot{T}{g})\colon(\Sch_{/Y},\Tbb_{\Zar{Y}}) \to (\Sch_{/X},\Tbb_{\Zar{X}})$ give strong tangent morphisms which are pseudofunctorial in $\Sch$.
\end{rmk}

We now check the five necessary technical conditions for $(f^{\ast},\!\quot{T}{f})$ to be a tangent morphism. First we will establish that $f^{\ast}$ and $\!\quot{T}{f}$ commute suitably with the bundle morphism $p\colon T \Rightarrow \id$ and the zero morphism $0\colon \id \Rightarrow T$.
\begin{lem}\label{Lemma: Pullback functor commutes with bundle map}
For any morphism of schemes $f\colon X \to Y$ and any $Y$-scheme $Z$ the diagram
\[
\xymatrix{
f^{\ast}T_{Z/Y} \ar[dr]_{(f^{\ast} \circ p)_Z} \ar[r]^-{\!\quot{T_Z}{f}} & T_{f^{\ast}Z/X} \ar[d]^{(p \ast f^{\ast})_Z} \\
 & f^{\ast}Z
}
\]
commutes.
\end{lem}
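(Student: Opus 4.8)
The plan is to reduce immediately to affine schemes and then to a one-line identity of ring homomorphisms. By Lemma~\ref{Lemma: Do tangent morphism stuff affine locally} it suffices to assume $f$ is a morphism $f\colon \Spec B \to \Spec A$ of affine schemes and that $Z = \Spec C$ for an $A$-algebra $C$, and to verify the triangle in $\Sch_{/\Spec A}$ for such inputs. In this case $f^{\ast}Z = \Spec(C \otimes_A B)$, $T_{Z/Y} = \Spec\big(\Sym_C(\Omega^1_{C/A})\big)$, and $T_{f^{\ast}Z/X} = \Spec\big(\Sym_{C \otimes_A B}(\Omega^1_{(C \otimes_A B)/B})\big)$.

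By Theorem~\ref{Thm: Zariski tangent structure} the bundle map $p_Z$ is $\Spec$ of the structure map $q_C\colon C \to \Sym_C(\Omega^1_{C/A})$, so $(f^{\ast} \circ p)_Z$ is $\Spec$ of $q_C \otimes_A \id_B\colon C \otimes_A B \to \Sym_C(\Omega^1_{C/A}) \otimes_A B$; likewise $(p \ast f^{\ast})_Z = p_{f^{\ast}Z}$ is $\Spec$ of $q_{C \otimes_A B}\colon C \otimes_A B \to \Sym_{C \otimes_A B}(\Omega^1_{(C \otimes_A B)/B})$, and by Corollary~\ref{Cor: Affine scheme tangent iso} the isomorphism $\!\quot{T_Z}{f}$ is $\Spec$ of the ring isomorphism $\theta\colon \Sym_{C \otimes_A B}(\Omega^1_{(C \otimes_A B)/B}) \xrightarrow{\ \cong\ } \Sym_C(\Omega^1_{C/A}) \otimes_A B$ determined by $c \otimes b \mapsto c \otimes b$ and $\mathrm{d}(c \otimes b) \mapsto \mathrm{d}c \otimes b$. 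Since $\Spec$ is contravariant, the triangle of the lemma commutes precisely when $\theta \circ q_{C \otimes_A B} = q_C \otimes_A \id_B$ as ring homomorphisms $C \otimes_A B \to \Sym_C(\Omega^1_{C/A}) \otimes_A B$.

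This last identity I would check on the pure-tensor generators of $C \otimes_A B$: each of $q_{C \otimes_A B}$, $\theta$, and $q_C \otimes_A \id_B$ sends $c \otimes b$ to the degree-zero element $c \otimes b$ of its codomain, so the two ring maps agree on a generating set and therefore coincide; Lemma~\ref{Lemma: Do tangent morphism stuff affine locally} then re-assembles the affine pieces to give the statement in general. I do not expect a genuine obstacle here: the entire content is that the explicit formulas for $p$ (Theorem~\ref{Thm: Zariski tangent structure}) and for the comparison isomorphism $\!\quot{T_Z}{f}$ (Corollary~\ref{Cor: Affine scheme tangent iso}) both act trivially on degree-zero ``base'' elements and never touch the differential symbols $\mathrm{d}(-)$. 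The only points needing a little care are orientation --- remembering that $\Spec$ reverses arrows, so the ring-level triangle is the mirror of the scheme-level one --- and the quasi-coherent gluing that passes between the affine-local and global pictures, and both of these have already been isolated in Lemma~\ref{Lemma: Do tangent morphism stuff affine locally}.
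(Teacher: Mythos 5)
Your proposal is correct and follows essentially the same route as the paper's proof: reduce to the affine case via Lemma \ref{Lemma: Do tangent morphism stuff affine locally}, identify the three maps as $\Spec$ of $q_{C\otimes_A B}$, $\theta$, and $q_C \otimes \id_B$ using Theorem \ref{Thm: Zariski tangent structure} and Corollary \ref{Cor: Affine scheme tangent iso}, and verify $\theta \circ q_{C\otimes_A B} = q_C \otimes \id_B$ on pure-tensor generators, where all maps act as the identity on degree-zero elements. The only difference is a harmless relabeling of which letters name which rings.
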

\begin{proof}
By Lemma \ref{Lemma: Do tangent morphism stuff affine locally} it suffices to prove this for affine schemes. Write $X = \Spec A, Y = \Spec C,$ and $Z = \Spec B$ in the cospan of schemes:
\[
\xymatrix{
 & \Spec B \ar[d] \\
 \Spec A \ar[r]_-{f} & \Spec C
}
\]
To show that the diagram
\[
\xymatrix{
f^{\ast}T_{B/C} \ar[r]^-{\!\quot{T_B}{f}} \ar[dr]_{f^{\ast}(p_B)} & T_{A \otimes_C B/A} \ar[d]^{p_{f^{\ast}B}}\\
 & f^{\ast}\Spec B
}
\]
commutes in $\Sch_{/X}$ it suffices to prove that the diagram
\[
\xymatrix{
\Sym\left(\Omega^1_{A \otimes_C B/A}\right) \ar[rr]^-{\theta_B} & & \Sym\left(\Omega_{B/C}\right) \otimes_C A \\
 & B \otimes_C A \ar[ur]_{q_B \otimes \id_A} \ar[ul]^{q_{A \otimes_C B}}
}
\]
commutes in $A \downarrow \Cring$. To check this we find on one hand for any pure tensor $b \otimes a$,
\[
(q_B \otimes \id_A)(b \otimes a) = b \otimes a
\]
while on the other hand
\[
(\theta_B \circ q_{B \otimes_C A})(b \otimes a) = \theta_B(b \otimes a) = b \otimes a,
\]
so the diagram indeed commutes.
\end{proof}

\begin{lem}\label{Lemma: Pullback functor commutes with zero map}
	For any morphism of schemes $f\colon X \to Y$ and any $Y$-scheme $Z$ the diagram
	\[
	\xymatrix{
		 T_{f^{\ast}Z/X}  & f^{\ast}T_{Z/Y} \ar[l]_-{\!\quot{T_Z}{f}} \\
		& f^{\ast}Z \ar[ul]^{(0 \ast f^{\ast})_Z} \ar[u]_{(f^{\ast} \circ 0)_Z}
	}
	\]
	commutes.
\end{lem}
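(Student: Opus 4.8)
\textbf{Proof plan for Lemma \ref{Lemma: Pullback functor commutes with zero map}.}
The plan is to mimic the argument just used for the bundle map in Lemma \ref{Lemma: Pullback functor commutes with bundle map}. First I would invoke Lemma \ref{Lemma: Do tangent morphism stuff affine locally} to reduce the statement to the affine case, writing $X = \Spec A$, $Y = \Spec C$, and $Z = \Spec B$, with $f$ corresponding to the ring map $C \to A$ and $Z \to Y$ to the ring map $C \to B$. Under this reduction, the triangle of schemes to be checked becomes, upon applying $\Spec$, a triangle of ring maps in $A \downarrow \Cring$; the content of the lemma is exactly that this triangle commutes.

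The key step is then to identify the three ring maps explicitly. The pullback $f^{\ast}Z$ is $\Spec(B \otimes_C A)$; the tangent $T_{f^{\ast}Z/X}$ is $\Spec\left(\Sym\left(\Omega^1_{B \otimes_C A/A}\right)\right)$; and $f^{\ast}T_{Z/Y}$ is $\Spec\left(\Sym(\Omega^1_{B/C}) \otimes_C A\right)$. The map $(0 \ast f^{\ast})_Z$ is, by Theorem \ref{Thm: Zariski tangent structure}, the spectrum of the zero section ring map $\zeta_{B \otimes_C A}\colon \Sym\left(\Omega^1_{B \otimes_C A/A}\right) \to B \otimes_C A$ sending $b \otimes a \mapsto b \otimes a$ and $\mathrm{d}(b \otimes a) \mapsto 0$. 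The map $(f^{\ast} \circ 0)_Z$ is $\Spec$ of $\zeta_B \otimes \id_A\colon \Sym(\Omega^1_{B/C}) \otimes_C A \to B \otimes_C A$, and $\!\quot{T_Z}{f}$ is $\Spec$ of the isomorphism $\theta_B$ of Corollary \ref{Cor: Affine scheme tangent iso} sending $b \otimes a \mapsto b \otimes a$ and $\mathrm{d}(b \otimes a) \mapsto \mathrm{d}b \otimes a$. So the triangle to check in $A \downarrow \Cring$ is
\[
\begin{tikzcd}
\Sym\left(\Omega^1_{B \otimes_C A/A}\right) \ar[rr]{}{\theta_B} \ar[dr, swap]{}{\zeta_{B \otimes_C A}} & & \Sym(\Omega^1_{B/C}) \otimes_C A \ar[dl]{}{\zeta_B \otimes \id_A} \\
 & B \otimes_C A &
\end{tikzcd}
\]
and one simply computes on generators: on $b \otimes a$ both composites give $b \otimes a$, while on $\mathrm{d}(b \otimes a)$ the right-hand path gives $(\zeta_B \otimes \id_A)(\mathrm{d}b \otimes a) = \zeta_B(\mathrm{d}b) \otimes a = 0$, matching $\zeta_{B \otimes_C A}(\mathrm{d}(b \otimes a)) = 0$. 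Since these symbols generate the symmetric algebra as an $A$-algebra, the triangle commutes, and gluing over an affine cover (as licensed by Lemma \ref{Lemma: Do tangent morphism stuff affine locally}) gives the general case.

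I do not anticipate a genuine obstacle here: the only mild subtlety is bookkeeping the direction of the arrows, since the zero \emph{map of schemes} $0\colon \id \Rightarrow T$ corresponds under $\Spec$ to the \emph{augmentation-type} ring map going the other way, so one must be careful that $\zeta$ kills $\mathrm{d}$-symbols and fixes the $B$-part. The computation itself is a one-line check on generators entirely parallel to the previous lemma.
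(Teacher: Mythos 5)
Your proposal matches the paper's proof essentially step for step: the same reduction to the affine case via Lemma \ref{Lemma: Do tangent morphism stuff affine locally}, the same identification of the triangle of ring maps $\zeta_{B \otimes_C A}$, $\zeta_B \otimes \id_A$, and $\theta_B$ in $A \downarrow \Cring$, and the same verification on the generators $b \otimes a$ and $\mathrm{d}(b \otimes a)$. Your closing remark about the arrow-direction bookkeeping under $\Spec$ is exactly the right point of care, and the argument is complete as stated.
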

\begin{proof}
	By Lemma \ref{Lemma: Do tangent morphism stuff affine locally} it suffices to prove this for affine schemes. Write $X = \Spec A, Y = \Spec C,$ and $Z = \Spec B$ in the cospan of schemes:
	\[
	\xymatrix{
		& \Spec B \ar[d] \\
		\Spec A \ar[r]_-{f} & \Spec C
	}
	\]
	To show that the diagram
	\[
	\xymatrix{
	T_{f^{\ast}Z/X}  & f^{\ast}T_{Z/Y} \ar[l]_-{\!\quot{T_Z}{f}} \\
	& f^{\ast}Z \ar[ul]^{(0 \ast f^{\ast})_Z} \ar[u]_{(f^{\ast} \circ 0)_Z}
	}
	\]
	commutes in $\Sch_{/X}$ it suffices to prove that the diagram
	\[
	\xymatrix{
		\Sym\left(\Omega^1_{A \otimes_C B/A}\right) \ar[dr]_{\zeta_{A \otimes_C B}} \ar[rr]^-{\theta_B} & & \Sym\left(\Omega_{B/C}\right) \otimes_C A \ar[dl]^{\zeta_B \otimes \id_A}\\
		& B \otimes_C A  
	}
	\]
	commutes in $A \downarrow \Cring$. To check this we find on one hand for any pure tensor $b \otimes a$,
	\[
	\zeta_{B \otimes_C A}(b \otimes a) = b \otimes a
	\]
	while for any differential $\mathrm{d}(b \otimes a)$,
	\[
	\zeta_{B \otimes_C A}\left(\mathrm{d}\big(b \otimes a\big)\right) = 0.
	\]
	Now we calculate that on the other hand
	\[
	\left((\zeta_B \otimes \id_A) \circ \theta_B\right)(b \otimes a) = (\zeta_B \otimes \id_A)(b \otimes a) = b \otimes a
	\]
	while
	\[
	\left((\zeta_B \otimes \id_A) \circ \theta_B\right)\big(\mathrm{d}(b \otimes a)\big) = (\zeta_B \otimes \id_A)\big(\mathrm{d}b \otimes a\big) = 0 \otimes a = 0,
	\]
	so the diagram indeed commutes.
\end{proof}

We will now show that the pullback functors $f^{\ast}$ commute suitably with the bundle addition morphisms $+\colon T_2 \Rightarrow T$.

\begin{lem}\label{Lemma: Pullback commutes with addition}
For any morphism $f\colon X \to Y$ of schemes and for any $Y$-scheme $Z$, the diagram
\[
\xymatrix{
f^{\ast}(T_{Z/Y})_2 \ar[r]^-{(\!\quot{T_{2}}{f})_Z} \ar[d]_{(f^{\ast} \ast +)_Z} & (T_{f^{\ast}Z/X})_2 \ar[d]^{(+ \ast f^{\ast})_{Z}} \\
f^{\ast}T_{Z/Y} \ar[r]_-{\!\quot{T}{f}_Z} & T_{f^{\ast}Z/Y}
}
\]
commutes.
\end{lem}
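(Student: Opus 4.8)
\textbf{Proof proposal for Lemma \ref{Lemma: Pullback commutes with addition}.}
The plan is to mimic the pattern already established in Lemmas \ref{Lemma: Pullback functor commutes with bundle map}, \ref{Lemma: Pullback functor commutes with zero map}: reduce to the affine case by Lemma \ref{Lemma: Do tangent morphism stuff affine locally} and then check a single commuting square of rings. First I would invoke Lemma \ref{Lemma: Do tangent morphism stuff affine locally} to replace $f\colon X\to Y$ by a map $f\colon\Spec A\to\Spec C$ of affine schemes and $Z$ by an affine $Y$-scheme $\Spec B$; it also needs to be noted here that $(T_{Z/Y})_2$ and $(T_{f^\ast Z/X})_2$ are affine, since the pullback of affine schemes over an affine base is affine, so that $(T_{B/C})_2=\Spec\!\bigl(\Sym(\Omega^1_{B/C})\otimes_B\Sym(\Omega^1_{B/C})\bigr)$ and likewise for the pullback. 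I would also observe that $\!\quot{T_2}{f}$ is the isomorphism induced by $f^\ast$ preserving the tangent pullback defining $T_2$ (this is how $\!\quot{T_2}{f}$ was constructed in the discussion before the lemma and in Proposition \ref{Prop: Existence of pullback pseudonatural transformation}), so affine-locally it is just $\theta_B\otimes_B\theta_B$ read through the canonical identification of $\Sym$ with tensor products.

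The core computation is then to verify that the square
\[
\begin{tikzcd}
\Sym\!\left(\Omega^1_{B/C}\right)\otimes_B\Sym\!\left(\Omega^1_{B/C}\right) \ar[rr]{}{\operatorname{add}_B} & & \Sym\!\left(\Omega^1_{B/C}\right)
\end{tikzcd}
\]
becomes, after applying $(-)\otimes_C A$ and transporting along $\theta_B$ and $\theta_B\otimes_B\theta_B$, the affine version $\operatorname{add}_{A\otimes_C B}$ of the bundle addition on $T_{A\otimes_C B/A}$. Concretely, I would use the generators-and-relations description: $\Sym(\Omega^1_{A\otimes_C B/A})$ is generated by $b\otimes a$ and $\mathrm{d}(b\otimes a)$, with $\theta_B$ sending $b\otimes a\mapsto b\otimes a$ and $\mathrm{d}(b\otimes a)\mapsto \mathrm{d}b\otimes a$ (Corollary \ref{Cor: Affine scheme tangent iso}). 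Chasing a generator $b\otimes a$ around both ways gives $b\otimes 1\otimes a$ on the nose, and chasing $\mathrm{d}(b\otimes a)$ gives $(\mathrm{d}b\otimes 1 + 1\otimes\mathrm{d}b)\otimes a$ along one route and $\mathrm{d}(b\otimes a)\otimes 1 + 1\otimes\mathrm{d}(b\otimes a)$ mapped by $\theta_B\otimes_B\theta_B$ to the same element along the other; since these agree on algebra generators the two ring maps coincide, hence so do the induced maps of affine schemes, which is the assertion of the lemma.

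The only mildly delicate point — and the step I expect to be the main obstacle — is bookkeeping the identification $\Sym(\Omega^1_{A\otimes_C B/A})\otimes$-power versus $\bigl(\Sym(\Omega^1_{B/C})\otimes_C A\bigr)$-power and making sure the pullback $f^\ast(T_{B/C})_2$ is genuinely computed as the fibre product of the two copies of $T_{B/C}\times_C A$ over $(\Spec B)\times_C A$, i.e.\ that $\theta$ on the $T_2$-level really is $\theta_B\otimes_B\theta_B$ after the correct base changes; once the dictionary between $\Sym$, tensor product and fibre product (as used already in the proof of Lemma \ref{Lemma: Do tangent morphism stuff affine locally}) is pinned down, the generator chase is routine. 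I would present the argument in the same terse style as the preceding lemmas, displaying only the relevant ring square in $A\downarrow\Cring$ and the two generator computations.
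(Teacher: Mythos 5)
Your proposal is correct and follows essentially the same route as the paper: reduce to affine schemes via Lemma \ref{Lemma: Do tangent morphism stuff affine locally}, translate the square into a square in $A\downarrow\Cring$ involving $\theta_B$, $(\theta_2)_B$ (which is indeed the base-changed $\theta_B\otimes_B\theta_B$ under the canonical identifications), $\operatorname{add}_B\otimes\id_A$ and $\operatorname{add}_{B\otimes_C A}$, and verify equality on the generators $b\otimes a$ and $\mathrm{d}(b\otimes a)$. Your generator chase matches the paper's computation exactly, and the bookkeeping point you flag is handled in the paper simply by writing down the explicit formula for $(\theta_2)_B$ on pure tensors and differentials rather than deriving it from the pullback-preservation isomorphism.
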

\begin{proof}
By Lemma \ref{Lemma: Do tangent morphism stuff affine locally} it suffices to prove this for affine schemes. Write $X = \Spec A, Y = \Spec C,$ and $Z = \Spec B$ in the cospan of schemes:
\[
\xymatrix{
	& \Spec B \ar[d] \\
	\Spec A \ar[r]_-{f} & \Spec C
}
\]
To prove that the diagram
\[
\xymatrix{
	f^{\ast}(T_{Z/Y})_2 \ar[r]^-{(\!\quot{T_{2}}{f})_Z} \ar[d]_{(f^{\ast} \ast +)_Z} & (T_{f^{\ast}Z/X})_2 \ar[d]^{(+ \ast f^{\ast})_{Z}} \\
	f^{\ast}T_{Z/Y} \ar[r]_-{\!\quot{T}{f}_Z} & T_{f^{\ast}Z/Y}
}
\]
commutes it suffices to prove that the diagram
\[
\xymatrix{
\Sym\left(\Omega^1_{B \otimes_C A/A}\right) \otimes_{B \otimes_C A} \Sym\left(\Omega^1_{B \otimes_C A/A}\right) \ar[rrr]^-{(\theta_2)_{B}} & & & \left(\Sym\left(\Omega^1_{B/C}\right) \otimes_B \Sym\left(\Omega_{B/C}^1\right) \right) \otimes_{C} A   \\
\Sym\left(\Omega_{B \otimes_C A/A}^1\right) \ar[rrr]_-{\theta_B}  \ar[u]^{\operatorname{add}_{B \otimes_C A}} & & & \Sym\left(\Omega_{B/C}^1\right) \otimes_C A \ar[u]_{\operatorname{add}_B \otimes \id_A}
}
\]
commutes in $A \downarrow \Cring$. Note that by construction we have that $(\theta_2)_B$ acts on a pure tensor $(b \otimes a) \otimes (b^{\prime} \otimes a^{\prime})$ by the assignment
\[
(\theta_2)_B\big((b \otimes a) \otimes (b^{\prime} \otimes a^{\prime})\big) = b \otimes b^{\prime} \otimes aa^{\prime}
\]
and for a differential $\mathrm{d}(b \otimes a) \otimes \mathrm{d}(b^{\prime} \otimes a^{\prime})$ via
\[
(\theta_2)_B\big(\mathrm{d}(b \otimes a) \otimes \mathrm{d}(b^{\prime} \otimes a^{\prime})\big) = \mathrm{d}b \otimes \mathrm{d}b^{\prime} \otimes aa^{\prime}.
\]
We begin by calculating the top half of the square above, i.e., we compute that for any pure tensor $b \otimes a$ in $B \otimes_C A$ that
\begin{align*}
\big((\theta_2)_B \circ \operatorname{add}_{B \otimes_C A}\big)(b \otimes a) &= (\theta_2)_B(b \otimes a \otimes 1_{B \otimes_C A}) = b \otimes 1_B \otimes a 
\end{align*}
while on differentials we have
\begin{align*}
\big((\theta_2)_B \circ \operatorname{add}_{B \otimes_C A}\big)\left(\mathrm{d}(b \otimes a)\right) &= (\theta_2)_B\left(\mathrm{d}(b \otimes a) \otimes 1 + 1 \otimes \mathrm{d}(b \otimes a)\right) = \left(\mathrm{d}b \otimes 1_B\right) \otimes a + \left(1_B \otimes \mathrm{d}b\right) \otimes a.
\end{align*}
Let us now compute the bottom half of the square above. Fix a pure tensor $b \otimes a$ and observe that on one hand we have
\[
\big((\operatorname{add}_B \otimes \id_A) \circ \theta_B\big)(b \otimes a) = (\operatorname{add}_B \otimes \id_A)(b \otimes a) = b \otimes 1_B \otimes a
\]
while on the other hand for differentials we get
\[
\big((\operatorname{add}_B \otimes \id_A) \circ \theta_B\big)\left(\mathrm{d}(b \otimes a)\right) = (\operatorname{add}_B \otimes \id_A)(\mathrm{d}b \otimes a) = (\mathrm{d}b \otimes 1_B) \otimes a + (1_B \otimes \mathrm{d}b) \otimes a.
\]
It follows that $(\theta_2)_B \circ \operatorname{add}_{B \otimes_C A} = (\operatorname{add}_B \otimes \id_A) \circ \theta_B$ so the diagram indeed commutes.
\end{proof}

We now establish that the pullback functor commutes suitably with the vertical lift natural transformation $\ell\colon T \Rightarrow T^2$ and the canonical flip natural transformation $c\colon T^2 \Rightarrow T^2$.

\begin{lem}\label{Lemma: Pullback commutes with vertical lift}
	For any morphism $f\colon X \to Y$ of schemes and for any $Y$-scheme $Z$, the diagram
	\[
	\xymatrix{
	f^{\ast}T_{Z/Y} \ar[r]^-{\!\quot{T}{f}_Z} \ar[d]_{(f^{\ast} \ast \ell)_{Z}} & T_{f^{\ast}Z/X} \ar[d]^{(\ell \ast f^{\ast})_{Z}} \\
	f^{\ast}T^2_{Z/Y} \ar[r]_-{\!\quot{T^2}{f}_Z} & T^2_{f^{\ast}Z/X}
	}
	\]
	commutes.
\end{lem}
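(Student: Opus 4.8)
The plan is to follow the template of Lemmas~\ref{Lemma: Pullback functor commutes with bundle map}--\ref{Lemma: Pullback commutes with addition} essentially verbatim, only now working with the double-jet algebra that represents $T^2$. First I would invoke Lemma~\ref{Lemma: Do tangent morphism stuff affine locally} to reduce to a morphism $f\colon\Spec A\to\Spec C$ of affine schemes and an affine $\Spec C$-scheme $Z=\Spec B$, so that $f^{\ast}$ becomes base change $(-)\otimes_C A$ on the relevant algebra categories and $\ell$ is the spectrum of the vertical lift ring map $v$ of Theorem~\ref{Thm: Zariski tangent structure}. Since $T^2=T\circ T$, the witness isomorphism $\!\quot{T^2}{f}$ is the whiskered composite $(T_{-/X}\ast\!\quot{T}{f})\circ(\!\quot{T}{f}\ast T_{-/Y})$, exactly as in the discussion of $\quot{T^2}{f}$ that precedes Lemma~\ref{Lemma: Vertical lift modification}; applying Corollary~\ref{Cor: Affine scheme tangent iso} to each of the two factors, I would record that $\!\quot{T^2}{f}_Z$ is the spectrum of a ring isomorphism
\[
\theta_B^{(2)}\colon \Sym_{\Sym(\Omega^1_{B\otimes_C A/A})}\!\left(\Omega^1_{\Sym(\Omega^1_{B\otimes_C A/A})/(B\otimes_C A)}\right)\xrightarrow{\ \cong\ }\Sym_{\Sym(\Omega^1_{B/C})}\!\left(\Omega^1_{\Sym(\Omega^1_{B/C})/B}\right)\otimes_C A
\]
that sends the generators by $b\otimes a\mapsto b\otimes a$, $\mathrm{d}(b\otimes a)\mapsto\mathrm{d}b\otimes a$, $\delta(b\otimes a)\mapsto\delta b\otimes a$, and $(\delta\mathrm{d})(b\otimes a)\mapsto(\delta\mathrm{d})(b)\otimes a$.

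With $\theta_B^{(2)}$ in hand, the square of schemes in the statement dualizes under $\Spec$ to the square of $A$-algebras asserting that
\[
\theta_B\circ v_{B\otimes_C A}=(v_B\otimes\id_A)\circ\theta_B^{(2)}
\]
as maps from the double-jet algebra of $B\otimes_C A$ over $A$ to $\Sym(\Omega^1_{B/C})\otimes_C A$, where $\theta_B$ is the isomorphism of Corollary~\ref{Cor: Affine scheme tangent iso}. Both composites are $A$-algebra homomorphisms, so it is enough to compare them on the ring generators $b\otimes a$, $\mathrm{d}(b\otimes a)$, $\delta(b\otimes a)$, $(\delta\mathrm{d})(b\otimes a)$. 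Feeding these through $v$ (which fixes elements of $B$, kills $\mathrm{d}$ and $\delta$, and sends $(\delta\mathrm{d})(b)\mapsto\mathrm{d}b$) along each side yields $b\otimes a$, $0$, $0$, and $\mathrm{d}b\otimes a$ respectively on both sides, so the square commutes; this then gives the desired diagram for general $f$ and $Z$ by Lemma~\ref{Lemma: Do tangent morphism stuff affine locally}, supplying the fourth of the five diagrams in Definition~\ref{Defn: Tangent Morphism}.

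I expect the only delicate step to be the second half of the first paragraph, namely pinning down $\theta_B^{(2)}$ precisely: one must check that composing the two whiskered copies of the isomorphism $\theta$ of Corollary~\ref{Cor: Affine scheme tangent iso} really does carry the outer derivational direction $\delta$ and the mixed jet $(\delta\mathrm{d})$ to the expected generators, rather than accidentally interchanging them with the inner direction $\mathrm{d}$. This is exactly the $T^2$-analogue of Corollary~\ref{Cor: Affine scheme tangent iso} and amounts to unwinding how $\Sym$ and $\Omega^1$ interact with the two successive base changes; once it is settled, the rest is the same finite generator check as in the preceding lemmas.
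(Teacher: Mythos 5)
Your proposal is correct and follows essentially the same route as the paper: reduce to affine schemes via Lemma \ref{Lemma: Do tangent morphism stuff affine locally}, identify the witness isomorphism for $T^2$ (your $\theta_B^{(2)}$ is exactly the map the paper calls $\sigma_B$, with the same action on the generators $b\otimes a$, $\mathrm{d}(b\otimes a)$, $\delta(b\otimes a)$, $(\delta\mathrm{d})(b\otimes a)$), and then verify the two composites agree on generators, obtaining $b\otimes a$, $0$, $0$, $\mathrm{d}b\otimes a$ on both sides. The ``delicate step'' you flag is handled in the paper simply by writing down $\sigma_B$ explicitly from the whiskered composite $(T_{-/X}\ast\!\quot{T}{f})\circ(\!\quot{T}{f}\ast T_{-/Y})$, just as you propose.
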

\begin{proof}
	By Lemma \ref{Lemma: Do tangent morphism stuff affine locally} it suffices to prove this for affine schemes. Write $X = \Spec A, Y = \Spec C,$ and $Z = \Spec B$ in the cospan of schemes:
	\[
	\xymatrix{
		& \Spec B \ar[d] \\
		\Spec A \ar[r]_-{f} & \Spec C
	}
	\]
	To prove that the diagram
	\[
\xymatrix{
	f^{\ast}T_{Z/Y} \ar[r]^-{\!\quot{T}{f}_Z} \ar[d]_{(f^{\ast} \circ \ell)_{Z}} & T_{f^{\ast}Z/X} \ar[d]^{(\ell \circ f^{\ast})_{Z}} \\
	f^{\ast}T^2_{Z/Y} \ar[r]_-{\!\quot{T^2}{f}_Z} & T^2_{f^{\ast}Z/X}
}
\]
commutes it suffices to prove that the diagram 
\[
\xymatrix{
\Sym\left(\Omega_{\Sym(\Omega_{B \otimes_C A/A}^1)/A}^1\right) \ar[d]_{v_{B \otimes_C A}} \ar[rrr]^-{\sigma_B} & & & \Sym\left(\Omega^1_{\Sym(\Omega_{B/C}^1)/C}\right) \otimes_C A \ar[d]^{v_{B} \otimes \id_A} \\
\Sym\left(\Omega^1_{B \otimes_C A/A}\right) \ar[rrr]_-{\theta_B} & & & \Sym\left(\Omega^1_{B/C}\right) \otimes_C A
}
\]
commutes in $A \downarrow \Cring$. Note that the morphism $\sigma_B$ is the ring map corresponding to the scheme isomorphism\footnote{For potentially obvious reasons it is best to avoid the notation $\theta^2_B$.} $\!\quot{T^2_B}{f}$ and is given on generators by
\[
b \otimes a \mapsto b \otimes a, \quad \mathrm{d}(b \otimes a) \mapsto \mathrm{d}b \otimes a, \quad \delta(b \otimes a) \mapsto \delta b \otimes a, \quad \delta\mathrm{d}(b\otimes a) \mapsto \delta\mathrm{d}b \otimes a.
\]
Fix a pure tensor $b \otimes a$. We now compute the top half of the diagram on all generators:
\begin{align*}
\big((v_B \otimes \id_A) \circ \sigma_B\big)(b \otimes a) &= (v_B \otimes \id_A)(b \otimes a) = b \otimes a; \\
\big((v_B \otimes \id_A) \circ \sigma_B\big)\big(\mathrm{d}(b \otimes a)\big) &= (v_B \otimes \id_A)(\mathrm{d}b \otimes a) = 0 \otimes a = 0; \\
\big((v_B \otimes \id_A) \circ \sigma_B\big)\big(\delta(b \otimes a)\big) &= (v_B \otimes \id_A)(\delta b \otimes a) = 0 \otimes a = 0; \\
\big((v_B \otimes \id_A) \circ \sigma_B\big)\big(\delta\mathrm{d}(b \otimes a)\big) &= (v_B \otimes \id_A)(\delta\mathrm{d}b \otimes a) = \mathrm{d}b \otimes a.
\end{align*}
Alternatively, the bottom half of the diagram is computed on all generators as follows:
\begin{align*}
\big(\theta_B \circ v_{B \otimes_C A}\big)(b \otimes a) &= \theta_B(b \otimes a) = b \otimes a; \\
\big(\theta_B \circ v_{B \otimes_C A}\big)\big(\mathrm{d}(b \otimes a)\big) &= \theta_B(0) = 0; \\
\big(\theta_B \circ v_{B \otimes_C A}\big)\big(\delta(b \otimes a)\big) &= \theta_B(0) = 0; \\
\big(\theta_B \circ v_{B \otimes_C A}\big)\big(\delta\mathrm{d}(b \otimes a)\big) &= \theta_B\big(\mathrm{d}(b \otimes a)\big) = \mathrm{d}b \otimes a.
\end{align*}
Because all assignments agree on all generators, it follows that the square commutes.
\end{proof}
\begin{lem}\label{Lemma: Pullback commutes with canonical flip}
	For any morphism $f\colon X \to Y$ of schemes and for any $Y$-scheme $Z$, the diagram
	\[
	\xymatrix{
	f^{\ast}T^2_{Z/Y} \ar[r]^-{\!\quot{T^2}{f}_Z} \ar[d]_{(f^{\ast} \ast c)_{Z}} & T^2_{f^{\ast}Z/X} \ar[d]^{(c \ast f^{\ast})_{Z}} \\
	f^{\ast}T^2_{Z/Y} \ar[r]_-{\!\quot{T^2}{f}_Z} & T^2_{f^{\ast}Z/X}
}
	\]
	commutes.
\end{lem}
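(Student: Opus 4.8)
The strategy mirrors exactly that of Lemmas \ref{Lemma: Pullback functor commutes with zero map} through \ref{Lemma: Pullback commutes with vertical lift}: reduce to the affine case via Lemma \ref{Lemma: Do tangent morphism stuff affine locally}, translate the diagram of schemes into the opposite diagram of rings using Theorem \ref{Thm: Zariski tangent structure}, and verify commutativity by evaluating both composites on the generators of the symmetric algebra defining $T^2$. First I would invoke Lemma \ref{Lemma: Do tangent morphism stuff affine locally} to assume $f\colon \Spec A \to \Spec C$ is a morphism of affine schemes and $Z = \Spec B$ is an affine $\Spec C$-scheme, so that it suffices to check the single square
\[
\xymatrix{
f^{\ast}T^2_{Z/Y} \ar[r]^-{\!\quot{T^2}{f}_Z} \ar[d]_{(f^{\ast} \ast c)_{Z}} & T^2_{f^{\ast}Z/X} \ar[d]^{(c \ast f^{\ast})_{Z}} \\
f^{\ast}T^2_{Z/Y} \ar[r]_-{\!\quot{T^2}{f}_Z} & T^2_{f^{\ast}Z/X}
}
\]
for these affine data.

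\textbf{Reduction to rings.} Applying $\Spec^{-1}$, the square above commutes if and only if the diagram of rings
\[
\xymatrix{
\Sym\left(\Omega_{\Sym(\Omega_{B \otimes_C A/A}^1)/A}^1\right) \ar[d]_{\gamma_{B \otimes_C A}} & & & \Sym\left(\Omega^1_{\Sym(\Omega_{B/C}^1)/C}\right) \otimes_C A \ar[lll]_-{\sigma_B} \ar[d]^{\gamma_{B} \otimes \id_A} \\
\Sym\left(\Omega_{\Sym(\Omega_{B \otimes_C A/A}^1)/A}^1\right) & & & \Sym\left(\Omega^1_{\Sym(\Omega_{B/C}^1)/C}\right) \otimes_C A \ar[lll]^-{\sigma_B}
}
\]
commutes in $A \downarrow \Cring$, where $\sigma_B$ is the ring isomorphism underlying $\!\quot{T^2}{f}_Z$ already introduced in the proof of Lemma \ref{Lemma: Pullback commutes with vertical lift}, given on generators by $b \otimes a \mapsto b \otimes a$, $\mathrm{d}(b\otimes a) \mapsto \mathrm{d}b \otimes a$, $\delta(b \otimes a) \mapsto \delta b \otimes a$, $\delta\mathrm{d}(b\otimes a) \mapsto \delta\mathrm{d}b \otimes a$, and where $\gamma$ is the canonical-flip ring map of Theorem \ref{Thm: Zariski tangent structure}, which fixes $b$ and $\delta\mathrm{d}(b)$ and interchanges $\mathrm{d}b \leftrightarrow \delta b$. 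Then I would evaluate $\gamma_{B\otimes_C A}\circ\sigma_B$ and $(\gamma_B \otimes \id_A)\circ\sigma_B$ on each of the four generator types $b\otimes a$, $\mathrm{d}(b\otimes a)$, $\delta(b\otimes a)$, $\delta\mathrm{d}(b\otimes a)$; in every case both sides return respectively $b\otimes a$, $\delta b \otimes a$, $\mathrm{d}b \otimes a$, $\delta\mathrm{d}b \otimes a$, since $\sigma_B$ simply strips the $A$-tensor factor off each generator symbol and $\gamma$ acts identically on both the ``$B\otimes_C A$'' side and the ``$B$'' side of the tensor decomposition. Since the two composites agree on a generating set of the symmetric algebra, they are equal, so the square commutes.

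\textbf{Main obstacle.} There is essentially no hard step here --- this lemma is the last and most routine of the six technical lemmas, and it is strictly parallel to Lemma \ref{Lemma: Pullback commutes with vertical lift}. The only point requiring minor care is bookkeeping: being precise that the generator $\delta\mathrm{d}(b\otimes a)$ of $\Sym(\Omega^1_{\Sym(\Omega^1_{B\otimes_C A/A})/A})$ is mapped by $\sigma_B$ to $\delta\mathrm{d}b \otimes a$ (not to something involving a sum, as the additive generators $\mathrm{d}$ and $\delta$ are ``orthogonal'' nil-square directions in the sense discussed before Theorem \ref{Thm: Zariski tangent structure}), and that $\gamma$ genuinely fixes this mixed $2$-jet symbol. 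Once these identifications are in place the verification is immediate.

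Finally, combining this lemma with Proposition \ref{Prop: Pullback functor of schemes preserves tangent functor}, Lemma \ref{Lemma: Pullback functor of schemes preserves tangent limits}, and Lemmas \ref{Lemma: Pullback functor commutes with bundle map}, \ref{Lemma: Pullback functor commutes with zero map}, \ref{Lemma: Pullback commutes with addition}, and \ref{Lemma: Pullback commutes with vertical lift}, all five coherence diagrams of Definition \ref{Defn: Tangent Morphism} are verified, which yields that $(f^{\ast}, \!\quot{T}{f})$ is a strong tangent morphism for every scheme morphism $f$; this is the content of the forthcoming Proposition \ref{Prop: Pullback functor is strong tangent morphism}.
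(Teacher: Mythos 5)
Your proposal is correct and follows essentially the same route as the paper's proof: reduce to affine schemes via Lemma \ref{Lemma: Do tangent morphism stuff affine locally}, pass to the corresponding square of rings involving $\sigma_B$ and the flip maps $\gamma$, and check that both composites agree on the four generator types $b\otimes a$, $\mathrm{d}(b\otimes a)$, $\delta(b\otimes a)$, $\delta\mathrm{d}(b\otimes a)$, yielding $b\otimes a$, $\delta b\otimes a$, $\mathrm{d}b\otimes a$, $\delta\mathrm{d}b\otimes a$ respectively on each side. The only (immaterial) slip is that the arrow you label $\sigma_B$ in your ring diagram points in the direction opposite to the generator description you give for it; since commutativity of a square is unaffected by reversing all arrows, this does not change the argument.
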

\begin{proof}
	By Lemma \ref{Lemma: Do tangent morphism stuff affine locally} it suffices to prove this for affine schemes. Write $X = \Spec A, Y = \Spec C,$ and $Z = \Spec B$ in the cospan of schemes:
	\[
	\xymatrix{
		& \Spec B \ar[d] \\
		\Spec A \ar[r]_-{f} & \Spec C
	}
	\]
	To prove that the diagram
		\[
	\xymatrix{
		f^{\ast}T^2_{Z/Y} \ar[r]^-{\!\quot{T^2}{f}_Z} \ar[d]_{(f^{\ast} \ast c)_{Z}} & T^2_{f^{\ast}Z/X} \ar[d]^{(c \ast f^{\ast})_{Z}} \\
		f^{\ast}T^2_{Z/Y} \ar[r]_-{\!\quot{T^2}{f}_Z} & T^2_{f^{\ast}Z/X}
	}
	\]
	commutes it suffices to prove that the diagram
	\[
	\xymatrix{
		\Sym\left(\Omega_{\Sym(\Omega_{B \otimes_C A/A}^1)/A}^1\right) \ar[d]_{\gamma_{B \otimes_C A}} \ar[rrr]^-{\sigma_B} & & & \Sym\left(\Omega^1_{\Sym(\Omega_{B/C}^1)/C}\right) \otimes_C A \ar[d]^{\gamma_{B} \otimes \id_A} \\
		\Sym\left(\Omega_{\Sym(\Omega_{B \otimes_C A/A}^1)/A}^1\right)  \ar[rrr]_-{\sigma_B} & & & \Sym\left(\Omega^1_{\Sym(\Omega_{B/C}^1)/C}\right) \otimes_C A 
	}
	\]
	commutes. To show this we calculate both paths of the square on all generators. For the top path we get
	\begin{align*}
	\big((\gamma_B \otimes \id_A) \circ \sigma_B\big)(b \otimes a) &= (\gamma_B \otimes \id_A)(b \otimes a) = b \otimes a; \\
	\big((\gamma_B \otimes \id_A) \circ \sigma_B\big)\big(\mathrm{d}(b \otimes a)\big) &= (\gamma_B \otimes \id_A)(\mathrm{d}b \otimes a) = \delta b \otimes a; \\
	\big((\gamma_B \otimes \id_A) \circ \sigma_B\big)\big(\delta(b \otimes a)\big) &= (\gamma_B \otimes \id_A)(\delta b \otimes a) = \mathrm{d}b \otimes a; \\
	\big((\gamma_B \otimes \id_A) \circ \sigma_B\big)\big(\delta\mathrm{d}(b \otimes a)\big) &= (\gamma_B \otimes \id_A)(\delta\mathrm{d}b \otimes a) = \delta\mathrm{d}b \otimes a.
	\end{align*}
	Alternatively, for the bottom path we calculate
	\begin{align*}
	\big(\gamma_B \circ \gamma_{B \otimes_C A}\big)(b \otimes a) &= \sigma_B(b \otimes a) = b \otimes a; \\
	\big(\sigma_B \circ \gamma_{B \otimes_C A}\big)\big(\mathrm{d}(b \otimes a)\big) &= \sigma_B(\delta(b \otimes a)) = \delta b \otimes a; \\
	\big(\sigma_B \circ \gamma_{B \otimes_C A}\big)\big(\delta(b \otimes a)\big) &= \theta_B(\mathrm{d}(b \otimes a)) = \mathrm{d}b \otimes a\; \\
	\big(\sigma_B \circ \gamma_{B \otimes_C A}\big)\big(\delta\mathrm{d}(b \otimes a)\big) &= \sigma_B\big(\delta\mathrm{d}(b \otimes a)\big) = \delta\mathrm{d}b \otimes a.
	\end{align*}
	Because all assignments agree on all generators, it follows that the square commutes.
\end{proof}

We now finally establish that for any scheme map $f\colon X \to Y$ the pullback functor $f^{\ast}$ and tangent commutativity isomorphism $\!\quot{T}{f}\colon f^{\ast} \circ T_{-/Y} \Rightarrow T_{-/X} \circ f^{\ast}$ constitute a strong tangent morphism.

\begin{prop}\label{Prop: Pullback functor is strong tangent morphism}
Let $f\colon X \to Y$ be a morphism of schemes and equip each category $(\Sch_{/X},\Tbb_{\Zar{X}})$ and $(\Sch_{/Y},\Tbb_{\Zar{Y}})$ with their respective Zariski tangent structures. Then the functor and natural transformation pair $(f^{\ast},\!\quot{T}{f})\colon (\Sch_{/Y},\Tbb_{\Zar{Y}}) \to (\Sch_{/X},\Tbb_{\Zar{X}})$ is a strong morphism of tangent categories.
\end{prop}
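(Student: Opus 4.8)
The plan is to assemble the technical results already established in this section. Recall from Definition \ref{Defn: Tangent Morphism} that a pair $(F,\alpha)$ is a strong morphism of tangent categories exactly when (i) the distributive law $\alpha$ is a natural isomorphism, (ii) $F$ preserves the pullbacks and equalizers of the source tangent structure, and (iii) the five coherence squares relating $\alpha$ to the bundle projection $p$, the zero $0$, the addition $+$, the vertical lift $\ell$, and the canonical flip $c$ all commute. The whole proof then reduces to checking these three items for the pair $(f^{\ast},\!\quot{T}{f})$.

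First I would dispatch (i): by Proposition \ref{Prop: Pullback functor of schemes preserves tangent functor} the component $\!\quot{T}{f}_Z\colon f^{\ast}T_{Z/Y} \to T_{f^{\ast}Z/X}$ is the canonical isomorphism of schemes from \cite[IV.16.5.12.2]{EGA44}, and it is natural in the $Y$-scheme $Z$ by the way it was defined immediately after that proposition; hence $\!\quot{T}{f}$ is a natural isomorphism. Next, (ii) is immediate from Lemma \ref{Lemma: Pullback functor of schemes preserves tangent limits}: since $f^{\ast} = (-)\times_Y X$ commutes with all limits, it in particular preserves every finite tangent pullback power of $p$ and the equalizer exhibiting the universality of the vertical lift, so $f^{\ast}$ preserves the pullbacks and equalizers of $\Tbb_{\Zar{Y}}$.

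Finally, (iii) is precisely the conjunction of Lemmas \ref{Lemma: Pullback functor commutes with bundle map}, \ref{Lemma: Pullback functor commutes with zero map}, \ref{Lemma: Pullback commutes with addition}, \ref{Lemma: Pullback commutes with vertical lift}, and \ref{Lemma: Pullback commutes with canonical flip}, which --- after the reduction to affine schemes supplied by Lemma \ref{Lemma: Do tangent morphism stuff affine locally} --- verify that $\!\quot{T}{f}$ is compatible with $p$, $0$, $+$, $\ell$, and $c$, in that order. Stringing these five squares together yields exactly the diagrams of Definition \ref{Defn: Tangent Morphism}, and the proof concludes. There is no real obstacle remaining, since all of the computational labour was carried out in the preceding lemmas; the only point requiring care is bookkeeping --- one must confirm that the natural transformation $\!\quot{T^2}{f}$ appearing in the lift and flip squares is the one induced from $\!\quot{T}{f}$ by the formula $\!\quot{T^2}{f} = (T_{-/X} \ast \!\quot{T}{f}) \circ (\!\quot{T}{f} \ast T_{-/Y})$, which is exactly the map realized affine-locally by the ring homomorphism $\sigma_B$ in Lemmas \ref{Lemma: Pullback commutes with vertical lift} and \ref{Lemma: Pullback commutes with canonical flip}. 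With that identification made, the five lemmas furnish the required coherences and $(f^{\ast},\!\quot{T}{f})$ is a strong tangent morphism.
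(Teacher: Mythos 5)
Your proposal is correct and follows essentially the same route as the paper's proof: the paper likewise cites Proposition \ref{Prop: Pullback functor of schemes preserves tangent functor} for the invertibility of $\!\quot{T}{f}$, Lemma \ref{Lemma: Pullback functor of schemes preserves tangent limits} for preservation of tangent pullbacks and equalizers, and the five coherence lemmas for the diagrams of Definition \ref{Defn: Tangent Morphism}. Your extra remark identifying $\!\quot{T^2}{f}$ with the whiskered composite realized affine-locally by $\sigma_B$ is a sensible bookkeeping point that the paper leaves implicit.
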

\begin{proof}
The fact that the natural transformation $\!\quot{T}{f}$ is an isomorphism is Proposition \ref{Prop: Pullback functor of schemes preserves tangent functor} and that $f^{\ast}$ preserves all tangent pullbacks and equalizers follows from Lemma \ref{Lemma: Pullback functor of schemes preserves tangent limits}. Finally, the fact that $(f^{\ast},\!\quot{T}{f})$ satisfy the commuting diagrams of Definition \ref{Defn: Tangent Morphism} follows from Lemmas \ref{Lemma: Pullback functor commutes with bundle map}, \ref{Lemma: Pullback functor commutes with zero map}, \ref{Lemma: Pullback commutes with addition}, \ref{Lemma: Pullback commutes with vertical lift}, and \ref{Lemma: Pullback commutes with canonical flip}.
\end{proof}

We now show that the information $(T_{-/X},\!\quot{T}{f}^{-1})$ forms a pseudonatural transformation $T\colon \Sch_{/-} \Rightarrow \Sch_{/-}$ of the pseudofunctor $\Sch_{/-}\colon \Sch \to \fCat$ given by 
\[
\Sch_{/-}(X) := \Sch_{/X}; \qquad \Sch_{/-}(f) := f^{\ast}\colon \Sch_{/Y} \to \Sch_{/X}, f^{\ast}Z := Z \times_Y X.
\]
on objects and morphisms. For any composable pair of morphisms $X \xrightarrow{f} Y \xrightarrow{g} Z$, the compositor natural isomorphisms $\phi_{f,g}\colon f^{\ast} \circ g^{\ast} \Rightarrow (g \circ f)^{\ast}$ are induced by the cancelation and reassociation isomorphisms
\[
\xymatrix{
f^{\ast}\left(g^{\ast}W\right) \ar@{=}[r] \ar[d]_{\phi_{f,g}^{W}}^{\text{def'n}} & f^{\ast}\left(W \times_Z Y\right) \ar@{=}[r] & (W \times_Z Y) \times_Y X \ar[d]^{\cong} \\
(g \circ f)^{\ast}W & W \times_Z X \ar@{=}[l] & \ar[l]^-{\cong} W \times_Z (Y \times_Y X)
}
\]
for any $Z$-scheme $W$. Affine-locally, if $X = \Spec A, Y = \Spec B, Z = \Spec C$, and $W = \Spec D$ this is the composition of spectra of canonical isomorphisms
\[
(D \otimes_C B) \otimes_B A \cong D \otimes_C (B \otimes_B A) \cong D \otimes_C A.
\]
In what follows, we will denote the natural isomorphism $D \otimes_C A \xrightarrow{\cong} (D \otimes_C B) \otimes_B A$ by 
\[
\omega_D^{A,B,C}\colon D \otimes_C A \xrightarrow{\cong} (D \otimes_C B) \otimes_B A;
\] 
note that $\omega_{D}^{A,B,C}(d\otimes a) = (d \otimes 1_B) \otimes a$ for any pure tensor $d \otimes a \in D \otimes_C A$.
Now to show that $(T_{-/X},\!\quot{T}{f}^{-1})$ constitute a pseudonatural transformation of $\Sch_{/-}$, because $\Sch$ is a $1$-category regarded as a $2$-category by inserting trivial $2$-cells, it suffices to prove that the pasting diagram
\[
\begin{tikzcd}
\Sch_{/Z} \ar[r, ""{name = UpLeft}]{}{g^{\ast}} \ar[d, swap]{}{T_{-/Z}} & \Sch_{/Y} \ar[d]{}{T_{-/Y}} \ar[r, ""{name = UpRight}]{}{f^{\ast}} & \Sch_{/X} \ar[d]{}{T_{/X}} \\
\Sch_{/Z} \ar[rr, bend right = 40, swap, ""{name = DownMid}]{}{(g \circ f)^{\ast}} \ar[r, swap, ""{name = DownLeft}]{}{g^{\ast}} & \Sch_{/Y} \ar[r, swap, ""{name = DownRight}]{}{f^{\ast}} & \Sch_{/X} \ar[from = UpLeft, to = DownLeft, Rightarrow, shorten <= 4pt, shorten >= 4pt]{}{\!\quot{T}{g}^{-1}} \ar[from = UpRight, to = DownRight, Rightarrow, shorten <= 4pt, shorten >= 4pt]{}{\!\quot{T}{f}^{-1}} \ar[from = 2-2, to = DownMid, Rightarrow, shorten <= 2pt, shorten >= 4pt]{}{\phi_{f,g}}
\end{tikzcd}
\] 
is equivalent to the pasting diagram:
\[
\begin{tikzcd}
 & \Sch_{/Y} \ar[dr]{}{f^{\ast}} \\
\Sch_{/Z} \ar[d, swap]{}{T_{-/Z}} \ar[ur]{}{g^{\ast}} \ar[rr, ""{name = Mid}]{}[description]{(g \circ f)^{\ast}} & & \Sch_{/X} \ar[d]{}{T_{-/X}} \\
\Sch_{-/Z} \ar[rr, swap, ""{name = Down}]{}{(g \circ f)^{\ast}} & & \Sch_{/X}
\ar[from = 1-2, to = Mid, Rightarrow, shorten <= 4pt, shorten >= 4pt]{}{\phi_{f,g}} \ar[from = Mid, to = Down, Rightarrow, shorten <= 6pt, shorten >= 4pt]{}{\!\quot{T}{g \circ f}^{-1}}
\end{tikzcd}
\]
It is worth noting that because the natural transformations $\!\quot{T}{f}_W\colon f^{\ast} \circ T_{-/Y} \Rightarrow T_{-/X} \circ f^{\ast}$ are generated affine-locally by the isomorphisms
\[
\theta_B\colon \Sym\left(\Omega_{B \otimes_C A/A}^1\right) \to \Sym\left(\Omega^1_{B/C}\right) \otimes_C A
\]
defined on pure tensors by $b \otimes a \mapsto b \otimes a$ and $\mathrm{d}(b \otimes a) \mapsto \mathrm{d}b \otimes a$, the map $\!\quot{T^{-1}}{f}$ is generated affine-locally by the inverse isomorphisms
\[
\theta_B^{-1}\colon \Sym\left(\Omega_{B/C}^1\right) \otimes_C A \to \Sym\left(\Omega^1_{B \otimes_C A/A}\right)
\]
defined by $b \otimes a \mapsto b \otimes a$ and $\mathrm{d}b \otimes a \mapsto \mathrm{d}(b \otimes a)$ on pure tensors. Furthermore, it follows from Lemma \ref{Lemma: Do tangent morphism stuff affine locally} and the fact that the isomorphisms above are all Zariski-locally defined, it even suffices to prove the equivalence of the pasting diagrams above on affine schemes using the maps $\theta^{-1}_B$.

\begin{prop}\label{Prop: Tangent functor and inverse iso is pseudonat}
For any composable pair of morphisms of schemes $X \xrightarrow{f} Y \xrightarrow{g} Z$, the pasting diagram
\[
\begin{tikzcd}
\Sch_{/Z} \ar[r, ""{name = UpLeft}]{}{g^{\ast}} \ar[d, swap]{}{T_{-/Z}} & \Sch_{/Y} \ar[d]{}{T_{-/Y}} \ar[r, ""{name = UpRight}]{}{f^{\ast}} & \Sch_{/X} \ar[d]{}{T_{/X}} \\
\Sch_{/Z} \ar[rr, bend right = 40, swap, ""{name = DownMid}]{}{(g \circ f)^{\ast}} \ar[r, swap, ""{name = DownLeft}]{}{g^{\ast}} & \Sch_{/Y} \ar[r, swap, ""{name = DownRight}]{}{f^{\ast}} & \Sch_{/X} \ar[from = UpLeft, to = DownLeft, Rightarrow, shorten <= 4pt, shorten >= 4pt]{}{\!\quot{T}{g}^{-1}} \ar[from = UpRight, to = DownRight, Rightarrow, shorten <= 4pt, shorten >= 4pt]{}{\!\quot{T}{f}^{-1}} \ar[from = 2-2, to = DownMid, Rightarrow, shorten <= 2pt, shorten >= 4pt]{}{\phi_{f,g}}
\end{tikzcd}
\] 
is equivalent to the pasting diagram:
\[
\begin{tikzcd}
& \Sch_{/Y} \ar[dr]{}{f^{\ast}} \\
\Sch_{/Z} \ar[d, swap]{}{T_{-/Z}} \ar[ur]{}{g^{\ast}} \ar[rr, ""{name = Mid}]{}[description]{(g \circ f)^{\ast}} & & \Sch_{/X} \ar[d]{}{T_{-/X}} \\
\Sch_{-/Z} \ar[rr, swap, ""{name = Down}]{}{(g \circ f)^{\ast}} & & \Sch_{/X}
\ar[from = 1-2, to = Mid, Rightarrow, shorten <= 4pt, shorten >= 4pt]{}{\phi_{f,g}} \ar[from = Mid, to = Down, Rightarrow, shorten <= 6pt, shorten >= 4pt]{}{\!\quot{T}{g \circ f}^{-1}}
\end{tikzcd}
\]
In particular, $(T_{-/X},\!\quot{T}{f}^{-1})$ describe the data of a pseudonatural transformation 
\[
T\colon \Sch_{/-} \Rightarrow \Sch_{/-}\colon \Sch \to \fCat.
\]
\end{prop}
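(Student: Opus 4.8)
The plan is to reduce the entire statement to a single commuting-pentagon of ring maps on affine schemes, exactly as the paragraph preceding the proposition sets up. By Lemma \ref{Lemma: Do tangent morphism stuff affine locally} (applied to the cospan $X \to Z \leftarrow \ast$, or rather to the relevant relative-scheme constructions) together with the fact that all the isomorphisms in sight --- the $\theta_B^{-1}$, the compositors $\phi_{f,g}$, and the reassociation maps $\omega_D^{A,B,C}$ --- are quasi-coherent and hence defined Zariski-locally, it suffices to check the equivalence of the two pastings on affine schemes. So I would set $X = \Spec A$, $Y = \Spec B$, $Z = \Spec C$, take an arbitrary affine $Z$-scheme $W = \Spec D$, and chase the generators $d \otimes a$ and $\mathrm{d}(d \otimes a)$ around the resulting diagram of symmetric algebras.

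Concretely, the left-hand pasting, evaluated on $W$, is the composite
\[
\Sym\!\left(\Omega^1_{(D\otimes_C B)\otimes_B A / A}\right) \xrightarrow{\ \theta_{D\otimes_C B}^{-1}\ } \Sym\!\left(\Omega^1_{D\otimes_C B / B}\right)\otimes_B A \xrightarrow{\ \theta_D^{-1}\otimes \id_A\ } \left(\Sym\!\left(\Omega^1_{D/C}\right)\otimes_C B\right)\otimes_B A,
\]
post-composed with the symmetric-algebra image of the reassociation $\omega$, while the right-hand pasting is $\theta_D^{-1}$ for the composite $g\circ f$ post/pre-composed with the image of $\phi_{f,g}$. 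On a pure tensor the first path sends $d \otimes a \mapsto d \otimes a \mapsto (d\otimes 1_B)\otimes a \mapsto ((d\otimes 1_B)\otimes 1_B)\otimes a$ and reassociates to $(d\otimes 1_B)\otimes a$; on a differential it sends $\mathrm{d}(d\otimes a)\mapsto \mathrm{d}(d\otimes 1_B)\otimes a \mapsto (\mathrm{d}d\otimes 1_B)\otimes a$, and one checks the second path produces the same values via $\theta_D^{-1}$ followed by $\Sym(\Omega^1_{(-)/-})$ applied to $\omega_D^{A,B,C}$. Since $\Sym$ and $\Omega^1_{(-)/-}$ are functorial and commute with the tensor products involved (as recalled in Lemma \ref{Lemma: Do tangent morphism stuff affine locally}), the two assignments agree on all generators, hence the two pastings agree. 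Naturality in $W$ is automatic because every map in play is already known to be natural (the $\theta$'s by Corollary \ref{Cor: Affine scheme tangent iso} and Proposition \ref{Prop: Pullback functor of schemes preserves tangent functor}, the $\phi_{f,g}$ as pseudofunctor compositors, and $\omega$ as the canonical reassociation). Unitality of the candidate pseudonatural transformation $T$ follows from the normalization convention ($F(\id)=\id$) together with $\theta_B = \id$ when $f = \id$, so $(T_{-/X}, \!\quot{T}{f}^{-1})$ indeed assembles into a pseudonatural transformation $T\colon \Sch_{/-} \Rightarrow \Sch_{/-}$.

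The main obstacle I anticipate is purely bookkeeping rather than conceptual: getting the several canonical isomorphisms --- $\theta^{-1}$, the compositor $\phi_{f,g}$, the reassociator $\omega_D^{A,B,C}$, and the unitor $B\otimes_B A \cong A$ --- threaded through the symmetric-algebra functor in precisely the right order so that the ``pure tensor'' and ``differential'' generator computations line up on the nose. In particular one must be careful that $\Sym$ is applied to $\Omega^1$ of the \emph{already-reassociated} algebra, and that the identification $T^2$ uses the iterated construction $\sigma_B$ correctly if one needed to go one level up --- but here only the first-order tangent functor appears in the pentagon, so that subtlety does not actually intervene. Once the diagram is drawn with all arrows labelled, the verification is a routine generator chase of the kind already carried out in Lemmas \ref{Lemma: Pullback functor commutes with bundle map} through \ref{Lemma: Pullback commutes with canonical flip}.
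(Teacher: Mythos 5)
Your proposal is correct and follows essentially the same route as the paper's proof: reduce to affine schemes via Lemma \ref{Lemma: Do tangent morphism stuff affine locally}, express both pastings as composites of the ring maps $\theta^{-1}$, the compositor, and the reassociator $\omega$, and check agreement on the generators $d \otimes a$ and $\mathrm{d}d \otimes a$, landing on $(d\otimes 1_B)\otimes a$ and $\mathrm{d}\big((d\otimes 1_B)\otimes a\big)$ for both paths. The only caveat is a bookkeeping slip you yourself anticipate: the arrows in your displayed composite run in the direction of $\theta$ rather than $\theta^{-1}$ under the paper's convention (and the reassociator should be pre-composed rather than post-composed in the ring-map picture), but since every map involved is an isomorphism this does not affect the validity of the generator chase.
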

\begin{proof}
After expanding out the pasting diagrams, we see that we are asking to show that the morphism $\alpha$ encoding the first pasting diagram defined via the diagram
\[
\begin{tikzcd}
T_{-/X} \circ f^{\ast} \circ g^{\ast} \ar[rr, Rightarrow]{}{\!\quot{T^{-1}}{f} \ast g^{\ast}} \ar[d, swap, Rightarrow]{}{\alpha} & & f^{\ast} \circ T_{-/Y} \circ g^{\ast} \ar[d, Rightarrow]{}{f^{\ast} \circ \!\quot{T^{-1}}{g}} \\
(g \circ f)^{\ast} \circ T_{-/Z} & & f^{\ast} \circ g^{\ast} \circ T_{-/Z} \ar[ll, Rightarrow]{}{\phi_{f,g} \ast T_{-/Z}}
\end{tikzcd}
\]
is equal to the morphism $\beta$ encoding the second pasting diagram defined via the diagram:
\[
\begin{tikzcd}
T_{-/X} \circ f^{\ast} \circ g^{\ast} \ar[drr, Rightarrow, swap]{}{\beta} \ar[rr, Rightarrow]{}{T_{-/X} \ast \phi_{f,g}} & & T_{-/X} \circ (g \circ f)^{\ast} \ar[d, Rightarrow]{}{\!\quot{T^{-1}}{g \circ f}} \\
 & & (g \circ f)^{\ast} \circ T_{-/Z}
\end{tikzcd}
\]
To show this we let $W$ be an arbitrary $Z$-scheme. From the discussion prior to the statement of the proposition, we know it suffices to prove the equality of $\alpha$ and $\beta$ on affine schemes. For this we assume that $X \cong \Spec A$, $Y \cong \Spec B$, $Z \cong \Spec C$, and $W \cong \Spec R$ in the diagram
\[
\xymatrix{
 & & \Spec R \ar[d] \\
\Spec A \ar[r]_-{f} & \Spec B \ar[r]_-{g} & \Spec C
}
\]
of schemes. 

We will now calculate how $\alpha$ acts on its domain. It is a straightfoward unwinding of definitions to check that $\alpha_W = \alpha_R$ is the spectrum of the composite of ring maps 
\[
\Gamma(\alpha^{\sharp}_R) =  \left(\theta_{R \otimes_C B}^{A,B}\right)^{-1} \circ \left(\left(\theta_R^{B,C}\right)^{-1} \otimes \id_A\right) \circ \omega_{\Sym(\Omega^1_{R/C})}^{A,B,C}
\] 
which is seen visually as
\[
\Sym\left(\Omega_{R/C}^1\right) \otimes_C A \xrightarrow{} \left(\Sym\left(\Omega_{R/C}^1\right) \otimes_C B\right) \otimes_B A \xrightarrow{} \left(\Sym\left(\Omega_{R \otimes_C B/B}\right)\right) \otimes_{B} A \to \Sym\left(\Omega_{(R \otimes_C B) \otimes_B A/A}\right).
\]
Fix now a pure tensor $r \otimes a$ in $\Sym(\Omega_{R/C}^1) \otimes_C A$ with $r \in R$ and $a \in A$. We then calculate that
\begin{align*}
\Gamma(\alpha_R^{\sharp})(r \otimes a) &= \left(\left(\theta_{R \otimes_C B}^{A,B}\right)^{-1} \circ \left(\left(\theta_R^{B,C}\right)^{-1} \otimes \id_A\right) \circ \omega_{\Sym(\Omega^1_{R/C})}^{A,B,C}\right)(r \otimes a) \\
&= \left(\left(\theta_{R \otimes_C B}^{A,B}\right)^{-1} \circ \left(\left(\theta_R^{B,C}\right)^{-1} \otimes \id_A\right)\right)\big((r \otimes 1_B) \otimes a\big) \\
&= (r \otimes 1_B) \otimes a
\end{align*}
while for differential pure tensors $\mathrm{d}r \otimes a$ we get that
\begin{align*}
\Gamma(\alpha_R^{\sharp})(\mathrm{d}r \otimes a) &= \left(\left(\theta_{R \otimes_C B}^{A,B}\right)^{-1} \circ \left(\left(\theta_R^{B,C}\right)^{-1} \otimes \id_A\right) \circ \omega_{\Sym(\Omega^1_{R/C})}^{A,B,C}\right)(\mathrm{d}r \otimes a) \\
&= \left(\left(\theta_{R \otimes_C B}^{A,B}\right)^{-1} \circ \left(\left(\theta_R^{B,C}\right)^{-1} \otimes \id_A\right)\right)\big((\mathrm{d}r \otimes 1_B) \otimes a\big) = \left(\theta_{R \otimes_C B}^{A,B}\right)^{-1}\big(\mathrm{d}(r \otimes 1_B) \otimes a\big) \\
&= \mathrm{d}\big((r \otimes 1_B) \otimes a\big).
\end{align*}
Because the pure tensors of the form $r \otimes a, \mathrm{d}r \otimes a$ for $r \in R$ and $a \in A$ linearly generate $\Sym(\Omega_{R/C}^1) \otimes_C A$, this completely determines the action of $\Gamma(\alpha_R^{\sharp})$ after extending linearly.

We now calculate the action of $\beta_W = \beta_R$ on its domain. By construction, $\beta_{R}$ is the composite of the ring maps
\[
\Gamma(\beta_{R}^{\sharp}) := \Sym(\Omega^1(\omega_R^{A,B,C}))\circ \left(\theta_{R}^{A,C}\right)^{-1}
\]
which is seen visually as
\[
\Sym\left(\Omega_{R/C}^1\right) \otimes_C A \xrightarrow{} \Sym\left(\Omega^1_{R \otimes_C A/A}\right) \xrightarrow{} \Sym\left(\Omega^1_{(R \otimes_C B) \otimes_B A}\right).
\]
We then compute that on pure tensors $r \otimes a$ for $r \in R$ and $a \in A$,
\begin{align*}
\Gamma(\beta_{R}^{\sharp})(r \otimes a) &= \left(\Sym(\Omega^1(\omega_R^{A,B,C}))\circ \left(\theta_{R}^{A,C}\right)^{-1}\right)(r \otimes a) = \Sym(\Omega^1(\omega_R^{A,B,C}))(r \otimes a) \\
&= (r \otimes 1_B) \otimes a.
\end{align*}
while for differentials pure tensors $\mathrm{d}r \otimes a$ we get
\begin{align*}
\Gamma(\beta_{R}^{\sharp})(\mathrm{d}r \otimes a) &= \left(\Sym(\Omega^1(\omega_R^{A,B,C}))\circ \left(\theta_{R}^{A,C}\right)^{-1}\right)(\mathrm{d}r \otimes a) = \Sym(\Omega^1(\omega_R^{A,B,C}))\big(\mathrm{d}(r \otimes a)\big) \\
&= \mathrm{d}\big((r \otimes 1_B) \otimes a\big).
\end{align*}
Because the pure tensors of the form $r \otimes a, \mathrm{d}r \otimes a$ for $r \in R$ and $a \in A$ linearly generate $\Sym(\Omega_{R/C}^1) \otimes_C A$, this completely determines the action of $\Gamma(\beta_R^{\sharp})$ after extending linearly. Furthermore, because $\Gamma(\alpha_R^{\sharp})$ and $\Gamma(\beta_R^{\sharp})$ coincide on generators, it follows that they coincide and we get that $\Gamma(\alpha_R^{\sharp}) = \Gamma(\beta_R^{\sharp})$. This implies that $\alpha_R = \beta_R$ and so the two pasting diagrams are equivalent by once again invoking Lemma \ref{Lemma: Do tangent morphism stuff affine locally}. Furthermore, because $\Sch$ is regarded as a $2$-category with trivial $2$-cells, the equivalence of these pasting diagrams implies that the data $(T_{-/X},\!\quot{T}{f}^{-1})$ defines a pseudonatural transformation $T\colon \Sch_{/-} \Rightarrow \Sch_{/-}$.
\end{proof}

With all these facts, we now have the ingredients to define and construct the (equivariant) tangent structure on the equivariant category of schemes over a base variety $X$ by way of the Zariski tangent indexing functor.

\begin{dfn}\label{Defn: Equivariant Zariski category}
Let $G$ be a smooth algebraic group and let $X$ be a (quasi-projective) left $G$-variety. Define the pre-equivariant pseudofunctor $\Scal$ to be the one given in Item (11) of Example \ref{Example: Pre-equivariant pseudofunctors}; i.e., $\Scal = (\Scal,\Sch_{/(-)})$ is the pre-equivariant pseudofunctor:
\[
\begin{tikzcd}
\Sf(G)^{\op} \ar[rr]{}{\quo_{(-) \times X}^{\op}} \ar[drr, swap]{}{\Scal} & & \Var_{/K}^{\op} \ar[d]{}{\Sch_{/(-)}} \\
 & & \fCat
\end{tikzcd}
\]
We then define the category of {\em descent equivariant schemes}\footnote{The idea here is that the category $(\Sch_{/X})_G$ describes all possible equivariant descent information as recorded by the actual schemes over $X$.} to be the equivariant category $(\Sch_{/X})_G := \Scal_G(X)$. We also define the {\em descent equivariant Zariski tangent category on $X$} to be the pseudolimit tangent category $((\Sch_{/X})_G,\Tbb)$.
\end{dfn}
The fact that the definition of the descent equivariant Zariski tangent category is actually a tangent category is justified by the theorem below.
\begin{Theorem}\label{Thm: Equivariant Zariski Tangent Structure}
Let $G$ be a smooth algebraic group and let $X$ be a (quasi-projective) left $G$-variety. Then the equivariant Zariski tangent category $((\Sch_{/X})_G,\Tbb)$ is a tangent category.
\end{Theorem}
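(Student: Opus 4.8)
The strategy is to verify that the pre-equivariant pseudofunctor $\Scal\colon \Sf(G)^{\op} \to \fCat$ of Definition \ref{Defn: Equivariant Zariski category} is a tangent indexing functor in the sense of Definition \ref{Defn: tangent pre-equivariant indexing functor}, and then invoke Theorem \ref{Thm: Pre-Equivariant Tangent Category} to conclude that $((\Sch_{/X})_G,\Tbb) = (\PC(\Scal),\Tbb)$ is a tangent category. Concretely, I must check the three conditions of Definition \ref{Defn: tangent pre-equivariant indexing functor} for $\Scal$: that each $\Scal(\Gamma) = \Sch_{/\!\quot{X}{\Gamma}}$ is a tangent category; that each morphism functor $\Scal(f) = \of^{\ast}$ is the functor-component of a strong tangent morphism; and that the witness isomorphisms assemble into a pseudonatural transformation $T\colon \Scal \Rightarrow \Scal$.

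\textbf{Step 1: the fibre categories are tangent categories.} For each object $\Gamma$ of $\Sf(G)$, the scheme $\!\quot{X}{\Gamma} = G\backslash(\Gamma \times X)$ is a $K$-variety (in particular a scheme), so by Theorem \ref{Thm: Zariski tangent structure} and Definition \ref{Defn: Zariski tangent structure on Sch over X} the category $\Sch_{/\!\quot{X}{\Gamma}}$ carries the Zariski tangent structure $\Tbb_{\Zar{\!\quot{X}{\Gamma}}}$. This gives Condition (1).

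\textbf{Step 2: the transition functors are strong tangent morphisms.} For a morphism $f\colon \Gamma \to \Gamma^{\prime}$ in $\Sf(G)$, Proposition \ref{Prop: Smooth free quotients} produces the induced scheme morphism $\of\colon \!\quot{X}{\Gamma} \to \!\quot{X}{\Gamma^{\prime}}$, and $\Scal(f) = \of^{\ast}$. By Proposition \ref{Prop: Pullback functor is strong tangent morphism}, applied to the scheme morphism $\of$, the pair $(\of^{\ast},\!\quot{T}{f})$ — where $\!\quot{T}{f} = \!\quot{T}{\of}$ is the canonical isomorphism of Proposition \ref{Prop: Pullback functor of schemes preserves tangent functor} — is a strong morphism of tangent categories $(\Sch_{/\!\quot{X}{\Gamma^{\prime}}},\Tbb_{\Zar{\!\quot{X}{\Gamma^{\prime}}}}) \to (\Sch_{/\!\quot{X}{\Gamma}},\Tbb_{\Zar{\!\quot{X}{\Gamma}}})$. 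This gives Condition (2). Note the work here is already done: Proposition \ref{Prop: Pullback functor is strong tangent morphism} is stated for an arbitrary scheme morphism, so we simply instantiate it at $\of$.

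\textbf{Step 3: pseudonaturality.} We must check that $(T_{-/\!\quot{X}{\Gamma}}, \!\quot{T}{f}^{-1})$ forms a pseudonatural transformation $T\colon \Scal \Rightarrow \Scal$, i.e., that the witness isomorphisms are compatible with the compositors $\phi_{f,g}$ of $\Scal$. The compositors of $\Scal$ come from those of the pseudofunctor $\Sch_{/(-)}$ precomposed with $\quo_{(-)\times X}^{\op}$ and the (pseudofunctorial) identification of $\overline{g \circ f}$ with $\of \circ \overline{g}$ from Proposition \ref{Prop: Smooth free quotients}; so it suffices to establish the coherence for the pseudofunctor $\Sch_{/(-)}\colon \Sch \to \fCat$ and the data $(T_{-/(-)},\!\quot{T}{(-)}^{-1})$. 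This is exactly Proposition \ref{Prop: Tangent functor and inverse iso is pseudonat}. This gives Condition (3).

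\textbf{Conclusion and main obstacle.} Having verified all three conditions, $\Scal$ is a tangent indexing functor, and Theorem \ref{Thm: Pre-Equivariant Tangent Category} immediately yields that $\PC(\Scal) = (\Sch_{/X})_G$ carries the pseudolimit tangent structure $\Tbb = (\underline{T},\underline{p},\underline{0},\underline{+},\underline{\ell},\underline{c})$, so it is a tangent category. The only genuine subtlety — and the one point requiring a small argument rather than a citation — is making sure the compositor $2$-cells of $\Scal$ really do reduce the pseudonaturality of $T\colon \Scal \Rightarrow \Scal$ to that of $T\colon \Sch_{/(-)} \Rightarrow \Sch_{/(-)}$; this is where the pseudofunctoriality of $\Gamma \mapsto \!\quot{X}{\Gamma}$ from Proposition \ref{Prop: Smooth free quotients} does its work, and one checks that the $\of$-assignment's structure isomorphisms are compatible with the base-change compositors of $\Sch_{/(-)}$ by the usual affine-local bookkeeping of Lemma \ref{Lemma: Do tangent morphism stuff affine locally}. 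Everything else is a direct instantiation of results already proved in this section.
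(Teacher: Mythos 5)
Your proposal is correct and follows essentially the same route as the paper: verify that $\Scal$ is a tangent indexing functor by citing Theorem \ref{Thm: Zariski tangent structure} for the fibre tangent structures, Proposition \ref{Prop: Pullback functor is strong tangent morphism} for the strong tangent morphisms $(\of^{\ast},\!\quot{T}{\of})$, and Proposition \ref{Prop: Tangent functor and inverse iso is pseudonat} for pseudonaturality, then apply Theorem \ref{Thm: Pre-Equivariant Tangent Category}. Your additional remark about checking that the compositors of $\Scal$ reduce to those of $\Sch_{/(-)}$ via the pseudofunctoriality of $\Gamma \mapsto \!\quot{X}{\Gamma}$ is a point the paper leaves implicit, and making it explicit is a reasonable refinement rather than a departure.
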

\begin{proof}
By Theorem \ref{Thm: Pre-Equivariant Tangent Category} it suffices to show that the pre-equivariant pseudofunctor $\Sbb$ is a tangent pre-equivariant indexing functor. However:
\begin{enumerate}
	\item That each category $\Scal(\Gamma)$ is a tangent category is Theorem \ref{Thm: Zariski tangent structure};
	\item That each pair $(\Scal(f), \!\quot{T}{f}) = (\of^{\ast},\!\quot{T}{\of})$ is a strong tangent morphism is Proposition \ref{Prop: Pullback functor is strong tangent morphism};
	\item That the data $(\!\quot{T}{\Gamma},\!\quot{T}{f}^{-1}) = (T_{-/\XGamma},\!\quot{T^{-1}}{\of})$ constitutes a pseudonatural transformation $T\colon \Scal \Rightarrow \Scal$ follows from Proposition \ref{Prop: Tangent functor and inverse iso is pseudonat}.
\end{enumerate}
Thus $(\Scal_G(X), \Tbb_G)$ is a tangent category and we are done.
\end{proof}

We now conclude this section by showing just below in Propositions \ref{Prop: Scheme gluing equivalence of tan cats} and \ref{Prop: Equivariantifying the gluing equiv for schemes} that the descent equivariant Zariski tangent structure on $\Sch_{/X}$ is compatible with the gluings of schemes in the sense that:
\begin{enumerate}
    \item First, if we have a scheme $X$ with an open affine cover $\Ucal = \lbrace f_i\colon U_i \to X \; \left. \right| \; i \in I \rbrace$ then there is an equivalence of tangent categories 
    \[
    \left(\Sch_{/X},T_{\Zar{X}}\right) \simeq \operatorname{pseudolim} \left(\Sch_{/U_i}, T_{\Zar{U_i}}\right).
    \]
    \item Second, these gluing pseudolimits are compatible with the equivariant Zariski cover in the sense that if we have an open affine cover $\lbrace f_i\colon U_i \to X \; \left. \right| \; i \in I \rbrace$ by \emph{equivariant} afffine opens then there is an equivalence
    \[
    \left(\left(\Sch_{/X}\right)_G,\Tbb_X\right) \simeq \operatorname{pseudolim} \left(\left(\Sch_{/U_i}\right)_G,\Tbb_{U_i}\right)
    \]
    where each $\Tbb_{(-)}$ is the tangent structure defined in Theorem \ref{Thm: Equivariant Zariski Tangent Structure}.
\end{enumerate}
\begin{prop}\label{Prop: Scheme gluing equivalence of tan cats}
Let $X$ be a scheme and let $\Ucal = \lbrace f_i\colon U_i \to X \; \left. \right| \; i \in I \rbrace$ be an affine open cover of $X$. Then there is an equivalence of tangent categories
\[
\left(\Sch_{/X},\Tbb_{\Zar{X}}\right) \simeq \operatorname{pseudolim}\left(\Sch_{/U_i},\Tbb_{\Zar{U_i}}\right).
\]
\end{prop}
\begin{proof}
Begin by letting $\Ubb$ denote the sublattice of $\Open(X)$ generated by the open immersions $f_i\colon U_i \to X$\footnote{Alternatively, this is the same as asking to consider the sieve $\Uscr$ on $X$ in the presheaf topos $[\Open(X)^{\op},\Set]$ generated by the Zariski cover $\Ucal$ in the Zariski (Grothendieck) pretopology.}. Recall that each pullback diagram of open inclusions
\[
\begin{tikzcd}
U_i \times_{X} U_j \ar[r]{}{\operatorname{pr}_1} \ar[d, swap]{}{\operatorname{pr}_2} & U_i \ar[d]{}{f_i} \\
U_j \ar[r, swap]{}{f_j} & X
\end{tikzcd}
\]
induces an invertible $2$-cell
\[
\begin{tikzcd}
(\Sch_{/X}) \ar[rr, ""{name = U}]{}{(f_i^{\ast},\quot{T}{f_i})} \ar[d, swap]{}{(f_j^{\ast},\quot{T}{f_j})} & & \Sch_{/U_i} \ar[d]{}{(\operatorname{pr}_1^{\ast},\quot{T}{\operatorname{pr}_1})} \\
\Sch_{/U_j} \ar[rr, ""{name = D}, swap]{}{(\operatorname{pr}_2^{\ast}, \quot{T}{\operatorname{pr}_2})} & & \Sch_{/U_i \times_X U_j}
\ar[from = U, to = D, Rightarrow, shorten <= 4pt, shorten >= 4pt]{}{\gamma_{ij}}
\ar[from = U, to = D, Rightarrow, shorten <= 4pt, shorten >= 4pt, swap]{}{\cong}
\end{tikzcd}
\]
of pullback functors. Furthermore, as we have indicated above, each pullback functor is a strong tangent morphism (in the lax direction) by Proposition \ref{Prop: Pullback functor is strong tangent morphism} when we equip each scheme category with the corresponding Zariski tangent structure. A routine check using the universal properties of the various $\Sym$ functors involved in defining the tangent functors shows that the transformation $\gamma_{ij}$ is a tangent transformation. Using that the Zariski topology is subcanonical as a Grothendieck topology and that $\Ucal$ is an affine open cover of $X$ then implies that we have an equivalence of categories
\[
\Sch_{/X} \simeq \operatorname{pseudolim} \Sch_{/U_i}.
\]

Now observe that we can write the pseudolimit $\operatorname{pseudolim}\Sch_{/U_i}$ as a pseudocone category for a tangent indexing functor as follows. Consider the pseudofunctor
\[
T_{\Zar{(-)}}\colon\Ubb^{\op} \to \fCat
\]
given by sending objects $V \mapsto \Sch_{/V}$ and sending morphisms of open immersions $g\colon V \to W$ to the pullback tangent morphism
\[
(g^{\ast},\quot{T}{g})\colon\left(\Sch_{/W},\Tbb_{\Zar{W}}\right) \to \left(\Sch_{/V},\Tbb_{\Zar(V)}\right)
\]
asserted by Proposition \ref{Prop: Pullback functor is strong tangent morphism}. By Proposition \ref{Prop: Pullback functor of schemes preserves tangent functor} this determines a tangent indexing functor. Observe also that effective Zariski descent then gives an equivalence of categories
\[
\operatorname{pseudolim} \Sch_{/U_i} \simeq \PC(T_{\operatorname{Zar(-)}})
\]
and hence by composing equivalences we get
\[
\Sch_{/X} \simeq \PC(T_{\Zar{(-)}})
\]
in $\fCat$ induced by the assignment
\[
\begin{tikzcd}
Z \ar[d] \\
X
\end{tikzcd} \mapsto \begin{tikzcd}
Z \times_{U_i} X \ar[d] \\
U_i
\end{tikzcd}
\]
for all $X$-schemes $Z \to X$ and all open immersions $f_i\colon U_i \to X$ in $\Ucal$. Now, since the above assignment is induced as a pseudocone over $T_{\Zar{(-)}}$, it follows from Theorem \ref{Thm: Pseudolimits in Tancat} that $\Sch_{/X}$ equipped with the Zariski tangent structure is equivalent to the desired pseudolimit in $\fTan$.
\end{proof}
\begin{prop}\label{Prop: Equivariantifying the gluing equiv for schemes}
Let $\Ucal := \lbrace f_i\colon U_i \to X \; \left. \right| \; i \in I \rbrace$ be an affine open cover of a $K$-variety $X$ for which each map $f_i$ is $G$-equivariant for a smooth algebraic group $G$. Then there is an equivalence of tangent categories
\[
\left(\left(\Sch_{/X}\right)_G,\Tbb_{X}\right) \simeq \operatorname{pseudolim} \left(\left(\Sch_{/U_i}\right)_G, \Tbb_{U_i}\right).
\]
\end{prop}
\begin{proof}
Because the cover $\Ucal$ is $G$-equivariant, it follows from a routine argument that for any smooth free $G$-variety $\Gamma$, the set
\[
\Ucal_{\Gamma} = \left\lbrace f_i \times \id_{\Gamma}: U_i \times \Gamma \to X \times \Gamma \; \left. \right| \; i \in I\right\rbrace
\]
is a $G$-equivariant open cover of $X \times \Gamma$. As such, by Proposition \ref{Prop: Scheme gluing equivalence of tan cats} we know that for every smooth free $G$-variety $\Gamma$ we have equivalences of categories
\[
\Sch_{/X \times \Gamma} \simeq \operatorname{pseudolim} \Sch_{/U_i \times \Gamma}
\]
and hence, after taking $G$-quotients,
\[
\Sch_{/\quot{X}{\Gamma}} \simeq \operatorname{pseudolim} \Sch_{/\quot{U_i}{\Gamma}}
\]
for all open immersions $f_i\colon U_i \to X$. Furthermore, these equivalences are pseudonatural in $\Sf(G)$ and so we can proceed as in the proof of Proposition \ref{Prop: Scheme gluing equivalence of tan cats} by using Theorem \ref{Theorem: Defining functors and nat transforms between equivariant categories and functors} to produce functors between the our categories.
\end{proof}

\section{A Sketch of the Descent Equivariant Tangent Structure on Smooth Manifolds}\label{Section: Descent Equivariant Manfiold stuffs}

Perhaps unsurprisingly, we can play a similar game in the case of smooth manifolds as what we did with varieties and produce a tangent category of descent equivariant smooth manifolds. However, in order to define such a tangent category over a given left $L$-manifold $M$, we need, analogously to the scheme case, the result that the pre-equivariant pseudofunctor $\Mcal = (\Mcal, \SMan_{/(-)})$
\[
\begin{tikzcd}
\FMan(L)^{\op} \ar[rr]{}{\quo_{(-) \times M}^{\op}} \ar[drr, swap]{}{\Mcal} & & \SMan_{\operatorname{sub}}^{\op} \ar[d]{}{\SMan_{/(-)}} \\
 & & \fCat
\end{tikzcd}
\]
defines a tangent indexing functor. In this shorter section we sketch how to go about this by first recalling some general tangent category theory regarding tangent display morphisms, relative/vertical tangent structures, and how the two concepts interact. To motivate this discussion, let us pose the following question. If $f\colon X \to Y$ is a morphism in a tangent category $(\Cscr,\Tbb)$ for which all pullbacks against $f$ exist and are preserved by all powers $T^m$ for $m \in \N$, then what tangent structures can we equip each of $\Cscr_{/X}$ and $\Cscr_{/Y}$ with to make the functor $f^{\ast}\colon\Cscr_{/Y} \to \Cscr_{/X}$ into a strong tangent morphism?

Recall that when given a tangent category $\Cscr$, there are two main techniques for building tangent structures on the slice categories $\Cscr_{/X}$. The first is the direct approach as described by \cite[Proposition 2.5]{GeoffRobinDiffStruct} and \cite{Rosicky}: simply define the tangent functor on the slice category $T\colon\Cscr_{/X} \to \Cscr_{/X}$ by the assignment:
\[
\begin{tikzcd}
Z \ar[d]{}{h} \\
X
\end{tikzcd} \mapsto \begin{tikzcd}
TZ \ar[d]{}{} \ar[r]{}{Th} & TX \ar[dl]{}{p_X} \\
 X
\end{tikzcd}
\]
The other family of tangent structures on the slice categories $\Cscr_{/X}$ can be defined less often (in the sense that the construction required need not always exist), but is the one we will need to consider in order to make the pullback functor $f^{\ast}$ a strong tangent functor when $f$ is a surjective submersion. These are the tangent structures on $\Cscr_{/X}$ defined by use of what is called the relative tangent bundle of a map $f\colon X \to Y$ in \cite{GeoffJS}.

Relative tangent bundles were originally discussed and discovered by Rosick{\'y} in \cite[Pages 5, 6]{Rosicky} in his original paper on tangent categories. There the relative tangent bundle functors $T_{(-)/Y}$ are ways of building tangent structures on the categories $\Cscr_{/Y}$ which are not simply induced by applying the ``global'' tangent structure on $\Cscr$ to the ``local-to-$Y$'' category $\Cscr_{/Y}$ by selecting only the tangent vectors which live ``in the direction directly above $Y$.'' Rosick{\'y} did this by considering equalizers of the form:
\[
\begin{tikzcd}
V_{f}(X) \ar[r]{}{\iota_f} & TX \ar[rr, shift left = 1]{}{Tf} \ar[rr, shift right = 1, swap]{}{0_Y \circ p_Y \circ Tf} & & TY
\end{tikzcd}
\]
However, we use the following alternative construction of \cite{GeoffJS} to construct relative tangent bundles.
\begin{dfn}[{\cite{GeoffJS}}]\label{Defn: Relative tangent bundle}
Let $f\colon X \to Y$ be a morphism in a tangent category $\Cscr$. The \emph{relative tangent bundle of $X$ over $Y$}, $V_{f}(X)$, is the pullback
\[
\begin{tikzcd}
V_{f}(X) \ar[r]{}{\iota_f} \ar[d, swap]{}{\iota^f} & Y \ar[d]{}{0_Y} \\
TX \ar[r, swap]{}{Tf} & TY
\end{tikzcd}
\]
if it exists.
\end{dfn}
\begin{rmk}
The existence and basic properties of the relative bundle $V_{f}(X)$ are studied in detail in the preprint \cite{LemayVooysMorphisms} (in which a morphism $f$ is called \emph{$0$-carrable} precisely when $V_{f}(X)$ exists). There Lemay and the second author of this paper showed that whenever $V_{f}(X)$ exists and whenever the pullback
\[
\begin{tikzcd}
f^{\ast}(TY) \ar[r] \ar[d] & TY \ar[d]{}{p_Y} \\
X \ar[r, swap]{}{f} & Y
\end{tikzcd}
\]
exists, then $V_{f}(X)$ is the kernel of the unique map $\theta_f\colon TX \to f^{\ast}(TY)$ in the category $\mathbf{DBun}(X)$ of differential bundles over $X$ equipped with linear maps between them. 
\end{rmk}
\begin{example}
In $\SMan$ if $f\colon X \to Y$ is a morphism of smooth manifolds then the relative tangent bundle $V_{f}(X)$ is given by
\[
V_{f}(X) := \left\lbrace \left(x, \overrightarrow{v}\right) \in TX \; | \; D[f](x)\overrightarrow{v} = \overrightarrow{0}\right\rbrace.
\]
In particular, every morphism $f\colon X \to Y$ admits a relative tangent bundle.
\end{example}
\begin{example}
In $\mathbf{CAlg}_{R}^{\op}$ for a base (commutative) ring $R$, if $A$ is a commutative $R$-algebra then $V_{\varphi}(A) = \Sym{A}(\Kah{R}{A})$; i.e., the relative tangent bundle of $A$ over $R$ is the $R$-algebra with structure map
\[
R \to A \to \Sym_{A}\left(\Kah{A}{R}\right).
\]
\end{example}
\begin{comment}
on tangent {\'e}tale maps and giving a necessary and sufficient condition for a pullback functor between slice tangent categories to preserve the slice tangent functors (up to natural isomorphism, of course).
\begin{dfn}[{\cite[Definition 35]{GeoffJSReverse}}]
A morphism $f\colon X \to Y$ in a tangent category $(\Cscr,\Tbb)$ is {\em {\'e}tale} if the diagram
\[
\begin{tikzcd}
TX \ar[r]{}{Tf} \ar[d, swap]{}{p_X} & TY \ar[d]{}{p_Y} \\
X \ar[r, swap]{}{f} & Y
\end{tikzcd}
\]
is a pullback diagram.
\end{dfn}
\end{comment}

When a tangent category $\Cscr$ has the property that every map is $0$-carrable then by \cite[Pages 5, 6]{Rosicky} the functor $T_{(-)/Y}\colon\Cscr_{/Y} \to \Cscr_{/Y}$ is the tangent functor for what we call the relative tangent structure on $\Cscr_{/Y}$. It has bundle projection $p_{X/Y}$
\[
\begin{tikzcd}
V_{f}(X) \ar[r]{}{\pi_0} \ar[dr, swap]{}{p_{X/Y}} & TX \ar[d]{}{p_X} \\
 & X
\end{tikzcd}
\]
and zero section given by the unique map $0_{X/Y}\colon X \to V_{f}(X)$
\[
\begin{tikzcd}
    X \ar[dr, dashed]{}{0_{X/Y}} \ar[drr, bend left = 20]{}{f} \ar[ddr, swap, bend right = 20]{}{0_X} & & \\
     & V_{f}(X) \ar[r]{}{\pi_1} \ar[d, swap]{}{\pi_0} & Y \ar[d]{}{0_Y} \\
     & TX \ar[r, swap]{}{Tf} & TY
\end{tikzcd}
\]
factoring the naturality square of the zero transformation. 

What is remarkable about the relative tangent bundles is that they are \emph{very} well behaved with respect to pullback against tangent display morphisms. Let us recall the definition of tangent display maps (as incarnated in \cite{BenThesis} and \cite{GeoffMarcelloTSubmersionPaper}) and then prove a helpful technical proposition. 
\begin{dfn}[{cf.\! \cite{BenThesis}, \cite{GeoffMarcelloTSubmersionPaper}}]
Let $\Cscr$ be a tangent category. A morphism $f\colon X \to Y$ is said to be a \emph{tangent display morphism} (alterntively \emph{$T$-display}) if for any map $g\colon Z \to Y$ the pullback
    \[
    \begin{tikzcd}
    X \times_Y Z \ar[r] \ar[d] & Z \ar[d]{}{g} \\
    X \ar[r, swap]{}{f} & Y
    \end{tikzcd}
    \]
    exists and is preserved by all powers $T^m$ of the tangent functor for $m \in \N$
\end{dfn}
\begin{example}
In $\mathbf{CAlg}_{R}^{\op}$, for $R$ a commutative rig, every morphism is $T$-display because the tangent functor is continuous (as it is a right adjoint) and because the category$\mathbf{CAlg}_{R}$ is both complete and cocomplete.
\end{example}
\begin{example}
In $\SMan$ a morphism $f\colon X \to Y$ is $T$-display if and only if $f$ is a submersion by \cite[Theorem 2.31]{GeoffMarcelloTSubmersionPaper}.
\end{example}

We now show that $T$-display morphisms behave particularly well with respect to the relative tangent structures on slice categories. We will use this to ultimately deduce that the pullback functors $f^{\ast}\colon\SMan_{/N} \to \SMan_{/M}$ are strong tangent morphisms for submersions $f\colon M \to N$ when each category is equipped with its relative/vertical tangent structure.

\begin{prop}\label{Prop: TDisplay pullback commutes with relative bundles}
Let $\Cscr$ be a tangent category in which every object $S$ has a relative tangent bundle functor $V_{(-)}\colon\Cscr_{/S} \to \Cscr_{/S}$ and let $f\colon X \to Y$ be a $T$-display morphism. Then if $f^{\ast}\colon\Cscr_{/Y} \to \Cscr_{/X}$ is the functor given by $Z \mapsto X \times_Y Z$, there is a natural isomorphism
\[
f^{\ast} \circ \quot{V_{(-)}}{Y} \cong \quot{V_{(-)}}{X} \circ f^{\ast}.
\]
\end{prop}
\begin{proof}
We begin by observing that since $f$ is $T$-display, for any $g:Z \to Y$ with corresponding pullback square
\[
\begin{tikzcd}
f^{\ast}Z \ar[r]{}{\pi_1} \ar[d, swap]{}{\pi_0} & Z \ar[d]{}{g} \\
X \ar[r, swap]{}{f} & Y
\end{tikzcd}
\]
then the diagram
\[
\begin{tikzcd}
T(f^{\ast}Z) \ar[r]{}{T\pi_1} \ar[d, swap]{}{T\pi_0} & TZ \ar[d]{}{Tg} \\
TX \ar[r, swap]{}{Tf} & TY
\end{tikzcd}
\]
is a pullback square as well. Now, on one hand we observe that $T_{f^{\ast}Z/X}$ is computed via the pullback square
\[
\begin{tikzcd}
V_{\pi_0}(f^{\ast}Z) \ar[r]{}{\iota^{\pi_0}} \ar[d, swap]{}{\iota_{\pi_0}}& X \ar[d]{}{0_X} \\
T(f^{\ast}Z) \ar[r, swap]{}{T\pi_0} & TX
\end{tikzcd}
\]
so pasting this  square with the diagram describing that $T$ preserves the pullback $f^{\ast}Z$ and using the Pullback Lemma allows us to deduce that $V_{\pi_0}(f^{\ast}Z)$ also arises from the pullback diagram:
\[
\begin{tikzcd}
V_{\pi_0}(f^{\ast}Z) \ar[r]{}{\iota_{\pi_0}} \ar[d, swap]{}{T\pi_1 \circ \iota^{\pi_0}} & X \ar[d]{}{Tf \circ 0_X} \\
TZ \ar[r, swap]{}{Tg} & TY
\end{tikzcd}
\]
That is, $V_{\pi_0}(f^{\ast}Z)$ arises as the pullback of $Tf \circ 0_X:X \to TY$ against $Tg:TZ \to TY$.

On the other hand, consider that the object $f^{\ast}T_{Z/Y}$ arises by pasting the pullback squares
\[
\begin{tikzcd}
f^{\ast}(V_{g}(Z)) \ar[rr]{}{\id_X \times \iota^{\pi_0}} \ar[d, swap]{}{\id_X \times \iota_{\pi_0}} & & X \times_Y Y \ar[d]{}{\id_X \times_Y 0_Y} \\
f^{\ast}(TZ) \ar[rr]{}{\id_X \times Tg} \ar[d, swap]{}{\pi_1} & & f^{\ast}(TY) \ar[d]{}{\pi_1} \\
TZ \ar[rr, swap]{}{Tg} & & TY
\end{tikzcd}
\]
together. Since $X \times_Y Y \cong X$, we can replace $X \times_Y Y$ with $X$. Furthermore, $f^{\ast}(TY)$ is a differential bundle over $X$ with the property that if we write $\theta_f$ for the unique map rendering
\[
\begin{tikzcd}
TX \ar[drr, bend left = 20]{}{Tf} \ar[ddr, swap, bend right = 20]{}{p_X} \ar[dr, dashed]{}{\exists!\theta_f} \\
 & f^{\ast}(TY) \ar[r]{}{\pi_1} \ar[d, swap]{}{\pi_0} & TY \ar[d]{}{p_Y} \\
 & X \ar[r, swap]{}{f} & Y
\end{tikzcd}
\]
commutative, then the diagram
\[
\begin{tikzcd}
X \ar[r]{}{0_X} \ar[d, swap]{}{\id_X \times 0_Y} & TX \ar[dl]{}{\theta_f} \ar[d]{}{Tf} \\
f^{\ast}(TY) \ar[r, swap]{}{\pi_1} & TY
\end{tikzcd}
\]
commutes as well. But now it follows that $\pi_1 \circ (\id_X \times 0_Y) = \pi_1 \circ \theta_f \circ 0_X = Tf \circ 0_X$ and hence that the diagram
\[
\begin{tikzcd}
X \times_Y V_{g}(Z) \ar[dr, swap]{}{\theta_f} \ar[r]{}{\pi_0} & X \ar[r]{}{0_X} \ar[d]{}{\id \times 0_Y}  & TX \ar[d]{}{Tf} \\
 & f^{\ast}(TY) \ar[r, swap]{}{\pi_1} & TY\end{tikzcd}
\]
commutes. Putting this together we find that $X \times_Y V_{g}(Z) = f^{\ast}(V_{g}(Z))$ is the pullback of $Tf \circ 0_X\colon X \to TY$ against $Tg\colon TZ \to TY$ and hence that $f^{\ast}(V_{g}(Z)) \cong V_{\pi_0}(f^{\ast}Z)$.
\end{proof}
\begin{cor}\label{Cor: The etale corollary we need}
In $\SMan$, if $f\colon X \to Y$ is a submersion then for any map $g\colon Z \to Y$ there is an isomorphism $V_{\pi_0}(f^{\ast}(Z)) \cong f^{\ast}(V_{g}(Z)).$
\end{cor}

With Proposition \ref{Prop: TDisplay pullback commutes with relative bundles} in mind, it is relatively straightforward to show that when every morphism in a tangent category admits a relative tangent bundle then $T$-display morphisms have pullback functors as part of strong tangent morphisms. It is worth noting that this appears implicitly in \cite[Section 3.3]{GeoffMarcelloTSubmersionPaper} when the authors discuss the universal property of slicing tangent categories.
\begin{prop}\label{Prop: Pullback functors as strong tangent morphisms}
Let $\Cscr$ be a tangent category such that every morphism $g\colon W \to Z \in \Cscr_1$ has a pullback square:
\[
\begin{tikzcd}
T_{W/Z} \ar[r]{}{\iota^{g}} \ar[d, swap]{}{\iota_g} & Z \ar[d]{}{0_Z} \\
TW \ar[r, swap]{}{Tg} & TZ
\end{tikzcd}
\]
If $f\colon X \to Y$ is a $T$-display morphism in a tangent category $\Cscr$ and if both $X$ and $Y$ admit relative tangent bundles then the functor $f^{\ast}\colon\Cscr_{/Y} \to \Cscr_{/X}$ is a strong tangent morphism with distributor $\alpha$ the natural isomorphism $T_{f^{\ast}Z/X} \cong f^{\ast}T_{Z/Y}$ for all object $Z$ over $Y$.
\end{prop}

We now apply the various general results above to show that for any Lie group $L$ satisfying Assumption \ref{Assume on Lie groups} and for any left $L$-manifold $M$, there is a ``sliced-over-$\SMan$'' pseudofunctor
\[
\Mcal\colon\FMan(L)^{\op} \to \fCat
\]
which is a tangent indexing functor. Because of this, however, we will have to introduce slice categories with notationally complicated terms which contain many slash and backslash characters. In order to make sure our categories are as readable as possible, we have adopted the comma category notation $\Cscr \downarrow X$ in order to indicate the slice category $\Cscr_{/X}$ of $\Cscr$ over $X$.

Recall that by Part (4) of Proposition \ref{Prop: Obsrevations about free L manifolds}, for each morphism $f\colon F \to E$ in the category $\FMan(L)$, the corresponding morphism $\overline{f}\colon L \backslash (F \times M) \to L \backslash (E \times M)$ is a surjective submersion in $\SMan$. As such, because submersions are $T$-display in $\SMan$, we have a functor
\[
\begin{tikzcd}
\overline{f}^{\ast}\colon \SMan\downarrow{\big(L\backslash(E \times M)\big)} \to \SMan\downarrow{\big(L \backslash(F \times M)\big)}
\end{tikzcd}
\]
for every morphism in $\overline{f}$ (which, in turn, is the technology which allows us to equip $\FMan(L)$ with the pseudofunctor $\Mcal$ described above). Our goal is to establish that this pseudofunctor is a tangent indexing functor when we regard each category $\SMan \downarrow X$ as a tangent category with its relative tangent structure.

To establish this, we once again observe that by Part (4) of Proposition \ref{Prop: Obsrevations about free L manifolds} we know that $\overline{f}$ is a surjective submersion. Thus by Corollary \ref{Cor: The etale corollary we need} there is an isomorphism, for any smooth manifold $\varphi\colon N \to L \backslash (E \times M)$,
\[
\left(\!\quot{\alpha}{f}\right)_{N}\colon V_{\pi_0}\left(N \times_{L \backslash (E \times M)} \big(L \backslash (F \times M)\big)\right) \xrightarrow{\cong} V_{\varphi}(N) \times_{L \backslash (E \times M)} \big(L\backslash (F \times M)\big).
\]
That this ismorphism is natural in the structure map $\varphi\colon N \to L \backslash (E \times M)$ is trivial to verify by virtue of the universal property of pullbacks. We claim that the pairs
\[
(\overline{f}^{\ast},\!\quot{\alpha}{f})\colon \SMan\downarrow{\big(L \backslash (E \times M)\big)} \longrightarrow \SMan\downarrow{\big(L \backslash (F \times M)\big)}
\]
are strong tangent morphisms when each of the corresponding slice categories are given the relative tangent structures.
\begin{prop}\label{Prop: Pullback functor for slice SMAN are smooth tangent morphisms}
For any Lie group $L$ satisfying Assumption \ref{Assume on Lie groups}, for any left $L$-manifold $M$, and for any morphism $f\colon F \to E$ in $\FMan(L)$, the pair
\[
(\overline{f}^{\ast},\!\quot{\alpha}{f})\colon \SMan\downarrow{\big(L \backslash (E \times M)\big)} \longrightarrow \SMan\downarrow{\big(L \backslash (F \times M)\big)}
\]
constitutes a strong tangent morphism.
\end{prop}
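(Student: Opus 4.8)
The plan is to verify the five coherence diagrams of Definition \ref{Defn: Tangent Morphism} for the pair $(\overline{f}^{\ast},\!\quot{\alpha}{f})$, using the fact that these are all diagrams in the slice category $\SMan\downarrow{\big(L \backslash (F \times M)\big)}$ and hence can be checked after applying the forgetful (domain) functor to $\SMan$, where the relevant maps become explicit pullback-comparison morphisms. First I would recall the shape of the slice tangent structure on $\SMan\downarrow X$ (from \cite{GeoffJS} and \cite{GeoffandRobinBundles}): the tangent functor sends $h\colon N \to X$ to the composite $TN \xrightarrow{Th} TX \xrightarrow{p_X} X$, that is, to the pullback-free description $T_{N/X} = TN$ with structure map $p_X \circ Th$; the bundle, zero, addition, lift and flip are all induced object-locally from those of $\SMan$ in the evident way. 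Since $\overline{f}$ is a submersion (Proposition \ref{Prop: Obsrevations about free L manifolds}(4)), it is a tangent display map by \cite[Example 4.11.i]{GeoffandRobinBundles}, and since it is also {\'e}tale, Proposition \ref{Prop: The pullback isomorphism for etale maps} and Corollary \ref{Cor: The etale corollary we need} give precisely the natural isomorphism $\!\quot{\alpha}{f}$ together with the preservation of all tangent pullbacks and equalizers by $\overline{f}^{\ast}$ (pullback functors preserve all limits, and the needed ones exist because $\overline{f}$ is a display map). This already supplies two of the three requirements for a \emph{strong} tangent morphism; it remains only to check the coherence squares.

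Next I would do the diagram-chase. The key observation that makes this short is that, because $\overline{f}$ is {\'e}tale, the comparison map $\!\quot{\alpha}{f}_N\colon T(\overline{f}^{\ast}N) \xrightarrow{\cong} \overline{f}^{\ast}(TN)$ is simply the universal arrow into the pullback $TN \times_{L\backslash(E\times M)} \big(L\backslash(F\times M)\big)$ built from $T(\operatorname{pr}_N)$ and $p_{\overline{f}^{\ast}N}$; equivalently, it identifies the tangent bundle of the fibre product with the fibre product of tangent bundles. Under this identification the slice tangent structure maps on $\overline{f}^{\ast}N$ (bundle projection, zero, addition, lift, flip) are exactly the restrictions/pullbacks of the corresponding maps on $TN$, because all of these are defined by postcomposition with structural maps of $\SMan$ that commute with the projections defining the pullback. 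Hence each of the five squares in Definition \ref{Defn: Tangent Morphism}, after applying the domain functor to $\SMan$, reduces to the statement that a morphism into a pullback is determined by its two legs, both of which visibly agree --- i.e. the squares commute by the universal property of the pullbacks involved, together with naturality of $p$, $0$, $\ell$, $c$ in $\SMan$. I would phrase this as a single lemma ("the $\!\quot{\alpha}{f}_N$ intertwine the slice tangent data objectwise") and then read off the five coherences, rather than writing out five separate chases.

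I do not expect a genuine obstacle here; the content has been front-loaded into Proposition \ref{Prop: The pullback isomorphism for etale maps} and Corollary \ref{Cor: The etale corollary we need}. The one point requiring mild care --- and the place I would slow down --- is bookkeeping the distinction between $T_{N/X}$ (the slice tangent, which as a manifold is just $TN$) and the pullback $\overline{f}^{\ast}(TN)$ versus $T(\overline{f}^{\ast}N)$, and checking that the natural isomorphism of Corollary \ref{Cor: The etale corollary we need} is compatible with the two different structure maps to the base so that it genuinely lives in the slice category and not merely in $\SMan$. Concretely: given $h\colon N \to L\backslash(E\times M)$, one must confirm that $\!\quot{\alpha}{f}_N$ is a morphism of slice objects over $L\backslash(F\times M)$, which follows because both composites to the base equal $\operatorname{pr}_1 \circ p_{\overline{f}^{\ast}N}$ after using $p_{L\backslash(E\times M)} \circ T\overline{f} = \overline{f}\circ p_{L\backslash(F\times M)}$ and the pullback square. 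Once that compatibility is recorded, the proof concludes by citing Proposition \ref{Prop: The pullback isomorphism for etale maps} for invertibility and limit preservation and the lemma above for the coherences, so that $(\overline{f}^{\ast},\!\quot{\alpha}{f})$ is a strong tangent morphism.
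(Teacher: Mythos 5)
Your proposal is correct and takes essentially the same route as the paper's own (sketched) proof: both derive the invertibility of $\!\quot{\alpha}{f}$ and the preservation of tangent limits from the fact that $\overline{f}$ is an \'etale tangent display map (Proposition \ref{Prop: Obsrevations about free L manifolds}, Corollary \ref{Cor: The etale corollary we need}), and both reduce the coherence squares of Definition \ref{Defn: Tangent Morphism} to the observation that each composite is a cone morphism into a pullback and is hence determined by its legs. Your uniform ``intertwining'' lemma and the explicit verification that $(\!\quot{\alpha}{f})_N$ is a morphism over $L \backslash (F \times M)$ are organizational refinements of, not departures from, the paper's argument.
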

\begin{proof}
Observe that because the functor $\overline{f}^{\ast}$ is continuous, it restricts to a functor
\[
\overline{f}^{\ast}\colon \mathbf{CMon}\left(\SMan\downarrow{\big(L \backslash (E \times M)\big)}\right) \longrightarrow \mathbf{CMon}\left(\SMan\downarrow{\big(L \backslash (F \times M)\big)}\right)
\]
by virtue of preserving the relevant diagrams describing commutative monoids; moreover, by the fact that each map $\overline{f}$ is a display map $\overline{f}^{\ast}$ preserves the additivity of the tangent bundles and interacts appropriately with the bundle transformations $p, 0,$ and $\operatorname{add}$ via $\!\quot{\alpha}{f}$ and its pullback power $(\!\quot{\alpha}{f})_2$. Similarly, $\overline{f}^{\ast}$ and $\!\quot{\alpha}{f}$ interact appropriately with the lifts $\lambda$ by virtue of the fact that $\overline{f}$ is a tangent display map. The preservation of the canonical flip $c$ comes down to verifying that the diagram
\[
\begin{tikzcd}
L \backslash (F \times M) \times_{L \backslash (E \times M)} V_{\varphi}^2(N) \ar[d, swap]{}{\overline{f}^{\ast} \ast c} \ar[rrr]{}{(V_{(-)} \ast \!\quot{\alpha}{f}) \circ (\!\quot{\alpha}{f} \ast V_{(-)})} & & & V_{\pi_0}^2\left(L \backslash (F \times M) \times_{L \backslash (E \times M)} N\right) \ar[d]{}{c \ast \overline{f}^{\ast}} \\
L \backslash (F \times M) \times_{L \backslash (E \times M)} V_{\varphi}^2(N) \ar[rrr, swap]{}{(T \ast \!\quot{\alpha}{f}) \circ (\!\quot{\alpha}{f} \ast T)} & & & V_{\pi_0}^2\left(L \backslash (F \times M) \times_{L \backslash (E \times M)} N\right)
\end{tikzcd}
\]
commutes, but this is a straightforward albeit tedious check using that each path around the square describes a cone morphism between the two isomorphic copies of the pullback.
\end{proof}

From here an extremely tedious but straightforward check shows that functor/transformation pairs $(V_{(-)},\!\quot{\alpha}{f}^{-1})$ vary pseudofunctorially in $\FMan(L)^{\op}$; this essentially uses the fact that pullbacks vary pseudofunctorially and that the relative tangent functors themselves vary pseudonaturally over surjective submersions. Applying Theorem \ref{Thm: Pre-Equivariant Tangent Category} shows that the category $\PC(\Mcal) = \Mcal_L(M)$ is a tangent category, and hence (by definition) a tangent category of descent equivariant smooth manifolds over the manifold $M$. We collect these observations into a definition and theorem justifying the name below.
\begin{dfn}\label{Defn: Descent Equivariant Manfiolds}
Let $L$ be a Lie group satisfying Assumption \ref{Assume on Lie groups} and let $M$ be a left $L$-manifold. Then we define the category of {\em descent $L$-equivariant smooth manifolds over $M$} to be the category
\[
\Mcal_{L}(M) =: \left(\SMan_{/M}\right)_L
\]
and we call the pseudolimit tangent structure the {\em descent $L$-equivariant tangent structure over $M$}.
\end{dfn}
\begin{Theorem}\label{Thm: Tangent Indexing Functor}
The descent $L$-equivariant tangent structure on $M$ claimed in Definition \ref{Defn: Descent Equivariant Manfiolds} exists.
\end{Theorem}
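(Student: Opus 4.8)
The plan is to verify the three conditions of Definition~\ref{Defn: tangent pre-equivariant indexing functor} for the pre-equivariant pseudofunctor $\Mcal = (\Mcal,\SMan_{/(-)})$, since Theorem~\ref{Thm: Pre-Equivariant Tangent Category} then immediately yields that $\PC(\Mcal) = \Mcal_L(M)$ is a tangent category. Condition (1) asks that each $\Mcal(F) = \SMan \downarrow (L\backslash(F\times M))$ carry a tangent structure; this is the slice (simple) tangent structure on a slice of $\SMan$, whose existence is standard tangent-category theory (the slice of any tangent category with sufficient pullbacks over an object is again a tangent category). By Part~(2) of Proposition~\ref{Prop: Obsrevations about free L manifolds} the base $L\backslash(F\times M)$ is an honest smooth manifold, so the slice is well-defined. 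Condition (2) asks that for each morphism $f\colon F \to E$ in $\FMan(L)$ the pullback functor $\overline{f}^{\ast}$ together with a naturality isomorphism $\quot{\alpha}{f}$ forms a strong tangent morphism; this is exactly the content of Proposition~\ref{Prop: Pullback functor for slice SMAN are smooth tangent morphisms}, whose proof in turn rests on Corollary~\ref{Cor: The etale corollary we need} (giving the required isomorphism $\quot{\alpha}{f}$ because $\overline{f}$ is {\'e}tale by Part~(4) of Proposition~\ref{Prop: Obsrevations about free L manifolds}) and on the fact that {\'e}tale maps are tangent display maps in $\SMan$. Condition (3) asks that the data $(\quot{T}{F},\quot{\alpha}{f}^{-1})$ assemble into a pseudonatural transformation $T\colon \Mcal \Rightarrow \Mcal$.

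First I would record the slice tangent structure explicitly: for a manifold $X$ in $\SMan_{/N}$ with structure map $x\colon X \to N$, set $T_{N}(X) = X\times_{N} T(X)$ pulled back appropriately — more precisely the simple slice tangent functor sends $x\colon X\to N$ to the composite $TX \xrightarrow{Tx} TN$, with the bundle, zero, addition, lift, and flip all inherited from those of $\SMan$ by restriction along $N$. I would cite the relevant construction (this is the ``simple slice'' tangent structure; see the slice-category discussion in the tangent-category literature, e.g.\ the references already used in the paper for slice tangent structures) and note that $\SMan$ has the pullbacks needed because every structure map in a slice can be pulled back along submersions and, more to the point, along the maps that appear in the tangent structure. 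Second, I would invoke Proposition~\ref{Prop: Pullback functor for slice SMAN are smooth tangent morphisms} verbatim for Condition (2). Third, for Condition (3) I would argue that pseudonaturality of $T$ reduces to checking the single cocycle-type coherence between the naturality isomorphisms $\quot{\alpha}{f}^{-1}$, $\quot{\alpha}{g}^{-1}$, $\quot{\alpha}{g\circ f}^{-1}$ and the compositor $\phi_{f,g}$ of $\Mcal$ (which itself is built from the pseudofunctoriality of pullback in $\SMan$), exactly as in Proposition~\ref{Prop: Tangent functor and inverse iso is pseudonat} for the Zariski case. Here the key point is that all the maps involved are canonically induced by universal properties of pullbacks, and that the tangent functor on $\SMan$ is pseudonatural along {\'e}tale maps (which is precisely what Corollary~\ref{Cor: Tangent space isos with pullbacks for local diffeomorphisms} and the Pullback Stacking Lemma from the proof of Lemma~\ref{Lemma: Necessary and sufficient condition for pullbakc and tangent to commute} give us); so the coherence holds because both composites are the unique comparison map between two presentations of the same iterated pullback.

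The main obstacle I anticipate is Condition (3), the pseudonaturality, or more precisely making the bookkeeping honest without grinding through a large diagram. Unlike the Zariski case — where the paper could compute the relevant ring maps on generators $b\otimes a$ and $\mathrm{d}(b\otimes a)$ and check equality symbol-by-symbol — in $\SMan$ there is no such generator-level handle, so the argument must be made purely diagrammatically via universal properties. Concretely, the hard part is verifying that the pasting of $\quot{\alpha}{f}^{-1}$, $\quot{\alpha}{g}^{-1}$ and $\phi_{f,g}$ equals the pasting of $\phi_{f,g}$ and $\quot{\alpha}{g\circ f}^{-1}$; the clean way to do this is to observe that both sides are cone morphisms out of $T\big((L\backslash(F\times M))\times_{L\backslash(E\times M)}(L\backslash(E\times M))\times_{L\backslash(D\times M)} N\big)$ into the pullback defining the codomain, and that this pullback — being a pullback — admits a unique such cone morphism, forcing the two pastings to agree. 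I would therefore structure the proof of (3) as: (i) identify the two pasting $2$-cells as comparison maps into a common limit; (ii) check both commute with the limit projections (a routine chase using that $\overline{f}^{\ast}$ is continuous and that tangent commutes with the relevant pullbacks by the {\'e}tale hypothesis); (iii) conclude by uniqueness. Assembling (1)–(3), Theorem~\ref{Thm: Pre-Equivariant Tangent Category} gives that $\Mcal_L(M) = \PC(\Mcal)$ carries the pseudolimit tangent structure, which is the assertion of Theorem~\ref{Thm: Tangent Indexing Functor}.
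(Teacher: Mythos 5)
Your proposal follows the same route as the paper: the paper's own proof is precisely the observation that conditions (1)--(3) of Definition \ref{Defn: tangent pre-equivariant indexing functor} are supplied by the slice tangent structures, by Proposition \ref{Prop: Pullback functor for slice SMAN are smooth tangent morphisms}, and by the (admittedly only sketched) pseudonaturality check, after which Theorem \ref{Thm: Pre-Equivariant Tangent Category} applies; your universal-property argument for condition (3) is a reasonable way to make the paper's ``extremely tedious but straightforward check'' honest, and it correctly isolates the two inputs (pseudofunctoriality of pullback and Corollary \ref{Cor: Tangent space isos with pullbacks for local diffeomorphisms}) that the paper also cites. One small correction to your condition (1): the simple slice tangent functor on $\SMan \downarrow N$ does not send $x\colon X \to N$ to the composite $TX \xrightarrow{Tx} TN$ (that is an object over $TN$, not over $N$); it sends $x$ to the pullback of $Tx$ along the zero section $0_N\colon N \to TN$, i.e.\ the vertical bundle of $x$ regarded over $N$ via the evident projection --- with that fix, your verification of condition (1) is exactly the standard construction the paper intends.
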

\begin{proof}[Sketch]
    In light of Theorem \ref{Thm: Pre-Equivariant Tangent Category} we must argue that the pseudofunctor $\Mcal$ is a tangent-indexing functor. However, this is argued by Proposition \ref{Prop: Pullback functor for slice SMAN are smooth tangent morphisms} and the discussion prior to the statement of Definition \ref{Defn: Descent Equivariant Manfiolds}.
\end{proof}

We now conclude this section by showing that the manifold versions of Propositions \ref{Prop: Scheme gluing equivalence of tan cats} and \ref{Prop: Equivariantifying the gluing equiv for schemes} hold as well. The reason this works is not surprising: the open subspace Grothendieck topology on a manifold is subcanonical.
\begin{prop}\label{Prop: Manifold gluing equivalence of tan cats}
Let $M$ be a smooth manifold and let $\Ucal = \lbrace f_i\colon U_i \to M \; \left. \right| \; i \in I \rbrace$ be a chart in a smooth atlas for $M$. Then there is an equivalence of tangent categories
\[
\left(\SMan_{/M},\Tbb_{\mathbf{Smooth}/M}\right) \simeq \operatorname{pseudolim}\left(\SMan_{/U_i},\Tbb_{\mathbf{Smooth}/U_i}\right).
\]
\end{prop}
\begin{proof}
Using that the open cover Grothendieck topology on $\SMan$ is subcanonical, this result follows mutatis mutandis to Proposition \ref{Prop: Scheme gluing equivalence of tan cats} once we realize that injective local diffeomorphisms $f_i:U_i \to M$ are $T$-display morphisms in $\SMan$.
\end{proof}
\begin{prop}\label{Prop: Equivariantifying the gluing equiv for manifolds}
Let $\Ucal := \lbrace f_i\colon U_i \to M \; \left. \right| \; i \in I \rbrace$ be a chart in a smooth atlas for a manifold $M$ for which each map $f_i$ is $L$-equivariant for a Lie group satisfying Assumption \ref{Assume on Lie groups}. Then there is an equivalence of tangent categories
\[
\left(\left(\SMan_{/M}\right)_L,\Tbb_{M}\right) \simeq \operatorname{pseudolim} \left(\left(\SMan_{/U_i}\right)_L, \Tbb_{U_i}\right).
\]
\end{prop}
\begin{proof}
This follows mutatis mutandis to the proof of Proposition \ref{Prop: Equivariantifying the gluing equiv for schemes}.
\end{proof}

\section{Remarks Toward Future Work}\label{Section: Towards Generalizations}
%\biggeoff{The section below needs to be rewritten a bit. I've killed the stuff that's now irrelevant, but we should review the orbifold stuff to make sure it makes sense}
%One can do a smooth or topological version of the example  presented by translating the results to the case where $G$ is a Lie group or a  topological group and $X$ is a manifold or a topological space respectively (with a smooth respectively continuous action by $G$). In doing so we simply extend $\Sf(G)$ to the category $\mathbf{FResl}_G(X)$ of free $G$-resolutions of $X$, i.e., principal $G$-fibrations of $X$ and then look at pseudofunctors $F\colon \mathbf{FResl}_G(X)^{\op} \to \fCat$ which satisfy ``good'' equivariant descent properties regarding behaviour along the quotient spaces of resolutions; this will be examined carefully in future work. 
In the last section of this paper we showed how to take the tangent categorical language of smooth manifolds and have it interact with the equivariant descent recorded by having a Lie group $L$ act on a smooth manifold $M$. We are especially interested in examining this descent-equivariant tangent structure when $M$ is an orbifold regarded as a $L$-space for some Lie group $L$ (each orbifold can be represented this way; cf. \cite{Pardon}). In order to show that this is well-behaved for orbifolds, i.e. independent of the chosen presentation, we need to prove that different Morita equivalent representations of an orbifold give rise to suitably equivalent descent-equivariant (tangent) categories. This is left as a topic for future work, as it will involve delicate and technical arguments involving change of group functors and manipulations of bibundles. An alternative approach towards studying tangent structures on orbifolds would be take Proposition \ref{Prop: Equivariantifying the gluing equiv for manifolds} as a point of departure and generalize to orbifold charts and atlases.

As a final comment, we also close with the following observation.
\begin{prop}
If $F\colon \Cscr^{\op} \to \fCat$ is a pseudofunctor where each category $F(X)$ for $X \in \Cscr_0$ is a category with finite biproducts and each functor $F(f)$ for $f \in \Cscr_1$ preserves finite biproducts, then $F(X)$ is a category finite biproducts as well. In particular, if we regard each category $F(X)$ with its finite biproduct tangent structure, then $F$ is a tangent indexing functor and the induced tangent structure on $\PC(F)$ corresponds with the finite biproduct tangent structure on $\PC(F)$.
\end{prop}
This shows that in particular the pseudolimit in $\fTan$ of a diagram $F\colon \Cscr^{\op} \to \fTan_{\operatorname{strong}}$ for which each object category $F(X)$ is a CDC is itself a CDC. It would be of interest to see how the pseudolimit structure may be used in the study of CDCs.

\bibliography{ETCBib}
\bibliographystyle{amsalpha}

\end{document}